\newcommand{\R}{\ensuremath{\mathbb{R}}}
\newcommand{\C}{\ensuremath{\mathbb{C}}}
\newcommand{\Z}{\ensuremath{\mathbb{Z}}}
\newcommand{\vecsxy}{\ensuremath{\mathbf{x}, \mathbf{y}}}
\newcommand{\veca}{\ensuremath{\mathbf{a}}}
\newcommand{\vecb}{\ensuremath{\mathbf{b}}}
\newcommand{\vece}{\ensuremath{\mathbf{e}}}
\newcommand{\vecu}{\ensuremath{\mathbf{u}}}
\newcommand{\vecv}{\ensuremath{\mathbf{v}}}
\newcommand{\vecw}{\ensuremath{\mathbf{w}}}
\newcommand{\vecx}{\ensuremath{\mathbf{x}}}
\newcommand{\vecy}{\ensuremath{\mathbf{y}}}
\newcommand{\vecz}{\ensuremath{\mathbf{z}}}
\newcommand{\vecalpha}{\ensuremath{\boldsymbol{\alpha}}}
\newcommand{\vecl}{\ensuremath{\boldsymbol{\lambda}}}
\newcommand{\veczero}{\ensuremath{\mathbf{0}}}
\newcommand{\xcrossy}{\ensuremath{\vecx \otimes \vecy}}
\newcommand{\ycrossz}{\ensuremath{\vecy \otimes \vecz}}
\newcommand{\sltwor}{\ensuremath{SL(2,\mathbb{R})}}
\newcommand{\sltwoz}{\ensuremath{SL(2,\mathbb{Z})}}
\newcommand{\slthreez}{\ensuremath{SL(3,\mathbb{Z})}}
\newcommand{\slthreer}{\ensuremath{SL(3,\mathbb{R})}}
\newcommand{\sldz}{\ensuremath{SL(d,\mathbb{Z})}}
\newcommand{\sotwor}{\ensuremath{SO(2,\mathbb{R})}}
\newcommand{\sonr}{\ensuremath{SO(n,\mathbb{R})}}
\newcommand{\slnr}{\ensuremath{SL(n,\R)}}
\newcommand{\slnz}{\ensuremath{SL(n,\Z)}}
\newcommand{\pslnr}{\ensuremath{PSL(n,\R)}}
\newcommand{\lfc}{\ensuremath{LF(\hat{\mathbb{C}})}}
\newcommand{\bce}{\footnotesize B.C.E.\normalsize}
\newcommand{\ce}{\footnotesize C.E.\normalsize}
\newcommand{\startm}{\begin{pmatrix}}
\newcommand{\finishm}{\end{pmatrix}}
\newcommand{\Rsphere}{\ensuremath{\hat{\mathbb{C}}}}
\newcommand{\lnorm}{\ensuremath{|\!|\!|}}
\newcommand{\matrixu}{\ensuremath{\startm 1 & 1 \\ 0 & 1 \finishm}}
\newcommand{\matrixv}{\ensuremath{\startm 0 & 1 \\ -1 & 0 \finishm}}
\newcommand{\Pn}{\ensuremath{P^{(n)}}}
\newcommand{\Ptwo}{\ensuremath{P^{(2)}}}
\newcommand{\Pd}{\ensuremath{P^{(d)}}}
\newcommand{\mand}{\hspace{5mm}\mbox{and}\hspace{5mm}}
\newcommand{\length}{\operatorname{length}}
\newcommand{\re}{\operatorname{Re}}
\newcommand{\im}{\operatorname{Im}}
\newcommand{\degree}{\operatorname{deg}}
\newcommand{\Span}{\operatorname{span}}
\theoremstyle{plain}
\newtheorem{theorem}{Theorem}[section]
\newtheorem{propn}[theorem]{Proposition}
\newtheorem{lemma}[theorem]{Lemma}
\newtheorem{corollary}[theorem]{Corollary}
\title{Hyperbolic Geometry  and  \\[1mm] Distance Functions on Discrete Groups}
\author{\Authornameonly\\{\bigskip}Supervisors: Professor Michael Cowling and
Dr James Franklin}
\begin{document}

\beforepreface

\prefacesection{Acknowledgements}

{\bigskip}Professor Michael Cowling has been a mentor for a number of
years.  He suggested the specific direction taken in this work, always answered
my questions clearly, and spent much time away from our meetings preparing
material.  Professor Cowling's thorough editing of the entire thesis is much
appreciated.

{\bigskip\noindent}Dr James Franklin's sincere commitment to this project, and his
willingness to challenge my thinking, have been extremely
valuable.  As well as supervising the historical part of this
work, Dr Franklin read the mathematical chapters, and helped with
the preparation of my seminar.  His suggestions led to important
improvements.

{\bigskip\noindent}I would like to thank Dr Ian Doust, our honours coordinator, for
always being available to provide both clear guidance on the preparation of this
project, and reassurance.  The School of Mathematics, as a
whole, has been an exciting and supportive environment in which to
study.

{\bigskip\noindent}I could not have completed this thesis without help from
my friends. In particular, I would like to thank David for
discussions which clarified several mathematical issues, and
Claire, Edwina, Lisa, Shaun and Stephanie for, at the appropriate
times, giving me a break from my work, and leaving me alone to get
on with it.

{\bigskip\noindent}My family have been patient and supportive throughout
my honours year.  I am especially grateful for the way they have
cared for me over the last few months.

{\bigskip\bigskip\bigskip\noindent}Anne Thomas, 12 June 2002.

\prefacesection{Introduction}

This thesis reflects my interest in the history of mathematics,
and in areas of mathematics which combine geometry and algebra.
The historical and mathematical sections of this work can be read
independently.  One of the main themes of the history, though, is
how in the nineteenth century a new understanding of mathematical
space developed, in which groups became vital to geometry. The
idea of using algebraic concepts to investigate geometric
structures is present throughout the work.

Chapter 1 is a short history of non-Euclidean geometry. This
chapter is a synthesis of my readings of mainly secondary sources.
Historians of mathematics have traditionally adopted an
`internalist' approach, explaining changes to the content of
mathematics only in terms of factors such as the imperative to
generalise, or the desire to remove inconsistencies. An
internalist narrative is certainly necessary for a satisfactory
history of mathematics, but it is not sufficient. First, the
history of mathematics is part of broader intellectual history.
Euclidean geometry held great philosophical prestige, and
non-Euclidean geometry challenged fundamental assumptions about
the nature of space and the truth value of mathematics. Second,
there are social factors in the history of mathematics.
Mathematicians themselves are part of a community, the
mathematical community.  The structure of this community in the
nineteenth century helps to explain the reception and
dissemination of non-Euclidean geometry. The sources consulted for this
chapter each adopted one or more of the approaches to the history
of mathematics outlined here, but none of them seemed to discuss
all the relevant issues.

Chapter 2 begins the strictly mathematical part of this thesis.
One aim of this chapter is to provide a deeper understanding of
some of the mathematics discussed in Chapter~1.  To this end, each
of the main models of hyperbolic geometry is presented, and important
results for each model proved.  The second aim of Chapter~2
is to describe thoroughly the way in which the action of the group
$PSL(2,\mathbb{Z})$ induces a tesselation of the upper half-plane.
Most of the material in Chapter 2 is selected and adapted from Chapters
3--6 of Ratcliffe~\cite{rat1:fhm}.

Chapter 3 poses a question about the upper half-plane, and then
provides the theory needed to frame the question precisely and
answer it in a variety of settings. Suppose $z$ is a point in the
upper half-plane, and $\gamma z$ the result of the action of an
element $\gamma \in PSL(2,\mathbb{Z})$ on the point $z$.  The
geodesic segment joining $z$ and $\gamma z$ has a finite length,
and crosses a finite number of tiles in the tesselation of the
upper half-plane.  The question is whether or not there is any
relationship between the length of the geodesic segment, and the
number of tiles it crosses.  We begin by defining a symmetric
space.  Then, if $X$ is a symmetric space, and $\Gamma$ a discrete
group of isometries of $X$, we define two distance functions on
the group~$\Gamma$.  The first is the geometric distance function,
and is induced by the distance function on the space $X$. The
second distance function on $\Gamma$ is the word distance
function.  The word distance function is induced by the generators
of $\Gamma$, and corresponds to the number of tiles in the
tesselation crossed by a geodesic segment.  We now ask whether or
not these two distance functions are equivalent.  We prove that,
when the tiles of the induced tesselation are compact, the two
distance functions are equivalent.  Then, in order to generalise
the action of $PSL(2,\mathbb{Z})$ on the upper half-plane, we
describe a symmetric space of matrices on which the group
$PSL(n,\mathbb{Z})$ acts.

Chapter 4 is devoted to the proof of Theorem~\ref{bigone}. This
theorem states that, when $n = 2$, the geometric and word distance
functions on $PSL(n,\mathbb{Z})$ are not equivalent, but that for
all $n \geq 3$, these two distances on $PSL(n,\mathbb{Z})$ are
equivalent.

The content of Chapter 3 and Chapter 4 is based upon two papers by Lubotzky,
Mozes and Raghunathan,~\cite{lmr1:cseg} and~\cite{lmr2:wrm}.  These papers are
not easy to read.  There is sometimes ambiguity about the hypotheses on the
space $X$ and the discrete group $\Gamma$ acting on $X$.  We have, here, assumed $X$
to be a symmetric space.  This assumption ensures that geometric distance
functions and word distance functions exist, and that structures such as
geodesics and fundamental domains behave nicely.  The authors do not prove the
`well-known' result that the geometric and word distance functions are
well-defined up to Lipschitz equivalence, so we fill this gap.  They do not
discuss the interesting geometric interpretation of the two distance functions,
in terms of a geodesic crossing tiles in the space.  Also, these papers
inaccurately refer to what we have called the geometric distance function as a
Riemannian metric. The more recent article,~\cite{lmr2:wrm}, was used only for
the proof of Theorem~\ref{compactfd}.  By assuming that $X$ is a symmetric
space rather than a `coarse path metric space', we have simplified the proof a
little, as we may choose the sequence of points $\{p_i\}_{i=0}^m$ to lie on the
geodesic.  Besides, it is not obvious that a `coarse path metric space' even
contains a point with a trivial stabiliser subgroup in $\Gamma$.  The proofs in
Chapter~4 are based on those in the earlier paper,~\cite{lmr1:cseg}, which
concerns Lipschitz equivalence for the group $PSL(d,\Z)$.  This paper does not
describe the symmetric space on which $PSL(d,\mathbb{Z})$ acts, so we have
added this material.  Also, we have, throughout Chapter~4, rewritten statements
of lemmas, propositions and so on, in order to clarify what each of the many
constants depends upon.   In the paper, for the case $d = 2$, the claim that
the word length of $u^n$ grows linearly in $n$ is not proved.  For the
cases $d \geq 3$, there is no explicit statement of Theorem~\ref{normthm}, and
no indication of how to combine the results of Corollary~\ref{product2} and
Corollary~\ref{word_log} to prove the result of Theorem~\ref{normthm}.  In the
proof of Proposition~\ref{Eij_is_U1},
results~\ref{syndetic}--\ref{lengthbounds} are stated without proof.  In
Proposition~\ref{product2}, the construction suggested by the authors does not
actually satisfy the statement of the proposition, and there is no proof of the
claims of Lemmas~\ref{K1lemma} and~\ref{K3lemma}. Finally, the way in which the
result of Theorem~\ref{normthm} implies that the word distance function is
bounded above by the geometric distance function is not proved.

\afterpreface

%
%


\chapter{A History of Non-Euclidean Geometry}\label{history}


\begin{quote} \emph{I entreat you, leave the doctrine of parallel lines alone;
you should fear it like a sensual passion; it will deprive you of health,
leisure and peace---it will destroy all joy in your life.}
\begin{flushright}
from a letter by Farkas Bolyai to his son J\'anos.\footnote{Quoted in B. A.
Rosenfeld. \emph{A History of Non-Euclidean Geometry: Evolution of the Concept of
a Geometric Space}. Springer-Verlag, 1988, p.~108.}
\end{flushright}
\end{quote}

\begin{quote}
\emph{I have now resolved to publish a work on parallels $\ldots$
the goal is not yet reached, but I have made such wonderful
discoveries that I have been almost overwhelmed by them $\ldots$ I
have created a new universe from nothing.}
\begin{flushright}
from a letter by J\'anos Bolyai to his father Farkas.\footnote{Quoted in Jeremy
Gray. \emph{Ideas of Space: Euclidean, Non-Euclidean and Relativistic}.
Clarendon Press, 1989, p.~107.}
\end{flushright}
\end{quote}

\section{Introduction}

This chapter is a short history of non-Euclidean geometry.  The intellectual
history of developments internal to mathematics is presented, together with
discussion of relevant social and philosophical trends within  the mathematical
community and in wider contexts.  Chapter 2 will provide a rigorous and modern
treatment of many of the mathematical terms and results referred to here.  In
order to properly acknowledge sources, the referencing style adopted for
Chapter 1 is different from that in the rest of this thesis.

The discovery and acceptance of non-Euclidean geometry forms a crucial strand
of the history of mathematics.  For thousands of years, Euclidean geometry
held great prestige.  However, there were small doubts about the parallel axiom.
After numerous attempts to validate this axiom, in the nineteenth century there
was a shift to advancing alternative geometries. The coming-together of
developments in many areas of mathematics aided the acceptance of
non-Euclidean geometry by the mathematical community.  In this process
mathematicians were forced to reconsider and alter their ideas about the nature
of mathematics and its relationship with the real world.

\section{Euclid's \emph{Elements}}

Although Egyptian and Babylonian mathematicians had posed and solved geometric
problems, geometry only acquired logical structure with the
Greeks.\footnote{Carl B. Boyer. \emph{A History of Mathematics}. John Wiley \&
Sons, Inc., 1991, p.~47.}  Greek geometers were the first to write proofs as
deductive sequences of statements.\footnote{Roberto Torretti. \emph{Philosophy
of Geometry from Riemann to Poincar\'e}. D. Reidel Publishing Company, 1984,
p.~2.} Euclid's thirteen-volume \emph{Stoichia (Elements)}, which appeared in
about 300~\bce, was an exposition of the fundamentals of classical Greek
mathematics. The first six books covered plane geometry, and the last three
solid geometry.

The \emph{Elements} opens with a list of 23 definitions, followed by ten
axioms.\footnote{The ten axioms are sometimes called five postulates and five
common notions; surviving manuscripts of the \emph{Elements} are not
consistent. See Boyer, \emph{History of Mathematics}, p.~105.} Propositions and their proofs fill the
remainder of the work. This logical structure reflected the efforts by
mathematicians of the classical period to establish geometry as a deductive system
dependent on as few assumptions as possible.\footnote{Torretti,
\emph{Philosophy of Geometry}, p.~5.}

Euclid's fifth axiom, which is also known as the parallel axiom, states:
\begin{quote} If a straight line falling on two straight lines makes the
interior angles on the same side less than two right angles, the two straight
lines, if produced indefinitely, meet on that side on which the angles are less
than the two right angles.\footnote{Quoted in Boyer, \emph{History of
Mathematics}, p.~106.} \end{quote} To understand this, suppose that the two
straight lines are $l$ and $m$, and that the transversal falling on them is line
$t$.  The axiom is saying that if the interior angles on the same side of $t$
sum to less than $\pi$, the three lines $l$, $m$ and $t$ form a triangle.  This
triangle has two vertices on the line $t$. Its third vertex lies in the
half-plane bounded by the line $t$, and on the same side of $t$ as the interior
angles which sum to less than $\pi$.

\begin{picture}(90,80)
\put(160,13){\line(6,1){90}} \put(160,45){\line(1,0){90}}
\put(218,58){\line(-1,-2){30}} \put(153,42){$l$} \put(148,10){$m$}
\put(217,60){$t$}
\end{picture}

\medskip

\noindent The parallel axiom does not say anything about the possibility that
the pairs of interior angles on both sides of the transversal sum to $\pi$.
With the word parallel defined as meaning `never intersecting', Euclid proved,
without using the fifth axiom, that if a transversal falling on two straight
lines makes the interior angles on the same side add to $\pi$, then the two
lines are parallel. To prove the converse of this statement, Euclid did need to
use the parallel axiom. The parallel axiom was also essential to the proof of
the key theorem that the angle sum of a triangle is $\pi$.

The \emph{Elements} made little use of the idea of motion in definitions or
methods.\footnote{Rosenfeld, \emph{History of Non-Euclidean Geometry}, pp.
110--112.}  In the sixth century~\bce, Pythagoreans had defined a line as the
trace of a moving point, and a surface as the trace of a moving line. But Euclid
largely conformed to the views of Aristotle (384--322~\bce), who rejected motion
in geometry because he considered mathematical objects to be abstractions of
physical objects. So, in the \emph{Elements}, a line is a ``breadthless length''
rather than the successive positions of a moving point.\footnote{Gray,
\emph{Ideas of Space}, p.~28.}  Euclid did use superposition of geometric
figures in the proofs of Propositions 4 and 8, but chose to prove Proposition
26, where he could also have used superposition, with a longer method instead.
This suggests that he was unhappy with the method of
superposition.\footnote{Morris Kline. \emph{Mathematical Thought from Ancient
to Modern Times}. Oxford University Press, 1972, p.~60.}

Euclidean geometry contained many ambiguities and hidden assumptions.  The
opening definitions, such as ``a surface is that which has length and breadth
only'', served no logical purpose, since Euclid provided no prior set of
undefined concepts in terms of which definitions could be
framed.\footnote{Boyer, \emph{History of Mathematics}, p.~105.}  Euclid's
second and third axioms, \begin{quote} 2. [It is possible] to produce a finite
straight line continuously in a straight line, \newline 3. [It is possible] to
describe a circle with any centre and radius,\footnote{Quoted in Kline,
\emph{Mathematical Thought}, p.~59.} \end{quote} embody the assumption that
geometric space is infinite.  Greek geometers also assumed that space was
homogeneous, so that any construction could be performed at any point with the
same results.\footnote{Gray, \emph{Ideas of Space}, p.~26.} The congruence of
figures depends on this property of space.  The existence of parallel lines was
also taken for granted.\footnote{Ibid, p.~29.}

Despite these logical weaknesses, the Greeks and many later thinkers
considered  the \emph{Elements} to be the model for philosophical inquiry.
After the fall of Rome, the survival and development of Greek
mathematics, including Euclidean geometry, depended on Arabic-speaking
mathematicians in Islamic Africa and Asia.  In the twelfth century, Arabic
translations of the \emph{Elements} were translated in turn into Latin.  The
\emph{Elements} were first printed in 1482, and versions in various European
vernaculars appeared in the sixteenth century.  Euclid's \emph{Elements}
achieved
huge influence in Europe, with more than one thousand editions
published.\footnote{Boyer, \emph{History of Mathematics}, p.~119.} Until about
1800, Euclidean geometry enjoyed secure philosophical and mathematical status.

\section{Doubts about the parallel axiom}

From early on there were, though, questions raised about the parallel axiom.
A lot of basic geometry, such as the angle sum of a triangle, depended on it,
so its soundness was very important. Yet the parallel axiom appeared much less
self-evident than the others.  Euclid's formulation of the axiom seemed
long-winded and complicated, and it was not clear that assuming the existence
of infinite straight lines was reasonable.\footnote{Kline, \emph{Mathematical
Thought}, p.~863.} Mathematicians did not seriously doubt the truthfulness of
the axiom until the nineteenth century; they just wanted to make sure that it
was formally unassailable. Proclus (410--485~\ce), for example, wrote that the
parallel axiom  \begin{quote} \dots ought to be struck from the axioms altogether.
For it is a theorem --- one that invites many questions.\footnote{Proclus. \emph{A
Commentary on the First Book of Euclid's Elements}. Princeton University Press,
1969, p.~150.}
\end{quote}

Many mathematicians tried to settle uncertainty about the parallel axiom by
proving it from the other nine axioms.  They were all unsuccessful, as
they relied on implicit and unsupported assumptions, some of them actually
equivalent to the parallel axiom. In his `proof', Ptolemy (d.~168~\ce) made
tacit assumptions about parallel lines. Proclus found the flaws in Ptolemy's
work, but then based his own efforts on a rather dubious Aristotelian axiom
about the finiteness of the universe, which was inconsistent with Euclidean
assumptions of the infinitude of space.\footnote{Kline, \emph{Mathematical
Thought}, pp.~863--864.} Thabit ibn Qurra (836--901) wrote two treatises trying to
prove the parallel axiom.\footnote{Rosenfeld, \emph{History of Non-Euclidean
Geometry}, pp.~49--56.} In the first, he assumed that if two lines diverge on one
side of a transversal, they converge on the other side.  In the second, he used
kinematic arguments, asserting that  \begin{quote} If any solid is imagined to
move as a whole in one direction with one simple and straight movement, then
every point in it will have a straight movement and will thus draw a straight
line on which it will pass.\footnote{Quoted in ibid., p.~52.} \end{quote} Thus,
the curve at a fixed distance from a straight line is itself straight.  Ibn
al-Haytham (965--1041) gave a false proof of this last statement, then used it to
prove the parallel axiom.  The first substantial European effort on parallels
was that of John Wallis (1616--1703), who realised that many of Euclid's proofs
depended on unstated assumptions.\footnote{Gray, \emph{Ideas of Space}, p.~57.}
He derived the parallel axiom by assuming the existence of triangles similar to
a given triangle. Other European attempts included those of Giordano Vitale
(1633--1711), and A.~M.~Legendre (1752--1833).

A different approach was to attempt to replace or reformulate the parallel
axiom. Ptolemy, Proclus, and the Arabian editor of Euclid,
Naz\^{\i}r-Edd\^{\i}n (1201--1274), as well as the Europeans Wallis, Joseph
Fenn, John Playfair (1748--1819) and Legendre, all attempted substitute axioms.
The version generally given in modern textbooks is due to Playfair: ``Through a
given point $P$ not on a line $l$ there exists only one line in the plane of
$P$ and $l$ which does not meet $l$''.\footnote{Quoted in Kline,
\emph{Mathematical Thought}, p.~865.} All these efforts, though, were
unsatisfactory, relying on assertions about infinity which were no more
self-evident than those in Euclid's version.\footnote{Kline, \emph{Mathematical
Thought}, p.~866.}

A third strategy was to investigate the consequences of denying the parallel
axiom. Aristotle connected negating the axiom with triangles having an angle
sum greater than $\pi$.\footnote{Gray, \emph{Ideas of Space}, pp.~33--34.}  The
resulting geometry is actually spherical geometry.  From a modern point of
view, there are no parallels  in spherical geometry, because all `lines'
(great circles) intersect. However, despite its antiquity, and its many
applications in navigation, spherical geometry  was never perceived as a
challenge to Euclidean geometry.\footnote{Rosenfeld, \emph{History of
Non-Euclidean Geometry}, pp.~1--33.} This was probably because spherical
geometry was not considered to be a geometry of the plane.\footnote{Gray,
\emph{Ideas of Space}, p.~71.} For example, in the first work in the area, the
\emph{Sphaerica} of Theodosius (\emph{c.}~20~\bce), propositions were formulated
mostly in terms of planes intersecting the sphere, rather than in terms of the
sphere's intrinsic geometry.

Many mathematicians who considered negating the parallel axiom found that their
conclusions were counter-intuitive. Common sense and everyday experience
favoured Euclidean geometry. Some abandoned their efforts. For instance, Omar
Khayyam (1048?--1122) entertained the possibility of a quadrilateral with angle
sum different from $2\pi$, but straight away dismissed the idea as
inconsistent.

Girolamo Saccheri (1667--1733), though, investigated in some depth the geometry
which resulted from denying the parallel axiom.  He was the first to try to
prove that a contradiction would follow from supposing the parallel axiom
false.  Saccheri began by establishing that $\angle ACD = \angle BDC$ in the
following quadrilateral.

\begin{picture}(90,60)
\put(160,5){\framebox(80,40)} \put(160,10){\line(1,0){5}}
\put(165,10){\line(0,-1){5}} \put(235,5){\line(0,1){5}}
\put(235,10){\line(1,0){5}} \put(156,24){\line(1,0){8}}
\put(156,26){\line(1,0){8}} \put(236,24){\line(1,0){8}}
\put(236,26){\line(1,0){8}} \put(151,1){$A$} \put(240,0){$B$}
\put(151,41){$C$} \put(242,41){$D$}
\end{picture}

\medskip

\noindent He then considered three hypotheses about angles $ACD$ and $BDC$:
they could both be obtuse, or both right angles, or both
acute.\footnote{Rosenfeld, \emph{History of Non-Euclidean Geometry},
pp.~98--99.} He showed that these three hypotheses implied that the angle sum
of a triangle was respectively more than, equal to, or less than $\pi$.
Implicitly assuming that straight lines could be extended indefinitely, and so
ruling out spherical geometry, Saccheri then rejected the obtuse angle
hypothesis, by showing that it implied the parallel axiom.  This was a
contradiction, since the parallel axiom implies that $ACD$ and $BDC$ are right
angles. He then considered the acute angle hypothesis. Assuming that it held,
Saccheri proved several theorems, and found that two straight lines might
diverge in one direction and come asymptotically close together in the other,
so that they had a common point at infinity. Although he found this idea
repugnant, he only rejected the acute angle hypothesis after deriving a
contradiction from an error he made in computing arc length.\footnote{Ibid.,
p.~99.}

Because there had been no rigorous invalidation of the results of negating the
parallel axiom, a few mathematicians began to speculate that the alternative
geometry might hold somewhere outside everyday experience.

J. H. Lambert (1728--1777) considered the same hypotheses as Saccheri, and like
Saccheri obtained from the obtuse angle hypothesis a contradiction. In
Euclidean geometry, he observed, there is an absolute measure of angles, but
length varies according to the unit of length selected. Under the acute angle
hypothesis Lambert found he could actually define an absolute unit of length,
by associating one line segment with one angle.  He also saw that the area of a
triangle was proportional to its angle sum. Counter-intuitive though such
consequences seemed, Lambert was unable to reject the acute angle hypothesis.
He supposed that it might hold on some ``imaginary sphere'', a concept he never
explained clearly.\footnote{Ibid., p.~101.}

Ferdinand Karl Schweikart (1780--1859) and his nephew Franz Adolf Taurinus
(1794--1874) were influenced by Saccheri and Lambert's work.  Assuming the angle
sum of a triangle was not two right angles, they obtained what they called
``astral geometry'', since it might occur in the region of the stars.
Schweikart and Taurinus were the first to use analysis in non-Euclidean
geometry, and this enabled progress past the classical approach of Saccheri and
Lambert.\footnote{Gray, \emph{Ideas of Space}, p.~97.}  Their analytic results
included trigonometric formulae, and the area of a triangle with a vertex at
infinity, in terms of a constant $K$.  They derived their trigonometric formulae
from those of spherical trigonometry, by replacing ordinary trigonometric
functions with hyperbolic trigonometric functions. The radius of the sphere
concerned was $K$.

Carl Friedrich Gau\ss\ (1777--1855) did a little work on non-Euclidean geometry
at the same time as Schweikart and Taurinus.\footnote{Rosenfeld, \emph{History
of Non-Euclidean Geometry}, pp.~214--215.}  He never published anything in the
area. According to his letters and rough notes, Gau\ss\ realised that rejecting
the parallel axiom implied the existence of an absolute measure of length, and
doubted whether Euclidean geometry could be proved.

\section{Systems of non-Euclidean geometry}

Nikolai Ivanovich Lobachevsky (1792--1856) and J\'anos Bolyai (1802--1860) were
the first people to publish, without any reservations, research on a system of
non-Euclidean geometry.\footnote{Torretti, \emph{Philosophy of Geometry},
p.~53.} They worked independently.\footnote{For this reason I will refer to
their geometry, in which the angle sum of a triangle is less than~$\pi$, as
hyperbolic geometry rather than using one or both of their names.}  Lobachevsky
initially set out his ideas on what he called ``imaginary geometry'' in a
presentation at the University of Kazan in 1826. (The city of Kazan lies on the
Volga River in central Russia.) Lobachevsky published papers in Kazan journals
in 1829 and 1835--37, and in 1837 a third paper appeared in \emph{Journal
f\"{u}r Mathematik}. Lobachevsky also published books on his theories, in 1840
and 1855.  As for J\'anos Bolyai, until 1821 he  agreed with his father Farkas
that the parallel axiom must be true. Over the next two years, though, J\'anos
investigated the alternative geometry, arriving at many of the same results as
Lobachevsky. In 1832 J\'anos published his work on what he called ``absolute
geometry'', in an appendix to his father's book on the theory of parallels.

In their systems, Lobachevsky and Bolyai rejected the parallel axiom but
regarded all  Euclidean propositions proved without using this axiom as still
being valid.\footnote{Torretti, \emph{Philosophy of Geometry}, p.~40.} In
Lobachevsky's account,\footnote{discussed in Kline, \emph{Mathematical
Thought}, pp.~874--875.} if $C$ is a point at a perpendicular distance
$a$ from the line $AB$, there exists an angle $\pi(a)$ such that all lines
through $C$ which make an angle less than $\pi(a)$ with the perpendicular $CD$
will intersect $AB$; all other lines through $C$ do not intersect $AB$. In the
Euclidean sense of the word parallel, there are infinitely many parallels
through $C$.

\begin{picture}(90,75)
\put(150,9){\line(1,0){100}} \put(200,9){\line(0,1){46}}
\put(200,55){\line(3,1){50}} \put(200,55){\line(3,-1){50}}
\put(200,55){\line(-3,-1){50}} \put(200,55){\line(-3,1){50}}
\qbezier(190,51)(200,41)(210,51) \put(145,-1){$A$} \put(195,-1){$D$}
\put(244,-1){$B$} \put(198,58){$C$} \put(178,37){\small{$\pi(a)$}}
\put(205,37){\small{$\pi(a)$}} \put(202,23){\small{$a$}}
\end{picture}

\noindent Other results of Lobachevksky and Bolyai included trigonometry, arc
length $ds$, the area of a circle, and theorems on the area of plane regions
and volumes of solids.

The founders of non-Euclidean geometry were not formalists, and  did not
consider their work to be an investigation of an axiomatic system with only one
difference from that of Euclid.\footnote{Torretti, \emph{Philosophy of
Geometry}, p.~61.} This is shown by their willingness to consider the
possibility that their geometry was the geometry of real physical space.  When
Lobachevsky wrote `straight  line', he meant straight in the ordinary sense of
the word, showing that he regarded his geometry as (at least potentially)
real.\footnote{Ibid., pp.~62--63.}  Indeed, the discovery of these different geometries
made  it seem possible that Euclidean geometry might be proved inexact by
experiment, at some point.\footnote{Ibid., p.~254.} Bolyai commented that if
non-Euclidean geometry were real, a measurement would determine the size of the
constant $K$, the radius of the imaginary sphere.  Since Euclidean geometry had
been used so successfully in physics, if non-Euclidean geometry were indeed the
geometry of the real world, then any differences would only be detectable on a very
large scale.\footnote{Ibid., p.~63.} Lobachevsky thus suggested using the
parallax of stars to decide which geometry held. His measurements were
inconclusive.   According to some sources, Gau\ss\ measured distances between
mountain peaks for the purpose of deciding between geometries.\footnote{For
example, Kline, \emph{Mathematical Thought}, pp.~872--873.} It seems more likely
that Gau\ss\ realised that the distances involved were too small, and he
actually made these measurements for his research in geodesy.\footnote{Gray,
\emph{Ideas of Space}, p.~123.}  This is certainly suggested by his writing
about the constant $K$: ``in the light of our astronomical experience, the
constant must be enormously larger than the radius of the
earth''.\footnote{Quoted in Torretti, \emph{Philosophy of Geometry}, p.~63.}

\section{Reception of non-Euclidean geometry}

Until the 1860s, the European mathematical community did not regard
investigations into non-Euclidean geometry as respectable, and non-Euclidean
geometry was separate from the rest of mathematics. Only about 1870 was the
work of Lobachevsky and Bolyai recognised. There were social reasons for this
reluctant reception.  As well, there were  mathematical issues which had to be
resolved, and a system of alternative geometry raised many philosophical
controversies.

\subsection{Social factors}

During the nineteenth century mathematics became professionalised, and
important mathematical communities developed at centres including Paris, Berlin
and G\"ottingen.  Bolyai, as a Hungarian army officer, was socially and
geographically a long way from these mathematical hubs. Lobachevsky's 1829
paper was published in a very obscure journal, got bad reviews and was regarded
as incomprehensible. His article in the mainstream \emph{Journal f\"ur
Mathematik} was virtually impossible to read, and depended on results in the
1829 paper. Gau\ss, at G\"ottingen, approved of Lobachevsky's 1829 paper and
Bolyai's 1832 article, but only praised them in letters to friends.

Because of Gau\ss's prestige, when his correspondence was published in the
early 1860s, these comments made a strong impression on the European
mathematical community, and led to intensive discussion of new geometrical
ideas. In 1865, Arthur Cayley (1821--1895) published a commentary on
Lobachevsky, which did much to disseminate his work. In the years 1866 and
1867, French, German, Italian and Russian translations and publications of
Lobachevsky's work appeared, so that by 1870 Lobachevsky and Bolyai's research
was known to geometers all across Europe. From the 1890s, there were
university courses and textbooks on non-Euclidean geometry, showing that it was
well-accepted by the mathematical community.

\subsection{Mathematical developments}

Internal developments in mathematics were needed before non-Euclidean geometry
could be recognised, because in the 1830s there were sound mathematical reasons
for holding reservations about the new geometry. Neither Lobachevsky nor Bolyai
proved that their geometry was consistent, that is, that it did not contain any
contradictions. Lobachevsky was convinced that he had established consistency,
since he had obtained his trigonometric formulae from those of spherical
trigonometry by multiplying the sides of the triangle by the complex number
$i$. His argument was in fact insufficient.  It only showed that his
trigonometric formulae
followed from the assumptions he had made in setting up hyperbolic
geometry.\footnote{Rosenfeld, \emph{History of Non-Euclidean Geometry}, p.~227.}

Models of hyperbolic geometry were critical to overcoming these mathematical
reservations. The models found by the mathematicians Beltrami, Klein and
Poincar\'e were structured collections of objects satisfying a set of
mathematical statements, given a suitable, if unusual, interpretation of the
key terms such as `straight line'.\footnote{Torretti, \emph{Philosophy of
Geometry}, p.~132.} The (relative) consistency of non-Euclidean geometry was
proved using a model, and models aided the acceptance of the new geometry in
other ways.  They provided a method of visualising concepts contrary to
intuition, such as the existence of infinitely many parallels through a point.
Models also made it possible to apply non-Euclidean geometry to solving problems
in other areas of mathematics.  This demonstrated its usefulness.

In the 1830s, Ferdinand Minding (1806--85) had investigated surfaces of constant
negative curvature, such as the pseudosphere, which is obtained by rotating a
tractrix about a vertical axis. For such surfaces, Minding obtained the same
trigonometric formulae as in hyperbolic geometry, and published them in 1840.

Minding's work was largely forgotten until Eugenio Beltrami (1835--1900) noticed
that Minding's trigonometric formulae were identical to Lobachevsky's. He thus
realised that non-Euclidean plane geometry could be regarded as the geometry of
a surface of constant negative curvature.  In his model of 1868, Beltrami
identified the hyperbolic plane with the interior of a fixed circle in the
Euclidean plane. Hyperbolic straight lines were identified with open chords of
this circle, and points at infinity with the circle itself.  By constructing
this first model of the hyperbolic plane, Beltrami established the relative
consistency of non-Euclidean geometry.  This was because his model was defined
wholly in terms of Euclidean geometry, and he proved, using differential
geometry, that it satisfied all the axioms of hyperbolic geometry. Thus, the new
geometry was consistent if Euclidean geometry was
consistent.\footnote{John G. Ratcliffe. \emph{Foundations of Hyperbolic
Manifolds}. Springer-Verlag, 1994, p.~7.}

Felix Klein (1849--1925) interpreted Beltrami's model in terms of projective
geometry in 1871.  He modelled hyperbolic geometry as metric projective
geometry in the interior of some conic. Beltrami's model was the special case
where the conic was a circle.

In 1882, Henri Poincar\'e (1854--1912) showed that linear fractional
transformations with real coefficients preserve the complex upper half-plane.
Poincar\'e's model is this upper half-plane with arc length defined as $ds =
\sqrt{dx^2 + dy^2}/y$, and geodesics as vertical lines or semicircles orthogonal
to the real axis. Distance is then arc length along the geodesic joining two
points. Poincar\'e's model allows groups of linear fractional transformations
to be represented as discrete groups of orientation-preserving isometries of
the hyperbolic plane. The model can be mapped to the unit disc, and in later
works Poincar\'{e} extended his model to three dimensions, and discussed a
two-sheeted hyperboloid model as well.

Using these models, Poincar\'{e} found many applications for hyperbolic
geometry, which helped the new geometry to seem useful and
familiar.\footnote{Scott Walter. The non-Euclidean style of Minkowskian
relativity. In Jeremy Gray, editor, \emph{The Symbolic Universe: Geometry and
Physics 1890--1930}, pp. 91--127. Oxford University Press, 1999, p.~92.}
Indeed, Poincar\'e's work showed that hyperbolic geometry was already part of
mainstream mathematics.\footnote{John Stillwell. Introduction to Poincar\'e's
`Theory of Fuchsian groups', `Memoir on Kleinian groups', `On the applications
of non-Euclidean geometry to the theory of quadratic forms'. In John Stillwell,
editor, \emph{Sources of Hyperbolic Geometry}, pp.~113--122. American Mathematical Society,
1996, p. 113}  For example, he used hyperbolic
geometry to prove the existence of linear fractional transformations.
He then studied the group of such transformations, using the
tesselation of the upper half-plane pictured here.\label{tess}
\begin{center}
\begin{pspicture}(0,-0.4)(16,9)
\psline[linewidth=1.5pt]{<->}(-0.5,0)(16.5,0)
\psline[linewidth=1.5pt]{->}(8,0)(8,9)
\psline(2,0)(2,8.5)
\psline(6,0)(6,8.5)
\psline(10,0)(10,8.5)
\psline(14,0)(14,8.5)
\uput[d](0,0){-2}
\uput[d](4,0){-1}
\uput[d](8,0){0}
\uput[d](12,0){1}
\uput[d](16,0){2}
\uput[r](8,4.2){$i$}
\psarc(0,0){4}{0}{90}
\psarc(4,0){4}{0}{180}
\psarc(8,0){4}{0}{180}
\psarc(12,0){4}{0}{180}
\psarc(16,0){4}{90}{180}
\psarc(2,0){2}{0}{180}
\psarc(6,0){2}{0}{180}
\psarc(10,0){2}{0}{180}
\psarc(14,0){2}{0}{180}
\psarc(1,0){1}{0}{180}
\psarc(3,0){1}{0}{180}
\psarc(5,0){1}{0}{180}
\psarc(7,0){1}{0}{180}
\psarc(9,0){1}{0}{180}
\psarc(11,0){1}{0}{180}
\psarc(13,0){1}{0}{180}
\psarc(15,0){1}{0}{180}
\psarc(1.33,0){1.33}{0}{180}
\psarc(5.33,0){1.33}{0}{180}
\psarc(9.33,0){1.33}{0}{180}
\psarc(13.33,0){1.33}{0}{180}
\psarc(2.67,0){1.33}{0}{180}
\psarc(6.67,0){1.33}{0}{180}
\psarc(10.67,0){1.33}{0}{180}
\psarc(14.67,0){1.33}{0}{180}
\psarc(0.8,0){0.8}{0}{180}
\psarc(4.8,0){0.8}{0}{180}
\psarc(8.8,0){0.8}{0}{180}
\psarc(12.8,0){0.8}{0}{180}
\psarc(3.2,0){0.8}{0}{180}
\psarc(7.2,0){0.8}{0}{180}
\psarc(11.2,0){0.8}{0}{180}
\psarc(15.2,0){0.8}{0}{180}
\psarc(0.67,0){0.67}{0}{180}
\psarc(4.67,0){0.67}{0}{180}
\psarc(8.67,0){0.67}{0}{180}
\psarc(12.67,0){0.67}{0}{180}
\psarc(3.33,0){0.67}{0}{180}
\psarc(7.33,0){0.67}{0}{180}
\psarc(11.33,0){0.67}{0}{180}
\psarc(15.33,0){0.67}{0}{180}
\psarc(0.57,0){0.57}{0}{180}
\psarc(4.57,0){0.57}{0}{180}
\psarc(8.57,0){0.57}{0}{180}
\psarc(12.57,0){0.57}{0}{180}
\psarc(3.43,0){0.57}{0}{180}
\psarc(7.43,0){0.57}{0}{180}
\psarc(11.43,0){0.57}{0}{180}
\psarc(15.43,0){0.57}{0}{180}
\psarc(0.5,0){0.5}{0}{180}
\psarc(1.5,0){0.5}{0}{180}
\psarc(2.5,0){0.5}{0}{180}
\psarc(3.5,0){0.5}{0}{180}
\psarc(4.5,0){0.5}{0}{180}
\psarc(5.5,0){0.5}{0}{180}
\psarc(6.5,0){0.5}{0}{180}
\psarc(7.5,0){0.5}{0}{180}
\psarc(8.5,0){0.5}{0}{180}
\psarc(9.5,0){0.5}{0}{180}
\psarc(10.5,0){0.5}{0}{180}
\psarc(11.5,0){0.5}{0}{180}
\psarc(12.5,0){0.5}{0}{180}
\psarc(13.5,0){0.5}{0}{180}
\psarc(14.5,0){0.5}{0}{180}
\psarc(15.5,0){0.5}{0}{180}
\end{pspicture}
\end{center}

\noindent Other applications included complex analysis, differential equations
and number theory.\footnote{John Stillwell, `Introduction', pp. 113--114.}

The models and applications of hyperbolic geometry which were so important to
its acceptance could not have developed without advances in many different
areas of mathematics in the nineteenth century. These areas included
differential geometry, group theory, geometric transformations and projective
geometry.

Gau\ss's theory of curved surfaces (1827) was essential to the eventual
recognition of hyperbolic geometry.\footnote{John Milnor. Hyperbolic Geometry:
the first 150 years. In \emph{Bulletin of the American Mathematical Society},
6:9--24, 1982, p.~10.} The language established by the theory of
curvature could be used to describe or analyse surfaces in terms of their
intrinsic properties, that is, without consideration of the space in which they
were embedded.\footnote{Torretti, \emph{Philosophy of Geometry}, p.~78.}  Geodesics
in surfaces could be defined by analogy with straight lines in the
plane.\footnote{Gray, \emph{Ideas of Space}, pp.~136--137.}  These techniques
were employed in modelling hyperbolic geometry, as all the models were surfaces
of constant negative curvature with geodesics defined intrinsically.

Riemannian geometry was also crucial to the reception of hyperbolic geometry.
At G\"ottingen in 1854, G. F. B. Riemann (1826--1866) gave a lecture entitled
``\"Uber die Hypothesen, welche der Geometrie zu Grunde liegen'' (``On the
Hypotheses which lie at  the Foundations of Geometry''). It was published in
1867. In this lecture, Riemann described a very general notion, that of an
$n$-dimensional manifold.   A manifold is characterised by a surface, and a
metric chosen from among infinitely many possible alternatives.  The theory of manifolds
included Euclidean geometry as a special case, and hyperbolic geometry as an
infinite family of cases where the surface has constant negative
curvature.\footnote{Torretti, \emph{Philosophy of Geometry}, p.~40.} By showing
that infinitely many geometries were possible, and allowing the design of
geometries very different from Euclid's, Riemann's work helped get hyperbolic geometry
admitted to mathematical respectability.\footnote{Gray, \emph{Ideas of Space},
pp.~141--145.}

Many of the applications of hyperbolic geometry used group theory. The idea of
a group began with concrete cases, such as  permutations of the roots of
algebraic equations, which were studied by J. H. Lagrange (1736--1813) and Paolo
Ruffini (1765--1822) in the eighteenth century.   In 1826, Niels Henrik Abel
(1802--1829) used such groups to prove that it was impossible to obtain a
solution in radicals of the general equation of degree greater than or equal to
5.  Augustin Louis Cauchy (1789--1857) wrote many papers on the theory of groups
of substitutions.  The term `group' was first used in an article by Evariste
Galois (1811--1832), who used it only for groups of substitutions. Cayley, in
1854, was the first to formulate the concept of an abstract group, by giving an
axiomatic definition of a group.  Klein brought group theory into geometry, by
noticing that geometric transformations can form groups under composition.

Although Euclid had rejected the use of motion in geometry, other
mathematicians employed many different kinds of geometric
transformations.\footnote{Rosenfeld, \emph{History of Non-Euclidean Geometry},
p.~112--151.} Archimedes (b.~287?~\bce) used reflections, and Apollonius
(262?--190?~\bce) used dilations and inversions. Greek and Arabian astronomers
applied stereographic projection to construct astrolabes and maps. The
geometric transformations employed by Isaac Newton (1642--1727) included
central projection and projective transformations. Alexis Claude Clairaut
(1713--1765) defined general affine transformations in words in 1733, and
Leonhard Euler (1707--1783) gave general formulae for affine and similarity
transformations in 1748, and classified plane motions. Euler also used
conformal mappings of the plane and linear fractional transformations, which
define transformations generated by similarities and inversions in circles.  A.
F. M\"obius (1790--1860) introduced the general concept of a geometric
transformation as a one-to-one correspondence between figures.

In his \emph{Erlangen Program} of 1872, Klein described geometry as the study
of those properties of figures which are preserved by a particular group of
geometric transformations. Geometries with non-trivial symmetries, such as
projective geometries and metric spaces with constant curvature, could now be
characterised by classifying groups of transformations.\footnote{John D.
Norton. Geometries in collision: Einstein, Klein and Riemann. In Gray, editor,
\emph{The Symbolic Universe}, pp.~128--144, p.~129.}  Two geometries were
equivalent if their characteristic groups were isomorphic.  For example,
Euclidean geometry in the plane is the study of those properties of figures,
such as angles and the ratios between lengths, which remain invariant under the
group of  similarities of the plane, which includes all translations, dilations
and rotations.  Klein's \emph{Erlangen Program} was at
first little noticed, but achieved international influence by the end of the
nineteenth century.\footnote{Boyer, \emph{History of Mathematics}, p.~549.}  As
a result of this program, the study of some geometries became the study of
transformation groups.\footnote{Jeremy Gray. Geometry --- formalisms and
intuitions. In Gray, editor, \emph{The Symbolic Universe}, pp.~58--83, p.~62.} Sophus Lie (1842--1899), for example,  used this idea to
characterise all three-dimensional geometries by determining all subgroups of
motions in three-dimensional space.
Hyperbolic geometry was now just one of many alternatives which could be
investigated using algebraic techniques, and was a source of applications of
group theory.

Projective geometry, with its roots in antiquity, was largely developed in the
nineteenth century. Depending on the projective metric chosen, the requirements
of spherical, Euclidean or hyperbolic geometry can be met.  So, as Klein
discovered, projective metric spaces can model hyperbolic
geometry.\footnote{Torretti, \emph{Philosophy of Geometry}, p.~110.}

\subsection{Philosophical disputes}

The philosophical controversy about non-Euclidean geometry was extensive. Since
the Greeks, Euclidean geometry had been considered the correct representation
of space, which accorded with intuition and possessed admirable logical
structure.  The discovery of non-Euclidean geometry led to debates on issues
including the  relationship between mathematics and the real world, and the
source of  geometric notions.  Changes to the ideas held in these areas were
necessary for non-Euclidean geometry to be accepted, while non-Euclidean
geometry itself inspired investigations into the foundations of mathematics.

One of the basic issues raised by non-Euclidean geometry was the connection
between geometry and physical space.  In classical physics,  Euclidean geometry
was held to be the science of space, a view which implied that this geometry
really existed.\footnote{Torretti, \emph{Philosophy of Geometry}, p.~25.}  The
ontological status of geometry was also linked to  debates over the source of
geometric knowledge.  If our geometric notions are inborn, \emph{a priori},
then geometry really exists. Further, our intuition must be obeyed, and so the
counter-intuitive nature of many non-Euclidean results is a sound argument
against them.  If, on the other hand, geometry is an abstraction of the
physical world, then Euclidean geometry is just one of many possible human
constructs.  It may well be the idealisation which is favoured by experience,
but this does not imply that it really exists.

Disputes about the source of mathematical knowledge went back to the
Greeks.\footnote{Rosenfeld, \emph{History of Non-Euclidean Geometry}, pp.~186--187.} In Plato's philosophy, mathematical knowledge was \emph{a priori}.
Geometric concepts were particular cases of the Forms, those eternal perfect
objects which are the template for physical things, and to which we have access
before we are born.  To prove these ideas, Plato describes in \emph{Meno} a
conversation between Socrates and a slave-boy.  After some leading questions,
the boy proves a result about the square of the hypotenuse of a right-angled
triangle.  In the dialogue, Socrates concludes \begin{quote} \dots now these
opinions have been newly aroused in this boy as if in a dream, but if someone
asks him these same things many times and in many ways, you can be sure that in
the end he will come to have exact knowledge of these things as well as anyone
else does \dots he will come to have knowledge without having been taught by
anyone, but only having been asked questions, having recovered this knowledge
himself, from himself.\footnote{Plato. \emph{Meno}. Aris \& Phillips
Publishers, 1985, p.~79.} \end{quote} The opposing view, put by Aristotle, was
that geometric concepts are abstractions of real world objects: a mathematician
``effects an abstraction, for in thought it is possible to separate figures
from motions''.\footnote{Quoted in Rosenfeld, \emph{History of Non-Euclidean
Geometry}, p.~186.}    Whether they regarded it as inborn knowledge or an
abstraction, the Greeks considered Euclidean geometry to correctly represent
the physical world.

The  idea of space as an empty receptacle which is then occupied by matter was
not familiar to Greek philosophers.  The classical Greek words \emph{topos}
(place) and \emph{kenon} (void) do not correspond to the word space, since
place is the place of a body, which is determined by its relationship with
other nearby bodies, while a void is always filled and is
finite.\footnote{Torretti, \emph{Philosophy of Geometry}, pp.~25--26.}  The
modern concept, though, was current in Europe by the fifteenth century.
Natural philosophers such as Giordano Bruno (1548--1600) held that space was
infinite and existed independent of matter.  Bruno wrote \begin{quote} Space
is a continuous three-dimensional natural quantity, in which the magnitude of
bodies is contained, which is prior by nature to all bodies and subsists
without them but indifferently receives them all, and is free from the
conditions of action and passion, unmixable, impenetrable, unshapeable,
non-locatable, outside all bodies yet encompassing and incomprehensibly
containing them all.\footnote{Quoted in ibid., p.~28.} \end{quote}

Once this concept of space was part of philosophical discourse, the dispute
about the source of geometry embraced the origin of our ideas of space as well as
of more specific geometrical notions.  Immanuel Kant (1724--1804) is important
for his association with the doctrine that our concept of space is \emph{a
priori}.  His major work \emph{Kritik der reinen Vernunft (Critique of Pure
Reason)} (1781) presented his theories of time and space.  Here, Kant wrote
that \begin{quote} 1. Space is not a conception which has been derived from
outward experiences.  For, in order that certain sensations may relate to
something without me \dots the representation of space must already exist as a
foundation. \dots \newline2. Space then is a necessary representation \emph{a
priori}, which serves for the foundation of all external
intuitions.\footnote{Quoted in Rosenfeld, \emph{History of Non-Euclidean
Geometry}, p.~187.}  \end{quote}  As for geometry, this was discussed by Kant in the
\emph{Kritik}, and in a chapter on pure mathematics in the \emph{Prolegomena}
(1783), a work written as a survey of the main points in the \emph{Kritik}.
Kant argued that geometry was the true science of space, that is, of space the
\emph{a priori} representation, and that geometry was revealed by pure
intuition in full agreement with Euclid's \emph{Elements}.\footnote{Torretti,
\emph{Philosophy of Geometry}, p.~33.}   Because of these doctrines, followers
of Kantian orthodoxy in the nineteenth century opposed non-Euclidean geometry,
dismissing it as an intellectual exercise unrelated to the real
world.\footnote{Ibid.}

Much of classical physics depended on Euclidean geometry.\footnote{Kline,
\emph{Mathematical Thought}, pp.~880--881.} Johann Kepler (1571--1630) regarded
geometry as the foundation of both celestial and terrestrial physics, writing
\begin{quote} We see that the motions [of the planets] occur in time and place
and that the force [that binds them to the sun] emanates from its source and
diffuses through the spaces of the world.  All these are geometrical things.
Must not that force be subject also to other geometrical
things?\footnote{Quoted in Torretti, \emph{Philosophy of Geometry}, p.~23.}
\end{quote} The coordinate geometry of Ren\'e Descartes (1596--1650) was crucial
to the status of  geometry in physics.\footnote{Torretti, \emph{Philosophy of
Geometry}, p.~34.}  With the Cartesian coordinate system, space could  be
equated with a structured set of points and, importantly, motion could be
analysed.

Gottfried Wilhelm Leibniz (1646--1716) and Isaac Newton are considered the
founders of mathematical physics.  They had differing views of space. Leibniz
(like Descartes) considered it impossible for space to be empty.  In
\emph{Nouveaux essais sur l'entendement humain (New Essays on Human
Understanding)}, Leibniz wrote that ``it is necessary rather to conceive space
as full of a matter originally fluid, susceptible of all the
divisions''.\footnote{Quoted in Rosenfeld, \emph{History of Non-Euclidean
Geometry}, p.~184.} Newton, however, thought that space was independent from
matter.  In the \emph{Principia} (1687), he defined absolute space as:
\begin{quote} \emph{Absolute space}, in its own nature, without relation to
anything external, remains always similar and immovable.\footnote{Quoted in
ibid., p.~185.} \end{quote} Absolute space was essential to  the results of the
\emph{Principia}, and it was Newton's ideas on the nature of space which
prevailed as theoretical mechanics was developed  in the eighteenth century.
The implicit assumptions about space made by Newton and later mechanists
included that it was continuous, infinite, three-dimensional and homogeneous,
and that it satisfied Euclidean theorems.\footnote{Torretti, \emph{Philosophy
of Geometry}, p.~25.}

Because physics depended on Euclidean geometry, undermining the parallel axiom
was a real physical problem.\footnote{Kline, \emph{Mathematical Thought},
pp.~880--881.}  There were sound reasons to think that physical space was
Euclidean.  How then could an alternative geometry exist?  The word `existence'
here was taken to mean true physical existence, and this is certainly the
interpretation of `existence' held by Bolyai and Lobachevsky.  Apart from the
parallel axiom, they never questioned Euclidean geometry, which Bolyai
described as ``the absolutely true science of space''.\footnote{Quoted in
Torretti, \emph{Philosophy of Geometry}, p.~62.}  Concrete notions of existence
definitely hampered the acceptance of non-Euclidean geometry: contemporaries
could discount Lobachevsky and Bolyai's work because neither author succeeded
in establishing the physical existence of their geometry.\footnote{Gray,
\emph{Ideas of Space}, p.~170.}

The development of Riemannian geometry was very important in changing
mathematicians' ideas about the nature of mathematical existence. Riemann began
his 1854 lecture on the foundations of geometry by observing that all previous
presentations of geometry had presupposed the nature of space.  He then
described space, meaning real physical space, as just one instance of a general
concept which he called a \emph{mehrfach ausgedehnte Gr\"osse} (multiply
extended quantity).\footnote{Torretti, \emph{Philosophy of Geometry}, p.~83.}
Riemann showed that such an extended quantity (in modern terminology, a
manifold) may have arbitrarily many ``metric relations''; therefore ``space
constitutes only a special case of a threefold extended
quantity''.\footnote{Quoted in ibid., p.~83.}   The implication of Riemann's geometry
for the philosophy of space was that, since there can be only one geometry of
real space among infinitely many mathematical possibilities, the science of space could not
be determined by mathematical processes alone. As Riemann said, ``those
properties which distinguish space from other conceivable threefold extended
quantities can be gathered only from experience''.\footnote{Quoted in ibid., p.~84.}
Riemann acknowledged that Euclidean geometry was empirically successful within
``the limits of observation'',\footnote{Quoted in ibid., p.~104.} but his work
meant that mathematical existence could no longer be equated with physical
existence.  This was a very significant change in mathematical discourse.

Because each possible manifold had equal mathematical validity, Euclidean and
non-Euclidean geometries could now exist side by side.  Indeed, Beltrami's
model showed that the consistency of Euclidean geometry implied the consistency
of hyperbolic geometry.  The models of hyperbolic geometry, such as Beltrami's,
also represented a move towards semantic neutrality.  With words such as
`straight' being interpreted in ways differing from their everyday meaning,
physical existence became irrelevant.  Poincar\'e, in the spirit of the
\emph{Erlangen Program}, characterised geometries using groups, and then
explained that alternative geometries can exist because the existence of one
group is not incompatible with the existence of another.\footnote{Arthur Miller.
Einstein, Poincar\'e, and the testability of geometry. In Gray, editor,
\emph{The Symbolic Universe}, pp.~47--57, p.~54.}

The development of projective geometry further aided the acceptance of
hyperbolic geometry.  Since projective geometry was so counter-intuitive, it
was quickly characterised by axioms, a project which was completed in 1882 by
Moritz Pasch (1843--1930).\footnote{Torretti, \emph{Philosophy of Geometry},
p.~190.}   Objections to non-Euclidean geometry based on intuition had little
weight once mathematics accepted systems as strange as projective geometry.

By the late nineteenth century, mathematicians made no claims that their work
represented the physical truth.  For example, in his popularising work of 1912,
\emph{La science et l'hypoth\`{e}se}, Poincar\'e wrote that the question of
whether Euclidean geometry was true had no meaning. He dismissed the argument
that geometric ideas are \emph{a priori} by pointing out that alternative
systems, such as projective geometry, are conceivable.\footnote{Henri
Poincar\'e. \emph{Science and Hypothesis}. Dover Publications Inc., 1952,
p.~48.}  Also, he argued, geometry could not be a body of experimental truths,
since if it were experimental it would not be exact, and would be subject to
continuous revision.\footnote{Ibid., pp.~49--50.}  Thus the choice of which
geometry to use in  representing physical space was a matter of convention,
with experience guiding but not determining our choice of the most convenient
system.\footnote{Ibid., p.~50.}  Poincar\'e concluded: ``One geometry cannot be
more true than another; it can only be more convenient''.\footnote{Ibid.}

This attenuation of mathematical truth-claims meant that the mathematical
notion of existence was very much weaker than that of physics. In fact, over
the course of the nineteenth century, it became for the first time possible,
then necessary, to distinguish between the subjects of mathematics and physics,
as they  developed quite distinct institutions and aims.\footnote{Jeremy Gray.
Introduction. In Gray, editor, \emph{The Symbolic Universe}, pp.~1--21, p.~3.}  The
leading physicist Ludwig Boltzmann (1844--1906), for instance, considered
mathematics to be unrelated to the real world, and to have only a service role
in physics.\footnote{Ibid., p.~4.}  The writings of Ernst Mach (1838--1916) were
very influential in physics.  His notion of existence was stronger than that
prevailing in mathematics, and he wrote of nineteenth-century geometry:
\begin{quote}  Analogues of the geometry we are familiar with, are constructed
on broader and more general assumptions for any number of dimensions, with no
pretension to being regarded as more than intellectual scientific experiments
and with no idea of being applied to reality. \dots Seldom have thinkers become so
steeped in reverie, or so far estranged from reality, as to imagine for our
space a number of dimensions exceeding the three of the given space of sense,
or to conceive of representing that space by any geometry that appreciably
departs from the Euclidean.  Gauss, Lobachevsky, Bolyai and Riemann were
perfectly clear on this point, and certainly cannot be held responsible for the
grotesque fictions subsequently stated in this field.\footnote{Quoted in Rosenfeld,
\emph{History of Non-Euclidean Geometry}, p.~203.} \end{quote} It is interesting,
then, that in twentieth-century physics, space is Riemannian and not
Euclidean.\footnote{Gray, `Geometry', p.~80.} Einstein's theories of special
and general relativity depend on geometry quite different from the
three-dimensional Euclidean geometry of classical physics.\footnote{Miller,
`Einstein, Poincar\'e', p.~55.}  Indeed, from 1908 to 1916, the laws of
special relativity were reformulated and reinterpreted in terms of hyperbolic
geometry.\footnote{Walter, `Non-Euclidean style', p.~91.}  Even so,
non-relativistic physics
still employs only one geometry to represent physical space, while
mathematicians consider a whole variety of geometries.

Non-Euclidean geometry stimulated discussion of the foundations of
mathematics.  Works responding to the new geometries began to appear in the
1860s.  Rigorous axiomatisation was their aim, and they embodied
the new, weaker notion of existence, and a lesser role for intuition.
Lie, for example, published two papers on the foundations of
geometry in 1890. In these, he asked what properties were necessary and
sufficient to define Euclidean geometry and the non-Euclidean geometries of
constant non-zero curvature.  Thus Lie considered the problem of space as an
issue of pure mathematics only, showing the divide between mathematics and
ideas about physical space.\footnote{Torretti, \emph{Philosophy of Geometry},
p.~154.}

The investigations into foundations culminated in 1899 with the publication of
\emph{Grundlagen der Geometrie} by David Hilbert (1862--1943). This was a
complete axiomatisation of geometry, which rectified the logical inadequacies
of Euclid's \emph{Elements}.  For Hilbert, a geometry was defined by a set of
unproved axioms, any of which might be negated to obtain a different theory.
The axioms contained undefined primitive terms, which could be modelled as
referring to particular objects such as points, lines and planes, restricted
only by the implications of the axioms.  Such a level of formalism further
weakened claims as to the truth of geometry.\footnote{Gray, `Geometry',
p.~69.}  At the end of the nineteenth century, Euclidean and hyperbolic
geometry had become two abstract axiomatic systems, differing in only one
axiom, and equally consistent.  This philosophy was very different from that of
Bolyai and Lobachevsky, but their work contributed to it.\footnote{Torretti,
\emph{Philosophy of Geometry}, p.~61.}

\section{Conclusion}

This history shows how the development and reception of non-Euclidean geometry
were associated with fundamental changes in the way mathematics is done and
thought about.  The recognition and study of abstract mathematical spaces, and
the realisation of the connection between geometry and groups, are stages in
this history which are of particular relevance for subsequent chapters.
Indeed, the style of exposition adopted for the rest of this thesis, with
explicit definitions, acknowledgement of assumptions and so forth, illustrates
well the abstraction and logical precision of modern mathematics.  The shift in
conceptions of geometry which resulted in this modern approach was stimulated
by the discovery of non-Euclidean geometry, and also aided the acceptance of
the new geometry by mathematicians.


\chapter{Hyperbolic Geometry}\label{hypgeom}


\section{Introduction}

In this chapter we survey hyperbolic geometry.  We provide a modern treatment
of much of the geometry discussed in Chapter 1, and lay the foundations for
Chapters 3 and 4.  After a section of preliminary definitions and results, we
consider four models of hyperbolic geometry.  Hyperbolic geometry is actually a
continuum of geometries, each distinguished by its Gaussian curvature.  For
convenience, the models of hyperbolic geometry presented here all have constant
curvature of $-1$. The first model discussed is called the hyperboloid model,
because the hyperbolic plane in this model is one sheet of a two-sheeted
hyperboloid. In general, the hyperboloid model is the sphere of radius $i$ in a
non-Euclidean space called Lorentzian $n$-space.  Lorentzian 4-space is the
model of space-time in special relativity.  Next is a brief treatment of
Beltrami's model, which was reinterpreted by Klein, and is also known as the
projective disc model. The model introduced by Poincar\'e was the upper
half-plane, and he showed how this model could be mapped to the unit disc.
Poincar\'e's models have now been generalised to higher dimensions, and are
known respectively as the upper half-space model, and the conformal ball
model.  We will prove that, in the upper half-space and the conformal ball,
hyperbolic isometries may be identified with compositions of finitely many
reflections in planes and spheres. In the upper half-plane, we go on to
consider how tesselation by triangles is related to the action of the discrete
group \sltwoz.  Most of the material in this chapter is selected and adapted
from  the treatment of hyperbolic geometry in Chapters 3--6
of~\cite{rat1:fhm}.  The proofs of relevant exercises in~\cite{rat1:fhm} are
provided.  Some additional results which will be needed in later chapters are
established.

\section{Geometric preliminaries}

This section comprises useful definitions and results relating to Euclidean
$n$-space and to general metric spaces.  The geometric transformations
discussed include reflections in planes and spheres.  Compositions of finitely
many such reflections are called M\"obius transformations, and M\"obius
transformations turn out to be very important for understanding hyperbolic
geometry.

\subsection{Euclidean $n$-space}

We define Euclidean $n$-space, $E^n$, to be the metric space consisting of
$\R^n$ together with the distance function \[ d_E(\vecx,\vecy) = |\vecx -
\vecy|. \] Here, the right-hand side is the Euclidean norm, $|\vecx| = (\vecx
\cdot \vecx)^{1/2}$.  The Euclidean inner product is the usual dot product
given by \[ \vecx \cdot \vecy = x_1 y_1 + x_2 y_2 + \cdots + x_n y_n. \] We are
referring to $d_E$ as a distance function rather than as a metric in order to
avoid confusion with Riemannian metrics in Chapter 3. The Cauchy--Schwartz
inequality for $\vecx, \vecy \in E^n$, \[ |\vecx\cdot\vecy| \leq
|\vecx||\vecy| \] with equality if and only if \vecx\ and \vecy\ are linearly
dependent, allows us to define $\theta(\vecx,\vecy)$, the Euclidean angle
between non-zero vectors \vecx\ and \vecy, by \[ \vecx \cdot \vecy =
|\vecx||\vecy|\cos\theta(\vecx,\vecy). \]

\subsection{Geometric transformations and group actions}

An isometry from a metric space $(X,d_X)$ to a metric space $(Y,d_Y)$ is a
bijection $\phi: X \rightarrow Y$ such that \[ d_X(x_1,x_2) =
d_Y(\phi(x_1),\phi(x_2))  \] for all $x_1,x_2 \in X$.  The set of isometries
from a metric space $X$ to itself, denoted by $I(X)$, forms a group under
composition. Isometries of $E^n$ are known as Euclidean isometries.

A function $\phi:\mathbb{R}^n \rightarrow \mathbb{R}^n$ is called an orthogonal
transformation if \[ \phi(\vecx)\cdot\phi(\vecy) = \vecx\cdot\vecy \] for all
$\vecx,\vecy \in \mathbb{R}^n$.  Let $\{\mathbf{e}_1,\vece_2,\ldots,\mathbf{e}_n\}$ be
the standard basis of $\mathbb{R}^n$.  Then, it can be proved that a map $\phi:
\mathbb{R}^n \rightarrow \mathbb{R}^n$ is an orthogonal transformation if and
only if $\phi$ is linear and $\{\phi(\mathbf{e}_1),\phi(\vece_2), \ldots,\phi(\mathbf{e}_n)\}$
is an orthonormal basis of $\mathbb{R}^n$.  It follows that every orthogonal
transformation is a Euclidean isometry, and that orthogonal transformations may
be identified with orthogonal $n \times n$ matrices.  The set of all orthogonal
$n \times n$ matrices, denoted by $O(n)$, forms a group under matrix
multiplication.

A group $G$ is said to act on a set $X$ if there exists a function from
$G\times X$ to $X$, written $(g,x) \mapsto gx$, such that for all $g,h \in G$,
and all $x \in X$, we have \[ 1x = x \hspace{1cm} \mbox{and} \hspace{1cm} g(hx)
= (gh)x. \] For example, the group $I(X)$ acts on $X$, and the group $O(n)$
acts on the set of $m$-dimensional vector subspaces of $\mathbb{R}^n$, for $1
\leq m \leq n$.  The action of $G$ on $X$ is said to be transitive if for each
$x, y \in X$ there is a $g \in G$ such that $gx = y$.

\begin{propn} \label{orthogtransitive} For each dimension $m$, the action of
the group $O(n)$ on the set of $m$-dimensional vector subspaces of $\mathbb{R}^n$ is
transitive. \end{propn}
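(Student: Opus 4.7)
The plan is to produce, for any two $m$-dimensional subspaces $V, W \subseteq \mathbb{R}^n$, an explicit orthogonal transformation $\phi \in O(n)$ with $\phi(V) = W$. The natural construction is to match up orthonormal bases on both sides and extend.

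First I would choose an orthonormal basis $\{\vecu_1, \ldots, \vecu_m\}$ of $V$ and an orthonormal basis $\{\vecw_1, \ldots, \vecw_m\}$ of $W$; the existence of such bases follows from Gram--Schmidt applied to any basis of $V$ (respectively $W$). Next I would extend each to an orthonormal basis of the ambient space $\mathbb{R}^n$, obtaining $\{\vecu_1, \ldots, \vecu_m, \vecu_{m+1}, \ldots, \vecu_n\}$ and $\{\vecw_1, \ldots, \vecw_m, \vecw_{m+1}, \ldots, \vecw_n\}$. Concretely, one picks any basis of $V^\perp$ and runs Gram--Schmidt on it, and likewise for $W^\perp$.

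Then I would define $\phi : \mathbb{R}^n \to \mathbb{R}^n$ to be the unique linear map satisfying $\phi(\vecu_i) = \vecw_i$ for $i = 1, \ldots, n$. Since $\phi$ is linear and carries the orthonormal basis $\{\vecu_1, \ldots, \vecu_n\}$ to the orthonormal basis $\{\vecw_1, \ldots, \vecw_n\}$, the characterisation of orthogonal transformations stated just above Proposition~\ref{orthogtransitive} (linear plus sends the standard basis to an orthonormal basis) applies --- strictly speaking, one first composes with the change-of-basis isometry sending $\vece_i \mapsto \vecu_i$, or equivalently one notes directly that $\phi$ preserves inner products on a basis and hence everywhere by bilinearity. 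Either way, $\phi \in O(n)$.

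Finally I would verify that $\phi(V) = W$: since $V = \Span\{\vecu_1, \ldots, \vecu_m\}$ and $\phi$ is linear, $\phi(V) = \Span\{\vecw_1, \ldots, \vecw_m\} = W$. There is no serious obstacle here; the only thing to be careful about is the logical dependence, namely that the stated characterisation of orthogonal transformations is given in terms of the standard basis $\{\vece_1, \ldots, \vece_n\}$, so I need either to apply it via an intermediate change of basis or to record the short bilinearity argument explicitly.
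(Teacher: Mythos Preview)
Your proof is correct and follows essentially the same idea as the paper: pick an orthonormal basis of the subspace, extend via Gram--Schmidt to an orthonormal basis of $\mathbb{R}^n$, and read off an orthogonal map. The one small difference is that the paper first uses the group property of $O(n)$ to reduce to the case $V = \Span\{\vece_1,\ldots,\vece_m\}$, so the resulting matrix sends the standard basis to an orthonormal basis and the stated characterisation applies directly---precisely the wrinkle you flagged and worked around.
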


\begin{proof} Let $V$ be any $m$-dimensional subspace of $\mathbb{R}^n$.
Identify $\R^m$ with the span of the set $\{\vece_1, \vece_2, \ldots, \vece_m \}$ in
$\R^n$. Since $O(n)$ is a group, it suffices to show there is an $A \in
O(n)$ such that $A(\mathbb{R}^m) = V$.

Choose an orthonormal basis $\{ \vecw_1, \vecw_2, \ldots, \vecw_m \}$ for $V$. Then,
using the Gram--Schmidt process, extend it to an orthonormal basis $\{ \vecw_1,
\vecw_2,
\ldots, \vecw_n \}$ for $\mathbb{R}^n$. Let $A$ be the $n \times n$ matrix
which has $\vecw_1,\vecw_2, \ldots,\vecw_n$ as its columns.  Then $A$ is orthogonal, and
$A(\mathbb{R}^m) = V$. \end{proof}

An $m$-plane of $E^n$ is defined to be a coset $\veca + V$, where $\veca \in
E^n$ and $V$ is an $m$-dimensional subspace of $E^n$.

\begin{corollary} \label{transplanes}For each dimension $m$, the group $I(E^n)$
acts transitively on the set of $m$-planes of $E^n$. \end{corollary}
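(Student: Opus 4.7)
The plan is to exhibit, for any two $m$-planes $\veca + V$ and $\vecb + W$ of $E^n$, an explicit isometry of $E^n$ carrying the first to the second. The natural candidate is a composition of two translations and an orthogonal transformation, and the main work is just organising these ingredients cleanly.

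First I would recall that translations $\tau_{\vecc} : \vecx \mapsto \vecx + \vecc$ are Euclidean isometries, since $d_E(\vecx + \vecc, \vecy + \vecc) = |\vecx - \vecy| = d_E(\vecx, \vecy)$, and that, as noted in the excerpt, every orthogonal transformation is also a Euclidean isometry. Hence both types of maps belong to $I(E^n)$, and so does any composition of them.

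Next I would use Proposition~\ref{orthogtransitive}: since $V$ and $W$ are both $m$-dimensional subspaces of $\R^n$, there exists $A \in O(n)$ with $A(V) = W$. Define
\[
\phi(\vecx) = A(\vecx - \veca) + \vecb = A\vecx + (\vecb - A\veca).
\]
This $\phi$ is the composition $\tau_{\vecb} \circ A \circ \tau_{-\veca}$, hence an element of $I(E^n)$. A short check shows $\phi(\veca + V) = A(V) + \vecb = W + \vecb = \vecb + W$, so $\phi$ sends the first $m$-plane onto the second.

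There is no real obstacle here: the only slightly subtle point is observing that the action of $I(E^n)$ on $m$-planes is well-defined (an isometry of $E^n$ carries $m$-planes to $m$-planes), which follows once one knows that Euclidean isometries are exactly compositions of translations with orthogonal transformations. In fact, since the problem only asks for transitivity, it suffices to exhibit the single map $\phi$ above and verify that it lies in $I(E^n)$ and sends $\veca + V$ onto $\vecb + W$, which is what I would do.
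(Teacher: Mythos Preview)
Your proof is correct and essentially identical to the paper's: both take $A \in O(n)$ with $A(V) = W$ from Proposition~\ref{orthogtransitive} and define $\phi(\vecx) = A\vecx + (\vecb - A\veca)$, checking that $\phi$ is an isometry and sends $\veca + V$ to $\vecb + W$. The only cosmetic difference is that you verify $\phi \in I(E^n)$ by writing it as a composition of a translation and an orthogonal map, whereas the paper computes $|\phi(\vecx) - \phi(\vecy)| = |A\vecx - A\vecy| = |\vecx - \vecy|$ directly.
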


\begin{proof} Let $\veca + V$ and $\vecb + W$ be $m$-planes of $E^n$.  By
Proposition~\ref{orthogtransitive}, there is an $A \in O(n)$ such that $A(V) =
W$.  Define the map $\phi: E^n \rightarrow E^n$ by \[ \phi(\vecx) = (\vecb -
A\veca) + A\vecx. \] Then $\phi(\veca + V) = \vecb + W$, and \begin{align*}
|\phi(\vecx) - \phi(\vecy)|  =  |A\vecx - A\vecy| =  |\vecx - \vecy|,
\end{align*} since $A$ is an isometry.  Hence $\phi$ is an isometry.
\end{proof}

The following proposition shows that every Euclidean isometry is the
composition of an orthogonal transformation and a translation.

\begin{propn} \label{formisom} The function $\phi: E^n \rightarrow E^n$ is an
isometry if and only if $\phi$ is of the form
\[
\phi(\vecx) = \veca + A\vecx,
\]
where $A$ is an orthogonal matrix and $\veca = \phi(\veczero)$.
\end{propn}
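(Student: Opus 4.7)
The plan is to prove both directions separately, with the forward direction being the substantive one.

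For the easy direction, assume $\phi(\vecx) = \veca + A\vecx$ with $A \in O(n)$. Then for any $\vecx, \vecy \in E^n$, I would compute $|\phi(\vecx) - \phi(\vecy)| = |A\vecx - A\vecy| = |A(\vecx - \vecy)| = |\vecx - \vecy|$, where the last equality uses that $A$, being orthogonal, preserves the Euclidean norm. Bijectivity follows from the invertibility of $A$. This directly mimics the calculation already carried out in the proof of Corollary~\ref{transplanes}.

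For the harder direction, suppose $\phi$ is an isometry and set $\veca = \phi(\veczero)$. The plan is to peel off the translation and show that what remains is an orthogonal transformation. Define $\psi: E^n \to E^n$ by $\psi(\vecx) = \phi(\vecx) - \veca$. Then $\psi$ is an isometry (composition of $\phi$ with a translation) and $\psi(\veczero) = \veczero$. It remains to show that $\psi$ is the map $\vecx \mapsto A\vecx$ for some orthogonal matrix $A$.

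The key intermediate step is to show $\psi$ preserves the Euclidean inner product. Since $\psi$ fixes the origin and preserves distance, I get $|\psi(\vecx)| = |\vecx|$ for all $\vecx$. Then from the identity $|\psi(\vecx) - \psi(\vecy)|^2 = |\vecx - \vecy|^2$, expanding both sides via $|\vecu - \vecv|^2 = |\vecu|^2 - 2\vecu\cdot\vecv + |\vecv|^2$ and cancelling the norm terms yields $\psi(\vecx) \cdot \psi(\vecy) = \vecx \cdot \vecy$. In particular $\{\psi(\vece_1), \ldots, \psi(\vece_n)\}$ is an orthonormal basis of $\R^n$.

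The main obstacle, and the step I expect to require the most care, is deducing linearity of $\psi$ from inner product preservation, since $\psi$ is a priori only a set-theoretic bijection. The trick is to expand an arbitrary $\vecx$ in the orthonormal basis $\{\psi(\vece_i)\}$: for any $\vecx \in \R^n$,
\[
\psi(\vecx) = \sum_{i=1}^n \bigl(\psi(\vecx) \cdot \psi(\vece_i)\bigr)\, \psi(\vece_i) = \sum_{i=1}^n (\vecx \cdot \vece_i)\, \psi(\vece_i) = \sum_{i=1}^n x_i\, \psi(\vece_i).
\]
This exhibits $\psi$ as the linear map whose matrix $A$ has columns $\psi(\vece_i)$; since these columns are orthonormal, $A \in O(n)$. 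Unwinding the definition of $\psi$ gives $\phi(\vecx) = \veca + A\vecx$ with $\veca = \phi(\veczero)$, completing the proof.
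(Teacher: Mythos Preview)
Your proof is correct and follows essentially the same strategy as the paper: subtract off the translation $\veca = \phi(\veczero)$, show the resulting map preserves the inner product via a polarization-type identity, and conclude it is given by an orthogonal matrix. The one noteworthy difference is that you explicitly prove linearity of $\psi$ by expanding $\psi(\vecx)$ in the orthonormal basis $\{\psi(\vece_i)\}$, whereas the paper simply appeals to the earlier stated (but not proved) fact that an inner-product-preserving map is automatically linear; your version is thus slightly more self-contained.
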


\begin{proof} Suppose $\phi$ is an isometry.  Define a map $A$ by $A(\vecx) =
\phi(\vecx) - \phi(\veczero)$.  Then $A(\veczero) = \veczero$, and, since $\phi$
preserves Euclidean norms,  \[ |A(\vecx) - A(\vecy)| = |\phi(\vecx) -
\phi(\vecy)| = |\vecx - \vecy| \] for all $\vecx, \vecy \in E^n$.  Therefore,
if $\vecx \in E^n$, \[ |A(\vecx)| = |A(\vecx) - A(\veczero)| = |\vecx -
\veczero| = |\vecx|, \] so $A$ preserves Euclidean norms.  It follows that $A$
is orthogonal, since \begin{align*} 2 (A\vecx \cdot A\vecy) & =  |A\vecx|^2 +
|A\vecy|^2 - |A\vecx - A\vecy|^2 \\ & =  |\vecx|^2 + |\vecy|^2 - |\vecx -
\vecy|^2 \\ & =  2(\vecx\cdot \vecy). \end{align*} So we have an orthogonal
matrix $A$ such that $\phi(\vecx) = \phi(\veczero) + A\vecx$.

Conversely, suppose $\phi$ is of the form $\phi(\vecx) = \phi(\veczero) + A\vecx$.  Then
$\phi$ is the composition of an orthogonal transformation and a translation,
both of which are isometries. Therefore $\phi$ is an isometry. \end{proof}

A function $\phi:X \rightarrow Y$ between metric spaces $(X,d_X)$ and $(Y,d_Y)$ is called a
similarity if it is a bijection, and there is a scale factor $k>0$ such that \[
d_X(x_1, x_2) = k d_Y(\phi(x_1),\phi(x_2))  \] for all $x_1,x_2 \in X$.  The
set of similarities from a metric space $X$ to itself, denoted by $S(X)$, forms
a group under composition. The isometries of $X$ are a subgroup of $S(X)$.
Under transformation by elements of $S(E^n)$, all the theorems of Euclidean
geometry remain true. Thus the similarities of $E^n$ are its characteristic
group, in the sense used by Klein in his \emph{Erlangen Program}.  The
following characterisation of similarities follows from
Proposition~\ref{formisom}.

\begin{propn} \label{formsim}  The function $\phi: E^n \rightarrow E^n$ is a
similarity if and only if $\phi$ is of the form \[ \phi(\vecx) = \veca +
kA\vecx, \] where $A$ is an orthogonal matrix, $k$ is a positive constant and
$\veca = \phi(\veczero)$. \end{propn}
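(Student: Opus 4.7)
The plan is to reduce the claim directly to Proposition \ref{formisom} by rescaling, since a similarity is morally ``an isometry up to a multiplicative factor.''

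First, for the forward direction, suppose $\phi \colon E^n \to E^n$ is a similarity with scale factor $k > 0$, so that $|\phi(\vecx) - \phi(\vecy)| = c |\vecx - \vecy|$ for all $\vecx, \vecy$, where $c$ is the constant determined by $k$ (up to the convention $c = k$ or $c = 1/k$ from the definition). I would define the auxiliary map
\[
\psi(\vecx) = \tfrac{1}{c}\bigl(\phi(\vecx) - \phi(\veczero)\bigr).
\]
Then $\psi$ is a bijection, $\psi(\veczero) = \veczero$, and a direct computation gives $|\psi(\vecx) - \psi(\vecy)| = |\vecx - \vecy|$, so $\psi$ is a Euclidean isometry. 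Applying Proposition \ref{formisom} to $\psi$, and using that $\psi$ fixes the origin, yields $\psi(\vecx) = A\vecx$ for some orthogonal matrix $A \in O(n)$. Rearranging the definition of $\psi$ gives
\[
\phi(\vecx) = \phi(\veczero) + c A \vecx,
\]
so setting $\veca = \phi(\veczero)$ produces the claimed form (with the constant in front of $A$ playing the role of the $k$ in the proposition).

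For the converse, I would argue that a map of the form $\phi(\vecx) = \veca + kA\vecx$ is automatically a bijection (since $A$ is invertible and $k \neq 0$), and a one-line computation
\[
|\phi(\vecx) - \phi(\vecy)| = k |A\vecx - A\vecy| = k|\vecx - \vecy|
\]
shows $\phi$ is a similarity, using that orthogonal transformations preserve Euclidean norms. Setting $\vecx = \veczero$ also confirms $\veca = \phi(\veczero)$.

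There is no real obstacle here; the only subtle point is bookkeeping the scaling constant. The definition of similarity in the excerpt places $k$ on one side of the equation, while the statement of the proposition places it on the other, so one must take care to introduce the scaling factor consistently when normalising $\phi$ to an isometry. Once that convention is fixed, the proof is essentially a one-step reduction to Proposition \ref{formisom}, paralleling the way Corollary \ref{transplanes} was reduced to Proposition \ref{orthogtransitive}.
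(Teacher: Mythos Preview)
Your proposal is correct and follows exactly the approach the paper indicates: the paper does not give a detailed proof but simply states that the characterisation ``follows from Proposition~\ref{formisom},'' and your rescaling argument is precisely how that reduction is carried out. Your observation about the bookkeeping of the scaling constant is apt, since the paper's definition of similarity places $k$ on the domain side while the proposition places it as the coefficient of $A\vecx$, so the two $k$'s are reciprocals of one another; this is a notational looseness in the paper rather than a flaw in your argument.
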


\subsection{Geodesics}

Let $(X,d)$ be a metric space, and let $\alpha:[a,b]\rightarrow X$ be a
continuous injection, where $a < b$ in \R.  We say $\alpha$ is a geodesic curve
if, for all $s,t \in [a,b]$, \[ d(\alpha(s),\alpha(t)) = |s-t|. \] For points
$x$ and $y$ in $X$, we denote by $[x,y]$ the image in $X$ of a geodesic curve
$\alpha:[a,b]\rightarrow X$ such that $\alpha(a) = x $ and $\alpha(b) = y$.  We
call $[x,y]$ a geodesic segment. Then $[x,y]\cup[y,z]$ is a geodesic segment
joining $x$ to $z$ if and only if \[ d(x,y) + d(y,z) = d(x,z). \] A geodesic
line is defined to be a function $\lambda:\R \rightarrow X$ which is locally
distance-preserving.  This means that for any $x \in \lambda(\R)$, with $x =
\lambda(t)$, there is a set $[a,b] \subseteq \R$ containing $t$  such that
$\lambda$ restricted to $[a,b]$ is a geodesic curve. A geodesic is the image in
$X$ of a geodesic line $\lambda: \R\rightarrow X$.

It turns out that, if the metric space $(X,d)$ satisfies the axioms of
Euclidean or hyperbolic geometry, then the shortest path from $x$ to $y$ in $X$ is
along the unique geodesic segment joining $x$ and $y$, and the length of this
shortest path is $d(x,y)$.

\subsection{Reflections} \label{reflections}

Let \vecu\ be a unit vector in $E^n$, and let $t$ be a real number. The hyperplane
of $E^n$ with unit normal vector \vecu\ passing through the point $t\vecu$ is
the set \[ P(\vecu,t) = \{ \vecx \in E^n : \vecu \cdot \vecx = t \}. \] We will
sometimes refer to hyperplanes as just planes.  The reflection $\rho$ of $E^n$ in
the plane $P(\vecu, t)$ is given by \[ \rho(\vecx) = \vecx + 2(t -
\vecu\cdot\vecx)\vecu. \] Then $\rho(\vecx) = \vecx$ if and only if $\vecx \in
P(\vecu,t)$, and $\rho^2(\vecx) = \vecx$ for all $\vecx \in E^n$. We now show
that every Euclidean isometry is a composition of finitely many reflections in
hyperplanes.

\begin{lemma} \label{rhoorthog} Let $\vecx$, $\vecy$ be in $E^n$ and $\rho$ be
the reflection in the plane $P(\vecu,t)$.  Then \[ |\rho(\vecx) - \rho(\vecy)|
= |\vecx - \vecy|. \] That is, $\rho$ is a Euclidean isometry. \end{lemma}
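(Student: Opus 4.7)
The plan is to compute $\rho(\vecx) - \rho(\vecy)$ directly from the explicit formula $\rho(\vecw) = \vecw + 2(t - \vecu\cdot\vecw)\vecu$ and observe that the $t$-dependent pieces cancel, leaving an expression that depends only on $\vecz := \vecx - \vecy$ and on $\vecu$. The key identity will be
\[
\rho(\vecx) - \rho(\vecy) = \vecz - 2(\vecu\cdot\vecz)\vecu,
\]
after which the proof reduces to a one-line norm calculation.

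First I would substitute the definition of $\rho$ into $\rho(\vecx) - \rho(\vecy)$ and use bilinearity of the inner product to collapse the $\vecu\cdot\vecx$ and $\vecu\cdot\vecy$ terms into $\vecu\cdot(\vecx - \vecy)$; the $t\vecu$ contributions then cancel. Next I would square the Euclidean norm of the resulting vector, expand using the inner product, and apply $|\vecu|^2 = 1$ to see that the cross term $-4(\vecu\cdot\vecz)^2$ exactly cancels the quadratic term $+4(\vecu\cdot\vecz)^2|\vecu|^2$, leaving $|\vecz|^2 = |\vecx - \vecy|^2$. Taking square roots gives the claimed equality.

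To conclude that $\rho$ is a Euclidean isometry in the sense already defined, I would also remark briefly that $\rho$ is a bijection: this is immediate from the fact, noted just before the lemma, that $\rho^2 = \mathrm{id}$, so $\rho$ is its own inverse. Combined with the norm-preserving calculation, this verifies all requirements of an isometry of $E^n$.

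There is no real obstacle here; the only thing to watch is keeping the use of $|\vecu|=1$ explicit, since without the unit-vector hypothesis the cancellation in the norm computation would fail. The whole argument is a short direct calculation, and no appeal to Proposition~\ref{formisom} or any deeper structural result is needed.
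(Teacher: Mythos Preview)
Your proposal is correct. The paper actually states Lemma~\ref{rhoorthog} without proof, so there is nothing to compare against directly; your direct computation via $\rho(\vecx)-\rho(\vecy)=\vecz-2(\vecu\cdot\vecz)\vecu$ and the norm expansion is exactly the standard argument, and indeed the same cancellation (using $|\vecu|=1$) reappears verbatim in the paper's proof of Proposition~\ref{conformal}, where it is phrased as showing that $I-2U$ is orthogonal.
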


\begin{propn} \label{isomrefl}Every isometry of $E^n$ is a composition of at
most $n+1$ reflections in hyperplanes. \end{propn}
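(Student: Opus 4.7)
The plan is to reduce an arbitrary isometry $\phi$ of $E^n$ to the identity by composing with at most $n+1$ carefully chosen reflections, and then invert. The strategy hinges on two ingredients: first, the fact (via Proposition~\ref{formisom}) that an isometry is completely determined by its values on the affinely independent points $\veczero, \vece_1, \ldots, \vece_n$, so that an isometry fixing all of these must be the identity; and second, the observation that a reflection in the perpendicular bisector of a segment $[\vecx, \vecy]$ swaps $\vecx$ and $\vecy$ and fixes every point equidistant from them.

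I would proceed by induction on the number of basis points already fixed. Set $\phi_0 = \phi$. If $\phi_0(\veczero) = \veczero$, take $\rho_0 = \mathrm{id}$ (and count no reflection); otherwise let $\rho_0$ be the reflection in the perpendicular bisector of the segment joining $\phi_0(\veczero)$ and $\veczero$, so that $\phi_1 := \rho_0 \circ \phi_0$ fixes $\veczero$. Inductively, suppose $\phi_k := \rho_{k-1} \circ \cdots \circ \rho_0 \circ \phi$ is an isometry fixing $\veczero, \vece_1, \ldots, \vece_{k-1}$. If $\phi_k(\vece_k) = \vece_k$, set $\rho_k = \mathrm{id}$; otherwise, because $\phi_k$ is an isometry and fixes $\vece_0, \ldots, \vece_{k-1}$, each of these previously fixed points satisfies
\[
|\vece_i - \phi_k(\vece_k)| = |\phi_k(\vece_i) - \phi_k(\vece_k)| = |\vece_i - \vece_k|,
\]
so each lies on the perpendicular bisector $H$ of $\phi_k(\vece_k)$ and $\vece_k$. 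Taking $\rho_k$ to be the reflection in $H$, we obtain an isometry $\phi_{k+1} := \rho_k \circ \phi_k$ that fixes $\veczero, \vece_1, \ldots, \vece_k$.

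After at most $n+1$ steps, we obtain $\phi_{n+1}$ fixing the $n+1$ affinely independent points $\veczero, \vece_1, \ldots, \vece_n$. By Proposition~\ref{formisom}, $\phi_{n+1}(\vecx) = \veca + A\vecx$ with $\veca = \phi_{n+1}(\veczero) = \veczero$ and with $A\vece_i = \vece_i$ for each $i$, forcing $A = I$ and hence $\phi_{n+1} = \mathrm{id}$. Since each $\rho_k$ is an involution, we may solve
\[
\rho_n \circ \rho_{n-1} \circ \cdots \circ \rho_0 \circ \phi = \mathrm{id}
\qquad \Longrightarrow \qquad
\phi = \rho_0 \circ \rho_1 \circ \cdots \circ \rho_n,
\]
exhibiting $\phi$ as a composition of at most $n+1$ reflections (one for each step at which a new point had to be moved).

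The only subtle point — and the thing worth being careful about — is the inductive claim that $\rho_k$ fixes the previously fixed points $\vece_0, \ldots, \vece_{k-1}$, which I expect to be the main obstacle to writing this cleanly. This is handled by the equidistance computation above, which uses nothing beyond the defining isometry property of $\phi_k$ together with the fact (from Section~\ref{reflections}) that reflection in $P(\vecu,t)$ fixes exactly the points of $P(\vecu,t)$. Everything else is bookkeeping, and the bound $n+1$ is sharp in the sense that each basis point may require its own reflection, while no extra reflection is needed once the identity is reached.
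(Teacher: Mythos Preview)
Your proof is correct and follows essentially the same strategy as the paper's: reflect to fix $\veczero$, then iteratively reflect in perpendicular bisectors to fix $\vece_1,\ldots,\vece_n$ one at a time, and conclude via Proposition~\ref{formisom} that the resulting composition is the identity. The only cosmetic difference is that the paper first invokes Proposition~\ref{formisom} to note that $\rho_0\phi$ is orthogonal and then uses orthogonality to check $\vecv_k\cdot\vece_j=0$, whereas you use the cleaner equidistance argument $|\vece_i-\phi_k(\vece_k)|=|\vece_i-\vece_k|$ directly; both arguments identify the same reflecting hyperplane and verify the same invariant.
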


\begin{proof} Let $\phi:E^n \rightarrow E^n$ be an isometry.  We construct
reflections $\rho_0,\rho_1,\ldots,\rho_n$ such that $\phi = \rho_0\rho_1 \cdots \rho_n$.

Set $\vecv_0 = \phi(\veczero)$.  If $\vecv_0 = \veczero$ let $\rho_0$ be the
identity, and otherwise let $\rho_0$ be the reflection in the plane
$P\left(\vecv_0/|\vecv_0|,|\vecv_0|/2\right)$. Then \[
\rho_0(\vecv_0) = \vecv_0 + 2\left(\frac{|\vecv_0|}{2} -
\frac{\vecv_0\cdot\vecv_0}{|\vecv_0|} \right)\frac{\vecv_0}{|\vecv_0|} =
\veczero, \] and so $\rho_0\phi(\veczero) = \veczero$.  Let $\phi_0 = \rho_0
\phi$.  Then the composition $\phi_0$ is an isometry, and $\phi_0(\vecx) = \rho_0\phi(\veczero)
+ \rho_0\phi(\vecx)$, so by Proposition~\ref{formisom}, $\phi_0$
is orthogonal.

For $k = 1,2,\ldots,n$, assume inductively that $\phi_{k-1}$ is an orthogonal
transformation of $E^n$ which fixes $\veczero$ and the basis vectors
$\mathbf{e}_1,\vece_2,\ldots,\mathbf{e}_{k-1}$.  Let $\vecv_k$ be the vector
$\phi_{k-1}(\mathbf{e}_k) - \mathbf{e}_k$.  If $\vecv_k = \veczero$ let
$\rho_k$ be the identity, and otherwise let $\rho_k$ be the reflection in the
plane $P\left(\vecv_k/|\vecv_k|,0\right)$. Then, by  using the assumption that
$\phi_{k-1}$ is orthogonal to expand out $\rho_k\phi_{k-1}(\vece_k)$, we find
that $\rho_k\phi_{k-1}$ fixes $\vece_k$. Also, for $1 \leq j  \leq k-1$,
\begin{align*} \vecv_k \cdot \vece_j & =  (\phi_{k-1}(\vece_k) -
\vece_k)\cdot\vece_j \\ & =  \phi_{k-1}(\vece_k)\cdot\vece_j \\ & =
\phi_{k-1}(\vece_k)\cdot\phi_{k-1}(\vece_j) \\ & =  \vece_k \cdot\vece_j \\ &
=  0. \end{align*} Therefore, for $1 \leq j \leq k-1$, the vector $\vece_j$ is
in the plane $P\left(\vecv_k/|\vecv_k|,0\right)$, hence is fixed by the
reflection $\rho_k$.  Consequently, $\phi_k = \rho_k\phi_{k-1}$ fixes  the
basis vectors $\vece_1,\vece_2,\ldots,\vece_k$. Also, since $\veczero$ is in
the plane $P\left(\vecv_k/|\vecv_k|,0\right)$, $\rho_k$ fixes $\veczero$. By
induction, then, the maps $\rho_0,\rho_1,\ldots,\rho_n$ are each either the
identity or a reflection, and the composition $\rho_n\cdots\rho_1\rho_0\phi$
fixes $\veczero,\vece_1,\vece_2,\ldots,\vece_n$. Using
Proposition~\ref{formisom} again, we infer that $\rho_n\cdots\rho_1\rho_0\phi$
is an orthogonal, hence linear, map which fixes all the basis vectors, and so
is the identity.  Since each reflection is its own inverse, this means that
$\rho_0\rho_1\cdots\rho_n = \phi$. \end{proof}

We now discuss reflections in spheres.  These maps are also known as
inversions. For $\mathbf{a} \in E^n$ and $r> 0$, we denote by $S(\veca,r)$ the
sphere with centre $\mathbf{a}$ and radius $r$. The reflection $\sigma$ of the
set  $E^n\setminus\{ \veca\}$ in the sphere $S(\veca,r)$ is given by \[
\sigma(\vecx) = \veca + r^2\frac{\vecx - \veca}{|\vecx - \veca|^2}. \] Then
$\sigma(\vecx) = \vecx$ if and only if $\vecx \in S(\veca,r)$, and
$\sigma^2(\vecx) = \vecx$ for all $\vecx \not = \veca$. Abusing notation, we
will say that $\sigma$ is the reflection of ${E}^n$ in $S(\veca,r)$.   Although
$\sigma$ is not an isometry, we do have the following identity.

\begin{lemma}\label{phixphiy} Let $\sigma$ be the reflection of $E^n$ in the
sphere $S(\veca,r)$. For all $\vecx, \vecy \not = \veca$, \[ |\sigma(\vecx)
- \sigma(\vecy)| = \frac{r^2|\vecx - \vecy|}{|\vecx - \veca||\vecy - \veca|}.
\] \end{lemma}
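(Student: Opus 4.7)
The plan is to prove this by direct computation, reducing it via a substitution that removes the center $\veca$ from the picture. Since $\sigma(\vecx) = \veca + r^2(\vecx-\veca)/|\vecx-\veca|^2$, the difference telescopes:
\[
\sigma(\vecx) - \sigma(\vecy) = r^2\left( \frac{\vecx - \veca}{|\vecx - \veca|^2} - \frac{\vecy - \veca}{|\vecy - \veca|^2} \right),
\]
so the $\veca$'s cancel out. Setting $\vecu = \vecx - \veca$ and $\vecw = \vecy - \veca$ reduces the claim to showing
\[
\left| \frac{\vecu}{|\vecu|^2} - \frac{\vecw}{|\vecw|^2} \right| = \frac{|\vecu - \vecw|}{|\vecu||\vecw|}.
\]

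The cleanest route is to square both sides and expand. The left-hand side squared is
\[
\frac{1}{|\vecu|^2} - \frac{2\,\vecu\cdot\vecw}{|\vecu|^2|\vecw|^2} + \frac{1}{|\vecw|^2},
\]
and combining over the common denominator $|\vecu|^2|\vecw|^2$ gives a numerator of $|\vecw|^2 - 2\,\vecu\cdot\vecw + |\vecu|^2 = |\vecu - \vecw|^2$. Hence the squared norm equals $|\vecu-\vecw|^2 / (|\vecu|^2|\vecw|^2)$, and taking positive square roots (legal since $\vecu, \vecw \ne \veczero$) yields the identity. Multiplying through by $r^2$ and substituting back gives the lemma.

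There is no real obstacle here: the statement is an algebraic identity rather than a geometric one, and the only care required is that $\vecx, \vecy \ne \veca$, which is given and ensures none of the denominators vanish. The underlying geometric content — that inversion distorts distances by a factor inversely proportional to the distances of the two points from the center — is precisely what this identity records, and the substitution $\vecu, \vecw$ makes the calculation transparent.
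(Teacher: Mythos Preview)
Your proof is correct. The paper actually states this lemma without proof (it is a standard identity, and the thesis cites Ratcliffe for background), so there is no alternative approach to compare against; your direct computation via the substitution $\vecu = \vecx - \veca$, $\vecw = \vecy - \veca$ and expansion of the squared norm is exactly the expected argument.
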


We will sometimes denote by $S^{n-1}$ the sphere $S(\veczero,1)$ in $E^n$.

Reflections in planes and spheres are continuous maps with respect to the usual
metric topology of $\R^n$, which is that induced by the distance function
$d_E$.  They are also differentiable functions.  The following result shows
that reflections in hyperplanes and spheres preserve angles. Let $\Omega$ be an
open subset of $E^n$ and let $\phi:\Omega \rightarrow E^n$ be a differentiable
function.  We say $\phi$ is conformal if $\phi$ preserves angles between
differentiable curves in $\Omega$. An important class of conformal maps is the
set of orthogonal matrices.  These are conformal because they preserve the
Euclidean inner product.

\begin{propn} \label{conformal} Every reflection of $E^n$ in a hyperplane or
sphere is conformal. \end{propn}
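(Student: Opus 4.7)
The plan is to reduce Proposition~\ref{conformal} to a pointwise linear-algebra statement about the Jacobian. Since reflections are $C^{1}$ on their domains, the angle between two differentiable curves at a point $\vecx$ is transformed into the angle between their images exactly by the action of the derivative $D\phi(\vecx)$ on their tangent vectors. Thus it suffices to show that at each point $D\phi(\vecx) = \lambda A$ for some $\lambda > 0$ and some orthogonal $A$: such a linear map multiplies every vector's length by $\lambda$ and preserves inner products up to the factor $\lambda^{2}$, so the cosines $(\vecv\cdot\vecw)/(|\vecv||\vecw|)$ which define the angles are unchanged.

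For the reflection $\rho$ in the hyperplane $P(\vecu,t)$, I would differentiate the formula $\rho(\vecx) = \vecx + 2(t - \vecu\cdot\vecx)\vecu$ coordinatewise to obtain the constant Jacobian $D\rho = I - 2\vecu\vecu^{T}$. A one-line computation using $|\vecu|=1$ shows $(I - 2\vecu\vecu^{T})^{T}(I - 2\vecu\vecu^{T}) = I$, so $D\rho$ is orthogonal. (Equivalently, $\rho$ is already known to be a Euclidean isometry by Lemma~\ref{rhoorthog}, and by Proposition~\ref{formisom} an affine isometry has orthogonal linear part.)

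For the reflection $\sigma$ in the sphere $S(\veca,r)$, after translating so that $\veca = \veczero$ one has $\sigma(\vecx) = r^{2}\vecx/|\vecx|^{2}$ on $E^{n}\setminus\{\veczero\}$. Differentiating, using $\partial(1/|\vecx|^{2})/\partial x_{j} = -2x_{j}/|\vecx|^{4}$, gives
\[
D\sigma(\vecx) \;=\; \frac{r^{2}}{|\vecx|^{2}}\bigl(I - 2\,\hat{\vecx}\hat{\vecx}^{T}\bigr),
\qquad \hat{\vecx} = \vecx/|\vecx|.
\]
The bracketed matrix is a Householder reflection, hence orthogonal by the same check as in the hyperplane case, so $D\sigma(\vecx)$ is a positive scalar multiple of an orthogonal matrix at every point of its domain, and conformality follows from the reduction above.

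The main obstacle is really just the sphere calculation, where one must handle the derivative of $1/|\vecx|^{2}$ cleanly; once $D\sigma$ is in the displayed form the result is immediate. As a sanity check, an alternative route avoids the Jacobian: Lemma~\ref{phixphiy} gives $|\sigma(\vecy) - \sigma(\vecx)|/|\vecy - \vecx| \to r^{2}/|\vecx - \veca|^{2}$ as $\vecy \to \vecx$, with the limit independent of the direction of approach. A linear map that scales every tangent vector by the same positive factor is necessarily that factor times an orthogonal matrix, so $D\sigma(\vecx)$ is conformal, and the proposition follows.
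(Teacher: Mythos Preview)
Your proof is correct and follows essentially the same approach as the paper: compute the Jacobian of each reflection and recognise it as a positive scalar multiple of a Householder matrix $I - 2\hat{\vecu}\hat{\vecu}^{T}$, which is orthogonal. The only cosmetic differences are that the paper verifies orthogonality of $I-2U$ via an inner-product computation rather than $(I-2U)^{T}(I-2U)=I$, and handles the general sphere $S(\veca,r)$ by explicitly conjugating by the translation $\tau$ rather than your phrase ``after translating so that $\veca=\veczero$''; your additional sanity check via Lemma~\ref{phixphiy} is a nice touch not present in the paper.
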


\begin{proof} Let $\rho$ be the reflection of $E^n$ in the plane
$P(\vecu,t)$. As $\rho(\vecx) = \vecx + 2(t - \vecu\cdot\vecx)\vecu$, the
$j^{\rm th}$ component of $\rho(\vecx)$ is $x_j + 2(t -
(u_1x_1+u_2x_2 + \cdots+u_nx_n))u_j$.  Thus \[ D\rho(\vecx) = (\delta_{ij} - 2u_iu_j) =
I - 2U, \] where $D\rho(\vecx)$  is the matrix of partial derivatives of
$\rho(\vecx)$, and $U$ is the matrix $(u_iu_j)$. As $D\rho(\vecx)$ is
independent of $t$, we may assume without loss of generality that $t=0$.  Then
$\rho(\vecx) = (I-2U)\vecx$, and \begin{align*} \rho(\vecx)\cdot\rho(\vecy)& =
(\vecx-2(\vecu\cdot\vecx)\vecu)\cdot(\vecy-2(\vecu\cdot\vecy)\vecu)\\ & =
\vecx\cdot\vecy - 4(\vecu\cdot\vecx)(\vecu\cdot\vecy) +
4(\vecu\cdot\vecx)(\vecu\cdot\vecy)(\vecu\cdot\vecu) \\ & =  \vecx\cdot\vecy,
\end{align*} so $\rho$ is an orthogonal transformation. Therefore $I-2U$ is an
orthogonal matrix, and so the reflection $\rho$ is conformal.

For reflections in spheres, we consider first the reflection $\sigma_r$ in the
sphere $S(\veczero,r)$.  We have $\sigma_r(\vecx) =
r^2\vecx/|\vecx|^2$, so the $j^{\rm th}$ component of
$\sigma_r(\vecx)$ is $r^2x_j/(x_1^2 + x_2^2 + \cdots + x_n^2)$. Hence, \[
D\sigma_r(\vecx) = r^2\left(\frac{\delta_{ij}|\vecx|^2 -
2x_ix_j}{|\vecx|^4}\right) = \frac{r^2}{|\vecx|^2}(I-2U), \] where $U$ is the
matrix $(x_ix_j/|\vecx|^2)$.  From above, $I - 2U$ is orthogonal, as
$\vecx/|\vecx|$ is a unit vector. Thus $\sigma_r$ is conformal.

Now let $\sigma$ be the reflection in the sphere $S(\veca,r)$ and let $\tau$ be
translation by $\veca$.  Then $\sigma = \tau\sigma_r\tau^{-1}$.  Since $\tau$
and $\sigma_r$ are conformal, this means $\sigma$ is conformal.   \end{proof}

\subsection{M\"obius transformations}

We now consider the one-point compactification of $E^n$, $E^n \cup \{ \infty
\}$, which we denote by $\hat{E}^n$. The reflections $\rho$ and $\sigma$ above
may be extended to be maps from $\hat{E}^n$ to $\hat{E}^n$, by setting
$\rho(\infty) = \infty$, $\sigma(\veca) = \infty$ and $\sigma(\infty) = \veca$.
A sphere of $\hat{E}^n$ is defined to be either a Euclidean sphere
$S(\veca,r)$, or an extended plane $P(\vecu,t) \cup \{ \infty \}$. For
simplicity of notation, we will also use $P(\vecu,t)$ to denote extended
planes. In the case $n=2$, we may identify $\hat{E}^2$ with the Riemann sphere
$\hat{\mathbb{C}}$.

A M\"obius transformation of $\hat{E}^n$ is a finite composition of reflections
in spheres of $\hat{E}^n$.  The set of all M\"obius transformations of
$\hat{E}^n$, denoted by $M(\hat{E}^n)$, forms a group under composition.  By
Proposition~\ref{isomrefl}, any Euclidean isometry may be written as a finite
composition of reflections, so we may by extending these reflections regard the
group of Euclidean isometries to be a subgroup of $M(\hat{E}^n)$. {}From the
properties of reflections, M\"obius transformations are continuous,
differentiable and conformal.

We now discuss the action of $M(\hat{E}^n)$ on spheres.  A M\"obius
transformation of particular importance is the reflection of $\hat{E}^n$ in the
sphere $S(\vece_n,\sqrt{2})$. Identify $\hat{E}^{n-1}$ with $\hat{E}^{n-1}
\times \{ 0 \}$ in $\hat{E}^n$.  The stereographic projection $\pi$ of
$\hat{E}^{n-1}$ onto the sphere $S^{n-1}$ is defined to be the projection of
$\vecx \in \hat{E}^{n-1}$ towards (or away from) the vector $\vece_n$ until it
meets the sphere $S^{n-1}$.  Let $\sigma$ be the reflection of $\hat{E}^n$ in
the sphere $S(\vece_n,\sqrt{2})$.  Then, by comparing the explicit formulae for
$\pi$ and $\sigma$, it can be shown that the restriction of $\sigma$ to
$\hat{E}^{n-1}$ is stereographic projection $\pi:\hat{E}^{n-1} \rightarrow
S^{n-1}$.

\begin{lemma} \label{mobactionlemma} Let $\sigma$ be the reflection of
$\hat{E}^n$ in the sphere $S(\veca,r)$, and let $\sigma_1$ be the reflection of
$\hat{E}^n$ in the sphere $S(\veczero,1)$. Define a map $\tau:\hat{E}^n
\rightarrow \hat{E}^n$ by $\tau(\vecx) = \veca + r\vecx$.  Then $\sigma =
\tau\sigma_1\tau^{-1}$. \end{lemma}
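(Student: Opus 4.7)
The plan is a direct verification: compute the composition $\tau\sigma_1\tau^{-1}$ explicitly and show it agrees with the formula for $\sigma$ given in Section~\ref{reflections}. First I would note that $\tau$ is a similarity of $E^n$ (translation composed with scaling by $r>0$), so it extends to a bijection $\hat{E}^n\to\hat{E}^n$ by setting $\tau(\infty)=\infty$, and its inverse is $\tau^{-1}(\vecx)=(\vecx-\veca)/r$, with $\tau^{-1}(\infty)=\infty$. In particular $\tau\sigma_1\tau^{-1}$ is a well-defined map from $\hat{E}^n$ to $\hat{E}^n$.

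For $\vecx \in E^n$ with $\vecx \neq \veca$ (so that $\tau^{-1}(\vecx) \neq \veczero$), I would chase the definitions:
\[
\sigma_1\bigl(\tau^{-1}(\vecx)\bigr) = \sigma_1\!\left(\frac{\vecx - \veca}{r}\right) = \frac{(\vecx-\veca)/r}{\bigl|(\vecx-\veca)/r\bigr|^2} = \frac{r(\vecx-\veca)}{|\vecx-\veca|^2},
\]
and then
\[
\tau\sigma_1\tau^{-1}(\vecx) = \veca + r\cdot\frac{r(\vecx-\veca)}{|\vecx-\veca|^2} = \veca + r^2\frac{\vecx-\veca}{|\vecx-\veca|^2} = \sigma(\vecx),
\]
matching the formula for reflection in $S(\veca,r)$.

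To finish, I would verify the two exceptional points. At $\vecx=\veca$: $\tau^{-1}(\veca)=\veczero$, $\sigma_1(\veczero)=\infty$, and $\tau(\infty)=\infty=\sigma(\veca)$. At $\vecx=\infty$: $\tau^{-1}(\infty)=\infty$, $\sigma_1(\infty)=\veczero$, and $\tau(\veczero)=\veca=\sigma(\infty)$. Thus $\tau\sigma_1\tau^{-1}$ and $\sigma$ agree on all of $\hat{E}^n$.

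There is no real obstacle here; the argument is essentially a substitution. The only point requiring a little care is keeping track of how the extensions to $\hat{E}^n$ behave at $\veca$ and $\infty$, to confirm that the conjugation identity also holds on these two points and not just on the generic part of $E^n$.
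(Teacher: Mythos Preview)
Your proof is correct. The paper states this lemma without proof, and your direct verification by substitution is exactly the natural argument; there is nothing to add.
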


\begin{propn} The group $M(\hat{E}^n)$ acts transitively on the set of spheres
of $\hat{E}^n$. \end{propn}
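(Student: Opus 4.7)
The plan is to fix a reference sphere, namely $S^{n-1} = S(\veczero, 1)$, and to show that every sphere of $\hat{E}^n$ can be carried to $S^{n-1}$ by some element of $M(\hat{E}^n)$. Since $M(\hat{E}^n)$ is a group, establishing this for every sphere yields transitivity by composing one such map with the inverse of another.

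First I would handle the Euclidean spheres. For an arbitrary $S(\veca,r)$ the similarity $\tau(\vecx) = \veca + r\vecx$ sends $S^{n-1}$ to $S(\veca,r)$, so it suffices to see that $\tau$ lies in $M(\hat{E}^n)$. Translation by $\veca$ is a Euclidean isometry and is therefore a finite composition of reflections in hyperplanes by Proposition~\ref{isomrefl}, hence belongs to $M(\hat{E}^n)$. For the dilation by $r$, a direct calculation shows that the composition $\sigma_{\sqrt{r}} \sigma_1$ of the reflections in $S(\veczero,\sqrt{r})$ and $S(\veczero,1)$ equals $\vecx \mapsto r\vecx$, so every positive dilation is a M\"obius transformation. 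Combining these puts $\tau$, and hence the whole similarity class carrying $S^{n-1}$ to $S(\veca,r)$, inside $M(\hat{E}^n)$.

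Next I would handle the extended planes. By Corollary~\ref{transplanes}, the group of Euclidean isometries acts transitively on the set of $(n-1)$-planes of $E^n$, so any plane $P(\vecu,t)$ can be carried to the reference plane $P(\vece_n,0) = \hat{E}^{n-1}$ by an isometry, which belongs to $M(\hat{E}^n)$ as before. To bridge from extended planes to Euclidean spheres, I would invoke the identification of stereographic projection with a specific inversion that is stated just before the proposition: the restriction to $\hat{E}^{n-1}$ of the reflection in $S(\vece_n,\sqrt{2})$ is precisely the stereographic projection of $\hat{E}^{n-1}$ onto $S^{n-1}$. Thus this single M\"obius transformation carries the extended plane $\hat{E}^{n-1}$ onto the Euclidean sphere $S^{n-1}$.

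Chaining these constructions gives a M\"obius transformation sending any given sphere of $\hat{E}^n$ to $S^{n-1}$, and transitivity follows. The main obstacle is the last step: it is not obvious from the definitions alone that a reflection in a sphere can swap a plane with a Euclidean sphere, and the proof relies entirely on the prior identification of stereographic projection with an inversion. Everything else is bookkeeping about translations, dilations, and Euclidean isometries, all of which are already known to lie in $M(\hat{E}^n)$.
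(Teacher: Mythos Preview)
Your transitivity argument is essentially the same as the paper's: reduce Euclidean spheres to $S^{n-1}$ via a similarity (translation plus dilation, the latter written as a composition of two inversions), reduce extended planes to $\hat{E}^{n-1}$ via Corollary~\ref{transplanes}, and bridge the two cases using the reflection in $S(\vece_n,\sqrt{2})$, which restricts to stereographic projection. The paper uses $\hat{E}^{n-1}$ as its reference sphere rather than $S^{n-1}$, but this is cosmetic.

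There is, however, a genuine omission. The proposition asserts that $M(\hat{E}^n)$ \emph{acts} on the set of spheres, not merely that any two spheres are linked by some M\"obius transformation. For that action to be well-defined one must know that every M\"obius transformation maps spheres to spheres, and the paper proves this as the first half of its argument: it reduces to the inversion $\vecx \mapsto \vecx/|\vecx|^2$ via Lemma~\ref{mobactionlemma}, encodes an arbitrary sphere by a coefficient vector $(a_0,a_1,\ldots,a_{n+1})$ satisfying $a_0|\vecx|^2 - 2\veca\cdot\vecx + a_{n+1} = 0$ with $|\veca|^2 > a_0 a_{n+1}$, and checks that the inversion simply swaps $a_0$ and $a_{n+1}$. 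Your proposal constructs specific M\"obius maps carrying specific spheres to $S^{n-1}$, but never verifies that an arbitrary composition of reflections preserves the class of spheres; without that, the phrase ``acts transitively on the set of spheres'' is not fully justified.
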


\begin{proof} We first need to show that $M(\hat{E}^n)$ maps spheres to
spheres. Let $\phi$ be a M\"obius transformation and let $\Sigma$ be a sphere
of $\hat{E}^n$.  As $\phi$ is a composition of reflections, we may assume that
$\phi$ is a reflection.  Suppose first that $\phi$ is a reflection in an
extended plane.  Since $\phi$ is an isometry, by Corollary~\ref{transplanes},
$\phi$ maps hyperplanes to hyperplanes, and by the definition of a sphere
$\phi$ maps spheres to spheres.  Now suppose that $\phi$ is the reflection in
the Euclidean sphere $S(\veca,r)$, and let $\tau$ be the same map as in
Lemma~\ref{mobactionlemma}.  By Proposition~\ref{formsim}, $\tau$ is a
Euclidean similarity, and so $\tau$ maps spheres to spheres. We may thus, by
Lemma~\ref{mobactionlemma}, assume without loss of generality that $\phi$ is
the reflection in the sphere $S(\veczero,1)$, that is, $\phi(\vecx) =
\vecx/|\vecx|^2$.

We may characterise the spheres of $\hat{E}^n$ by a vector $(a_0,a_1,\ldots,a_{n+1})$
in $\R^{n+2}$, called a coefficient vector, which is unique up to multiplication by a non-zero scalar.  The
equations for the sphere $S(\veca,r)$ and the extended plane $P(\veca,t)$ in
$\hat{E}^n$ are respectively \[ |\vecx|^2 - 2\veca\cdot\vecx + |\veca|^2 - r^2
= 0 \hspace{5mm}\mbox{and}\hspace{5mm} -2\veca\cdot\vecx + 2t = 0. \] These may
be written in the common form \begin{equation}\label{coeffsphere} a_0|\vecx|^2
- 2\veca\cdot\vecx + a_{n+1} = 0, \end{equation} with $|\veca|^2 >a_0a_{n+1}$.
Conversely, any vector $(a_0,a_1,\ldots,a_{n+1})\in \R^{n+2}$ such that $|\veca|^2>
a_0a_{n+1}$ determines a sphere of $\hat{E}^n$ satisfying
equation~(\ref{coeffsphere}).

Now let $(a_0,a_1,\ldots,a_{n+1})$ be a coefficient vector for $\Sigma$. Then
$\vecx \in \Sigma$ satisfies equation~(\ref{coeffsphere}), and so if $\vecy =
\phi(\vecx)$, by the formula for $\phi$, the vector $\vecy$ satisfies \[ a_0 -
2\veca\cdot\vecy + a_{n+1}|\vecy|^2 = 0. \] This is the equation of another
sphere, say $\Sigma'$.  Hence $\phi$ maps $\Sigma$ into $\Sigma'$.  Since
$\phi$ is its own inverse, the same argument shows that $\phi$ maps $\Sigma'$
into $\Sigma$.  Thus, since $\phi$ is a bijection, $\phi(\Sigma) = \Sigma'$. We
have shown that $M(\hat{E}^n)$ acts on the set of spheres of $\hat{E}^n$.

To show that this action is transitive, let $\Sigma$ be a sphere of
$\hat{E}^n$. It suffices to prove there is a M\"obius transformation $\phi$
such that $\phi(\Sigma) = \hat{E}^{n-1}$. By Corollary~\ref{transplanes}, the
group of Euclidean isometries acts transitively on the set of hyperplanes of
$E^n$. Since Euclidean isometries are M\"obius transformations, this completes
the proof if $\Sigma$ is a hyperplane. We may thus assume that $\Sigma$ is a
Euclidean sphere. Now, the group of Euclidean similarities acts transitively on
the set of spheres of $E^n$. Let $\psi$ be the similarity which maps $\Sigma$
to $S^{n-1}$. Then by Proposition~\ref{formsim}, $\psi$ has the form
$\psi(\vecx) = \psi(\veczero) + kA\vecx$, for some orthogonal matrix $A$ and
some $k > 0$.  The map $\vecx \mapsto k\vecx$ is the composition of the
reflection in $S(\veczero,1)$ followed by the reflection in
$S(\veczero,\sqrt{k})$, and so is a M\"obius transformation. Since $A$ is
orthogonal, $\vecx \mapsto A\vecx$ is an isometry, and so is also a M\"obius
transformation. Hence $\psi$ is the composition of an isometry (translation) and
a M\"obius transformation. Therefore $\psi$ is a M\"obius transformation as
well. We may now assume that $\Sigma = S^{n-1}$. Let $\sigma$ be the reflection
in the sphere $S(\vece_n,\sqrt{2})$.  Then $\sigma$ is stereographic
projection, so we have $\sigma(S^{n-1}) =\hat{E}^{n-1}$, and the proof is
complete. \end{proof}

The proof of the following important lemma is omitted for reasons of brevity.
See~\cite{rat1:fhm}.

\begin{lemma} \label{fixB} Let $\phi$ be a M\"obius transformation which fixes
every point of a sphere $\Sigma$ of $\hat{E}^n$.  Then $\phi$ is either the
identity or the reflection in $\Sigma$. \end{lemma}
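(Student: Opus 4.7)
The plan is to reduce to the normalized case $\Sigma = \hat{E}^{n-1}$ by conjugation, then exploit the fact that a M\"obius transformation fixing $\infty$ should behave like a Euclidean similarity. Using the transitivity of $M(\hat{E}^n)$ on spheres just established, I would choose $\psi \in M(\hat{E}^n)$ with $\psi(\Sigma) = \hat{E}^{n-1}$ and set $\tilde\phi = \psi\phi\psi^{-1}$, which then fixes every point of $\hat{E}^{n-1}$. Once the lemma is proved in this normalized setting, the general case follows: if $\tilde\phi$ is the identity, so is $\phi$; while if $\tilde\phi$ is the reflection in $\hat{E}^{n-1}$, then the same normalized result applied to $\psi\rho\psi^{-1}$, where $\rho$ denotes the reflection in $\Sigma$, shows that $\psi\rho\psi^{-1}$ is also the reflection in $\hat{E}^{n-1}$, forcing $\phi = \rho$.

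Assume then that $\Sigma = \hat{E}^{n-1}$. Since $\tilde\phi$ fixes $\infty$, and the previous proposition shows that M\"obius transformations permute the spheres of $\hat{E}^n$, the map $\tilde\phi$ sends spheres through $\infty$ (i.e.\ extended planes) to spheres through $\infty$. Consequently $\tilde\phi|_{E^n}$ is a bijection of $E^n$ carrying hyperplanes to hyperplanes. Combining this with the conformality of M\"obius transformations (Proposition~\ref{conformal}), I would conclude that $\tilde\phi|_{E^n}$ is a Euclidean similarity, so by Proposition~\ref{formsim} it has the form $\tilde\phi(\vecx) = \veca + kA\vecx$ with $A \in O(n)$ and $k > 0$.

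The fixing data then determine the similarity completely: $\tilde\phi(\veczero) = \veczero$ gives $\veca = \veczero$, and $\tilde\phi(\vece_i) = \vece_i$ for $i = 1, \ldots, n-1$ gives $kA\vece_i = \vece_i$. Taking Euclidean norms yields $k = 1$ and hence $A\vece_i = \vece_i$ for each such $i$. Since $A$ is orthogonal, $A\vece_n$ is a unit vector perpendicular to $\vece_1, \ldots, \vece_{n-1}$, so $A\vece_n = \pm \vece_n$. The two signs give $\tilde\phi$ equal to the identity or to the reflection in $\hat{E}^{n-1}$, completing the normalized case.

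The main obstacle is justifying the step in the second paragraph where a hyperplane-preserving conformal bijection of $E^n$ is identified as a Euclidean similarity. My approach would be first to use preservation of hyperplanes to show that $\tilde\phi|_{E^n}$ preserves parallelism of lines, hence is affine, and then to invoke conformality to force the linear part to be a positive scalar multiple of an orthogonal matrix.
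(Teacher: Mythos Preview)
The paper does not prove this lemma: it explicitly omits the proof ``for reasons of brevity'' and refers the reader to Ratcliffe~\cite{rat1:fhm}. So there is no in-paper argument to compare against; I assess your proposal on its own terms.

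Your overall architecture is correct. The conjugation reducing to $\Sigma = \hat E^{n-1}$ is the natural first move, and your handling of the reverse direction is fine: once the normalized case is in hand, $\psi\rho\psi^{-1}$ is a non-identity M\"obius map fixing $\hat E^{n-1}$ pointwise, hence must be the reflection in $\hat E^{n-1}$, forcing $\phi=\rho$. The endgame---reading off $k=1$ and $A\vece_n=\pm\vece_n$ from the fixed points $\veczero,\vece_1,\ldots,\vece_{n-1}$---is also correct.

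The step you flag as the obstacle is genuinely the crux, and your plan works with one correction: what you need is that $\tilde\phi|_{E^n}$ carries \emph{lines onto lines}, which is the hypothesis of the fundamental theorem of affine geometry; ``preserves parallelism'' is not quite the right formulation. You get line preservation from hyperplane preservation as follows. Since $\tilde\phi^{-1}$ is also M\"obius and also fixes $\infty$, both $\tilde\phi$ and its inverse send hyperplanes to hyperplanes; a short downward induction on dimension (intersecting with one further hyperplane at each stage, using bijectivity to see the image cannot drop or gain dimension) shows that affine $k$-flats go to affine $k$-flats for every $k$, in particular $k=1$. Over $\R$ the fundamental theorem then yields affinity, and conformality of an affine map forces its linear part to be a positive scalar multiple of an orthogonal matrix, exactly as you say.

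This is a valid proof. Note, though, that it imports the fundamental theorem of affine geometry, which lies outside the toolkit the paper develops. The argument in Ratcliffe reaches the key step ``M\"obius fixing $\infty$ $\Rightarrow$ similarity'' by a more self-contained route, working directly with the metric distortion of inversions (essentially the content of Lemma~\ref{phixphiy}) rather than appealing to an affine-geometric classification theorem.
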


\section{The hyperboloid model} \label{hyperboloid}

We first establish useful definitions and results in Lorentzian $n$-space.
Then, we define the hyperboloid model and show that it is a metric space.  A
discussion of its geodesics and trigonometry completes this section.

\subsection{Lorentzian $n$-space}

Let \vecx\ and \vecy\ be vectors in $\R^n$, where $n > 1$, and consider the symmetric
bilinear form \[ \vecx \circ \vecy = -x_1 y_1 + x_2 y_2 + \cdots + x_n y_n. \]
This form is known as the Lorentzian inner product, although it is not positive
definite. The space of $\R^n$ together with $\circ$ is called Lorentzian
$n$-space.  The Lorentzian norm derived from the Lorentzian inner
product is $\| \vecx \| = (\vecx \circ \vecx)^{1/2}$. Then, $\| \vecx \|$ may be
positive, zero, or positive imaginary. If $\| \vecx \|$ is imaginary, we denote
its modulus by $\lnorm \vecx \lnorm$.

Using the Lorentzian norm, the vectors in $\R^n$ may be partitioned into space-,
light- and time-like sets (these labels derive from the use of Lorentzian
4-space as a model for special relativity). Space-like vectors are defined to
be those with positive Lorentzian norm, and light-like vectors those with
Lorentzian norm zero. A vector \vecx\ is time-like if $\| \vecx \|$ is
imaginary, and is positive or negative time-like as $x_1 > 0$  or  $x_1 < 0$.
In $\R^3$, the light-like vectors are the cone $x_1^2 = x_2^2 + x_3^2$, the
space-like vectors are the exterior of this cone, and the time-like vectors are
the two disjoint convex sets forming the cone's interior.

We now concentrate on time-like vectors and prove an analogue of the
Cauchy--Schwartz inequality. Two vectors $\vecx$ and $\vecy$ are said to be
Lorentz orthogonal if $\vecx \circ \vecy = 0$. If vectors $\vecx$ and $\vecy$
with non-zero Lorentzian norm are Lorentz orthogonal then they are linearly
independent.

\begin{propn} \label{orthog} Let $\vecx$ and $\vecy$ be non-zero Lorentz
orthogonal vectors in $\R^n$.  If $\vecx$ is time-like then \vecy\ is
space-like. \end{propn}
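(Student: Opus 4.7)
The plan is to combine the Lorentz orthogonality relation with the ordinary Euclidean Cauchy--Schwartz inequality applied to the last $n-1$ coordinates. Writing $\vecx = (x_1, \ldots, x_n)$ and $\vecy = (y_1, \ldots, y_n)$, the hypothesis that $\vecx$ is time-like says $-x_1^2 + x_2^2 + \cdots + x_n^2 < 0$, which I would immediately rewrite as
\[
x_2^2 + \cdots + x_n^2 \;<\; x_1^2.
\]
In particular this forces $x_1 \neq 0$. The Lorentz orthogonality hypothesis $\vecx \circ \vecy = 0$ rearranges to $x_1 y_1 = x_2 y_2 + \cdots + x_n y_n$.

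Next I would apply the Euclidean Cauchy--Schwartz inequality to the truncated vectors $(x_2, \ldots, x_n)$ and $(y_2, \ldots, y_n)$ in $\R^{n-1}$, and square the rearranged orthogonality relation to get
\[
x_1^2 y_1^2 \;=\; (x_2 y_2 + \cdots + x_n y_n)^2 \;\leq\; (x_2^2 + \cdots + x_n^2)(y_2^2 + \cdots + y_n^2).
\]
Feeding in the time-like estimate $x_2^2 + \cdots + x_n^2 < x_1^2$ gives, provided $y_2^2 + \cdots + y_n^2 > 0$,
\[
x_1^2 y_1^2 \;<\; x_1^2 (y_2^2 + \cdots + y_n^2),
\]
and dividing through by $x_1^2 > 0$ yields $y_1^2 < y_2^2 + \cdots + y_n^2$, i.e.\ $\vecy \circ \vecy > 0$, so $\vecy$ is space-like.

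The only mild obstacle is the degenerate case $y_2^2 + \cdots + y_n^2 = 0$, in which the strict inequality above fails; I would dispatch it separately by observing that the orthogonality relation then becomes $x_1 y_1 = 0$, and since $x_1 \neq 0$ this forces $y_1 = 0$, making $\vecy = \veczero$ and contradicting the hypothesis that $\vecy$ is non-zero. Hence this case does not arise, and the argument above delivers the conclusion.
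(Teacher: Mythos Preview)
Your proof is correct and follows essentially the same approach as the paper: both use the Lorentz orthogonality relation to express $x_1 y_1$ in terms of the tail coordinates, apply the Euclidean Cauchy--Schwartz inequality to $(x_2,\ldots,x_n)$ and $(y_2,\ldots,y_n)$, and combine this with the time-like bound $x_2^2+\cdots+x_n^2 < x_1^2$. The only cosmetic difference is that the paper computes $\|\vecy\|^2$ directly and rules out the light-like case at the end, whereas you split off the degenerate case $y_2=\cdots=y_n=0$ first.
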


\begin{proof} Since \vecx\ is time-like, $x_1^2 > x_2^2 + \cdots + x_n^2$.
Therefore, \[ 1 > \left( \sum_{i=2}^n x_i^2 \right)x_1^{-2}. \] As $\vecx \circ
\vecy = 0$, we have $x_1y_1 = x_2y_2 + \cdots + x_ny_n$.  Then, using the
Cauchy--Schwartz inequality for the Euclidean inner product, \begin{align*} \|
\vecy \|^2 & =  -y_1^2 + y_2^2 + \cdots + y_n^2 \\ & =  -\left[ \left(
\sum_{i=2}^n x_iy_i \right)x_1^{-1} \right]^2 + \sum_{i=2}^n y_i^2 \\ & \geq
-\left( \sum_{i=2}^n x_i^2 \right)\left( \sum_{i=2}^n y_i^2 \right)x_1^{-2} +
\sum_{i=2}^n y_i^2 \\ & =  \left( \sum_{i=2}^n y_i^2 \right)\left[ 1 - \left(
\sum_{i=2}^n x_i^2 \right)x_1^{-2}\right] \\ & \geq 0. \end{align*}
{}From the second last line, if $\| \vecy \|^2 = 0$ then $\sum_{i=2}^n y_i^2 =
0$, implying $y_i = 0$ for $2 \leq i \leq n$. Then $y_1 =  (
\sum_{i=2}^n x_iy_i )x_1^{-1} = 0$, and so $\vecy = 0$.  But $\vecy$ is
non-zero.  Thus, $\| \vecy \| > 0$, that is, \vecy\ is space-like. \end{proof}

An $n \times n$ matrix $A$ such that $A\vecx \circ A\vecy = \vecx \circ \vecy$
for all vectors \vecx\ and \vecy\ in $\R^n$ is said to be Lorentzian.  The set
of all Lorentzian matrices, denoted by $O(1,n-1)$, forms a group under matrix
multiplication. The subset of matrices $A \in O(1,n-1)$ which transform
positive time-like vectors into positive time-like vectors is in fact a
subgroup of index two of $O(1,n-1)$, and is denoted by $PO(1,n-1)$. A vector
subspace of $\R^n$ is said to be time-like if it contains a time-like vector.

\begin{propn} \label{transitive} For each dimension $m$, the action of
$PO(1,n-1)$ on the set of $m$-dimensional time-like vector subspaces of $\R^n$
is transitive. \end{propn}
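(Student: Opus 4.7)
The plan is to mimic the proof of Proposition~\ref{orthogtransitive}, replacing the Euclidean inner product with the Lorentzian one and the ordinary Gram--Schmidt procedure with a Lorentzian analogue. Identify $\R^m$ with the span of $\{\vece_1, \vece_2, \ldots, \vece_m\}$ in $\R^n$; since $\vece_1 \circ \vece_1 = -1$, this is a positive time-like subspace. Given any $m$-dimensional time-like subspace $V$, it suffices, by the group structure of $PO(1,n-1)$, to exhibit $A \in PO(1,n-1)$ with $A(\R^m) = V$.

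The key step is to construct, within $V$, a Lorentz-orthonormal basis $\{\vecw_1, \ldots, \vecw_m\}$ with $\vecw_1$ positive time-like and $\vecw_2, \ldots, \vecw_m$ space-like of unit Lorentzian norm. Since $V$ is time-like, choose a positive time-like $\vecw_1 \in V$ with $\lnorm\vecw_1\lnorm = 1$. The Lorentz-orthogonal complement of $\vecw_1$ inside $V$ is an $(m-1)$-dimensional subspace, and by Proposition~\ref{orthog} every non-zero vector in it is space-like. A straightforward modification of the Gram--Schmidt process (subtracting Lorentzian projections $-(\vecv \circ \vecw_i)\vecw_i$ for the time-like vector and $+(\vecv \circ \vecw_i)\vecw_i$ for the space-like ones, and normalising with respect to $\lnorm\cdot\lnorm$ or $\|\cdot\|$ as appropriate) then produces the required orthonormal basis of $V$. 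The same process, applied further, extends $\{\vecw_1, \ldots, \vecw_m\}$ to a Lorentz-orthonormal basis $\{\vecw_1, \ldots, \vecw_n\}$ of $\R^n$, where the additional vectors $\vecw_{m+1}, \ldots, \vecw_n$ are space-like, again by Proposition~\ref{orthog}.

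Let $A$ be the $n \times n$ matrix whose columns are $\vecw_1, \ldots, \vecw_n$. By construction, $A\vece_i = \vecw_i$, so $A(\R^m) = V$. Since the columns form a Lorentz-orthonormal basis, a direct computation shows $A\vecx \circ A\vecy = \vecx \circ \vecy$ for all $\vecx, \vecy \in \R^n$, so $A \in O(1,n-1)$. Finally, $A\vece_1 = \vecw_1$ is positive time-like by our choice of $\vecw_1$. Because $A$ preserves Lorentzian norms, it maps the set of time-like vectors to itself; this set has two connected components (the positive and negative time-like cones), and by continuity $A$ either preserves the two components or interchanges them. Since $\vece_1$ is positive time-like and $A\vece_1 = \vecw_1$ is too, $A$ preserves the positive time-like cone, hence $A \in PO(1,n-1)$, completing the proof.

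The main obstacle is verifying that the Gram--Schmidt construction goes through in the indefinite setting: one must know at each step that the vector being normalised has non-zero Lorentzian norm, and this is exactly where Proposition~\ref{orthog} is essential, since it guarantees that any vector Lorentz-orthogonal to a time-like vector is space-like (and therefore has positive norm available for normalisation).
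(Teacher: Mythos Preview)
Your proof is correct and follows essentially the same approach as the paper: a Lorentzian Gram--Schmidt process, using Proposition~\ref{orthog} to guarantee that each vector Lorentz-orthogonal to the time-like $\vecw_1$ is space-like and hence normalisable, and then taking $A$ to be the matrix with columns $\vecw_1,\ldots,\vecw_n$. Your explicit connectedness argument that $A$ lies in $PO(1,n-1)$ rather than merely $O(1,n-1)$ is in fact more careful than the paper, which simply asserts membership in $PO(1,n-1)$ from the inner-product computation without separately checking that positive time-like vectors map to positive time-like vectors.
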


\begin{proof}  The method is a modification of the Gram--Schmidt process.  Let $V$ be an
$m$-dimensional time-like vector subspace of $\R^n$.  Choose a  basis $\{
\vecu_1,\vecu_2, \ldots, \vecu_m\}$ for $V$ so that $\vecu_1$ is time-like.
Then let $\vecw_1 = \vecu_1/\lnorm \vecu_1 \lnorm$, hence $\vecw_1 \circ
\vecw_1 = -1$.  Let $\vecv_2 = \vecu_k + (\vecu_k \circ \vecw_1)\vecw_1$, and
$\vecw_2 = \vecv_2/\| \vecv_2\|$. For $3 \leq k \leq n$, let the vector $\vecv_k$
be \[ \vecv_k = \vecu_k + (\vecu_k\circ\vecw_1)\vecw_1 -
\sum_{i=2}^{k-1}(\vecu_k\circ\vecw_i)\vecw_i, \] and let $\vecw_k =
\vecv_k/\| \vecv_k\|$.  Then for $2 \leq k \leq n$, we have $\vecv_k
\circ \vecw_1 = 0$, and so by Proposition~\ref{orthog}, the vector $\vecv_k$ is
space-like.  Let $A$ be the matrix with columns $\vecw_1,\vecw_2,\ldots,\vecw_n$, so
that $A(\R^m) = V$.  The following calculation shows that $A$ preserves the
Lorentzian inner product and is thus an element of $PO(1,n-1)$: \begin{align*}
A\vecx \circ A\vecy & =  A(x_1\vece_1 + \cdots + x_n\vece_n)\circ A(y_1\vece_1
+ \cdots + y_n\vece_n)\\ & =  (x_1A\vece_1 + \cdots + x_nA\vece_n)\circ
(y_1A\vece_1 + \cdots + y_nA\vece_n) \\ & =  (x_1\vecw_1 + x_2\vecw_2 + \cdots
+ x_n\vecw_n)\circ (y_1\vecw_1 + y_2\vecw_2 + \cdots + y_n\vecw_n) \\ & =
-x_1y_1 + x_2y_2 + \cdots + x_ny_n \\ & =  \vecx \circ \vecy. \end{align*}
\end{proof}

\begin{propn} \label{CS} Suppose \vecx and \vecy\ are positive time-like
vectors in $\R^n$.  Then $\vecx\circ\vecy\leq\|\vecx \|\|\vecy\|$, with
equality if and only if \vecx\ and \vecy\ are linearly dependent. \end{propn}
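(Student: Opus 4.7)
The plan is to exploit Proposition~\ref{transitive} to put $\vecx$ into a normal form along $\vece_1$, after which both sides of the desired inequality depend only on the first coordinate of the transformed $\vecy$, and the inequality reduces to an elementary one-line estimate. First, apply Proposition~\ref{transitive} with $m=1$ to the $1$-dimensional time-like subspace spanned by $\vecx$. Inspecting its proof, we may in fact choose $A \in PO(1,n-1)$ so that $A\vece_1 = \vecx/\lnorm\vecx\lnorm$: just take $\vecu_1 = \vecx$ in the Gram--Schmidt construction. Equivalently, $A^{-1}\vecx = \lnorm\vecx\lnorm\,\vece_1$.

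Next, set $\vecw = A^{-1}\vecy$. Since $A^{-1} \in PO(1,n-1)$ preserves positive time-likeness, $w_1 > 0$ and $w_1^2 > w_2^2 + \cdots + w_n^2$, so
\[ w_1 \;\geq\; \sqrt{w_1^2 - w_2^2 - \cdots - w_n^2} \;=\; \lnorm\vecw\lnorm \;=\; \lnorm\vecy\lnorm. \]
Because $A$ preserves the Lorentzian inner product,
\[ \vecx \circ \vecy \;=\; (A^{-1}\vecx)\circ\vecw \;=\; \lnorm\vecx\lnorm\,\vece_1 \circ \vecw \;=\; -\lnorm\vecx\lnorm\,w_1. \]
Multiplying $w_1 \geq \lnorm\vecy\lnorm$ by the non-positive scalar $-\lnorm\vecx\lnorm$ flips the direction, yielding
\[ \vecx \circ \vecy \;=\; -\lnorm\vecx\lnorm\,w_1 \;\leq\; -\lnorm\vecx\lnorm\,\lnorm\vecy\lnorm \;=\; \|\vecx\|\|\vecy\|, \]
the last identity because $\|\vecx\|\|\vecy\| = (i\lnorm\vecx\lnorm)(i\lnorm\vecy\lnorm) = -\lnorm\vecx\lnorm\,\lnorm\vecy\lnorm$.

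For the equality clause, chase the inequalities: $\vecx\circ\vecy = \|\vecx\|\|\vecy\|$ iff $w_1 = \lnorm\vecy\lnorm$ iff $w_2 = \cdots = w_n = 0$ iff $\vecw = w_1\vece_1$, and this holds iff $\vecy = A\vecw = (w_1/\lnorm\vecx\lnorm)\vecx$, i.e.\ iff $\vecx$ and $\vecy$ are linearly dependent. The converse direction (equality when $\vecy$ is a positive scalar multiple of $\vecx$) is immediate by substitution.

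The only genuine obstacle is a mild one: Proposition~\ref{transitive} as stated gives only the transitivity of the action, not the explicit $A$ that realises $\vecx/\lnorm\vecx\lnorm$ as the image of $\vece_1$. Examining its constructive proof, however, the matrix built there has first column $\vecu_1/\lnorm\vecu_1\lnorm$, so choosing $V = \Span\{\vecx\}$ with $\vecu_1 = \vecx$ immediately provides the required $A$. Everything after this normalisation is a routine computation.
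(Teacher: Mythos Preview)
Your proof is correct and follows essentially the same approach as the paper: use Proposition~\ref{transitive} to reduce to the case where $\vecx$ is a positive multiple of $\vece_1$, then the inequality becomes an elementary comparison of $w_1$ with $\lnorm\vecw\lnorm$. The paper squares both sides and compares $(\vecx\circ\vecy)^2$ with $\|\vecx\|^2\|\vecy\|^2$ before invoking signs, whereas you work directly with $\lnorm\cdot\lnorm$ and track the sign flip when multiplying by $-\lnorm\vecx\lnorm$, but these are cosmetic differences. One small remark: you don't actually need to inspect the proof of Proposition~\ref{transitive} --- transitivity alone gives some $A\in PO(1,n-1)$ with $A^{-1}\vecx\in\Span\{\vece_1\}$, and membership in $PO(1,n-1)$ then forces $A^{-1}\vecx$ to be a \emph{positive} multiple of $\vece_1$.
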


\begin{proof} By Proposition~\ref{transitive}, there is a Lorentzian matrix $A$
so that $A\vecx = t\mathbf{e}_1$ for some $t > 0$. We may thus replace \vecx\
and \vecy\ by $A\vecx$ and $A\vecy$ without affecting the inner product or
norms. So we may assume, without loss of generality, that $\vecx = x_1
\mathbf{e}_1$. Then, \begin{align*} \| \vecx \|^2 \| \vecy \|^2 & =
-x_1^2(-y_1^2 + y_2^2 + \cdots + y_n^2) \\ & =  x_1^2y_1^2 - x_1^2(y_2^2 +
\cdots + y_n^2) \\ & \leq x_1^2 y_1^2 \\ & =  (\vecx \circ \vecy)^2,
\end{align*} with equality if and only if $y_2^2 + \cdots + y_n^2 = 0$, that
is, $\vecy = y_1 \mathbf{e}_1$.  Now, $\| \vecx \|$ and $\| \vecy \|$ are both
positive  imaginary, so $\| \vecx \| \| \vecy \| < 0$. Also, \[ \vecx \circ
\vecy = -x_1y_1 < 0. \] So $ \vecx \circ \vecy \leq \| \vecx \| \| \vecy \|$,
with equality if and only if \vecx\ and \vecy\ are linearly dependent.
\end{proof}

The following corollary uses this inequality to define the Lorentzian
time-like angle between positive time-like vectors $\vecx$ and $\vecy$.  The
Lorentzian angle is denoted $\eta(\vecx,\vecy)$, and will be used to define a
distance function for the hyperboloid model of hyperbolic $n$-space.

\begin{corollary}\label{cosheta} If \vecx\ and \vecy\ are positive time-like
vectors in $\R^n$, then there is a unique non-negative real number
$\eta(\vecx,\vecy)$ such that \[ \vecx \circ \vecy = \| \vecx\| \| \vecy\|
\cosh \eta(\vecx,\vecy). \] Moreover, $\eta(\vecx,\vecy) = 0$ if and only if
$\vecx$ and $\vecy$ are positive scalar multiples of each other.
\end{corollary}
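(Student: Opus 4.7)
The plan is to leverage Proposition~\ref{CS} directly. Once one divides through by $\|\vecx\|\|\vecy\|$ and tracks signs carefully, the existence and uniqueness of $\eta$ become an immediate consequence of the fact that $\cosh$ is a continuous strictly increasing bijection from $[0,\infty)$ onto $[1,\infty)$.

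First, I would pin down the signs. Since $\vecx$ and $\vecy$ are time-like, $\|\vecx\|$ and $\|\vecy\|$ are positive imaginary, so $\|\vecx\|\|\vecy\| = -\lnorm\vecx\lnorm\,\lnorm\vecy\lnorm$ is a negative real number. Because $\vecx$ and $\vecy$ are both positive time-like, their first coordinates $x_1,y_1$ are positive; combining the time-like inequalities $x_1^2 > x_2^2 + \cdots + x_n^2$ and $y_1^2 > y_2^2 + \cdots + y_n^2$ with Cauchy--Schwartz on $\R^{n-1}$, we have $x_1 y_1 > |x_2 y_2 + \cdots + x_n y_n|$, so $\vecx \circ \vecy = -x_1 y_1 + x_2 y_2 + \cdots + x_n y_n < 0$ as well. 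Thus both sides of Proposition~\ref{CS}'s inequality $\vecx\circ\vecy \leq \|\vecx\|\|\vecy\|$ are negative reals.

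Next I would divide by the negative quantity $\|\vecx\|\|\vecy\|$, which flips the inequality, yielding
\[
\frac{\vecx\circ\vecy}{\|\vecx\|\|\vecy\|} \geq 1.
\]
Since $\cosh\colon[0,\infty) \to [1,\infty)$ is a continuous strictly increasing bijection, there is a unique non-negative real $\eta(\vecx,\vecy)$ with
\[
\cosh\eta(\vecx,\vecy) = \frac{\vecx\circ\vecy}{\|\vecx\|\|\vecy\|},
\]
and rearranging gives the desired identity $\vecx\circ\vecy = \|\vecx\|\|\vecy\|\cosh\eta(\vecx,\vecy)$.

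For the equality clause, $\eta(\vecx,\vecy) = 0$ iff $\cosh\eta(\vecx,\vecy) = 1$ iff $\vecx\circ\vecy = \|\vecx\|\|\vecy\|$, which by Proposition~\ref{CS} is equivalent to $\vecx$ and $\vecy$ being linearly dependent. Writing $\vecy = c\vecx$, the condition that both $\vecx$ and $\vecy$ are positive time-like forces $x_1 > 0$ and $cx_1 = y_1 > 0$, hence $c > 0$. Conversely, if $\vecy = c\vecx$ with $c > 0$, a direct computation gives $\vecx\circ\vecy = c(\vecx\circ\vecx) = -c\lnorm\vecx\lnorm^2$ and $\|\vecx\|\|\vecy\| = -c\lnorm\vecx\lnorm^2$, so equality holds. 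The only real obstacle is the sign bookkeeping, since the Lorentzian norm on time-like vectors is imaginary and the inequality in Proposition~\ref{CS} reverses direction when divided through; once that is handled, the rest is routine.
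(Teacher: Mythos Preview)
Your proof is correct and is exactly the argument the paper has in mind: the paper states this corollary without proof, leaving it as an immediate consequence of Proposition~\ref{CS}, and your write-up supplies precisely the routine sign analysis and the $\cosh$ bijection needed to make that implication explicit. Note that the negativity of $\vecx\circ\vecy$ is also visible inside the paper's proof of Proposition~\ref{CS} (after the Lorentz transformation, $\vecx\circ\vecy = -x_1 y_1 < 0$), so your separate Cauchy--Schwartz argument for that point, while correct, is not strictly needed.
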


To conclude our discussion of Lorentzian $n$-space, we define a Lorentzian
cross product between vectors in $\R^3$.  Let $\vecx$ and $\vecy$ be in $\R^3$,
and let $J$ be the matrix \[ J = \startm -1 & 0 & 0 \\ 0 & 1 & 0 \\ 0 & 0 & 1
\finishm. \] The Lorentzian cross product of \vecx\ and \vecy\ is defined to be
$ \vecx \otimes \vecy = J(\vecx \times \vecy)$, where~$\times$ is the usual
cross product in $\R^3$. It can be calculated that $\vecx \otimes \vecy =
J\vecy \times J\vecx$, and also that $\vecx \circ \vecy = \vecx \cdot J\vecy$.
These identities can then be used to prove the following facts about the
Lorentzian cross product.

\begin{lemma} \label{crossprodfacts} If \vecw, \vecx, \vecy\ and \vecz\ are
vectors in $\R^3$, then \begin{enumerate} \item $\vecx \circ (\vecx \otimes
\vecy) = \vecy \circ (\vecx \otimes \vecy)= 0$, \item $\vecx \otimes \vecy = -
\vecy \otimes \vecx$, \item $(\vecx \otimes \vecy) \circ \vecz = \det
\begin{pmatrix} x_1 & x_2 & x_3 \\ y_1 & y_2 & y_3 \\ z_1 & z_2 & z_3
\end{pmatrix}$, \item $\vecx \circ (\vecy \otimes \vecz) = (\vecx \otimes
\vecy) \circ \vecz$, \item $\vecx \otimes (\vecy \otimes \vecz) = (\vecx \circ
\vecy)\vecz - (\vecz \circ \vecx)\vecy$, \item  $(\vecx \otimes
\vecy)\circ(\vecz \otimes \vecw) = \det\begin{pmatrix} \vecx \circ \vecw &
\vecx \circ \vecz \\ \vecy \circ \vecw & \vecy \circ \vecz \end{pmatrix}$.
\end{enumerate} \end{lemma}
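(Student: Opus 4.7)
The plan is to reduce each Lorentzian identity to a corresponding Euclidean identity by exploiting the three ingredients already given: $\vecx \otimes \vecy = J(\vecx \times \vecy) = J\vecy \times J\vecx$, $\vecx \circ \vecy = \vecx \cdot J\vecy$, and the facts that $J$ is symmetric with $J^2 = I$. These will let me pass between Lorentzian and Euclidean bilinear operations essentially for free, so the main task is to order the six parts so that each can be derived with a one- or two-line computation.

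I would prove the parts in the order (2), (1), (3), (4), (5), (6). Part~(2) is immediate: $\vecx \otimes \vecy = J(\vecx \times \vecy) = -J(\vecy \times \vecx) = -\vecy \otimes \vecx$ by antisymmetry of the Euclidean cross product. For part~(1), compute
\[
\vecx \circ (\vecx \otimes \vecy) = \vecx \cdot J(\vecx \otimes \vecy) = \vecx \cdot J^2(\vecx \times \vecy) = \vecx \cdot (\vecx \times \vecy) = 0,
\]
and likewise with $\vecy$ in place of $\vecx$. For part~(3),
\[
(\vecx \otimes \vecy) \circ \vecz = J(\vecx \times \vecy) \cdot J\vecz = (\vecx \times \vecy) \cdot \vecz,
\]
since $J^\top J = I$, and the right-hand side is the stated determinant by the classical scalar triple product formula. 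Part~(4) then follows immediately from (3), because cyclically permuting the rows of a $3\times 3$ determinant leaves it unchanged, so $(\vecx \otimes \vecy) \circ \vecz$ and $\vecx \circ (\vecy \otimes \vecz) = (\vecy \otimes \vecz) \circ \vecx$ are equal.

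Part~(5) is where the algebra is heaviest and will be the main obstacle. Using $\vecx \otimes \vecu = J\vecu \times J\vecx$ with $\vecu = \vecy \otimes \vecz$ and $J(\vecy \otimes \vecz) = J^2(\vecy \times \vecz) = \vecy \times \vecz$, I get
\[
\vecx \otimes (\vecy \otimes \vecz) = (\vecy \times \vecz) \times J\vecx = -J\vecx \times (\vecy \times \vecz).
\]
Applying the standard Euclidean BAC--CAB identity $\veca \times (\vecb \times \vecc) = (\veca \cdot \vecc)\vecb - (\veca \cdot \vecb)\vecc$ to the right-hand side gives
\[
-\bigl[(J\vecx \cdot \vecz)\vecy - (J\vecx \cdot \vecy)\vecz\bigr] = (\vecx \circ \vecy)\vecz - (\vecx \circ \vecz)\vecy,
\]
which is the desired formula since $\vecx \circ \vecz = \vecz \circ \vecx$. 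Finally, part~(6) is obtained by combining (4) and (5):
\[
(\vecx \otimes \vecy) \circ (\vecz \otimes \vecw) = \vecx \circ \bigl(\vecy \otimes (\vecz \otimes \vecw)\bigr) = (\vecy \circ \vecz)(\vecx \circ \vecw) - (\vecy \circ \vecw)(\vecx \circ \vecz),
\]
which is exactly the displayed $2\times 2$ determinant. The only delicate points in the whole proof are keeping track of where the matrix $J$ appears and making sure the sign flip from $J\vecx \times (\vecy \times \vecz)$ versus $(\vecy \times \vecz) \times J\vecx$ comes out correctly in~(5); everything else is essentially bookkeeping.
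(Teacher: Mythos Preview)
Your proof is correct and follows exactly the approach the paper indicates: the paper does not give a detailed proof but simply states that the identities $\vecx \otimes \vecy = J(\vecx \times \vecy) = J\vecy \times J\vecx$ and $\vecx \circ \vecy = \vecx \cdot J\vecy$ ``can then be used to prove the following facts,'' which is precisely what you have carried out. Your reductions to the Euclidean triple product, BAC--CAB, and the Lagrange identity are all sound, and the sign in part~(5) is handled correctly.
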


The following corollaries define the Lorentzian angle $\eta(\vecx,\vecy)$ between
pairs of space-like vectors \vecx\ and \vecy, and provide some useful identities
involving Lorentzian angles and trigonometric or hyperbolic trigonometric
functions.

\begin{corollary} \label{timecrossisspace} Let \vecx\ and \vecy\ be linearly
independent positive time-like vectors in $\R^3$. Then $\vecx \otimes \vecy$ is
space-like, and $\| \xcrossy \| = - \| \vecx \| \| \vecy \| \sinh \eta (\vecx,
\vecy)$. \end{corollary}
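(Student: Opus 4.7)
The plan is to compute $(\xcrossy)\circ(\xcrossy)$ directly using part (6) of Lemma~\ref{crossprodfacts} with $\vecz=\vecx$ and $\vecw=\vecy$, then translate the resulting Gram determinant into hyperbolic functions via Corollary~\ref{cosheta}. Specifically, that identity immediately gives
\[
(\xcrossy)\circ(\xcrossy) = \det\begin{pmatrix} \vecx\circ\vecy & \vecx\circ\vecx \\ \vecy\circ\vecy & \vecy\circ\vecx \end{pmatrix} = (\vecx\circ\vecy)^2 - \|\vecx\|^2\|\vecy\|^2.
\]

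Next I would substitute $\vecx\circ\vecy = \|\vecx\|\|\vecy\|\cosh\eta(\vecx,\vecy)$ from Corollary~\ref{cosheta}, which is legal since \vecx\ and \vecy\ are positive time-like. The right-hand side then factors as $\|\vecx\|^2\|\vecy\|^2(\cosh^2\eta(\vecx,\vecy) - 1) = \|\vecx\|^2\|\vecy\|^2\sinh^2\eta(\vecx,\vecy)$. Because \vecx\ and \vecy\ are time-like, $\|\vecx\|^2$ and $\|\vecy\|^2$ are both negative, so their product is a positive real number; multiplying by $\sinh^2\eta(\vecx,\vecy)\geq 0$ shows that $(\xcrossy)\circ(\xcrossy)\geq 0$.

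To promote this to strict positivity (and hence to space-like rather than light-like), I would invoke the linear independence hypothesis together with the uniqueness clause of Corollary~\ref{cosheta}: if $\eta(\vecx,\vecy)=0$ then \vecx\ and \vecy\ would be positive scalar multiples of each other, contradicting linear independence. Hence $\sinh\eta(\vecx,\vecy)>0$, so $(\xcrossy)\circ(\xcrossy)>0$, which is the definition of \xcrossy\ being space-like.

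Finally, taking the positive square root yields $\|\xcrossy\| = \lnorm\vecx\lnorm\,\lnorm\vecy\lnorm\,\sinh\eta(\vecx,\vecy)$, and since $\|\vecx\|$ and $\|\vecy\|$ are positive imaginary with $\|\vecx\|\|\vecy\| = -\lnorm\vecx\lnorm\,\lnorm\vecy\lnorm$, this rearranges to the asserted formula $\|\xcrossy\| = -\|\vecx\|\|\vecy\|\sinh\eta(\vecx,\vecy)$. The only place that requires care is bookkeeping the imaginary-versus-real nature of the various norms so that the sign in the final formula comes out correctly; there is no substantive obstacle once part (6) of Lemma~\ref{crossprodfacts} is in hand.
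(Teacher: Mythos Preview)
Your proof is correct and follows essentially the same route as the paper: compute $\|\xcrossy\|^2$ via part (6) of Lemma~\ref{crossprodfacts}, substitute $\vecx\circ\vecy=\|\vecx\|\|\vecy\|\cosh\eta(\vecx,\vecy)$, reduce to $\|\vecx\|^2\|\vecy\|^2\sinh^2\eta(\vecx,\vecy)$, and take square roots with the sign determined by the norms being positive imaginary. You are slightly more explicit than the paper in invoking linear independence to rule out $\eta(\vecx,\vecy)=0$, but the argument is the same.
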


\begin{proof} We have \begin{align*} \| \xcrossy \|^2 & =  (\xcrossy) \circ
(\xcrossy) \\ & =  (\vecx \circ \vecy)^2 - \| \vecx \|^2 \| \vecy \|^2 \\ & =
\| \vecx \|^2 \| \vecy \|^2\cosh^2 \eta(\vecx,\vecy) - \| \vecx \|^2 \| \vecy
\|^2 \\ & =  \| \vecx \|^2 \| \vecy \|^2 \sinh^2 \eta(\vecx, \vecy).
\end{align*} Since $\eta(\vecx,\vecy) > 0$, and $\| \vecx \|$ and $\| \vecy \|$
are positive imaginary, on taking square roots of both sides we get \[ \|
\xcrossy \| = - \| \vecx \| \| \vecy \| \sinh \eta (\vecx, \vecy). \] The
right-hand side is real and positive, so \xcrossy\ is space-like. \end{proof}

\begin{corollary} \label{coseta} Let \vecx\ and \vecy\ be space-like vectors in
$\R^3$. If \xcrossy\ is time-like then $|\vecx \circ \vecy| < \| \vecx \| \|
\vecy \|$, and so there is a unique real number $\eta(\vecx,\vecy) \in (0,\pi)$
such that \[ \vecx \circ \vecy = \| \vecx \| \| \vecy \| \cos
\eta(\vecx,\vecy). \] \end{corollary}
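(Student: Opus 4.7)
The plan is to exploit the determinantal identity in Lemma~\ref{crossprodfacts}(6), applied to the case $\vecz = \vecx$ and $\vecw = \vecy$. This yields
\[
\|\xcrossy\|^2 = (\xcrossy)\circ(\xcrossy) = \det\begin{pmatrix} \vecx\circ\vecy & \vecx\circ\vecx \\ \vecy\circ\vecy & \vecy\circ\vecx \end{pmatrix} = (\vecx\circ\vecy)^2 - \|\vecx\|^2\|\vecy\|^2.
\]
So the Lorentzian norm of \xcrossy\ is expressible purely in terms of inner products of \vecx\ and \vecy.

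Next I would exploit the two hypotheses. Since \xcrossy\ is time-like, $\|\xcrossy\|^2 < 0$, so the displayed identity forces $(\vecx\circ\vecy)^2 < \|\vecx\|^2\|\vecy\|^2$. Since \vecx\ and \vecy\ are space-like, $\|\vecx\|^2 > 0$ and $\|\vecy\|^2 > 0$, so the right-hand side is positive and $\|\vecx\|\|\vecy\|$ is a positive real number. Taking square roots gives $|\vecx\circ\vecy| < \|\vecx\|\|\vecy\|$, which is the first claim.

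Dividing through, the real number $(\vecx\circ\vecy)/(\|\vecx\|\|\vecy\|)$ lies strictly between $-1$ and $1$. Since $\cos$ restricts to a bijection from $(0,\pi)$ onto $(-1,1)$, there is a unique $\eta(\vecx,\vecy) \in (0,\pi)$ with
\[
\cos\eta(\vecx,\vecy) = \frac{\vecx\circ\vecy}{\|\vecx\|\|\vecy\|},
\]
which rearranges to the required formula $\vecx\circ\vecy = \|\vecx\|\|\vecy\|\cos\eta(\vecx,\vecy)$.

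There is no real obstacle here: the proof is essentially a one-line application of Lemma~\ref{crossprodfacts}(6) followed by the observation that $\cos$ is a bijection on $(0,\pi) \to (-1,1)$. The only thing to be careful about is the sign bookkeeping, namely that space-like vectors have positive real norm (so $\|\vecx\|\|\vecy\|$ is genuinely a positive real, not imaginary) while the time-like assumption on \xcrossy\ is what flips the determinant to negative and produces the strict inequality.
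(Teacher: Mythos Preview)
Your proof is correct and is essentially the same as the paper's, just with the details spelled out. The paper's proof is a single line invoking the identity $\|\xcrossy\|^2 = (\vecx\circ\vecy)^2 - \|\vecx\|^2\|\vecy\|^2$ (already derived in the proof of Corollary~\ref{timecrossisspace} via Lemma~\ref{crossprodfacts}(6)), and you have reproduced exactly this derivation and drawn the conclusion in the same way.
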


\begin{proof} Follows immediately from the equality $\| \xcrossy \|^2 = (\vecx
\circ \vecy)^2 - \| \vecx \|^2 \| \vecy \|^2$. \end{proof}

\begin{corollary}\label{sineta} Let \vecx\ and \vecy\ be space-like vectors in
$\R^3$.  If \xcrossy\ is time-like, then \[ \lnorm \xcrossy \lnorm = \| \vecx
\| \| \vecy \| \sin \eta (\vecx,\vecy). \] \end{corollary}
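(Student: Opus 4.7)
The plan is to compute $\|\vecx \otimes \vecy\|^2$ directly from Lemma~\ref{crossprodfacts}(6) with the choice $\vecz = \vecx$ and $\vecw = \vecy$, which gives
\[
\|\vecx \otimes \vecy\|^2 = (\vecx \otimes \vecy) \circ (\vecx \otimes \vecy) = \det\begin{pmatrix} \vecx \circ \vecy & \vecx \circ \vecx \\ \vecy \circ \vecy & \vecy \circ \vecx \end{pmatrix} = (\vecx \circ \vecy)^2 - \|\vecx\|^2\|\vecy\|^2.
\]
This is essentially the Lorentzian analogue of the classical identity $|\vecx \times \vecy|^2 = |\vecx|^2|\vecy|^2 - (\vecx \cdot \vecy)^2$, with a sign flip.

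Next I would substitute the expression for $\vecx \circ \vecy$ from Corollary~\ref{coseta}, namely $\vecx \circ \vecy = \|\vecx\|\|\vecy\|\cos\eta(\vecx,\vecy)$, to obtain
\[
\|\vecx \otimes \vecy\|^2 = \|\vecx\|^2\|\vecy\|^2\cos^2\eta(\vecx,\vecy) - \|\vecx\|^2\|\vecy\|^2 = -\|\vecx\|^2\|\vecy\|^2\sin^2\eta(\vecx,\vecy),
\]
using the Pythagorean identity. Because $\vecx$ and $\vecy$ are space-like, $\|\vecx\|$ and $\|\vecy\|$ are positive real numbers, so the right-hand side is manifestly non-positive, consistent with the hypothesis that $\vecx \otimes \vecy$ is time-like.

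Finally, since $\vecx \otimes \vecy$ is time-like, its Lorentzian norm is positive imaginary, so $\|\vecx \otimes \vecy\|^2 = -\lnorm \vecx \otimes \vecy \lnorm^2$. Equating this with the formula above yields $\lnorm \vecx \otimes \vecy \lnorm^2 = \|\vecx\|^2\|\vecy\|^2\sin^2\eta(\vecx,\vecy)$. Taking the positive square root (valid because $\eta(\vecx,\vecy) \in (0,\pi)$ by Corollary~\ref{coseta}, so $\sin\eta(\vecx,\vecy) > 0$, and the Lorentzian norms $\|\vecx\|, \|\vecy\|$ are positive reals) gives the desired identity. The only real care needed is sign bookkeeping between imaginary Lorentzian norms and their moduli $\lnorm \cdot \lnorm$; there is no genuine obstacle beyond this.
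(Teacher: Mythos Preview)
Your proof is correct and follows essentially the same approach as the paper: compute $\|\xcrossy\|^2$ via Lemma~\ref{crossprodfacts}(6), substitute $\vecx\circ\vecy = \|\vecx\|\|\vecy\|\cos\eta(\vecx,\vecy)$ from Corollary~\ref{coseta}, and then use that $\xcrossy$ is time-like to convert to $\lnorm\xcrossy\lnorm$. You have in fact been more explicit than the paper about the sign bookkeeping and why the positive square root is the correct one.
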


\begin{proof} By Corollary~\ref{coseta}, since \xcrossy\ is time-like, $\vecx
\circ \vecy = \| \vecx \| \| \vecy \| \cos \eta(\vecx,\vecy)$.  Then by
Lemma~\ref{crossprodfacts}, \begin{align*} \| \xcrossy \|^2 & = (\vecx \circ
\vecy)^2 - \| \vecx \|^2 \| \vecy \|^2 \\ & =  \| \vecx \|^2 \| \vecy
\|^2\cos^2 \eta(\vecx,\vecy) - \| \vecx \|^2 \| \vecy \|^2 \\ & =  -\| \vecx
\|^2 \| \vecy \|^2 \sin^2 \eta(\vecx,\vecy), \end{align*} and the conclusion
follows from \xcrossy\ being time-like. \end{proof}

\subsection{The hyperboloid model}

We define $H^n$, the hyperboloid model of hyperbolic $n$-space, to be the set
of positive time-like vectors of norm $i$ in Lorentzian $(n+1)$-space: \[ H^n =
\{ \vecx \in \R^{n+1} : \| \vecx \|^2 = -1 \mbox{ and } x_1 > 0 \}. \] We are
disregarding the negative time-like vectors of norm $i$ so as to obtain a
connected set. The hyperbolic plane $H^2$ is thus one sheet of the two-sheeted
hyperboloid $x_1^2 = x_2^2 + x_3^2 + 1$. Let \vecx\ and \vecy\ be vectors in
$H^n$ and let $\eta(\vecx,\vecy)$ be the Lorentzian time-like angle between
them. The hyperbolic distance between \vecx\ and \vecy\ is defined to be \[
d_H(\vecx,\vecy) = \eta(\vecx,\vecy). \] Since $\vecx \circ \vecy = \| \vecx \|
\| \vecy \| \cosh \eta(\vecx,\vecy)$, and the norms of \vecx\ and \vecy\ are
equal to $i$, we have the explicit formula \[ d_H(\vecx,\vecy) =
\cosh^{-1}(-\vecx \circ \vecy). \]

\begin{propn} The function $d_H$ is a distance function. \end{propn}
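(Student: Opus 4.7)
My plan is to verify the three distance-function axioms for $d_H$. Symmetry is immediate from $\vecx\circ\vecy=\vecy\circ\vecx$. For non-negativity with equality iff $\vecx=\vecy$, I would invoke Corollary~\ref{cosheta}: $\eta(\vecx,\vecy)\geq 0$, with equality precisely when one vector is a positive scalar multiple of the other; since both $\vecx$ and $\vecy$ satisfy $\|\,\cdot\,\|^2=-1$, any such scalar must equal $1$, forcing $\vecx=\vecy$. The substantive work is the triangle inequality.

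To handle the triangle inequality, I would set $a=-\vecx\circ\vecy$, $b=-\vecy\circ\vecz$, $c=-\vecx\circ\vecz$, each at least $1$, with $a=\cosh d_H(\vecx,\vecy)$ and similarly for $b,c$. By monotonicity of $\cosh$ on $[0,\infty)$ together with the identity $\cosh(\alpha+\beta)=\cosh\alpha\cosh\beta+\sinh\alpha\sinh\beta$, the inequality $d_H(\vecx,\vecz)\leq d_H(\vecx,\vecy)+d_H(\vecy,\vecz)$ is equivalent to
\[
c\leq ab+\sqrt{(a^2-1)(b^2-1)}.
\]
This is automatic when $c\leq ab$; otherwise, squaring and simplifying reduces it to $a^2+b^2+c^2-2abc-1\leq 0$. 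A routine cofactor expansion identifies this last quantity with $\det G$, where $G$ is the $3\times 3$ Gram matrix of $\vecx,\vecy,\vecz$ for the Lorentzian form $\circ$.

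It therefore remains to show $\det G\leq 0$. When $\vecx,\vecy,\vecz$ are linearly dependent, $G$ has rank at most two and $\det G=0$. Otherwise these vectors span a $3$-dimensional time-like subspace $V\subseteq\R^{n+1}$, time-like because it contains the time-like vector $\vecx$. I would apply Proposition~\ref{transitive} in $\R^{n+1}$ to produce $B\in PO(1,n)$ sending $V$ onto $\R^3\times\{\veczero\}$; since $B$ preserves the Lorentzian form it preserves $G$, so without loss of generality $\vecx,\vecy,\vecz\in\R^3$. Writing $A$ for the $3\times 3$ matrix whose columns are $\vecx,\vecy,\vecz$ and $J=\operatorname{diag}(-1,1,1)$, one gets $G=A^T J A$ and hence $\det G=\det(J)(\det A)^2=-(\det A)^2\leq 0$, as required.

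The hard part is the triangle inequality, and the Gram-determinant route is attractive because it converts the problem into a one-line linear-algebra calculation in $\R^3$, sidestepping the sign tracking one would otherwise face in applying a Lorentzian Cauchy--Schwartz estimate directly to the space-like cross products $\vecx\otimes\vecy$ and $\vecy\otimes\vecz$. The only geometric input beyond elementary algebra is the $PO(1,n)$-transitivity statement of Proposition~\ref{transitive}.
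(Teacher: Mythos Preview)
Your argument is correct, and the symmetry and non-degeneracy parts match the paper exactly. For the triangle inequality, however, you take a genuinely different route. The paper also reduces to $\R^3$ via Proposition~\ref{transitive}, but then works with the Lorentzian cross product: it shows that $(\xcrossy)\otimes(\ycrossz)$ is a scalar multiple of $\vecy$, hence time-like or zero, and invokes Corollary~\ref{coseta} to obtain $|(\xcrossy)\circ(\ycrossz)|\leq\|\xcrossy\|\,\|\ycrossz\|$; this inequality is then fed directly into the addition formula for $\cosh$. In your notation that inequality reads $|c-ab|\leq\sqrt{(a^2-1)(b^2-1)}$, which is exactly the squared form of what you prove via the Gram determinant, so the two arguments are algebraically equivalent at the core. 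Your Gram-matrix approach is more self-contained---it bypasses the cross-product lemmas (Lemma~\ref{crossprodfacts}, Corollaries~\ref{timecrossisspace} and~\ref{coseta}) entirely and would adapt to other signatures---whereas the paper's route reuses machinery it develops anyway for the later hyperbolic trigonometry.
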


\begin{proof} Since $\vecx \circ \vecy = \vecy \circ \vecx$, $d_H$ is
symmetric.  By the definition of $\eta(\vecx,\vecy)$ in
Corollary~\ref{cosheta}, $d_H$ is non-negative. If $\vecx = \vecy$, then
$\cosh^{-1}(-\vecx \circ \vecy) = \cosh^{-1}(1) = 0$, so $d_H(\vecx,\vecy) =
0$.  Conversely, if $d_H(\vecx,\vecy) = 0$, then by the definition of
$\eta(\vecx,\vecy)$, the vectors \vecx\ and \vecy\ must be positive scalar multiples of
each other.  But \vecx\ and \vecy\ have the same norm, which implies $\vecx =
\vecy$.

For the triangle inequality, we need to show that for all $\vecx$, \vecy\ and
\vecz\ in $H^n$, \[ d_H(\vecx,\vecz) \leq d_H(\vecx,\vecy) + d_H(\vecy,\vecz).
\] By Proposition~\ref{transitive}, there exists a Lorentzian matrix $A$ such
that $A\vecx$, $A\vecy$ and $A\vecz$ are in the subspace of $\R^{n+1}$ spanned
by $\mathbf{e}_1$, $\mathbf{e}_2$ and $\mathbf{e}_3$.  So we may, without loss
of generality, assume that $n=2$. This allows us to use results about the
Lorentzian cross product. By Corollary~\ref{timecrossisspace}, \xcrossy\ and
\ycrossz\ are space-like, and \[ \| \xcrossy \| = \sinh\eta(\vecx,\vecy),
\hspace{5mm} \| \ycrossz \| = \sinh \eta(\vecy,\vecz). \] We have, also,
\[ (\xcrossy) \otimes (\ycrossz)  =  ((\xcrossy)\circ \vecy)\vecz
- (\vecz\circ(\xcrossy))\vecy  =  - (\vecz\circ(\xcrossy))\vecy,
\] so the vectors \vecy\ and $(\xcrossy) \otimes (\ycrossz)$ are
linearly dependent.  Thus, $(\xcrossy) \otimes (\ycrossz)$ is either zero
or time-like, so we have by Corollary~\ref{coseta} that \[
|(\xcrossy)\circ(\ycrossz)| \leq \|\xcrossy\| \|\ycrossz\|. \] Combining all
these facts, we obtain \begin{align*} \cosh(\eta(\vecx,\vecy) +
\eta(\vecy,\vecz)) & =  \cosh\eta(\vecx,\vecy)\cosh\eta(\vecy,\vecz) +
\sinh\eta(\vecx,\vecy)\sinh\eta(\vecy,\vecz) \\ & =  (-\vecx \circ
\vecy)(-\vecy \circ \vecz) + \| \xcrossy \| \| \ycrossz \| \\ & \geq (\vecx
\circ \vecy)(\vecy \circ \vecz) +  ( \xcrossy )\circ( \ycrossz )\\ & =  (\vecx
\circ \vecy)(\vecy \circ \vecz) + ((\vecx \circ \vecz)(\vecy \circ \vecy) -
(\vecx \circ \vecy)(\vecy \circ \vecz)) \\ & =  -\vecx \circ \vecz \\ & =
\cosh\eta(\vecx,\vecz). \end{align*} Since $\cosh$ is monotonic increasing, we
infer that $\eta(\vecx,\vecz) \leq \eta(\vecx,\vecy) + \eta(\vecy,\vecz)$, as
required. \end{proof}

The topology induced on $H^n$ by the distance function $d_H$ is the same as the
subspace topology of $H^n$, where $H^n$ is regarded as a subspace of $\R^{n+1}$
with its usual metric topology.

\subsubsection{Hyperbolic geodesics}

We now define hyperbolic lines, and show that the hyperbolic lines of $H^n$
are its geodesics.  A hyperbolic line of $H^n$ is the non-empty intersection of
$H^n$ with a two-dimensional vector subspace of $\R^{n+1}$. If \vecx\ and
\vecy\ are distinct vectors in $H^n$ then $L(\vecx,\vecy) = H^n \cap
\Span\{ \vecsxy \}$ is the unique hyperbolic line containing \vecx\ and
\vecy. Observe that $L(\vecx,\vecy)$ is one branch of a hyperbola.  We first
establish a sufficient condition for points of $H^n$ to lie on the same
hyperbolic line.

\begin{lemma} \label{collinear} If \vecx, \vecy\ and \vecz\ are points of $H^n$
and \[ \eta(\vecx,\vecy) + \eta(\vecy,\vecz) = \eta(\vecx,\vecy), \] then \vecx,
\vecy\ and \vecz\ are hyperbolically collinear, that is, there is a hyperbolic
line of $H^n$ containing \vecx, \vecy\ and \vecz. \end{lemma}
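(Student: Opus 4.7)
The plan rests on examining the equality case in the triangle inequality proof. First, dispose of the trivial case in which two of $\vecx$, $\vecy$, $\vecz$ coincide: any pair of points of $H^n$ lies on a hyperbolic line, so the conclusion is immediate. Assume, then, that the three vectors are pairwise distinct.

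I next reduce to $n = 2$. By Proposition~\ref{transitive} there exists $A \in PO(1,n)$ sending $\vecx$, $\vecy$, $\vecz$ into $\Span\{\vece_1,\vece_2,\vece_3\}$. Since $A$ preserves the Lorentzian inner product, it preserves all the $\eta$-values (so the hypothesis is unchanged) and also preserves linear independence of vectors (so the conclusion is unchanged). I may therefore work entirely inside $\R^3$, where the Lorentzian cross product of Lemma~\ref{crossprodfacts} is available.

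Now I retrace the triangle inequality proof. The only non-identity step there was the inequality $(\xcrossy) \circ (\ycrossz) \leq \|\xcrossy\|\|\ycrossz\|$, which was justified by computing $(\xcrossy) \otimes (\ycrossz) = -(\vecz \circ (\xcrossy))\vecy$ via Lemma~\ref{crossprodfacts}(5) and the identity $\vecy \circ (\xcrossy) = 0$, and then appealing to Corollary~\ref{coseta}. Suppose, for contradiction, that $\vecz \circ (\xcrossy) \neq 0$. Then $(\xcrossy) \otimes (\ycrossz)$ is a nonzero scalar multiple of the time-like vector $\vecy$, hence time-like; Corollary~\ref{coseta}, applied to the space-like vectors $\xcrossy$ and $\ycrossz$ (space-like by Corollary~\ref{timecrossisspace}), would then give the \emph{strict} inequality $|(\xcrossy) \circ (\ycrossz)| < \|\xcrossy\|\|\ycrossz\|$. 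Propagating this through the computation yields $\cosh(\eta(\vecx,\vecy) + \eta(\vecy,\vecz)) > \cosh \eta(\vecx,\vecz)$, and since $\cosh$ is strictly increasing on $[0,\infty)$ this contradicts the hypothesis.

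Therefore $\vecz \circ (\xcrossy) = 0$. By Lemma~\ref{crossprodfacts}(3) this is exactly the statement that the $3 \times 3$ determinant with rows $\vecx$, $\vecy$, $\vecz$ vanishes, so the three vectors are linearly dependent in $\R^3$. Since $\vecx$ and $\vecy$ are distinct positive time-like vectors of common norm $i$, they are linearly independent; hence $\vecz \in \Span\{\vecsxy\}$, and so $\vecz \in H^n \cap \Span\{\vecsxy\} = L(\vecx, \vecy)$, which is the desired hyperbolic line through all three points. The only delicate step is the bookkeeping in the equality-case analysis of the triangle inequality; once one knows which inequality is the sole source of slack, everything else follows from identities already catalogued for the Lorentzian cross product.
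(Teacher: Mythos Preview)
Your proof is correct and follows essentially the same route as the paper: reduce to $n=2$ via Proposition~\ref{transitive}, use the identity $(\xcrossy)\otimes(\ycrossz) = -(\vecz\circ(\xcrossy))\vecy$, invoke Corollary~\ref{coseta} to rule out the time-like case, and conclude linear dependence via the determinant formula in Lemma~\ref{crossprodfacts}. Your explicit treatment of the degenerate case where two of the points coincide is a worthwhile addition, since Corollary~\ref{timecrossisspace} requires linearly independent inputs and the paper's proof silently assumes this.
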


\begin{proof} Since $\Span\{\vecx,\vecy,\vecz\}$ has dimension at most 3,
we may assume that $n = 2$ and use Lorentzian cross product results.  From the
proof that $d_H$ is a distance function, $ \eta(\vecx,\vecy) +
\eta(\vecy,\vecz) = \eta(\vecx,\vecy)$ if and only if
\begin{equation}\label{collineareqn}(\xcrossy)\circ(\ycrossz) = \|\xcrossy\|
\|\ycrossz\|.\end{equation} Also from that proof, the vector
$(\xcrossy)\otimes(\ycrossz)$ is either zero or time-like. But if it were
time-like, equation~(\ref{collineareqn}) would contradict
Corollary~\ref{coseta}.  So $(\xcrossy)\otimes(\ycrossz)$ is $\veczero$.  Now,
since \[ (\xcrossy)\otimes(\ycrossz) = -((\xcrossy)\circ\vecz)\vecy \] and
\vecy\ is time-like, we have that $(\xcrossy)\circ\vecz = 0$. Therefore, by
Lemma~\ref{crossprodfacts}, the vectors \vecx, \vecy\ and \vecz\ are linearly
dependent.  So \vecx, \vecy\ and \vecz\ lie in a two-dimensional subspace of
$\R^{n+1}$, and are thus hyperbolically collinear.\end{proof}

The next proposition relates a geodesic curve $\vecalpha$ to a parametrisation
using Lorentz orthonormal vectors, and to a differential equation.  Two vectors
\vecx\ and \vecy\ in $\R^{n+1}$ are said to be Lorentz orthonormal if $\| \vecx
\|^2 = -1$, $\vecx \circ \vecy = 0$, and $\| \vecy \|^2 = 1$. This definition
means that \vecx\ must be time-like and \vecy\ space-like.  Note that
$\mathbf{e}_1$ and $\mathbf{e}_2$ are Lorentz orthonormal.

\begin{propn} \label{geodtfae} Let $\vecalpha:[a,b] \rightarrow H^n$ be a
curve. Then the following are equivalent:\begin{enumerate} \item The curve
$\vecalpha$ is a geodesic curve. \item There are Lorentz orthonormal vectors
\vecx, \vecy\ in $\R^{n+1}$ such that \[ \vecalpha(t) = (\cosh(t-a))\vecx +
(\sinh(t-a))\vecy. \] \item The curve $\vecalpha$ satisfies the differential
equation $\vecalpha'' - \vecalpha = \veczero$. \end{enumerate} \end{propn}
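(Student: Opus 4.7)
The plan is to prove $(2) \Leftrightarrow (3)$ by an elementary ODE argument, $(2) \Rightarrow (1)$ by direct calculation, and $(1) \Rightarrow (2)$ as the main step via a normalisation argument using Proposition~\ref{transitive}.

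For $(2) \Rightarrow (3)$ one differentiates the given formula twice and reads off $\vecalpha'' = \vecalpha$. For $(3) \Rightarrow (2)$, the general solution of the linear ODE $\vecalpha'' = \vecalpha$ is $\vecalpha(t) = \cosh(t-a)\vecx + \sinh(t-a)\vecy$ with $\vecx = \vecalpha(a)$ and $\vecy = \vecalpha'(a)$. That $\{\vecx, \vecy\}$ is Lorentz orthonormal follows from the constraint $\vecalpha \circ \vecalpha \equiv -1$ (since $\vecalpha$ maps into $H^n$): differentiating once gives $\vecalpha \circ \vecalpha' = 0$, and differentiating a second time combined with $\vecalpha'' = \vecalpha$ yields $\vecalpha' \circ \vecalpha' = 1$; evaluating at $t=a$ gives $\vecx \circ \vecx = -1$, $\vecx \circ \vecy = 0$, $\vecy \circ \vecy = 1$.

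For $(2) \Rightarrow (1)$, Lorentz orthonormality combined with the subtraction formula for $\cosh$ gives
\[
-\vecalpha(s) \circ \vecalpha(t) = \cosh(s-a)\cosh(t-a) - \sinh(s-a)\sinh(t-a) = \cosh(s-t),
\]
so $d_H(\vecalpha(s), \vecalpha(t)) = |s-t|$. One also verifies that $\vecalpha(t) \in H^n$: its Lorentzian norm squared is $-1$, and by continuity starting from the positive time-like vector $\vecalpha(a) = \vecx$, $\vecalpha(t)$ remains on the positive sheet. Injectivity is automatic since $|s-t| \neq 0$ for $s \neq t$.

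The main step is $(1) \Rightarrow (2)$. By Proposition~\ref{transitive} in Lorentzian $(n+1)$-space there is $A \in PO(1,n)$ mapping the time-like line through $\vecalpha(a)$ to $\Span\{\vece_1\}$; since the norm is preserved and $A$ sends positive time-like to positive time-like, in fact $A\vecalpha(a) = \vece_1$. Because $A$ preserves $d_H$, we may replace $\vecalpha$ by $A\vecalpha$ and assume $\vecalpha(a) = \vece_1$. The equation $d_H(\vece_1, \vecalpha(t)) = |t-a|$ forces $(\vecalpha(t))_1 = \cosh(t-a)$, and then $\|\vecalpha(t)\|^2 = -1$ requires the remaining components to have Euclidean length $|\sinh(t-a)|$; write them as $\sinh(t-a)\vecu(t)$ with $\vecu(t)$ a Euclidean unit vector orthogonal to $\vece_1$. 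The main obstacle is to show $\vecu(t)$ is independent of $t$. For this, expand $-\vecalpha(s) \circ \vecalpha(t) = \cosh(s-t)$ using the subtraction identity and cancel the common factor $\sinh(s-a)\sinh(t-a)$ (valid for $s, t \neq a$), obtaining $\vecu(s) \cdot \vecu(t) = 1$; since both are Euclidean unit vectors this forces $\vecu(s) = \vecu(t)$. Calling the constant value $\vecu$, and pulling back by $A^{-1} \in PO(1,n)$, we obtain the desired form with $\vecx = \vecalpha(a)$ and $\vecy = A^{-1}\vecu$, which remain Lorentz orthonormal since $A^{-1}$ preserves the Lorentzian inner product.
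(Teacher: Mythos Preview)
Your proof is correct. The implications $(2)\Leftrightarrow(3)$ and $(2)\Rightarrow(1)$ match the paper's treatment essentially line for line. The genuine difference is in $(1)\Rightarrow(2)$.

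The paper first invokes Lemma~\ref{collinear}: from $\eta(\vecalpha(a),\vecalpha(t))+\eta(\vecalpha(t),\vecalpha(b))=\eta(\vecalpha(a),\vecalpha(b))$ it deduces that the entire image of $\vecalpha$ lies in a single hyperbolic line, reduces to $n=1$, and then the parametrisation is forced because $H^1$ is one branch of a hyperbola. You instead stay in $H^n$, normalise so that $\vecalpha(a)=\vece_1$, write the space-like part as $\sinh(t-a)\vecu(t)$, and use the full two-parameter geodesic condition $-\vecalpha(s)\circ\vecalpha(t)=\cosh(s-t)$ to force $\vecu(s)\cdot\vecu(t)=1$, hence $\vecu$ constant. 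Your route is more self-contained in that it bypasses Lemma~\ref{collinear} (and therefore the cross-product machinery behind it), at the cost of exploiting the geodesic condition for \emph{all} pairs $s,t$ rather than just pairs involving the endpoints. The paper's route, on the other hand, makes the collinearity of the image explicit as a geometric fact before parametrising, which feeds more naturally into Corollary~\ref{geodlines}. Both arguments are clean; yours is slightly shorter.
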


\begin{proof} If $A$ is a Lorentzian matrix then $(A\vecalpha)' = A\vecalpha'$,
so $\vecalpha$ satisfies the differential equation if and only if $A\vecalpha$
does.  We may thus transform $\vecalpha$ by a Lorentzian matrix.

Suppose $\vecalpha$ is a geodesic curve, and $t \in [a,b]$. We prove the second
statement holds.  Now, \begin{align*} \eta(\vecalpha(a),\vecalpha(b)) & =  b -
a \\ & =  (t - a) + (b - t) \\ & =  \eta(\vecalpha(a),\vecalpha(t)) +
\eta(\vecalpha(t),\vecalpha(b)). \end{align*} By Lemma~\ref{collinear},
$\vecalpha(a)$, $\vecalpha(t)$ and $\vecalpha(b)$ are hyperbolically
collinear.  So $\vecalpha([a,b])$ is contained in a hyperbolic line  of $H^n$.
Thus we may assume that $n = 1$, and so consider $H^1$ as the branch of the
hyperbola $x_1^2 = 1 + x_2^2$ on which $x_1 > 0$.  By
Proposition~\ref{transitive}, we may apply a Lorentz transformation to map
$\vecalpha(a)$ to $\mathbf{e}_1$, and so we may assume that $\vecalpha(a) =
\mathbf{e}_1$. Then, \[ \mathbf{e}_1 \cdot \vecalpha(t)  =
(-\vecalpha(a))\circ \vecalpha(t)  =  \cosh\eta(\vecalpha(a),\vecalpha(t)) =
\cosh(t-a). \] {}From the equation $x_1^2 = 1 + x_2^2$, we have $\mathbf{e}_2
\cdot \vecalpha(t) = \pm\sinh(t-a)$.  Since $\vecalpha$ is continuous, either $\mathbf{e}_2 \cdot \vecalpha(t) = \sinh(t-a)$ for all $t$, or
$\mathbf{e}_2 \cdot \vecalpha(t) = -\sinh(t-a)$ for all $t$. In the second
case, we may reflect in the $x_2$-axis, and so we may assume that $\vecalpha$
has the form \[ \vecalpha(t) = (\cosh(t-a))\mathbf{e}_1 + (\sinh(t-a))\mathbf{e}_2.
\]

Now suppose there are Lorentz orthonormal vectors \vecx\ and \vecy\ such that
\[ \vecalpha(t) = (\cosh(t-a))\vecx + (\sinh(t-a))\vecy. \] Let $s$ and $t$ be such
that $a \leq s \leq t \leq b$.  Then \begin{align*} \cosh
\eta(\vecalpha(s),\vecalpha(t)) & =  - \vecalpha(s) \circ \vecalpha(t) \\ & =
-(-\cosh(s-a)\cosh(t-a) + \sinh(s-a)\sinh(t-a)) \\ & =  \cosh((t-a)-(s-a)) \\ &
=  \cosh(t-s), \end{align*} so $\eta(\vecalpha(s),\vecalpha(t)) = t-s$.
Therefore $\vecalpha$ is a geodesic curve.

If the second statement holds then just differentiate to find that $\vecalpha''
- \vecalpha = \veczero$.

Finally, suppose that $\vecalpha'' - \vecalpha = \veczero$. Then \begin{equation}
\label{threetotwo} \vecalpha(t) = (\cosh(t-a))\vecalpha(a) +
(\sinh(t-a))\vecalpha'(a) \end{equation} is a solution of this differential
equation which satisfies the initial value conditions at $t = a$. By
uniqueness of such solutions, $\vecalpha(t)$ must have the
form~(\ref{threetotwo}). We now just need to prove that $\vecalpha(a)$ and
$\vecalpha'(a)$ are Lorentz orthonormal. By differentiating the equation
$\vecalpha(t) \circ \vecalpha(t) = -1$ we obtain $\vecalpha(t) \circ
\vecalpha'(t)= 0$ for all $t$. In particular, $\vecalpha(a) \circ \vecalpha'(a)
= 0$. Also, \[ \| \vecalpha(t) \|^2  =  \vecalpha(t) \circ \vecalpha(t)  =
\cosh^2(t-a)\|\vecalpha(a)\|^2 + \sinh^2(t-a)\| \vecalpha'(a)\|^2.
\] Since $\vecalpha$ maps into $H^2$, $\|\vecalpha(a)\|^2 = \|
\vecalpha(t) \|^2 = -1$, and so $\| \vecalpha'(a)\|^2 = 1$.  This completes the
proof. \end{proof}

The next proposition uses these results to characterise the geodesic lines of
$H^n$.

\begin{propn} \label{geodeqn} A function $\vecl: \R \rightarrow H^n$ is a
geodesic line if and only if there are Lorentz orthonormal vectors \vecx\ and
\vecy\ such that \[ \vecl(t) = (\cosh t )\vecx + (\sinh t)\vecy. \]
\end{propn}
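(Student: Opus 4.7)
The plan is to leverage Proposition~\ref{geodtfae} by packaging its local information through the ODE characterisation (item~3), which is by its nature a local condition but has a unique global solution once initial data is fixed. This lets us pass cleanly from the local property of being distance-preserving on some neighbourhood of each point to a single global formula on all of $\R$.

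For the forward direction, suppose $\vecl \colon \R \to H^n$ is a geodesic line. By definition, for each $t_0 \in \R$ there exists an interval $[a,b] \ni t_0$ on which $\vecl$ restricts to a geodesic curve, so by Proposition~\ref{geodtfae} the function $\vecl$ satisfies the ODE $\vecl'' - \vecl = \veczero$ on $[a,b]$. Since $t_0$ is arbitrary, $\vecl$ satisfies this linear constant-coefficient ODE on all of $\R$, and by uniqueness of solutions with the initial data at $t=0$ we obtain
\[
\vecl(t) = (\cosh t)\,\vecl(0) + (\sinh t)\,\vecl'(0).
\]
Set $\vecx = \vecl(0)$ and $\vecy = \vecl'(0)$. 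Because $\vecl$ maps into $H^n$, we have $\vecl(t) \circ \vecl(t) = -1$ for all $t$, giving $\|\vecx\|^2 = -1$ directly. Differentiating this identity once and evaluating at $t = 0$ yields $\vecx \circ \vecy = 0$; differentiating a second time and using $\vecl'' = \vecl$ gives $\|\vecl'(t)\|^2 = -\vecl(t) \circ \vecl(t) = 1$, so $\|\vecy\|^2 = 1$. Thus $\vecx, \vecy$ are Lorentz orthonormal, as required.

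For the converse, suppose $\vecl(t) = (\cosh t)\vecx + (\sinh t)\vecy$ with $\vecx, \vecy$ Lorentz orthonormal, and fix any $a \in \R$. Define
\[
\vecx_a = (\cosh a)\vecx + (\sinh a)\vecy, \qquad \vecy_a = (\sinh a)\vecx + (\cosh a)\vecy.
\]
A direct computation using $\|\vecx\|^2 = -1$, $\|\vecy\|^2 = 1$, $\vecx \circ \vecy = 0$ and $\cosh^2 a - \sinh^2 a = 1$ shows that $\vecx_a, \vecy_a$ are Lorentz orthonormal, and the hyperbolic addition formulae give $\vecl(t) = (\cosh(t-a))\vecx_a + (\sinh(t-a))\vecy_a$. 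Hence on every interval $[a,b]$ with $b > a$, $\vecl|_{[a,b]}$ has the form specified in Proposition~\ref{geodtfae}(2), so it is a geodesic curve. Therefore $\vecl$ is locally distance-preserving, i.e.\ a geodesic line. The main subtlety lies in the forward direction, namely promoting the local interval-by-interval conclusions of Proposition~\ref{geodtfae} to a single global statement; the ODE viewpoint resolves this cleanly, since the equation $\vecl'' = \vecl$ is a pointwise condition and linear ODE theory guarantees a unique global solution.
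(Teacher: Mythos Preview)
Your proof is correct and follows essentially the same approach as the paper: both directions pivot on Proposition~\ref{geodtfae}, with the forward direction handled identically via the ODE $\vecl'' - \vecl = \veczero$ and identification of $\vecl(0), \vecl'(0)$ as Lorentz orthonormal. The only minor difference is in the converse, where the paper invokes item~(3) of Proposition~\ref{geodtfae} (the ODE is manifestly satisfied, hence every restriction is a geodesic curve), while you invoke item~(2) directly by rewriting $\vecl$ in the shifted form via hyperbolic addition formulae; both routes are equally valid, the paper's being slightly shorter.
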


\begin{proof} Suppose such vectors \vecx\ and \vecy\ exist.  Then $\vecl$
satisfies the differential equation $\vecl'' - \vecl = \veczero$.  By
Proposition~\ref{geodtfae}, the restriction of $\vecl$ to any interval $[a,b]$
is a geodesic curve, and so $\vecl$ is a geodesic line.

For the converse, suppose $\vecl$ is a geodesic line.  Then $\vecl''  - \vecl =
\veczero$ so, as in the proof of Proposition~\ref{geodtfae}, \[ \vecl(t) = (\cosh
t)\vecl(0) + (\sinh t)\vecl'(0). \] The vectors $\vecl(0)$ and $\vecl'(0)$ are
Lorentz orthonormal by the same argument as in the proof of
Proposition~\ref{geodtfae}. \end{proof}

\begin{corollary} \label{geodlines} The geodesics of $H^n$ are its hyperbolic
lines. \end{corollary}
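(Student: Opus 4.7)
The plan is to prove both inclusions by leveraging Proposition~\ref{geodeqn}, which provides the explicit form of any geodesic line as $\vecl(t) = (\cosh t)\vecx + (\sinh t)\vecy$ for some Lorentz orthonormal pair $\vecx, \vecy$. The central observation I want to establish is that such a parametrisation sweeps out precisely the hyperbolic line $H^n \cap \Span\{\vecx, \vecy\}$, so that geodesics and hyperbolic lines coincide as subsets of $H^n$.

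For the forward direction, I would start from a geodesic line $\vecl$, apply Proposition~\ref{geodeqn} to obtain Lorentz orthonormal $\vecx, \vecy$ as above, and note the obvious inclusion $\vecl(\R) \subseteq H^n \cap \Span\{\vecx, \vecy\}$. To upgrade this to equality, I would expand an arbitrary $\vecz = a\vecx + b\vecy$ in the hyperbolic line in the basis $\{\vecx, \vecy\}$; Lorentz orthonormality then yields $\|\vecz\|^2 = -a^2 + b^2 = -1$, i.e.\ $a^2 - b^2 = 1$. Provided I can show that positive time-likeness of $\vecz$ forces $a > 0$, the identity $(a, b) = (\cosh t, \sinh t)$ produces a unique $t \in \R$ with $\vecz = \vecl(t)$.

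For the reverse direction, given a hyperbolic line $L = H^n \cap V$ with $V$ a two-dimensional subspace of $\R^{n+1}$, I would pick $\vecx \in L$ (so $\vecx$ is positive time-like with $\|\vecx\|^2 = -1$), take any $\vecu \in V$ linearly independent of $\vecx$, and perform a Lorentzian Gram--Schmidt step by setting $\vecv := \vecu + (\vecu \circ \vecx)\vecx$. Then $\vecv \circ \vecx = 0$ and $\vecv \neq \veczero$, so Proposition~\ref{orthog} forces $\vecv$ to be space-like; setting $\vecy := \vecv/\|\vecv\|$ produces a Lorentz orthonormal basis of $V$. Proposition~\ref{geodeqn} then guarantees that $\vecl(t) := (\cosh t)\vecx + (\sinh t)\vecy$ is a geodesic line, and the same hyperbola-parametrisation argument identifies its image with $L$.

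The main obstacle I anticipate is the sign analysis step: checking that $a > 0$ is equivalent to $\vecz$ being positive time-like. This forces me to inspect the first Euclidean coordinate $z_1 = a x_1 + b y_1$. Using $\vecx \circ \vecy = 0$ together with the Euclidean Cauchy--Schwartz inequality applied to the spatial parts of $\vecx$ and $\vecy$, I expect to derive $|y_1| < x_1$, which combined with $a^2 = 1 + b^2 \geq 1$ implies that $z_1$ has the same sign as $a$. Once this is in hand, the standard parametrisation $(\cosh t, \sinh t)$ of the branch $a \geq 1$ of the hyperbola $a^2 - b^2 = 1$ closes both directions simultaneously.
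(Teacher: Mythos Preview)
Your argument is correct. The forward direction matches the paper's in spirit, though you are more careful: the paper simply asserts that the image of $\vecl$ equals $H^n \cap \Span\{\vecx,\vecy\}$, whereas you actually verify surjectivity via the sign analysis $|y_1| < x_1$, which is sound (from $\vecx \circ \vecy = 0$ and Cauchy--Schwartz on the spatial coordinates one gets $y_1^2 \leq x_1^2 - 1 < x_1^2$, and then $|a| > |b|$ forces $\operatorname{sgn}(z_1) = \operatorname{sgn}(a)$).

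The genuine difference is in the converse direction. The paper does not build a Lorentz orthonormal basis by hand; instead it invokes the transitive action of $PO(1,n)$ on two-dimensional time-like subspaces (Proposition~\ref{transitive}, as already used in Proposition~\ref{geodtfae}) to reduce to the case $n=1$, where $L = H^1$ and the standard parametrisation $(\cosh t)\vece_1 + (\sinh t)\vece_2$ is manifestly onto. Your Gram--Schmidt construction is a perfectly valid alternative and is more self-contained, but it duplicates work already packaged inside Proposition~\ref{transitive}. The paper's route is shorter precisely because it reuses that machinery; yours has the virtue of making the geometry of the single hyperbolic line explicit without passing through the group action.
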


\begin{proof} Suppose $\vecl$ is a geodesic line of $H^n$.  By
Proposition~\ref{geodeqn}, there exist vectors $\vecx$ and $\vecy$, one of
which is time-like, such that \[ \vecl(t) = (\cosh t )\vecx + (\sinh t )\vecy.
\] Then the image of $\vecl$ is the intersection of $H^n$ and
$\Span\{\vecx,\vecy\}$, which is a hyperbolic line.

Conversely, let $L$ be a hyperbolic line of $H^n$.  As in the proof of
Proposition~\ref{geodtfae}, we may assume that $n = 1$.  Then $L = H^1$.
Define $\vecl: \R \rightarrow H^1$ by \[ \vecl(t) = (\cosh t )\mathbf{e}_1 +
(\sinh t )\mathbf{e}_2. \] Then by Proposition~\ref{geodeqn}, $\vecl$ is a
geodesic line which is onto $H^1 = L$.  So $L$ is a geodesic. \end{proof}

\subsubsection{Hyperbolic trigonometry}

We now restrict our attention to the hyperbolic plane $H^2$, and discuss
hyperbolic trigonometry.  For reasons of space, the treatment here is cursory;
again, see~\cite{rat1:fhm} for the details. In particular, the area of regions
in the hyperbolic plane is not needed at all in later chapters, so we do not
define area.

The definition of a hyperbolic triangle is as follows. Suppose \vecx, \vecy\ and \vecz\ in
$H^2$ are hyperbolically non-collinear. Let $L(\vecsxy)$ be the unique
hyperbolic line of $H^2$ containing \vecx\ and \vecy, and let $H(\vecx, \vecy,
\vecz)$ be the closed half-plane of $H^2$ with \vecz\ in its interior and
$L(\vecx,\vecy)$ as its boundary. The hyperbolic triangle with vertices at
\vecx, \vecy\ and \vecz\ is defined to be \[ T(\vecx,\vecy,\vecz) =
H(\vecx,\vecy,\vecz) \cap H(\vecy,\vecz,\vecx) \cap  H(\vecz,\vecx,\vecy). \]
The sides of this triangle are the geodesic segments $[\vecx,\vecy]$,
$[\vecy,\vecz]$ and $[\vecz,\vecx]$.  To define the angles of this triangle,
let $a = d_H(\vecy,\vecz)$, $b = d_H(\vecz,\vecx)$ and $c = d_H(\vecsxy)$.  Let
$\mathbf{f}:[0,a] \rightarrow H^2$, $\mathbf{g}:[0,b] \rightarrow H^2$ and
$\mathbf{h}:[0,c] \rightarrow
H^2$ be the geodesic curves joining \vecy\ to \vecz, \vecz\ to \vecx, and
\vecx\ to \vecy\ respectively.  Then, the hyperbolic angle $\alpha$ at the
vertex \vecx\ is defined to be the Lorentzian angle between the vectors
$-\mathbf{g}'(b)$ and $\mathbf{h}'(0)$.  Angles $\beta$ and $\gamma$ at vertices \vecy\ and
\vecz\ are defined similarly.  The following sketch shows the relationships
between vectors, sides and angles.

\begin{center}
\begin{pspicture}(0,0)(7,7)
\pscurve(4,6.2)(3.2,3.7)(1,1)
\pscurve(4,6.2)(4.8,3.7)(7,1)
\pscurve(1,1)(4.2,1.5)(7,1)
\uput[l](1,0.9){$\mathbf{x}$}
\uput[u](4,6.2){$\mathbf{y}$}
\uput[r](7,0.9){$\mathbf{z}$}
\uput[l](3.15,3.7){$c$}
\uput[r](4.85,3.7){$a$}
\uput[d](4,1.6){$b$}
\uput[r](1.25,1.34){$\alpha$}
\uput[u](3.98,4.95){$\beta$}
\uput[l](6.6,1.4){$\gamma$}
\end{pspicture}
\end{center}

The trigonometric identities below may be proved using
Lemma~\ref{crossprodfacts} and
Corollaries~\ref{timecrossisspace}--\ref{sineta}.  In each of these identities,
$\alpha$, $\beta$ and $\gamma$ are the angles of a hyperbolic triangle, and
$a$, $b$ and $c$ are the lengths of the opposite sides.

\begin{theorem}[Sine Rule]  \[ \frac{\sinh a}{\sin \alpha} = \frac{\sinh b}{\sin \beta} =
\frac{\sinh c}{\sin \gamma}. \] \end{theorem}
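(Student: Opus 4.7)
The plan is to reduce the three ratios in the Sine Rule to a common, manifestly symmetric quantity via a single Lorentzian cross-product computation at one vertex, and then invoke the symmetry of that quantity. By Proposition~\ref{geodtfae}, I parametrise the geodesic curve from $\vecz$ to $\vecx$ as $\mathbf{g}(t) = (\cosh t)\vecz + (\sinh t)\vecw$ for a unique Lorentz orthonormal pair $(\vecz, \vecw)$; solving $\mathbf{g}(b) = \vecx$ for $\vecw$ and differentiating yields an explicit unit tangent at $\vecx$ along the side $[\vecz,\vecx]$, and a similar calculation on $\mathbf{h}$ gives the unit tangent at $\vecx$ along $[\vecx,\vecy]$. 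The two unit vectors determining $\alpha$ come out to
\[
\vecu := -\mathbf{g}'(b) = \frac{\vecz - (\cosh b)\vecx}{\sinh b}, \qquad \vecv := \mathbf{h}'(0) = \frac{\vecy - (\cosh c)\vecx}{\sinh c},
\]
both Lorentz orthogonal to $\vecx$ and of Lorentzian norm $1$, with $\alpha = \eta(\vecu,\vecv)$.

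The heart of the argument is to compute $\vecu \otimes \vecv$. By part~(1) of Lemma~\ref{crossprodfacts} the vector $\vecu \otimes \vecv$ is Lorentz orthogonal to both $\vecu$ and $\vecv$. Since the triangle is non-degenerate, $\vecu$ and $\vecv$ are linearly independent unit vectors in the two-dimensional Lorentz-orthogonal complement of $\vecx$, so they span that complement, forcing $\vecu \otimes \vecv = \lambda \vecx$ for some scalar $\lambda$. Taking the Lorentzian inner product of both sides with $\vecx$ gives $\lambda = -(\vecu \otimes \vecv) \circ \vecx$. When I expand $\vecu \otimes \vecv$ bilinearly from the formulae above, the terms involving $\vecx \otimes \vecx$, $(\zcrossx)\circ\vecx$, and $(\xcrossy)\circ\vecx$ all vanish (the first by part~(2), the latter two by part~(1) of Lemma~\ref{crossprodfacts}), and part~(3) turns the surviving term $(\zcrossy)\circ\vecx$ into a $3 \times 3$ determinant; after accounting for one row swap this produces
\[
\lambda = \frac{D}{\sinh b \, \sinh c}, \qquad D := \det\begin{pmatrix} x_1 & x_2 & x_3 \\ y_1 & y_2 & y_3 \\ z_1 & z_2 & z_3 \end{pmatrix}.
\]

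Because $\vecx,\vecy,\vecz$ are hyperbolically non-collinear they are linearly independent in $\R^3$, so $D \neq 0$ and $\vecu \otimes \vecv = \lambda \vecx$ is a nonzero scalar multiple of the time-like vector $\vecx$, hence time-like. Corollary~\ref{sineta} then applies and gives
\[
\sin\alpha \;=\; \|\vecu\|\,\|\vecv\|\,\sin\eta(\vecu,\vecv) \;=\; \lnorm \vecu \otimes \vecv \lnorm \;=\; |\lambda|\cdot\lnorm \vecx \lnorm \;=\; \frac{|D|}{\sinh b\,\sinh c},
\]
so $\sinh a /\sin\alpha = \sinh a\,\sinh b\,\sinh c / |D|$. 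The right-hand side is completely symmetric in $\vecx,\vecy,\vecz$: $|D|$ is invariant under row permutations, and the sinh product is visibly symmetric. Running the identical computation at the vertices $\vecy$ and $\vecz$ therefore yields the matching formulae $\sinh b/\sin\beta = \sinh c/\sin\gamma = \sinh a\,\sinh b\,\sinh c / |D|$, and the Sine Rule follows. The main obstacle is just careful sign bookkeeping: the flip in $-\mathbf{g}'(b)$, the sign introduced by the row swap that relates $(\zcrossy)\circ\vecx$ to $D$, and verifying that $\vecu \otimes \vecv$ is strictly time-like (not merely a scalar multiple of $\vecx$) so that Corollary~\ref{sineta} is actually applicable.
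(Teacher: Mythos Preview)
Your proof is correct and follows the approach the paper indicates (it cites Lemma~\ref{crossprodfacts} and Corollaries~\ref{timecrossisspace}--\ref{sineta} without giving details). Your computation of the tangent vectors via Proposition~\ref{geodtfae}, the reduction of $\vecu\otimes\vecv$ to a scalar multiple of $\vecx$, and the extraction of $|D|/(\sinh b\,\sinh c)$ via Corollary~\ref{sineta} are all accurate, including the sign bookkeeping.

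One minor remark on packaging: the route Ratcliffe (and hence the paper) has in mind works with the space-like vectors $\ycrossz$, $\zcrossx$, $\xcrossy$ as the basic objects, using Corollary~\ref{timecrossisspace} to identify their norms as $\sinh a$, $\sinh b$, $\sinh c$, and then showing that $\alpha$ coincides with the space-like angle between $\zcrossx$ and $\xcrossy$; part~(5) of Lemma~\ref{crossprodfacts} then gives $(\zcrossx)\otimes(\xcrossy)$ as a multiple of $\vecx$ directly. Your tangent vectors are exactly $\vecu = \vecx\otimes(\zcrossx)/\sinh b$ and $\vecv = \vecx\otimes(\xcrossy)/\sinh c$, so the two routes are the same computation in slightly different clothing; yours has the advantage of working directly with the vectors that define $\alpha$, while the other explains why Corollary~\ref{timecrossisspace} appears in the paper's list of cited results.
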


\begin{theorem}[First Cosine Rule]  \[ \cos \gamma = \frac{\cosh a \cosh b - \cosh c}{\sinh a
\sinh b}. \] \end{theorem}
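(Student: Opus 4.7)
The plan is to evaluate $\cos\gamma$ by computing the Lorentzian inner product of the two unit tangent vectors at $\vecz$ that define $\gamma$, then expressing everything in terms of the inner products $\vecx\circ\vecy$, $\vecy\circ\vecz$, $\vecz\circ\vecx$, which are $-\cosh c$, $-\cosh a$, $-\cosh b$ respectively.

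First, I would apply Proposition~\ref{geodtfae} to parametrise the two relevant geodesic curves at $\vecz$. For $\mathbf{g}:[0,b]\to H^2$ joining $\vecz$ to $\vecx$, the proposition gives Lorentz orthonormal vectors $\vecz$ and $\vecv$ such that $\mathbf{g}(t)=(\cosh t)\vecz+(\sinh t)\vecv$, and the endpoint condition $\mathbf{g}(b)=\vecx$ forces
\[
\vecv=\frac{\vecx-(\cosh b)\vecz}{\sinh b},
\]
so the outgoing tangent at $\vecz$ is $\mathbf{g}'(0)=\vecv$. A similar parametrisation of $\mathbf{f}:[0,a]\to H^2$ joining $\vecy$ to $\vecz$ yields, after differentiation at $t=a$ and sign reversal,
\[
-\mathbf{f}'(a)=\frac{\vecy-(\cosh a)\vecz}{\sinh a}.
\]
From Proposition~\ref{geodtfae} (or directly from $\|\vecz\|^2=-1$ and Lorentz orthonormality), both tangent vectors have Lorentzian norm~$1$ and are space-like.

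Second, I would compute their Lorentzian inner product. Expanding bilinearly,
\begin{align*}
\bigl(-\mathbf{f}'(a)\bigr)\circ\mathbf{g}'(0)
&=\frac{1}{\sinh a\,\sinh b}\Bigl[\vecy\circ\vecx-(\cosh b)\,\vecy\circ\vecz\\
&\qquad\qquad{}-(\cosh a)\,\vecz\circ\vecx+(\cosh a)(\cosh b)\,\vecz\circ\vecz\Bigr].
\end{align*}
Substituting $\vecx\circ\vecy=-\cosh c$, $\vecy\circ\vecz=-\cosh a$, $\vecz\circ\vecx=-\cosh b$, and $\vecz\circ\vecz=-1$, two of the terms cancel and the bracket collapses to $\cosh a\cosh b-\cosh c$.

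Third, I must justify invoking Corollary~\ref{coseta} to interpret this inner product as $\cos\gamma$. This requires verifying that $(-\mathbf{f}'(a))\otimes\mathbf{g}'(0)$ is time-like. Since both tangent vectors lie in the tangent plane to $H^2$ at $\vecz$, which by Proposition~\ref{orthog} consists entirely of space-like vectors (being Lorentz-orthogonal to the time-like vector $\vecz$), their Lorentzian cross product, a non-zero vector Lorentz-orthogonal to both space-like vectors and hence to the space-like plane they span, must be a scalar multiple of the time-like vector $\vecz$ by Lemma~\ref{crossprodfacts}(1) together with dimensional considerations in $\R^3$. This confirms that the cross product is time-like and that Corollary~\ref{coseta} applies, yielding the claimed identity $\cos\gamma=(\cosh a\cosh b-\cosh c)/(\sinh a\sinh b)$.

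The main obstacle I expect is the bookkeeping in step one: ensuring the parametrisations are oriented so that the tangent vectors extracted really are the two vectors whose Lorentzian angle defines $\gamma$ in the sense of the excerpt, and keeping the signs straight when $\mathbf{f}$ is parametrised from $\vecy$ to $\vecz$ (so one differentiates at the endpoint and negates) versus $\mathbf{g}$ from $\vecz$ to $\vecx$ (so one differentiates at the start). Once the tangent vectors are in hand, the remaining algebra is essentially a one-line bilinear expansion.
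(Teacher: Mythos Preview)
Your proof is correct. The paper does not give a full proof of this theorem, only the remark that it ``may be proved using Lemma~\ref{crossprodfacts} and Corollaries~\ref{timecrossisspace}--\ref{sineta}''; your argument is consistent with that hint, invoking Corollary~\ref{coseta} for the angle and Lemma~\ref{crossprodfacts}(1) for the time-likeness check.

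There is a mild stylistic difference worth noting. The route the paper has in mind (following Ratcliffe) would express the angle $\gamma$ at $\vecz$ as the Lorentzian angle between the space-like normal vectors $\zcrossx$ and $\ycrossz$ to the two sides, and then read off the numerator directly from the determinant identity Lemma~\ref{crossprodfacts}(6), with the denominator $\sinh a\,\sinh b$ coming from Corollary~\ref{timecrossisspace}. You instead use Proposition~\ref{geodtfae} to write down the unit tangent vectors $(\vecy-(\cosh a)\vecz)/\sinh a$ and $(\vecx-(\cosh b)\vecz)/\sinh b$ explicitly and expand their inner product by hand. Your route is slightly more elementary (it avoids identity~(6) and the passage from tangents to normals), while the cross-product route is more uniform (the same template yields the Sine Rule and Second Cosine Rule with almost no change). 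Either way the algebra collapses to the same substitution $\vecx\circ\vecy=-\cosh c$, $\vecy\circ\vecz=-\cosh a$, $\vecz\circ\vecx=-\cosh b$, $\vecz\circ\vecz=-1$.
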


\begin{theorem}[Second Cosine Rule] \[ \cosh c = \frac{\cos \alpha \cos \beta + \cos
\gamma}{\sin \alpha \sin \beta}. \] \end{theorem}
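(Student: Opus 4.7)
The plan is to work with three unit space-like ``polar'' normals to the sides of the triangle. Define
$\mathbf{n}_a = (\vecy \otimes \vecz)/\sinh a$, $\mathbf{n}_b = (\vecz \otimes \vecx)/\sinh b$, $\mathbf{n}_c = (\vecx \otimes \vecy)/\sinh c$. Each is a unit space-like vector by Corollary~\ref{timecrossisspace}, since the vertices are positive time-like of Lorentzian norm $i$ with $\eta(\vecy,\vecz) = a$, and so on. Expanding the pairwise products $\mathbf{n}_i \circ \mathbf{n}_j$ as $2 \times 2$ determinants via Lemma~\ref{crossprodfacts}(6) and appealing to the First Cosine Rule to interpret the resulting expressions, one finds $\mathbf{n}_a \circ \mathbf{n}_b = -\cos \gamma$, $\mathbf{n}_b \circ \mathbf{n}_c = -\cos \alpha$, and $\mathbf{n}_a \circ \mathbf{n}_c = -\cos \beta$; for instance $\mathbf{n}_b \circ \mathbf{n}_c$ reduces to $(\cosh a - \cosh b \cosh c)/(\sinh b \sinh c) = -\cos \alpha$.

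The heart of the argument is to compute $(\mathbf{n}_c \otimes \mathbf{n}_a) \circ (\mathbf{n}_c \otimes \mathbf{n}_b)$ in two ways. Applying Lemma~\ref{crossprodfacts}(6) once more and substituting the pairwise products from the previous paragraph, this expression evaluates to $\det \begin{pmatrix} -\cos \alpha & 1 \\ -\cos \gamma & -\cos \beta \end{pmatrix} = \cos \alpha \cos \beta + \cos \gamma$. On the other hand, the vector triple product identity Lemma~\ref{crossprodfacts}(5), combined with anti-commutativity and the fact $\vecx \circ (\vecx \otimes \vecy) = 0$ from part~1, collapses $\mathbf{n}_c \otimes \mathbf{n}_a$ into $-D\vecy/(\sinh a \sinh c)$ and $\mathbf{n}_c \otimes \mathbf{n}_b$ into $D\vecx/(\sinh b \sinh c)$, where $D$ is the common scalar $(\vecx \otimes \vecy) \circ \vecz = (\vecy \otimes \vecz) \circ \vecx = (\vecz \otimes \vecx) \circ \vecy$ (the three are equal because cyclic permutations of the rows of a $3 \times 3$ determinant in Lemma~\ref{crossprodfacts}(3) preserve its value). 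Taking the Lorentzian inner product of these two scalar multiples and using $\vecx \circ \vecy = -\cosh c$ yields $D^2 \cosh c / (\sinh a \sinh b \sinh^2 c)$.

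The Sine Rule, obtained by applying Corollary~\ref{sineta} to each $\|\mathbf{n}_i \otimes \mathbf{n}_j\|$ (for $\mathbf{n}_i \otimes \mathbf{n}_j$ is a scalar multiple of a time-like vertex), supplies the three-way identity $D = \sinh b \sinh c \sin \alpha = \sinh a \sinh c \sin \beta = \sinh a \sinh b \sin \gamma$; multiplying the first two factorisations gives $D^2/(\sinh a \sinh b \sinh^2 c) = \sin \alpha \sin \beta$. Equating the two computations of $(\mathbf{n}_c \otimes \mathbf{n}_a) \circ (\mathbf{n}_c \otimes \mathbf{n}_b)$ then yields $\cos \alpha \cos \beta + \cos \gamma = \cosh c \sin \alpha \sin \beta$, and since a non-degenerate hyperbolic triangle has $\sin \alpha \sin \beta \neq 0$, dividing through delivers the formula.

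The main obstacle I anticipate is sign and orientation bookkeeping. The triple product identity $\vecx \otimes (\vecy \otimes \vecz) = (\vecx \circ \vecy)\vecz - (\vecz \circ \vecx)\vecy$ is not symmetric in its arguments, so the reduction of each $\mathbf{n}_i \otimes \mathbf{n}_j$ to a scalar multiple of the ``opposite vertex'' must be done with the correct application of anti-commutativity; a parallel subtlety is tracking the signs to confirm that the same scalar $D$ appears in all three such reductions, so that the substitution of the Sine Rule in the final step is unambiguous.
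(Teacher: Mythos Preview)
Your proof is correct and uses precisely the tools the paper points to---Lemma~\ref{crossprodfacts} together with Corollaries~\ref{timecrossisspace}--\ref{sineta}---since the paper itself omits the detailed argument for reasons of space. The polar-triangle computation you carry out (forming the unit space-like normals $\mathbf{n}_a,\mathbf{n}_b,\mathbf{n}_c$ and evaluating $(\mathbf{n}_c\otimes\mathbf{n}_a)\circ(\mathbf{n}_c\otimes\mathbf{n}_b)$ two ways) is the standard derivation from Ratcliffe that the paper is implicitly deferring to, and your sign bookkeeping checks out.
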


We now generalise the definition of hyperbolic triangle to include the
possibility of vertices at infinity.  Suppose $L_1$ and $L_2$ are hyperbolic
lines, with $L_1$ the intersection of $H^2$ with the plane
$\Span\{\vecx_1,\vecy_1\}$, and $L_2$ the intersection of $H^2$ with the plane
$\Span\{\vecx_2,\vecy_2\}$.  If $\Span\{\vecx_1,\vecy_1\} \cap
\Span\{\vecx_2,\vecy_2\}$ is a one-dimensional subspace of $\R^{n+1}$
consisting of light-like vectors, then $L_1$ and $L_2$ are said to meet at
infinity. Hyperbolic lines which meet at infinity are disjoint, but become
asymptotically close in one direction.  A generalised hyperbolic triangle,
then, is defined in the same way as a hyperbolic triangle, except that the
hyperbolic lines defining adjacent sides may meet at infinity. The angle at a
vertex at infinity is defined to be zero.

The Second Cosine Rule may be extended to a generalised hyperbolic triangle
with one vertex at infinity.  If such a triangle has angles $\alpha$,
$\beta=\frac{\pi}{2}$ and $\gamma=0$, and finite side of length $c$, we obtain
\[ c = \cosh^{-1}\left(\frac{1}{\sin \alpha}\right). \] This shows that there
is an absolute unit of length in hyperbolic geometry, as the length $c$ depends
only on the angle $\alpha$.

\begin{theorem} Let $T$ be a generalised hyperbolic triangle.  If the  angles
of $T$ are $\alpha$, $\beta$ and $\gamma$, then the area of $T$ is $\pi - \alpha
- \beta - \gamma.$ \end{theorem}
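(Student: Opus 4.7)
The plan is to first set up hyperbolic area as an isometry-invariant measure on $H^2$ that is additive under dissection by geodesics, and then compute directly in successively more general cases. Since the hyperboloid model does not come equipped with a convenient area element, I would transfer to the upper half-plane model (developed later in the chapter), in which $dA=dx\,dy/y^2$; isometry invariance of this integral is a separate standard verification.

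The heart of the argument is the singly asymptotic case: a generalised triangle with exactly one ideal vertex and finite angles $\alpha,\beta$. By transitivity of the isometry group on the ideal boundary and on geodesic segments, place the ideal vertex at $\infty$, so that the two sides meeting there become vertical Euclidean rays, and arrange the third side to be the unit semicircle $x^2+y^2=1$. A short check using Euclidean tangent vectors shows that the two vertical sides must then be $x=-\cos\alpha$ and $x=\cos\beta$ for the interior angles to equal $\alpha$ and $\beta$. Integration gives
\[
\int_{-\cos\alpha}^{\cos\beta}\int_{\sqrt{1-x^2}}^{\infty}\frac{dy\,dx}{y^2}=\int_{-\cos\alpha}^{\cos\beta}\frac{dx}{\sqrt{1-x^2}}=\pi-\alpha-\beta,
\]
which is the desired formula with $\gamma=0$.

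For a finite triangle $T$ with vertices \vecx, \vecy, \vecz\ and angles $\alpha,\beta,\gamma$, I would extend the side $[\vecx,\vecy]$ beyond \vecy\ to its ideal endpoint $\Omega$ and draw the geodesic from \vecz\ to $\Omega$, which meets $[\vecy,\vecz]$ at \vecz\ in some angle $\delta$. The singly asymptotic triangle $T_1=(\vecx,\vecz,\Omega)$ has angles $\alpha,\gamma+\delta,0$ and is dissected into $T$ together with the singly asymptotic triangle $T_2=(\vecy,\vecz,\Omega)$ of angles $\pi-\beta,\delta,0$. Applying the singly asymptotic formula to $T_1$ and $T_2$ and subtracting,
\[
\mathrm{area}(T)=(\pi-\alpha-\gamma-\delta)-(\beta-\delta)=\pi-\alpha-\beta-\gamma.
\]
Triangles with two or three ideal vertices are handled by the same kind of dissection, or as limits of finite triangles with vertices receding to the boundary.

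The main obstacle is foundational rather than computational: one must define the area measure on $H^2$, prove its invariance under hyperbolic isometries (so that the placement above is legitimate) and its additivity under geodesic dissection. Once this machinery is in place, the dissection argument above is essentially the only idea required, and everything reduces to the elementary antiderivative $\int dx/\sqrt{1-x^2}=\arcsin x$.
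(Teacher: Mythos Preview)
Your argument is correct and is essentially the standard textbook proof (it is, for instance, the argument in Ratcliffe, to which the paper refers). There is nothing to compare against: the paper states this theorem without proof, having noted just beforehand that ``the area of regions in the hyperbolic plane is not needed at all in later chapters, so we do not define area'' and that the reader should consult \cite{rat1:fhm} for details. Your identification of the foundational obstacle---defining the area measure and verifying its isometry invariance---is exactly the gap the paper acknowledges and declines to fill.
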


\begin{corollary} The angle sum of a generalised hyperbolic triangle is less
than~$\pi$.
\end{corollary}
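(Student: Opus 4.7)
The plan is to deduce this corollary almost immediately from the preceding theorem, which identifies the area of a generalised hyperbolic triangle $T$ with angles $\alpha$, $\beta$, $\gamma$ as $\pi - \alpha - \beta - \gamma$. Since $T$ is a nondegenerate region in $H^2$ (its vertices are hyperbolically non-collinear, possibly with some at infinity, but its sides are honest geodesic segments bounding a two-dimensional region), its area is strictly positive. Hence $\pi - \alpha - \beta - \gamma > 0$, which rearranges to $\alpha + \beta + \gamma < \pi$.

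The only step requiring any care is justifying that the area is strictly positive rather than merely nonnegative. I would note that the construction of $T(\vecx,\vecy,\vecz)$ as the intersection of three closed half-planes, together with the non-collinearity of the vertices (or the fact that at any vertex at infinity the two adjacent sides meet only asymptotically), ensures that the interior of $T$ is a nonempty open subset of $H^2$, and therefore has positive area under any reasonable notion of hyperbolic area. Since the paper has deferred the formal definition of area to the reference \cite{rat1:fhm}, I would simply appeal to this standard fact.

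The hard part, such as it is, was entirely absorbed into the preceding theorem (the Gauss--Bonnet-type area formula for generalised triangles); once that is available, the corollary is a one-line consequence. No new trigonometric identities or geometric constructions are required.
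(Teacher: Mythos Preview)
Your proposal is correct and matches the paper's intent: the corollary is stated immediately after the area formula with no separate proof, so the intended argument is exactly the one-line deduction you give from positivity of the area.
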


\section{The projective disc model}

The projective disc model is mapped to $H^n$ by a projection known as gnomonic projection.
Its chief feature is that its hyperbolic lines are the same as its Euclidean
lines.

The open unit disc in $\R^n$ is the set $ D^n = \{ \vecx \in \R^n : |\vecx| <
1\}$. Identify $\R^n$ with $\R^n \times \{ 0 \}$ in $\R^{n+1}$.  Let $\mu$ be
the translation of $D^n$ vertically by $\vece_{n+1}$, followed by radial
projection to $H^n$. The map $\mu$ is known as gnomonic projection, is a
bijection, and has the explicit formula \[ \mu(\vecx) = \frac{\vecx +
\vece_{n+1}}{\lnorm \vecx + \vece_{n+1}\lnorm}. \]

We now define a distance function $d_D$ on $D^n$ by \[ d_D(\vecx,\vecy) =
d_H(\mu(\vecx),\mu(\vecy)). \] The projective disc model of hyperbolic
$n$-space is  the metric space consisting of $D^n$ together with the distance
function $d_D$.

A subset $L$ of $D^n$ is called a hyperbolic line of $D^n$ if $\mu(L)$ is a
hyperbolic line of $H^n$.  Since $\mu: D^n \rightarrow H^n$ is an isometry, and
the geodesics of $H^n$ are its hyperbolic lines, the geodesics of $D^n$ are its
hyperbolic lines. The following characterisation of the hyperbolic lines of
$D^2$ shows that in this model, hyperbolic lines are the same as Euclidean
lines.

\begin{propn} A subset $L$ of $D^2$ is a hyperbolic line of $D^2$ if and only
if $L$ is an open chord of $D^2$. \end{propn}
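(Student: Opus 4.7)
The plan rests on the geometric description of gnomonic projection: $\mu(\vecx)$ is the unique positive scalar multiple of $\vecx + \vece_3$ lying on $H^2$, so $\mu(\vecx)$ and $\vecx + \vece_3$ lie on a common ray from the origin in $\R^3$. By definition, a hyperbolic line of $D^2$ is a preimage under $\mu$ of a hyperbolic line of $H^2$, and a hyperbolic line of $H^2$ is the intersection $H^2 \cap V$ for some two-dimensional vector subspace $V$ of $\R^3$. So the task reduces to establishing, via the lift $\vecx \mapsto \vecx + \vece_3$ followed by radial projection, a bijective correspondence between open chords of $D^2$ and those two-dimensional vector subspaces $V$ of $\R^3$ that meet $H^2$.

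For the forward direction, let $L = D^2 \cap \ell$, where $\ell$ is a Euclidean line in $\R^2$. The translate $\ell + \vece_3$ is a line in $\R^3$ lying in an affine plane that does not contain the origin, and together with the origin it spans a unique two-dimensional vector subspace $V \subset \R^3$. Every $\vecx \in L$ satisfies $\vecx + \vece_3 \in V$, so $\mu(\vecx)$, being a positive scalar multiple of $\vecx + \vece_3$, lies in $V \cap H^2$. Conversely, any $\vecy \in V \cap H^2$ lies on a ray through the origin which, by the transversality of $V$ with the affine plane of $\ell + \vece_3$, meets $\ell + \vece_3$ in a unique point $\vecx + \vece_3$; then $\vecx \in \ell$, and since $\mu$ is a bijection onto $H^2$ and $\vecy \in H^2$ we have $\vecy = \mu(\vecx)$ with $\vecx \in D^2$, hence $\vecx \in L$. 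Therefore $\mu(L) = H^2 \cap V$ is a hyperbolic line.

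For the reverse direction, suppose $\mu(L) = H^2 \cap V$ for some two-dimensional vector subspace $V$ of $\R^3$. The main obstacle is to see that $V$ meets the affine plane containing $D^2 + \vece_3$ transversely, so that the intersection is a Euclidean line. This is handled by observing that $\mu(L)$ is non-empty, so $V$ contains some $\mu(\vecx)$, hence also the positive scalar multiple $\vecx + \vece_3$ of $\mu(\vecx)$; thus $V$ crosses the affine plane at the point $\vecx + \vece_3$. Hence $V$ meets this affine plane in a Euclidean line, which projects back by the translation $-\vece_3$ to a Euclidean line $\ell \subset \R^2$. Applying the same ray-intersection bookkeeping as in the forward direction shows $L = D^2 \cap \ell$, which is an open chord.
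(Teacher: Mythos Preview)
Your proof is correct and follows essentially the same approach as the paper: both arguments exploit that gnomonic projection is a vertical translation followed by radial projection, so that hyperbolic lines of $H^2$ (intersections with two-dimensional linear subspaces) correspond to intersections of those subspaces with the affine plane at height $1$, which are Euclidean lines. The paper's version is a two-sentence sketch that simply asserts this correspondence and says the argument reverses; you have spelled out the ray-intersection bookkeeping that makes the bijection explicit.
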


\begin{proof} Let $L$ be a hyperbolic line of $H^2$.  Then $L$ is the
intersection of $H^2$ with the plane $P(\vecu,0)$, where $\vecu$ is some  unit
vector in $\R^3$. Radial projection maps $L$ onto the line of intersection of
the plane $P(
\vecu,0)$ and the plane $P(\vece_3,1)$.  The translation by the vector $-\vece_3$
which follows ensures that $\mu^{-1}(L)$ is an open chord of $D^2$. This
argument can be reversed for the converse. \end{proof}

\section{The conformal ball model} \label{ball}

The conformal ball model is mapped to $H^n$ by a projection known as
stereographic projection.  We  show that the isometries of this model are
M\"obius transformations, and that the hyperbolic angles of this model are the
same as Euclidean angles.

First, we redefine the Lorentzian inner product on $\R^{n+1}$ to be \[ \vecx
\circ \vecy = x_1 y_1 + x_2 y_2 + \cdots + x_n y_n - x_{n+1}y_{n+1}. \] All
results of Section~\ref{hyperboloid} are still true, once the order of
coordinates in $\R^{n+1}$ is reversed.  Identify $\R^n$ with $\R^n \times \{ 0
\}$ in $\R^{n+1}$.  Let $B^n$ be the open unit ball in $\R^n$ (this is the same
set as $D^n$, but we will soon be defining a different distance function for
$B^n$, and so wish to distinguish the two metric spaces). The stereographic
projection $\zeta$ of $B^n$ onto $H^n$ is the projection of $\vecx \in B^n$
away from the vector $-\mathbf{e}_{n+1}$ until it meets $H^n$ in the unique
point $\zeta(\vecx)$.  The explicit formula for this map is \begin{equation}
\label{sterproj} \zeta(\vecx) = \left( \frac{2x_1}{1 - |\vecx|^2},\ldots,
\frac{2x_n}{1 - |\vecx|^2}, \frac{1 + |\vecx|^2}{1 - |\vecx|^2}\right).
\end{equation} The projection $\zeta$ is a bijection from $B^n$ to $H^n$.

The conformal ball model of hyperbolic $n$-space is defined to be the set $B^n$
together with the distance function $d_B$ given by \[ d_B(\vecx,\vecy) =
d_H(\zeta(\vecx),\zeta(\vecy)). \] By the explicit formula for $d_H$,
and~(\ref{sterproj}), \begin{align*} \cosh d_H(\zeta(\vecx),\zeta(\vecy))
& =  -\zeta(\vecx) \circ \zeta(\vecy) \\ &=  \frac{-4\vecx \cdot \vecy + (1 +
|\vecx|^2)(1 + |\vecy|^2 )}{(1- |\vecx|^2)(1 - |\vecy|^2)} \\ & = 1 + \frac{2|\vecx - \vecy |^2}{(1-
|\vecx|^2)(1 - |\vecy|^2)}. \end{align*} Thus an explicit formula for $d_B$ is
\[ d_B(\vecx,\vecy) = \cosh^{-1}\left(1 + \frac{2|\vecx - \vecy|^2}{(1 -
|\vecx|^2)(1 - |\vecy|^2)}\right). \] The topology induced on $B^n$ by $d_B$ is
the same as the subspace topology of $B^n$ as a subspace of $\R^n$.

A subset $L$ of $B^n$ is defined to be a hyperbolic line of $B^n$ if $\zeta(L)$
is a hyperbolic line of $H^n$. By Corollary~\ref{geodlines}, and the fact that
$\zeta$ is an isometry, the geodesics of $B^n$ are its hyperbolic lines.  We now
characterise the hyperbolic lines of $B^2$.

\begin{propn} \label{Blines}A subset $L$ is a hyperbolic line of $B^2$ if and
only if $L$ is either an open diameter of $B^2$, or the intersection of $B^2$
with a circle orthogonal to $S^1$. \end{propn}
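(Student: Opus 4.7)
The plan is to convert the defining condition ``$\zeta(L)$ is a hyperbolic line of $H^2$'' into a Euclidean equation on $B^2$ via the explicit inverse of stereographic projection. From the formula for $\zeta$ in Section~\ref{ball} one reads off $\zeta^{-1}(\vecz) = (z_1,z_2)/(z_3+1)$, and any hyperbolic line of $H^2$ has the form $H^2 \cap V$ with $V = \veca^\perp$ for some unit normal $\veca = (a_1,a_2,a_3)$. For $V$ to meet $H^2$ at all, $V$ must contain a positive time-like vector; a short check shows this forces either $a_3 = 0$ (so $\vece_3 \in V$) or else $a_1^2 + a_2^2 > a_3^2$. These two possibilities produce the two alternatives in the proposition.

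In the first case, pulling the linear equation $\veca\cdot\vecz = 0$ back through $\zeta^{-1}$ gives $a_1 y_1 + a_2 y_2 = 0$ (using $z_3+1>0$), which cuts out an open diameter of $B^2$. In the second case, I would set $z_i = y_i(z_3+1)$ for $i = 1,2$, solve $\veca\cdot\vecz = 0$ for $z_3+1$ as a rational function of $(y_1,y_2)$, and feed the result into the identity $(z_3-1)/(z_3+1) = y_1^2 + y_2^2$ which comes from $\vecz \in H^2$. After clearing denominators and completing the square this yields
\[
\Bigl(y_1 + \tfrac{a_1}{a_3}\Bigr)^{\!2} + \Bigl(y_2 + \tfrac{a_2}{a_3}\Bigr)^{\!2} = \frac{a_1^2 + a_2^2 - a_3^2}{a_3^2},
\]
a Euclidean circle with centre $\vecc = (-a_1/a_3,-a_2/a_3)$ and radius $r = \sqrt{a_1^2 + a_2^2 - a_3^2}/|a_3|$. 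Orthogonality to $S^1$ then drops out of the Pythagorean criterion, since $|\vecc|^2 = (a_1^2+a_2^2)/a_3^2 = 1 + r^2$.

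For the converse I would reverse the correspondence: an open diameter with direction $\vecu$ is recovered from $V = \Span\{\vece_3,\vecu\}$; a circle in $\R^2$ orthogonal to $S^1$ with centre $\vecc$ (necessarily satisfying $|\vecc|>1$) is recovered from $V = \veca^\perp$ with $\veca = (-c_1,-c_2,1)$, and the computation above run backwards shows its intersection with $B^2$ is exactly $\zeta^{-1}(H^2 \cap V)$. The main obstacle is the book-keeping in the second case: I need to verify that the quadratic I derive is a genuine non-degenerate real circle (this is exactly where $a_1^2 + a_2^2 > a_3^2$ enters, matching the condition that $V$ meets $H^2$) and that the orthogonality condition $|\vecc|^2 = 1 + r^2$ sets up a bijection between the admissible planes $V$ and the circles in $\R^2$ orthogonal to $S^1$. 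Once that dictionary is in place, both directions of the equivalence are immediate.
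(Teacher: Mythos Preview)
Your proof is correct and takes a somewhat different route from the paper's. Both arguments begin with the explicit inverse $\zeta^{-1}(\vecz) = (z_1,z_2)/(z_3+1)$ and treat the diameter case (planes containing $\vece_3$) in essentially the same way. For the circle case, however, the paper first exploits rotational symmetry about the $x_3$-axis to reduce to the one-parameter family of planes with normal $(-\sin\theta, 0, \cos\theta)$, then parametrises the corresponding hyperbolic line explicitly and verifies by a direct distance computation that its image under $\zeta^{-1}$ lies on the circle with centre $(\tan\theta, 0)$ and radius $\sqrt{\tan^2\theta - 1}$; a separate connectedness argument is then invoked to conclude that the image is the \emph{entire} arc in $B^2$. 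Your approach instead works with a general normal vector $\veca$ and pulls back the linear equation $\veca\cdot\vecz = 0$ algebraically, via the identity $(z_3-1)/(z_3+1) = |\vecy|^2$, to obtain the circle equation in one stroke. This is cleaner: non-degeneracy of the circle drops out of the time-like condition $a_1^2 + a_2^2 > a_3^2$, the orthogonality relation $|\vecc|^2 = 1 + r^2$ is read off directly from the coefficients, and no connectedness argument is needed because the derived equation already characterises the image exactly. The paper's version is more hands-on and geometric; yours is more efficient and makes the bijection between admissible planes and orthogonal circles completely explicit.
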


\begin{proof} Let $L$ be an open diameter of $B^2$.  Then $L$ is the
intersection of $B^2$ with the one-dimensional subspace of $\R^3$
spanned by, say, the vector $\vecv$.  The projection $\zeta$ maps
$L$ onto the hyperbolic line of $H^2$ obtained by intersecting
$H^2$ with the two-dimensional subspace
$\Span\{\vecv,\vece_3\}$. Thus $L$ is a hyperbolic line of
$B^2$.  This argument can be reversed to show that if $\zeta(L)$
is a hyperbolic line of $H^2$ which passes through the point
$\vece_3$, then $L$ is an open diameter of $B^2$.

Now, if $\vecx=(x_1,x_2,x_3) \in H^2$ then $x_1^2 + x_2^2 - x_3^2
= -1$.  The inverse of the projection $\zeta$ is thus the map
\[
\zeta^{-1}(x_1,x_2,x_3) = \frac{1}{x_3 + 1}(x_1,x_2).
\]
Let $R$ be a rotation of $\R^3$ about the $x_3$ axis, and identify
$R$ with a rotation of $\R^2$.  Then $\zeta^{-1} (R(\vecx)) =
R(\zeta^{-1}(\vecx))$ for any $\vecx \in H^2$. So we may consider,
without loss of generality, a hyperbolic line of $H^2$ which is of
the form $H^2 \cap P(\vecw,0)$, where $\vecw=
(-\sin\theta,0,\cos\theta)$ for some
$\theta\in(\frac{\pi}{4},\frac{\pi}{2})$.   Let $L =
\zeta^{-1}(H^2 \cap P(\vecw,0))$.  Then $L$ is a hyperbolic line
of $B^2$. Now, $\vecx \in P(\vecw,0)$ if and only if $x_3 = x_1
\tan \theta$. This means that $\vecx \in H^2 \cap P(\vecw,0)$ if
and only if
\[
x_2^2 = x_1^2 \tan^2 \theta - x_1^2 - 1.
\]
Abbreviate $\tan\theta$ to $t$.  Then
\[
\zeta^{-1}(\vecx) = \frac{1}{x_3 + 1}(x_1,x_2) = \frac{1}{x_1 t
+ 1}\left(x_1,\sqrt{x_1^2 t^2 - x_1^2 - 1}\right).
\]
The circle in $\R^2$ with centre $(t,0)$ and radius $\sqrt{t^2 -
1}$ meets $S^1$ and is orthogonal to it. See the diagram in the
proof of Lemma~\ref{orthogB} below. We have
\[
| \zeta^{-1}(\vecx) - (t,0) |^2 = \left( \frac{x_1}{x_1 t + 1} -
t\right)^2 + \left(\frac{\sqrt{x_1^2 t^2 - x_1^2 - 1}}{x_1 t +
1}\right)^2 = \cdots = t^2 - 1.
\]
So $L$, the image of $H^2 \cap P(\vecw,0)$ under $\zeta^{-1}$,
lies in the circle centre $(t,0)$ and radius $\sqrt{t^2 - 1}$.
Since hyperbolic lines of $H^2$ are connected, and $\zeta^{-1}$ is
continuous, $L$ is a connected arc.  As $x_1 \rightarrow \infty$,
the arc $L$ approaches the boundary of $B^2$.  Thus $L$ is the
intersection of $B^2$ with a circle orthogonal to $S^1$.  This
argument can be reversed to prove the converse.\end{proof}

\subsection{M\"obius transformations and isometries}

A map $\phi:\hat{E}^n \rightarrow \hat{E}^n$ is said to leave $B^n$ invariant
if $\phi$ maps $B^n$ bijectively onto itself. (If, in addition, $\phi(\vecx) =
\vecx$ for all $\vecx$ in $B^n$, we say that $\phi$ fixes each point of
$B^n$.)  We define a M\"obius transformation of $B^n$ to be a M\"obius
transformation of $\hat{E}^n$ which leaves $B^n$ invariant. Our aim is to show
that $I(B^n)$, the group of isometries of $B^n$, is isomorphic to $M(B^n)$, the
group of M\"obius transformations of $B^n$.

The following lemma gives sufficient and necessary conditions for a reflection
in a sphere to leave $B^n$ invariant.  Two spheres are said to be orthogonal if
they intersect in $E^n$, and at each point of intersection their normal lines
are orthogonal.

\begin{lemma} \label{orthogB} Let $\sigma$ be the reflection of $\hat{E}^n$ in
the sphere $S(\mathbf{a},r)$.  Then $\sigma$ leaves $B^n$ invariant if and only
if $S(\mathbf{a},r)$ is orthogonal to $S^{n-1}$. \end{lemma}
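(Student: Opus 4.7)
The plan is first to translate orthogonality of $S(\veca,r)$ and $S^{n-1}$ into a clean algebraic condition. If the two spheres intersect at a point $\vecx$, the radii $\vecx - \veca$ and $\vecx - \veczero = \vecx$ are the outward normals there, so orthogonality means $(\vecx - \veca)\cdot \vecx = 0$. Combined with $|\vecx| = 1$ and $|\vecx - \veca| = r$, Pythagoras applied to the triangle with vertices $\veczero$, $\veca$, $\vecx$ gives $|\veca|^2 = 1 + r^2$; conversely this identity places $|\veca|$ between $|r-1|$ and $r+1$, so the spheres intersect, and the converse of Pythagoras makes the intersection orthogonal. Thus the lemma reduces to showing that $\sigma$ leaves $B^n$ invariant if and only if $|\veca|^2 = 1 + r^2$.

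For the ``if'' direction, I would assume $|\veca|^2 = 1 + r^2$, which gives $|\veca| > 1$ and $\veca \notin \overline{B^n}$, so $\sigma$ is continuous on a neighbourhood of $\overline{B^n}$. Substituting the defining formula for $\sigma$ together with the expansion $|\vecx - \veca|^2 = 1 - 2\veca\cdot \vecx + |\veca|^2$ into $|\sigma(\vecx)|^2$, a direct calculation shows that $|\sigma(\vecx)| = 1$ whenever $|\vecx| = 1$, hence $\sigma(S^{n-1}) \subseteq S^{n-1}$; the involution property $\sigma^2 = \mathrm{id}$ upgrades this to equality. Then $\sigma(B^n)$ is a connected subset of $\hat{E}^n \setminus S^{n-1}$, so it lies entirely in $B^n$ or entirely in the exterior. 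To rule out the exterior I would exhibit a fixed point of $\sigma$ inside $B^n$: the ray from $\veczero$ through $\veca$ meets $S(\veca,r)$ at $(|\veca| - r)\veca/|\veca|$, whose Euclidean norm is $|\veca| - r$, and $|\veca| - r < 1$ follows from $|\veca|^2 = 1 + r^2 < (1+r)^2$.

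For the ``only if'' direction, assume $\sigma$ leaves $B^n$ invariant. I first rule out $\veca \in B^n$ (otherwise $\sigma(\veca) = \infty \notin B^n$) and $\veca \in S^{n-1}$ (otherwise a sequence $\vecx_k \in B^n$ with $\vecx_k \to \veca$ satisfies $|\sigma(\vecx_k)| \to \infty$, contradicting boundedness of $\sigma(B^n) = B^n$). Hence $|\veca| > 1$, and continuity on $\overline{B^n}$ forces $\sigma(S^{n-1}) = S^{n-1}$. Setting $u = \veca \cdot \vecx$ for $\vecx \in S^{n-1}$ and expanding $|\sigma(\vecx)|^2 = 1$ with the same substitutions, after clearing the denominator $|\vecx - \veca|^2 = 1 - 2u + |\veca|^2$ one arrives at the identity
\[
(|\veca|^2 - r^2)^2 - 1 + 2(r^2 + 1 - |\veca|^2)\,u = 0.
\]
As $\vecx$ varies over $S^{n-1}$, the scalar $u$ sweeps the interval $[-|\veca|, |\veca|]$, which has positive length, so the coefficient of $u$ must vanish, yielding $|\veca|^2 = 1 + r^2$.

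The main obstacle is the ``only if'' direction, specifically the edge-case analysis that excludes $\veca \in S^{n-1}$ (without which the continuity step extending invariance of $B^n$ to invariance of $S^{n-1}$ fails), together with organising the expansion so that the linear-in-$u$ coefficient isolates the desired equation instead of mixing with the constant term. Once orthogonality is rephrased as $|\veca|^2 = 1 + r^2$, both directions of the lemma reduce to the same calculation read forwards or backwards.
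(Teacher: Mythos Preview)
Your argument is correct. Both directions go through as written: the orthogonality condition is equivalent to $|\veca|^2 = 1 + r^2$; in the ``if'' direction your computation that $\sigma$ fixes $S^{n-1}$ is right, the fixed point $(|\veca|-r)\veca/|\veca|$ does lie in $B^n$, and the connectedness argument closes; in the ``only if'' direction the exclusions $\veca \in B^n$ and $\veca \in S^{n-1}$ are handled properly, the passage to $\sigma(S^{n-1}) = S^{n-1}$ follows since $\sigma$ is a homeomorphism of $\hat{E}^n$ with $\sigma(B^n)=B^n$, and the polynomial-in-$u$ identity you derive is exactly $(|\veca|^2 - r^2)^2 - 1 + 2(1+r^2-|\veca|^2)u = 0$, whose linear coefficient must vanish.

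Your route differs from the paper's in two respects. First, the paper reduces to $n=2$ by slicing with $2$-planes through $\veca$ and then works in $\mathbb{C}$; you stay in $\R^n$ throughout. Second, and more substantively, the paper's ``only if'' direction is a contrapositive by explicit case analysis on the real axis: it places $a \ge 0$ real and, according to whether $a<1$, or $a\ge 1$ with $a^2 < r^2+1$, or $a>1$ with $a^2 > r^2+1$, exhibits a concrete point of $B^2$ that $\sigma$ sends outside. Your argument replaces this casework with the observation that invariance of $B^n$ forces invariance of $S^{n-1}$, turning the problem into an algebraic identity that must hold for a continuum of inner-product values $u$; this is cleaner and dimension-free. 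The paper's ``if'' direction is also slightly different: it computes $|\sigma(\vecx)|^2 < 1$ directly for $\vecx \in B^2$ rather than going via $S^{n-1}$ and connectedness. The trade-off is that the paper's method is self-contained algebra, while yours leans on the topological fact that $\sigma$ is a homeomorphism of $\hat{E}^n$ (or at least continuity near $\overline{B^n}$) to transport invariance from the ball to its boundary.
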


\begin{proof} By considering the intersection of $S(\veca,r)$, $B^n$ and
$S^{n-1}$ with two-dimensional subspaces of $\hat{E}^n$ which contain the
vector \veca, we may assume that $n=2$.  The circles $S(\veca,r)$ and $S^1$ are
orthogonal if and only if, at their points of intersection, their respective
radii are orthogonal.  Thus, $S(\veca,r)$ is orthogonal to $S^1$ if and only if
$|\veca|^2 = r^2 + 1$.

\begin{center}
\begin{pspicture}(0,0)(6,6)
\pscircle(1,3){1.5}
\psdot*(1,3)
\pscircle(4,3){2.6}
\psdot*(4,3)
\psline(1,3)(1.75,4.3)
\psline(1.75,4.3)(4,3)
\psline(4,3)(1,3)
\uput[d](1,3){$\mathbf{0}$}
\uput[d](4,3){$\mathbf{a}$}
\uput[l](1.45,3.75){1}
\uput[r](2.6,3.9){$r$}
\end{pspicture}
\end{center}

Suppose $S(\veca,r)$ is orthogonal to $S^1$. Then for $\vecx \in B^2$,
\begin{align*} |\sigma(\vecx)|^2 & =  \left| \veca + r^2 \frac{\vecx - \veca
}{|\vecx - \veca|^2}\right|^2 \\ & =  \left| \veca + \frac{(|\veca|^2 -
1)}{|\vecx - \veca|^2}(\vecx - \veca)\right|^2 \\ & = \cdots \\& =
\frac{|\vecx - \veca|^2 + (1 - |\vecx|^2)(1 - |\veca|^2)}{|\vecx - \veca|^2}\\
& < 1, \end{align*} since $|\vecx| < 1$ and $|\veca| > 1$.  Thus $\sigma(B^2)$
is contained in $B^2$.  To show that $\sigma$ is onto $B^2$, let $\vecy$ be in
$B^2$.  Then, since $\sigma$ is its own inverse, $\sigma(\vecx) = \vecy$ if and
only if $\vecx = \sigma(\vecy)$. But $\sigma$ maps $B^2$ into $B^2$, so $\vecx
\in B^2$, hence $\sigma$ is onto.  Therefore $\sigma$ leaves $B^2$ invariant.

For the converse, we identify $B^2$ with the open unit disc in $\mathbb{C}$ and
$S^1$ with the unit circle in $\mathbb{C}$. Without loss of generality, we may
consider the centre of the circle $S(\veca,r)$ to lie on the real axis at a
point $a \geq 0$. We prove the contrapositive.  Suppose $S(a,r)$ is not
orthogonal to $S^1$. We show that there exists a point of $B^2$ which is mapped
outside $B^2$ by $\sigma$.  If $a < 1$, let $\delta > 0$ be such that $a +
\delta < 1$. Then $a + \delta \in B^2$, but for sufficiently small $\delta$, \[
\sigma(a + \delta) = a + \frac{r^2}{\delta} > 1. \] If $a \geq 1$ and $a^2 <
r^2 + 1$, then for small enough $\delta > 0$, we have $a^2 + \delta(a + 1) <
r^2 + 1$ and $1 - \delta \in B^2$. However, \[ \sigma(1 - \delta) = a -
\frac{r^2}{a - (1 - \delta)} < -1. \] Finally, if $a > 1$ and $a^2 > 1 + r^2$,
then choose $\delta>0$ so that $a^2 - \delta(a - 1) > 1 + r^2$ and $-1 + \delta
\in B^2$. We have $\sigma(-1 + \delta) > 1$.  So, if $S(a,r)$ is not orthogonal
to $S^1$, the reflection $\sigma$ does not leave $B^2$ invariant. \end{proof}

\begin{lemma} \label{MobdB} If $\phi$ is a M\"obius transformation of
$\hat{E}^n$ leaving $B^n$ invariant, and $\vecx$, $\vecy$ are in $B^n$, then \[
\frac{|\phi(\vecx) - \phi(\vecy)|^2}{(1 - |\phi(\vecx)|^2)(1 -
|\phi(\vecy)|^2)} = \frac{|\vecx - \vecy|^2}{(1 - |\vecx|^2)(1 - |\vecy|^2)}.
\] \end{lemma}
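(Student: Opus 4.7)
The plan is to reduce to a composition of two especially simple types of M\"obius transformations and verify the identity for each. By Lemma~\ref{orthogB} (together with the analogous observation for hyperplane reflections: such a reflection leaves $B^n$ invariant iff the plane passes through $\veczero$, making it an orthogonal transformation), the two relevant types are (i) orthogonal transformations, and (ii) reflections $\sigma$ in a sphere $S(\veca, r)$ with $|\veca|^2 = r^2 + 1$. I would argue that $M(B^n)$ is generated by such maps, and that the claimed identity is preserved under composition.

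For type (i), the identity is immediate: orthogonal transformations preserve both $|\vecx - \vecy|$ and $|\vecx|$, so every factor on both sides is individually preserved. For type (ii), Lemma~\ref{phixphiy} gives
\[
|\sigma(\vecx) - \sigma(\vecy)|^2 = \frac{r^4\,|\vecx - \vecy|^2}{|\vecx - \veca|^2\,|\vecy - \veca|^2},
\]
and a direct expansion of $|\sigma(\vecx)|^2$ followed by simplification using $|\veca|^2 - r^2 = 1$ produces the key identity
\[
1 - |\sigma(\vecx)|^2 = \frac{r^2\,(1 - |\vecx|^2)}{|\vecx - \veca|^2},
\]
and likewise for $\vecy$. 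Multiplying these and dividing into the first display yields exactly the desired equality. Preservation of the identity under composition is routine: if both $\phi$ and $\psi$ satisfy it, then substituting $\psi(\vecx), \psi(\vecy)$ into the identity for $\phi$ immediately gives the identity for $\phi \psi$.

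The main obstacle is the generating claim: that every element of $M(B^n)$ decomposes as a finite composition of transformations of types (i) and (ii). The natural strategy is a reduction to the case $\phi(\veczero) = \veczero$ by pre-composing with the explicit type (ii) reflection in the sphere centred at $\veca = \phi(\veczero)/|\phi(\veczero)|^2$ with radius $\sqrt{|\veca|^2 - 1}$; a short computation verifies that this sphere is orthogonal to $S^{n-1}$ and that its reflection sends $\phi(\veczero)$ to $\veczero$. Once $\veczero$ is fixed, one uses conformality at $\veczero$ together with invariance of $B^n$ (so that the derivative at $\veczero$ must be an orthogonal matrix $A$, not merely a scalar multiple of one) and applies Lemma~\ref{fixB} to a suitable sphere of agreement to conclude that $\phi$ coincides with $A$ globally.
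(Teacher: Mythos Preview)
Your approach is correct and essentially identical to the paper's: reduce to single reflections, split into the hyperplane case (plane through the origin, giving an orthogonal map) and the sphere case (orthogonal to $S^{n-1}$, so $|\veca|^2 = r^2 + 1$), and verify each using Lemma~\ref{phixphiy} together with the key computation $1 - |\sigma(\vecx)|^2 = r^2(1 - |\vecx|^2)/|\vecx - \veca|^2$. You are in fact more careful than the paper, which simply asserts ``we may assume $\phi$ is a reflection'' without addressing why the individual reflections in a decomposition of $\phi \in M(B^n)$ can be taken to preserve $B^n$; your sketch of the generating claim (reduce to $\phi(\veczero) = \veczero$ via an orthogonal-sphere reflection, then identify the resulting map as orthogonal via Lemma~\ref{fixB}) correctly fills this gap.
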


\begin{proof} Since $\phi$ is a composition of reflections, we may assume that
$\phi$ is a reflection.  If $\phi$ is a reflection in a plane then, in order
that $\phi$ leave $B^n$ invariant, $\phi$ must be a reflection in a plane
passing through the origin.  Then $\phi$ preserves Euclidean distances, and
$\phi(\mathbf{0}) = \mathbf{0}$, so $|\phi(\vecx)| = |\vecx|$, $|\phi(\vecy)| =
|\vecy|$ and $|\phi(\vecx) - \phi(\vecy)| = |\vecx - \vecy|$.  The result
follows immediately.

We are left with the possibility that $\phi$ is a reflection in a sphere
$S(\mathbf{a},r)$.  By Lemma~\ref{phixphiy}, \[ \frac{|\phi(\vecx) -
\phi(\vecy)|}{|\vecx - \vecy|} = \frac{r^2}{|\vecx - \veca||\vecy - \veca|}. \]
By Lemma~\ref{orthogB}, $S(\veca,r)$ must be orthogonal to $S^{n-1}$, and from
its proof, this means that $|\veca|^2 = r^2 + 1$.  The formula for the
reflection $\phi$ is \[ \phi(\vecx) = \veca + \frac{r^2}{|\vecx -
\veca|^2}(\vecx - \veca), \] hence \begin{align*} |\phi(\vecx)|^2  - 1 & =
|\veca|^2 - 1 + \frac{2r^2}{|\vecx - \veca|^2}\veca \cdot (\vecx - \veca) +
\frac{r^4}{|\vecx - \veca|^2} \\ & =  \frac{r^2|\vecx - \veca|^2 +
2r^2\veca\cdot(\vecx - \veca) + r^4}{|\vecx - \veca|^2}  \\ & =  \cdots \\ & =
\frac{r^2(|\vecx|^2 - 1)}{|\vecx - \veca|^2}. \end{align*} Putting all this
together, we obtain \[ \frac{|\phi(\vecx) - \phi(\vecy)|^2}{|\vecx - \vecy|^2}  =
\frac{r^4}{|\vecx - \veca|^2|\vecy - \veca|^2} =  \frac{(1 - |\phi(\vecx)|^2
)(1 - |\phi(\vecy)|^2 )}{(1 - |\vecx|^2 )(1 - |\vecy|^2 )}. \] Rearrange this
equation to finish the proof. \end{proof}

\begin{theorem} \label{Mob_isom_B} Every M\"obius transformation of $\hat{E}^n$
which leaves $B^n$ invariant  restricts to an isometry of $B^n$,
and every isometry of $B^n$ extends to a unique M\"obius transformation of
$\hat{E}^n$ which leaves $B^n$ invariant. \end{theorem}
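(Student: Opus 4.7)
The plan is to prove the two directions separately and then establish uniqueness. For the first direction, let $\phi$ be a M\"obius transformation of $\hat{E}^n$ leaving $B^n$ invariant. Lemma~\ref{MobdB} says $\phi$ preserves the quantity $|\vecx - \vecy|^2/((1 - |\vecx|^2)(1 - |\vecy|^2))$ for all $\vecx, \vecy \in B^n$, and the explicit formula for $d_B$ then gives $d_B(\phi(\vecx), \phi(\vecy)) = d_B(\vecx, \vecy)$. Combined with the hypothesis that $\phi|_{B^n}$ is a bijection onto $B^n$, this makes $\phi|_{B^n}$ an isometry of $B^n$.

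For the second direction, let $\psi \in I(B^n)$. I would first reduce to the case $\psi(\veczero) = \veczero$ by exhibiting, for each $\veca \in B^n \setminus \{\veczero\}$, a M\"obius transformation leaving $B^n$ invariant that sends $\veca$ to $\veczero$. The candidate is the reflection $\sigma_{\veca}$ in the sphere $S(\veca/|\veca|^2, r)$ with $r = \sqrt{1/|\veca|^2 - 1}$; its centre has squared norm $1/|\veca|^2 = r^2 + 1$, so by Lemma~\ref{orthogB} the sphere is orthogonal to $S^{n-1}$ and $\sigma_{\veca}$ leaves $B^n$ invariant, and direct substitution into the reflection formula gives $\sigma_{\veca}(\veca) = \veczero$. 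Setting $\veca = \psi^{-1}(\veczero)$ and replacing $\psi$ by $\psi \circ \sigma_{\veca}$, I may assume $\psi(\veczero) = \veczero$. Since $d_B(\veczero, \vecx) = \cosh^{-1}(1 + 2|\vecx|^2/(1 - |\vecx|^2))$ is strictly increasing in $|\vecx|$, this fixed point forces $|\psi(\vecx)| = |\vecx|$ for every $\vecx \in B^n$, and the general formula for $d_B$ then forces $|\psi(\vecx) - \psi(\vecy)| = |\vecx - \vecy|$. Polarising through $\vecx \cdot \vecy = (|\vecx|^2 + |\vecy|^2 - |\vecx - \vecy|^2)/2$ shows $\psi$ preserves the Euclidean inner product on $B^n$, and evaluating against a scaled standard basis $s\vece_1, \ldots, s\vece_n \in B^n$ for sufficiently small $s > 0$ yields an orthogonal matrix $A$ with $\psi(\vecx) = A\vecx$ for every $\vecx \in B^n$. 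The linear map $A$ is a Euclidean isometry, hence a M\"obius transformation by Proposition~\ref{isomrefl}, and clearly leaves $B^n$ invariant; so does $\sigma_{\veca}$, and the composition $A \circ \sigma_{\veca}$ is the desired M\"obius extension of the original $\psi$.

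For uniqueness, suppose $\phi_1, \phi_2$ are two M\"obius transformations of $\hat{E}^n$ that leave $B^n$ invariant and agree on $B^n$. Then $\phi_1 \circ \phi_2^{-1}$ fixes $B^n$ pointwise, and by continuity of M\"obius transformations it also fixes $S^{n-1}$ pointwise. Lemma~\ref{fixB} applied to $\Sigma = S^{n-1}$ then forces $\phi_1 \circ \phi_2^{-1}$ to be either the identity or the reflection in $S^{n-1}$; the reflection $\vecx \mapsto \vecx/|\vecx|^2$ sends every point of $B^n \setminus \{\veczero\}$ outside $B^n$, so it cannot fix $B^n$ pointwise, and we conclude $\phi_1 = \phi_2$. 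The step requiring the most care is extracting the orthogonal matrix $A$ from an isometry fixing $\veczero$; the rest is algebraic unpacking of Lemmas~\ref{MobdB} and~\ref{orthogB}, so I expect no serious obstacle.
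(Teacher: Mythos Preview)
Your proof is correct and follows essentially the same approach as the paper: use Lemma~\ref{MobdB} for the first direction, reduce the second direction to the origin-fixing case via a M\"obius transformation built from a sphere orthogonal to $S^{n-1}$, show the reduced map is orthogonal, and invoke Lemma~\ref{fixB} for uniqueness. The only cosmetic differences are that the paper uses a two-reflection map $\tau_{\vecb} = \rho_{\vecb'}\sigma_{\vecb'}$ composed on the left rather than your single reflection $\sigma_{\veca}$ composed on the right, and the paper simply cites as ``known'' the orthogonal-extension fact that you actually prove via polarisation against the scaled basis $s\vece_1,\ldots,s\vece_n$.
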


\begin{proof} The claim that M\"obius transformations restrict to isometries
follows immediately from the explicit formula for $d_B$ together with
Lemma~\ref{MobdB}.

Conversely, let $\phi: B^n \rightarrow B^n$ be an isometry. We construct an
extension for $\phi$ as follows.  First, for any $\vecb \in B^n$, we construct
a M\"obius transformation $\tau_\vecb$ such that $\tau_\vecb$ leaves $B^n$
invariant, and $\tau_\vecb (\veczero) = \vecb$.  Next, we consider the map
$\tau_{\phi(\veczero)}^{-1}\phi$, which fixes $\veczero$, and show that this
map is orthogonal.  It follows that $\tau_{\phi(\veczero)}^{-1}\phi$ extends to
a M\"obius transformation of $\hat{E}^n$.  We then prove that this extension is
unique.

Let $S(\veca,r)$ be a sphere of $E^n$ orthogonal to $S^{n-1}$, and let
$\sigma_\veca$ be the reflection in this sphere (since $r^2 = |\veca|^2 - 1$,
the radius $r$ is a function of $\veca$).  Let $\rho_\veca$ be the reflection
in the hyperplane through $\mathbf{0}$ normal to \veca.  Then by
Lemma~\ref{orthogB}, $\sigma_\veca$ and $\rho_\veca$ leave $B^n$ invariant, so
the composition $\rho_\veca \sigma_\veca$ leaves $B^n$ invariant.  A
calculation shows that $\rho_\veca\sigma_\veca(\mathbf{0}) =
-\veca/|\veca|^2$. For $\mathbf{b} \not = \mathbf{0}$ in $B^n$, set
$\mathbf{b}' = -\vecb/|\vecb|^2$.  If $r = (|\mathbf{b}'|^2
- 1)^{1/2}$ then $S(\vecb', r)$ is orthogonal to $S^{n-1}$, so we may define a
M\"obius transformation of $B^n$ by the formula $\tau_\vecb =
\rho_{\vecb'}\sigma_{\vecb'}$.  Define $\tau_\mathbf{0}$ to be the identity
map.  Then for all $\vecb$ in $B^n$, $\tau_\vecb(\mathbf{0}) = \vecb$.

We now define a map $\psi: B^n \rightarrow B^n$ by $\psi =
\tau_{\phi(\mathbf{0})}^{-1}\phi$. We have $\psi(\mathbf{0}) = \mathbf{0}$.
And, since $\psi$ is a composition of an isometry and a M\"obius
transformation, by the first part of this theorem, $\psi$ is an isometry of
$B^n$. Let $\vecx$ and $\vecy$ be in $B^n$.  Since $d_B(\psi(\vecx),\mathbf{0})
= d_B(\vecx,\mathbf{0})$, we have \[ \frac{|\psi(\vecx)|^2}{1 -
|\psi(\vecx)|^2} = \frac{|\vecx|^2}{1 - |\vecx|^2 }, \]  hence $|\psi(\vecx)| =
|\vecx|$.  Also, since $d_B(\psi(\vecx),\psi(\vecy)) = d_B(\vecx,\vecy)$, we
have \[ \frac{|\psi(\vecx) - \psi(\vecy)|^2}{(1 - |\psi(\vecx)|^2)(1 -
|\psi(\vecy)|^2)} = \frac{|\vecx - \vecy|^2}{(1 - |\vecx|^2)(1 - |\vecy|^2)},
\] and so $|\psi(\vecx) - \psi(\vecy)|=|\vecx - \vecy|$.  Thus $\psi$ preserves
Euclidean distances in $B^n$.  It is known that any mapping $\psi$ of the open
ball $B^n$ which preserves Euclidean distances and fixes the point $\veczero$
is the restriction to $B^n$ of an orthogonal transformation, say $A$, of $E^n$.
Hence, $\tau_{\phi(\mathbf{0})}A$ is an extension of $\phi$. Now, $A$ is a
Euclidean isometry, so is also a M\"obius transformation of $\hat{E}^n$.   Thus
$\tau_{\phi(\mathbf{0})}A$ is a M\"obius transformation of $\hat{E}^n$ which
leaves $B^n$ invariant and extends $\phi$.

To show that this extension is unique, suppose that $\upsilon$ is another
M\"obius transformation of $B^n$ which extends $\phi$, and let $\xi$ be the
composition $\upsilon^{-1}\tau_{\phi(\mathbf{0})}A$. By the continuity of $\xi$,
we have $\xi(\vecx) = \vecx$ for all $\vecx \in \overline{B^n}$. So by
Lemma~\ref{fixB}, $\xi$ is the identity.  Therefore $\tau_{\phi(\mathbf{0})}A$
is the unique M\"obius extension of $\phi$. \end{proof}

\begin{corollary} The groups $I(B^n)$ and $M(B^n)$ are isomorphic.
\end{corollary}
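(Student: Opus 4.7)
The plan is to use Theorem~\ref{Mob_isom_B} directly: it already supplies the bijection between $M(B^n)$ and $I(B^n)$, so essentially all that remains is to check that this bijection respects composition.

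First, I would define the map $\Phi: M(B^n) \to I(B^n)$ by sending a M\"obius transformation $\phi$ of $\hat{E}^n$ leaving $B^n$ invariant to its restriction $\phi|_{B^n}$. Theorem~\ref{Mob_isom_B} guarantees that $\phi|_{B^n}$ is indeed an isometry of $B^n$, so $\Phi$ is well defined.

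Next, I would verify that $\Phi$ is a bijection. Surjectivity is immediate from the second half of Theorem~\ref{Mob_isom_B}: every isometry of $B^n$ extends to a M\"obius transformation of $\hat{E}^n$ leaving $B^n$ invariant, and this extension restricts back to the original isometry. For injectivity, suppose $\phi_1, \phi_2 \in M(B^n)$ satisfy $\phi_1|_{B^n} = \phi_2|_{B^n}$. Then both are M\"obius extensions of the same isometry of $B^n$, so by the uniqueness clause of Theorem~\ref{Mob_isom_B}, $\phi_1 = \phi_2$.

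Finally, I would check that $\Phi$ is a group homomorphism. If $\phi_1, \phi_2 \in M(B^n)$, then since both maps send $B^n$ into $B^n$, the restriction of the composition equals the composition of the restrictions: $(\phi_1 \phi_2)|_{B^n} = \phi_1|_{B^n} \circ \phi_2|_{B^n}$, i.e.\ $\Phi(\phi_1 \phi_2) = \Phi(\phi_1) \Phi(\phi_2)$. This is a purely formal property of restriction of functions to an invariant set, and is the easiest step. I do not anticipate any genuine obstacle in this corollary; the whole content has been packaged into Theorem~\ref{Mob_isom_B}, and the only subtlety is invoking uniqueness to get injectivity of $\Phi$.
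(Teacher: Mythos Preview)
Your proposal is correct and is exactly the argument the paper has in mind; the paper states the corollary without proof, treating it as immediate from Theorem~\ref{Mob_isom_B}, and your write-up simply spells out the restriction map, its bijectivity via the existence and uniqueness clauses, and the trivial homomorphism check.
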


We can now explain why the hyperbolic angles of $B^n$ are the same as its
Euclidean angles.  From the geometric definition of the map $\zeta:B^n
\rightarrow H^n$, this projection preserves the Euclidean angle between any two
hyperbolic lines of $B^n$ which intersect at the origin. By considering the tangent space
to $H^n$ at $\zeta(\veczero) = \vece_{n+1}$, it can be proved that the
hyperbolic angle between two hyperbolic lines of $H^n$ intersecting at
$\zeta(\veczero)$ is the same as the Euclidean angle.  So, the
hyperbolic angle between two hyperbolic lines of $B^n$ which intersect at the
origin is the same as the Euclidean angle.  Moreover, the isometries of
$B^n$ are M\"obius transformations, which are conformal and act transitively on
the set of hyperbolic lines of $B^n$. Therefore, the hyperbolic angle between
any two intersecting hyperbolic lines in $B^n$ is the same as the Euclidean
angle between these lines. This is why $B^n$ is called the conformal ball model
of hyperbolic $n$-space.

\section{The upper half-space model}\label{upperhs}

The upper half-space model is mapped to $B^n$ by a M\"obius transformation.  As
in the conformal ball model, we may identify isometries and
M\"obius transformations.  In the two-dimensional case, there are
many interesting relationships between isometries, maps known as
linear fractional transformations, and groups of matrices.

Consider the upper half-space $U^n = \{ \vecx \in \hat{E}^n: x_n > 0\}$. There
is a standard transformation $\eta$ from $U^n$ onto the open unit ball $B^n$.
It is given by $\eta = \sigma \rho$, where $\rho$ is the reflection in the
boundary $\hat{E}^{n-1}$, and $\sigma$ is the reflection in the sphere
$S(\mathbf{e}_n,\sqrt{2})$.  The upper half-space model of hyperbolic space is
the set $U^n$ together with the distance function $d_U$ defined by \[
d_U(\vecsxy) = d_B(\eta(\vecx),\eta(\vecy)). \] The topology induced on $U^n$
by $d_U$ is the subspace topology of $U^n$ as a subspace of $\R^n$. We will
mainly be considering the upper half-plane $U^2$, which may be identified with
the set of complex numbers \[ \{ z \in \mathbb{C} : \im(z) > 0 \}. \]

\begin{lemma} The explicit formula for $d_U$ is \[ d_U(\vecx,\vecy) =
\cosh^{-1}\left(1 + \frac{|\vecx - \vecy|^2}{2x_n y_n}\right). \] \end{lemma}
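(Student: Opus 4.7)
The plan is to unpack the definition $d_U(\vecx,\vecy) = d_B(\eta(\vecx),\eta(\vecy))$, using the explicit formula for $d_B$ derived earlier, and then to compute the quantities $|\eta(\vecx) - \eta(\vecy)|^2$ and $1 - |\eta(\vecx)|^2$ in terms of $\vecx$ (and likewise for $\vecy$). The hope is that the messy denominators that appear will cancel against each other inside the argument of $\cosh^{-1}$ to produce the stated formula.

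First I would handle the numerator. Since $\rho$ is reflection in $\hat{E}^{n-1}$, it is a Euclidean isometry with $|\rho(\vecx) - \rho(\vecy)| = |\vecx - \vecy|$, and its last-coordinate flip gives $\rho(\vecx) \cdot \vece_n = -x_n$, so
\[
|\rho(\vecx) - \vece_n|^2 = |\vecx|^2 + 2x_n + 1.
\]
Applying Lemma~\ref{phixphiy} to $\sigma$, the reflection in $S(\vece_n,\sqrt{2})$, yields
\[
|\eta(\vecx) - \eta(\vecy)|^2 = \frac{4|\vecx - \vecy|^2}{(|\vecx|^2 + 2x_n + 1)(|\vecy|^2 + 2y_n + 1)}.
\]

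Next I would compute $1 - |\eta(\vecx)|^2$ by plugging $\rho(\vecx)$ into the explicit formula for $\sigma$ and expanding:
\[
|\eta(\vecx)|^2 = 1 + \frac{4(\vece_n \cdot \rho(\vecx) - 1)}{|\rho(\vecx)-\vece_n|^2} + \frac{4}{|\rho(\vecx)-\vece_n|^2} = 1 - \frac{4x_n}{|\vecx|^2 + 2x_n + 1},
\]
so $1 - |\eta(\vecx)|^2 = 4x_n/(|\vecx|^2 + 2x_n + 1)$, and analogously for $\vecy$.

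Substituting these three expressions into the formula $d_B(\vecu,\vecv) = \cosh^{-1}(1 + 2|\vecu-\vecv|^2/((1-|\vecu|^2)(1-|\vecv|^2)))$, the factors $|\vecx|^2 + 2x_n + 1$ and $|\vecy|^2 + 2y_n + 1$ cancel between the numerator and the two factors in the denominator, leaving
\[
\frac{2|\eta(\vecx)-\eta(\vecy)|^2}{(1-|\eta(\vecx)|^2)(1-|\eta(\vecy)|^2)} = \frac{|\vecx - \vecy|^2}{2 x_n y_n},
\]
which gives the claimed formula for $d_U$. There is no conceptual obstacle here; the only thing to watch is the bookkeeping in the expansion of $|\eta(\vecx)|^2$, where one must be careful that the cross term $2\vece_n \cdot (\rho(\vecx)-\vece_n) / |\rho(\vecx)-\vece_n|^2$ combines correctly with the $|\rho(\vecx)-\vece_n|^{-2}$ term to produce a clean single fraction.
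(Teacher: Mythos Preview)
Your proposal is correct and follows essentially the same route as the paper: both compute $|\eta(\vecx)-\eta(\vecy)|$ via Lemma~\ref{phixphiy} and expand $|\eta(\vecx)|^2$ directly from the formula for $\sigma$, then cancel the common factors. The only cosmetic difference is that the paper writes the recurring denominator as $|\vecx+\vece_n|^2$ (using $|\rho(\vecx)-\vece_n|=|\vecx+\vece_n|$) rather than expanding it to $|\vecx|^2+2x_n+1$ as you do.
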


\begin{proof} From the explicit formula for $d_B$, we have \[ \cosh
d_U(\vecx,\vecy)  =  \cosh d_B(\eta(\vecx),\eta(\vecy))  =  1 +
\frac{2|\sigma\rho(\vecx) - \sigma\rho(\vecy)|^2} {(1 -
|\sigma\rho(\vecx)|^2)(1 - |\sigma\rho(\vecy)|^2)}. \] Then, by
Lemma~\ref{phixphiy} applied to $\sigma$, and $\rho$ being a Euclidean
isometry, we have \[ |\sigma\rho(\vecx) - \sigma\rho(\vecy)|  =
\frac{2|\rho(\vecx) - \rho(\vecy)|} {|\rho(\vecx) - \vece_n||\rho(\vecy) -
\vece_n|} =  \frac{2|\vecx - \vecy|} {|\vecx + \vece_n||\vecy + \vece_n|}.
\] Also, \begin{align*} |\sigma\rho(\vecx)|^2 & =  \left| \vece_n +
\frac{2(\rho(\vecx)-\vece_n)} {|\rho(\vecx) - \vece_n|^2}\right|^2 \\ & =  1 +
\frac{4\vece_n\cdot(\rho(\vecx)-\vece_n)}{|\rho(\vecx) - \vece_n|^2} +
\frac{4}{|\rho(\vecx)- \vece_n|^2} \\ & =  1 + \frac{4[\rho(\vecx)]_n}{|\vecx +
\vece_n|^2}, \end{align*} hence \[ 1 - |\sigma\rho(\vecx)|^2 =
\frac{4x_n}{|\vecx + \vece_n|^2}. \] These results may be combined to
complete the proof. \end{proof}

\begin{corollary} Let $ia$ and $ib$, where $b \geq a > 0$, be points on the
imaginary axis in the upper half-plane $U^2$.  Then the hyperbolic distance
between $ia$ and $ib$ is $\log \frac{b}{a}$. \end{corollary}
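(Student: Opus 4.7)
The plan is to apply the explicit formula for $d_U$ from the preceding lemma directly to the points $ia = (0,a)$ and $ib = (0,b)$ in $U^2$, and then recognise the resulting inverse hyperbolic cosine as a logarithm.

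First I would substitute into $d_U(\vecx,\vecy) = \cosh^{-1}\!\left(1 + \frac{|\vecx - \vecy|^2}{2 x_n y_n}\right)$. With $\vecx = (0,a)$ and $\vecy = (0,b)$, we have $|\vecx-\vecy|^2 = (b-a)^2$ and $x_2 y_2 = ab$, so after combining fractions the argument simplifies to $(a^2+b^2)/(2ab)$. The calculation is a one-line simplification that I would not dwell on.

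The remaining step is to verify $\cosh^{-1}\!\bigl((a^2+b^2)/(2ab)\bigr) = \log(b/a)$. Setting $t = \log(b/a)$, which is non-negative since $b \geq a > 0$, we have $e^t = b/a$ and $e^{-t} = a/b$, so
\[
\cosh t \;=\; \frac{e^t + e^{-t}}{2} \;=\; \frac{1}{2}\!\left(\frac{b}{a} + \frac{a}{b}\right) \;=\; \frac{a^2+b^2}{2ab}.
\]
Since $\cosh$ is injective on $[0,\infty)$ and $t \geq 0$, it follows that $\cosh^{-1}\!\bigl((a^2+b^2)/(2ab)\bigr) = t = \log(b/a)$, completing the proof.

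There is no real obstacle here: once the formula for $d_U$ is in hand, the corollary is a direct substitution together with the elementary identity $\cosh(\log(b/a)) = (a^2+b^2)/(2ab)$. The only point requiring any care is ensuring $\log(b/a) \geq 0$ so that $\cosh^{-1}$ (defined as the inverse on $[0,\infty)$) returns the correct value, which is guaranteed by the hypothesis $b \geq a$.
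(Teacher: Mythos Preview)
Your argument is correct. The paper states this corollary without proof, and your direct substitution into the explicit formula for $d_U$ together with the identity $\cosh\bigl(\log(b/a)\bigr) = (a^2+b^2)/(2ab)$ is exactly the intended routine verification.
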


A subset $L$ of $U^n$ is, by definition, a hyperbolic line of $U^n$ if
$\eta(L)$ is a hyperbolic line of $B^n$.  Thus the geodesics of $U^n$ are its
hyperbolic lines.  Now, the boundary of $U^n$ is mapped onto the boundary of
$B^n$ by $\eta$ which is  a M\"obius transformation.  So $\eta$ is conformal
and maps spheres to spheres. By Proposition~\ref{Blines}, a hyperbolic line
of $B^2$ is the intersection of $B^2$ with either a Euclidean line or a
Euclidean circle which is orthogonal to its boundary $S^1$. Hence, $L$ is a hyperbolic
line of $U^2$ if and only if $L$ is the intersection of $U^2$ with either a
Euclidean line or a Euclidean circle which is orthogonal to
the real axis.

Another consequence of $\eta$ being conformal is that, since $B^n$ is a
conformal model of hyperbolic $n$-space, $U^n$ is also a conformal model.

\subsection{M\"obius transformations, isometries and linear fractional
transformations}\label{linfrac}

As in the conformal ball model, we may identify the M\"obius transformations
and isometries of $U^n$.  A M\"obius transformation of $U^n$ is a M\"obius
transformation of $\hat{E}^n$ which maps $U^n$ bijectively onto itself.

\begin{theorem} \label{Mob_isom_thm} Every M\"obius transformation of
$\hat{E}^n$ which leaves $U^n$ invariant restricts to an isometry of $U^n$, and every isometry of $U^n$ extends to a unique
M\"obius transformation of $\hat{E}^n$ which leaves $U^n$ invariant. \end{theorem}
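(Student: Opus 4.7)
The plan is to reduce this theorem to the analogous statement for the conformal ball model, Theorem~\ref{Mob_isom_B}, by conjugating with the standard map $\eta = \sigma\rho:U^n \to B^n$. Since $\eta$ is itself a finite composition of reflections in spheres, it is a M\"obius transformation of $\hat{E}^n$, and its inverse $\eta^{-1}$ is also a M\"obius transformation mapping $B^n$ bijectively onto $U^n$. This gives a bridge between the two models on which the whole argument will rest.

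First I would handle the forward direction. Let $\phi$ be a M\"obius transformation of $\hat{E}^n$ that leaves $U^n$ invariant. Then $\tilde{\phi} = \eta\phi\eta^{-1}$ is a composition of M\"obius transformations, hence a M\"obius transformation of $\hat{E}^n$, and by construction it leaves $B^n$ invariant. By Theorem~\ref{Mob_isom_B}, $\tilde{\phi}$ restricts to an isometry of $B^n$. For $\vecx,\vecy \in U^n$, the definition $d_U(\vecx,\vecy) = d_B(\eta(\vecx),\eta(\vecy))$ yields
\[
d_U(\phi(\vecx),\phi(\vecy)) = d_B(\eta\phi(\vecx),\eta\phi(\vecy)) = d_B(\tilde\phi\eta(\vecx),\tilde\phi\eta(\vecy)) = d_B(\eta(\vecx),\eta(\vecy)) = d_U(\vecx,\vecy),
\]
so $\phi|_{U^n}$ is an isometry.

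For the converse, let $\psi:U^n\to U^n$ be an isometry. Then $\eta\psi\eta^{-1}:B^n\to B^n$ is a composition of three isometries (using that $\eta$ and $\eta^{-1}$ are isometries between $U^n$ and $B^n$ in the appropriate directions, which follows from the defining identity for $d_U$), so it is an isometry of $B^n$. By Theorem~\ref{Mob_isom_B}, it extends uniquely to a M\"obius transformation $\Psi$ of $\hat{E}^n$ leaving $B^n$ invariant. Then $\Phi := \eta^{-1}\Psi\eta$ is a M\"obius transformation of $\hat{E}^n$ that leaves $U^n$ invariant, and by construction its restriction to $U^n$ equals $\psi$, giving the required extension.

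The remaining point is uniqueness, which I expect to be the subtlest step. Suppose $\Phi_1$ and $\Phi_2$ are two M\"obius extensions of $\psi$ leaving $U^n$ invariant. Then $\xi = \Phi_1^{-1}\Phi_2$ is a M\"obius transformation fixing every point of $U^n$. Since M\"obius transformations are continuous on $\hat{E}^n$, $\xi$ fixes the closure $\overline{U^n}$, which contains the sphere $\hat{E}^{n-1} = P(\vece_n,0)\cup\{\infty\}$ of $\hat{E}^n$. By Lemma~\ref{fixB}, $\xi$ is either the identity or the reflection $\rho$ in $\hat{E}^{n-1}$. But $\rho$ exchanges $U^n$ with the lower half-space, so it cannot fix interior points of $U^n$; hence $\xi$ is the identity and $\Phi_1 = \Phi_2$. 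The only thing to verify carefully is that $\eta$ genuinely restricts to an isometry between the two models, which is immediate from the definition $d_U(\vecx,\vecy) = d_B(\eta(\vecx),\eta(\vecy))$ and the fact that $\eta$ is a bijection $U^n\to B^n$.
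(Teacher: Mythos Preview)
Your proof is correct and is essentially a fully spelled-out version of the paper's one-line argument, which simply says the result follows from Theorem~\ref{Mob_isom_B} together with the fact that $\eta$ is an isometry. Your separate uniqueness argument via Lemma~\ref{fixB} is fine but slightly more than needed: uniqueness already transfers directly from the uniqueness clause in Theorem~\ref{Mob_isom_B} by conjugating with $\eta$.
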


\begin{proof} Follows immediately from Theorem~\ref{Mob_isom_B} and the
fact that $\eta$ is an isometry.\end{proof}

\begin{corollary} \label{Mob_isom_U^2} The groups $I(U^n)$ and $M(U^n)$ are
isomorphic. \end{corollary}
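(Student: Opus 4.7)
The plan is to exhibit the restriction map as the desired isomorphism; the previous theorem does essentially all the work, so this amounts to an administrative verification.

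First I would define $\Phi: M(U^n) \rightarrow I(U^n)$ by $\Phi(\phi) = \phi|_{U^n}$. By Theorem~\ref{Mob_isom_thm}, every M\"obius transformation of $\hat{E}^n$ that leaves $U^n$ invariant restricts to an isometry of $U^n$, so $\Phi$ is well-defined into $I(U^n)$. The same theorem says that every isometry of $U^n$ extends to a M\"obius transformation of $\hat{E}^n$ leaving $U^n$ invariant, which gives surjectivity of $\Phi$. For injectivity, suppose $\phi_1, \phi_2 \in M(U^n)$ satisfy $\phi_1|_{U^n} = \phi_2|_{U^n}$; then $\phi_1$ and $\phi_2$ are both M\"obius extensions of this common isometry, and the uniqueness clause in Theorem~\ref{Mob_isom_thm} forces $\phi_1 = \phi_2$.

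Next I would check that $\Phi$ is a group homomorphism. Both $M(U^n)$ and $I(U^n)$ are groups under composition, and restriction of functions manifestly respects composition: for $\phi, \psi \in M(U^n)$ and $\vecx \in U^n$,
\[
\Phi(\phi \circ \psi)(\vecx) = (\phi \circ \psi)(\vecx) = \phi(\psi(\vecx)) = \Phi(\phi)(\Phi(\psi)(\vecx)),
\]
since $\psi(\vecx) \in U^n$ by invariance. Thus $\Phi(\phi \circ \psi) = \Phi(\phi) \circ \Phi(\psi)$, so $\Phi$ is a bijective homomorphism and therefore an isomorphism.

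There is no real obstacle here; the only subtle point is that injectivity genuinely needs the uniqueness half of Theorem~\ref{Mob_isom_thm} (which in turn traces back to Lemma~\ref{fixB}), rather than being automatic from the restriction-to-$U^n$ construction. Once that is noted, the proof is essentially formal.
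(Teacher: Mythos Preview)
Your proof is correct and is precisely the natural argument the paper has in mind: the corollary is stated without proof, relying on Theorem~\ref{Mob_isom_thm} to make the restriction map $M(U^n)\to I(U^n)$ a bijection, with uniqueness supplying injectivity exactly as you note. There is nothing to add.
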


We now consider just the two-dimensional case, where $U^2$ is the complex upper
half-plane.  An important class of maps on $\Rsphere$ is the linear fractional
transformations. A linear fractional transformation  is a continuous map $\phi:
\Rsphere \rightarrow \Rsphere$ of the form \[\phi(z) = \frac{az + b}{cz + d},\]
where $a$, $b$, $c$ and  $d$ are in $\mathbb{C}$ and $ad - bc \not = 0$. Let
$LF(\Rsphere)$ be the set of all linear transformations of \Rsphere. Let
$B(\Rsphere)$ be the set of all bijections of \Rsphere.  Then
$B(\Rsphere)$ is a group under composition.  Note that a linear fractional
transformation is a bijection, so $\lfc \subseteq B(\Rsphere)$.

We will need the definitions of several groups of matrices. First,  $GL(n,\C)$
is the group of invertible $n \times n$ complex matrices.  Then, $GL(n,\C)$
contains the  subgroup $SL(n,\C)$, which is the group of $n \times n$ complex
matrices which have determinant 1.  The groups $SL(n,\R)$ and $SL(n,\Z)$ are
defined similarly.  The group $\sonr$ is the group of real orthogonal matrices
with determinant 1.  So $\sonr$ is a subgroup of $\slnr$.

We discuss the relationships between $LF(\Rsphere)$, M\"obius transformations,
groups of matrices and the isometries of $U^2$.   Let $\Xi:
GL(2,\mathbb{C})\rightarrow LF(\Rsphere)$ be the map \[ \left(\Xi \startm a & b
\\ c & d \finishm\right)(z) = \frac{az + b}{cz + d}. \]

\begin{lemma} \label{LFgroup} The set $LF(\Rsphere)$ is a group under
composition. \end{lemma}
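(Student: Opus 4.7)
The plan is to realize $LF(\Rsphere)$ as the image of the map $\Xi$ introduced just before the statement, and conclude by a subgroup argument inside $B(\Rsphere)$. Concretely, I would first check that every linear fractional transformation $\phi(z) = (az+b)/(cz+d)$ with $ad - bc \neq 0$ is a bijection of $\Rsphere$, so $\lfc \subseteq B(\Rsphere)$. Since $B(\Rsphere)$ is already a group under composition, associativity is free, and it suffices to verify that $\lfc$ contains the identity and is closed under composition and inversion.

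For the identity, observe that $\Xi\startm 1 & 0 \\ 0 & 1 \finishm$ is the identity map on $\Rsphere$, so $\mathrm{id} \in \lfc$. For closure under composition, I would carry out the direct substitution: given $A = \startm a_1 & b_1 \\ c_1 & d_1 \finishm$ and $B = \startm a_2 & b_2 \\ c_2 & d_2 \finishm$ in $GL(2,\C)$, one computes
\[
\Xi(A)\circ\Xi(B)(z) \;=\; \frac{a_1\bigl(\tfrac{a_2 z + b_2}{c_2 z + d_2}\bigr)+b_1}{c_1\bigl(\tfrac{a_2 z + b_2}{c_2 z + d_2}\bigr)+d_1} \;=\; \frac{(a_1 a_2 + b_1 c_2)z + (a_1 b_2 + b_1 d_2)}{(c_1 a_2 + d_1 c_2)z + (c_1 b_2 + d_1 d_2)},
\]
which is precisely $\Xi(AB)(z)$. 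The multiplicativity of the determinant then shows $\det(AB) = \det(A)\det(B) \neq 0$, so the composition satisfies the non-degeneracy condition and lies in $\lfc$. In other words, $\Xi$ is a homomorphism of magmas from $GL(2,\C)$ into $B(\Rsphere)$.

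For closure under inversion, I would solve $w = (az+b)/(cz+d)$ for $z$ to obtain the inverse $z = (dw - b)/(-cw + a)$, which corresponds under $\Xi$ to the matrix $\startm d & -b \\ -c & a \finishm$; this has determinant $ad-bc \neq 0$, so the inverse is again a linear fractional transformation. Equivalently, $\Xi(A^{-1})$ serves as the inverse of $\Xi(A)$ by the homomorphism property already established. This completes the verification that $\lfc$ is a subgroup of $B(\Rsphere)$, hence a group.

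There is no real obstacle: the proof is essentially bookkeeping, and the homomorphism identity $\Xi(A)\circ\Xi(B) = \Xi(AB)$ is what makes everything drop out. The only point requiring minor care is the handling of $\infty$ and of the zeros of denominators when interpreting compositions on $\Rsphere$, which is resolved by the standard convention that extends each $\phi \in \lfc$ continuously to $\Rsphere$ by sending the pole of $cz + d$ to $\infty$ and $\infty$ to $a/c$ (or to $\infty$ when $c = 0$); with this convention all substitutions above are valid on all of $\Rsphere$.
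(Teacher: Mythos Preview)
Your proposal is correct and follows essentially the same approach as the paper: realize $LF(\Rsphere)$ as the image of the map $\Xi:GL(2,\C)\to B(\Rsphere)$, verify that $\Xi(gh)=\Xi(g)\Xi(h)$, and conclude that $LF(\Rsphere)$, being the image of a homomorphism, is a subgroup of $B(\Rsphere)$. The only difference is presentational: the paper invokes the general fact that the image of a group homomorphism is a subgroup in one line, whereas you spell out the identity, closure, and inversion checks explicitly (and add the care about $\infty$), but the underlying argument is the same.
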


\begin{proof} The map $\Xi$ is into the group $B(\Rsphere)$.   For $g, h \in
GL(2,\mathbb{C})$, it can be checked that $\Xi(gh)$, the image of the matrix
product $gh$, is the same map as the composition $\Xi(g)\Xi(h)$.  Thus, $\Xi$
is a group homomorphism.  Therefore, the image of $\Xi$ is a subgroup of
$B(\Rsphere)$.  Since $\Xi$ maps onto $LF(\Rsphere)$, we conclude that
$LF(\Rsphere)$ is a group. \end{proof}

Lemma~\ref{LFgroup} permits us to define an action of the group
$GL(2,\mathbb{C})$ on $\Rsphere$.  Let $g$ be the matrix
$\startm a & b \\ c & d \finishm \in GL(2,\C)$. The action is defined by setting
$gz=(\Xi(g))(z)$, that is, \begin{equation}\label{gltwocaction} gz = \startm a
& b \\ c & d \finishm z = \frac{az + b}{cz + d}. \end{equation}

We now define some quotient groups. Let $\C^*$ be $\C \setminus \{ 0 \}$.  The
group $PGL(n,\mathbb{C})$ is the quotient of the group $GL(n,\mathbb{C})$ by
the normal subgroup $\{\lambda I: \lambda \in \C^* \}$.  The group $PSL(n,
\mathbb{C})$ is then the quotient \[SL(n,\mathbb{C})/(SL(n,\C) \cap \{\lambda
I: \lambda\in \C^* \}).\]The groups $PSL(n,\R)$ and $PSL(n,\Z)$ are defined
similarly.  Then, if $n$ is even, $PSL(n,\R)$ is $\slnr/\{\pm I \}$, and
$PSL(n,\Z)$ is $\slnz/\{\pm I \}$. If $n$ is an odd integer,  we have
$PSL(n,\R) = SL(n,\R)$ and $PSL(n,\Z) = SL(n,\Z)$. The fact that the groups
$PGL(2,\mathbb{C})$ are $PSL(2,\mathbb{C})$ isomorphic is used to prove the
following corollary.

\begin{corollary} \label{LFisPSL} The group $LF(\Rsphere)$ is isomorphic to the
group $PSL(2,\mathbb{C})$. \end{corollary}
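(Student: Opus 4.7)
The plan is to invoke the First Isomorphism Theorem applied to the homomorphism $\Xi: GL(2,\mathbb{C}) \to LF(\hat{\mathbb{C}})$, then use the stated isomorphism $PGL(2,\mathbb{C}) \cong PSL(2,\mathbb{C})$ to finish. By Lemma~\ref{LFgroup}, $\Xi$ is a surjective group homomorphism, so $LF(\hat{\mathbb{C}})$ is isomorphic to the quotient $GL(2,\mathbb{C})/\ker(\Xi)$. The task therefore reduces to identifying $\ker(\Xi)$ with the normal subgroup $\{\lambda I : \lambda \in \mathbb{C}^*\}$.

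First I would check the easy inclusion: if $g = \lambda I$ for some $\lambda \in \mathbb{C}^*$, then $\Xi(g)(z) = \lambda z/\lambda = z$, so $\Xi(g)$ is the identity map on $\hat{\mathbb{C}}$, and hence $\lambda I \in \ker(\Xi)$.

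Next I would establish the reverse inclusion. Suppose $g = \begin{pmatrix} a & b \\ c & d \end{pmatrix} \in \ker(\Xi)$, so that
\[
\frac{az+b}{cz+d} = z \quad \text{for all } z \in \hat{\mathbb{C}}.
\]
Clearing denominators for $z \neq \infty$ (and with $cz+d \neq 0$) gives $cz^2 + (d-a)z - b = 0$ for all such $z$. Since this polynomial identity holds on a cofinite subset of $\mathbb{C}$, all its coefficients vanish: $c = 0$, $a = d$, and $b = 0$. Combined with $ad - bc \neq 0$, this forces $a = d \neq 0$, so $g = aI$ with $a \in \mathbb{C}^*$. Hence $\ker(\Xi) = \{\lambda I : \lambda \in \mathbb{C}^*\}$, and by the First Isomorphism Theorem $LF(\hat{\mathbb{C}}) \cong PGL(2,\mathbb{C})$.

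Finally, I would appeal to the fact stated just before the corollary that $PGL(2,\mathbb{C}) \cong PSL(2,\mathbb{C})$ (which holds because every $g \in GL(2,\mathbb{C})$ can be rescaled to have determinant $1$ by dividing by a square root of $\det g$, well-defined up to sign, and these signs are precisely what is quotiented out in $PSL(2,\mathbb{C}) = SL(2,\mathbb{C})/\{\pm I\}$). Composing the two isomorphisms yields $LF(\hat{\mathbb{C}}) \cong PSL(2,\mathbb{C})$. The only nontrivial step is the kernel computation, and even this is routine polynomial identification, so there is no real obstacle here; the proof is essentially a formal consequence of Lemma~\ref{LFgroup} plus the cited isomorphism $PGL(2,\mathbb{C}) \cong PSL(2,\mathbb{C})$.
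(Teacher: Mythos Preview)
Your proof is correct and follows essentially the same route as the paper: apply the First Isomorphism Theorem to the surjective homomorphism $\Xi$, identify $\ker\Xi$ with the scalar matrices via the polynomial identity $cz^2 + (d-a)z - b = 0$, and then invoke the stated isomorphism $PGL(2,\mathbb{C}) \cong PSL(2,\mathbb{C})$. You are a bit more explicit than the paper in checking both inclusions for the kernel and in spelling out the final $PGL \cong PSL$ step, but the argument is the same.
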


\begin{proof} The homomorphism $\Xi : GL(2,\mathbb{C}) \rightarrow
LF(\Rsphere)$ is onto, so \[ LF(\Rsphere) \cong GL(2,\mathbb{C}) / \ker \Xi. \]
Suppose $g = \startm a & b \\ c & d \finishm$ is in the kernel of $\Xi$.  Then,
for all $z \in \Rsphere$, \[ \frac{az + b}{cz + d} = z. \] Rearranging, we
obtain the equation $cz^2 + (d-a)z + b = 0$.  Since this equation holds for all
$z$, we have $c = 0$, $d = a$ and $b = 0$.  Hence, the kernel of $\Xi$ is the
subgroup $\{ \lambda I : \lambda \in \C^* \}$. Therefore $LF(\Rsphere) \cong
PSL(2,\mathbb{C})$. \end{proof}

\begin{propn}\label{LFisMob} Every linear fractional transformation of
\Rsphere\ is a M\"obius transformation of \Rsphere. \end{propn}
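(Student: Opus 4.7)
The plan is to use the fact that the map $\Xi \colon GL(2,\mathbb{C}) \to LF(\hat{\mathbb{C}})$ is surjective, decompose a general linear fractional transformation into a short list of elementary pieces, and verify that each elementary piece is a M\"obius transformation. Since $M(\hat{E}^2) = M(\hat{\mathbb{C}})$ is a group under composition, this suffices.

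Concretely, given $\phi(z) = (az+b)/(cz+d)$ with $ad-bc \neq 0$, I would split into two cases. If $c = 0$, then $d \neq 0$ and $\phi(z) = (a/d)z + b/d$. If $c \neq 0$, the identity
\[
\phi(z) = \frac{a}{c} + \frac{bc - ad}{c^2}\cdot\frac{1}{z + d/c}
\]
expresses $\phi$ as a composition of a translation, the inversion $z \mapsto 1/z$, a complex scaling $z \mapsto \lambda z$ with $\lambda = (bc-ad)/c^2$, and another translation. So in either case $\phi$ is built from three kinds of maps: translations $z \mapsto z + \beta$, complex scalings $z \mapsto \lambda z$ with $\lambda \in \mathbb{C}^*$, and the inversion $z \mapsto 1/z$.

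Next I would verify each building block is in $M(\hat{\mathbb{C}})$. Translations of $\hat{E}^2$ are Euclidean isometries, so by Proposition~\ref{isomrefl} they are finite compositions of reflections in hyperplanes (extended to $\infty \mapsto \infty$), hence M\"obius. Writing $\lambda = r e^{i\theta}$ with $r > 0$, the scaling $z \mapsto \lambda z$ is the composition of a rotation, which is an orthogonal transformation of $E^2$ and hence a M\"obius transformation by the same proposition, with the positive dilation $z \mapsto rz$; this dilation was shown in the proof that $M(\hat{E}^n)$ acts transitively on spheres to be the composition of the reflections in $S(\veczero,1)$ and $S(\veczero,\sqrt{r})$, hence M\"obius. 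Finally, on $\mathbb{C}$ the reflection $\sigma_1$ in $S(\veczero,1)$ has the formula $\sigma_1(z) = z/|z|^2 = 1/\bar z$, so the inversion
\[
\frac{1}{z} = \overline{\sigma_1(z)}
\]
equals $\sigma_1$ followed by complex conjugation, which is the reflection in the real axis, that is, in the extended plane $P(\vece_2,0)$. Both factors are reflections in spheres of $\hat{E}^2$, so $z \mapsto 1/z$ is a M\"obius transformation.

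Since $M(\hat{\mathbb{C}})$ is a group, composing these elementary M\"obius maps in the order prescribed by the decomposition above yields $\phi$ as a finite composition of reflections in spheres, which is the definition of a M\"obius transformation of $\hat{\mathbb{C}}$. The only mildly delicate point, and the one I would be most careful about, is the treatment of the inversion: confusing the holomorphic map $z \mapsto 1/z$ with the geometric reflection $z \mapsto 1/\bar z$ would leave the proof with an unaccounted-for conjugation, so I would explicitly exhibit $1/z$ as a composition of two reflections rather than as a single reflection.
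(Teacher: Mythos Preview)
Your proof is correct but takes a different route from the paper. You decompose an arbitrary linear fractional transformation directly into translations, complex scalings, and the holomorphic inversion $z\mapsto 1/z$, and then verify each piece is M\"obius. The paper instead works group-theoretically: it first proves that $SL(2,\mathbb{C})$ is generated by the matrices $u(m)=\begin{pmatrix}1&m\\0&1\end{pmatrix}$ and $v=\begin{pmatrix}0&1\\-1&0\end{pmatrix}$, then invokes Corollary~\ref{LFisPSL} to conclude that $LF(\hat{\mathbb{C}})$ is generated by $\phi_{u(m)}(z)=z+m$ and $\phi_v(z)=-1/z$, and finally writes $\phi_v$ as reflection in $S^1$ followed by reflection in the imaginary axis. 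Your approach is more self-contained, needing neither the isomorphism $LF(\hat{\mathbb{C}})\cong PSL(2,\mathbb{C})$ nor any generator computation; the paper's approach, on the other hand, yields as a byproduct the generation of $SL(2,\mathbb{C})$ by $u(m)$ and $v$, which foreshadows the generators $u,v$ of $SL(2,\mathbb{Z})$ that become important later in the thesis. Your careful distinction between $1/z$ and $1/\bar z$ is exactly the point the paper handles by choosing $-1/z$ and the imaginary-axis reflection instead.
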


\begin{proof} We first show that the group $SL(2,\mathbb{C})$ is generated by
matrices of the form \[ u(m)=\startm 1 & m \\ 0 & 1 \finishm
\hspace{5mm}\mbox{and}\hspace{5mm} v = \matrixv, \] where $m$ is a complex
number.  For complex numbers $m$, $n$ and $p$, consider the product \[
u(m)\,v\,u(n)\,v\,u(p)  =  \startm mn - 1 & mnp - p - m \\ n & np - 1 \finishm
\in SL(2,\mathbb{C}). \] Let $g = \startm a & b \\ c & d \finishm \in
SL(2,\mathbb{C})$.  Suppose first that $c \not = 0$.  Set $n = c$, and then
choose $m$ and $p$ so that $ mn - 1 = a \hspace{5mm} \mbox{and}\hspace{5mm} np
- 1 = d$.  Since both $g$ and the product $u(m)\,v\,u(n)\,v\,u(p)$  have
determinant 1, we must have $b = mnp - p - m$.  Thus $g =
u(m)\,v\,u(n)\,v\,u(p)$. Now suppose that $c = 0$.  Since $\det(g) = 1$, $a
\not = 0$.  We have \[ v^{-1}\startm a & b \\ c & d \finishm  =  \matrixv
\startm a & b \\ c & d \finishm  =  \startm -c & -d \\ a & b \finishm, \] which
may be written $u(m)\,v\,u(n)\,v\,u(p)$ for some complex numbers $m$, $n$ and $p$,
since $a \not = 0$.  Thus $g = v\,u(m)\,v\,u(n)\,v\,u(p)$. Therefore
$SL(2,\mathbb{C})$ may be generated by the matrix $v$, and matrices of the form
$u(m)$, where $m \in \mathbb{C}$.

Let $\phi_{u(m)} = \Xi(u(m))$ and $\phi_v = \Xi(v)$.  Then $\phi_{u(m)}(z) = z
+ m$ and $\phi_v(z) = -1/z$.  We have $v^2 = -I$, and, from
Corollary~\ref{LFisPSL}, $LF(\Rsphere) \cong PSL(2,\mathbb{C})$. Hence
$LF(\Rsphere)$ is generated by the maps $\phi_{u(m)}$ and $\phi_v$.  So it now
suffices to prove that $\phi_{u(m)}$ and $\phi_v$ are M\"obius transformations.
First, $\phi_{u(m)}$ is a translation, and translations are isometries.
Therefore $\phi_{u(m)}$ is a M\"obius transformation. For $\phi_v$, let
$\sigma$ be the reflection in the unit circle $S^1$, and let $\tau$ be the
reflection in the imaginary axis, that is, $\tau(z) = -\overline{z}$.  Then \[
\tau\sigma(z) = \tau\left(\frac{z}{|z|^2}\right) =
\tau\left(\frac{1}{\overline{z}}\right) = -\frac{1}{z}, \] hence $\phi_v =
\tau\sigma$.  Thus $\phi_v$ is a M\"obius transformation. \end{proof}

\begin{propn} \label{MobequalsLF}Let $\rho:\Rsphere \rightarrow \Rsphere$ be
complex conjugation, that is, $\rho(z) = \overline{z}$.  Then $M(\Rsphere) =
LF(\Rsphere)\cup LF(\Rsphere)\rho$.  \end{propn}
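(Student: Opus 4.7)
The plan is to prove the two inclusions $LF(\Rsphere)\cup LF(\Rsphere)\rho \subseteq M(\Rsphere)$ and $M(\Rsphere)\subseteq LF(\Rsphere)\cup LF(\Rsphere)\rho$ separately. The first is the easy direction: by Proposition~\ref{LFisMob}, every element of $LF(\Rsphere)$ is a M\"obius transformation, and $\rho$ itself is the reflection of $\hat{E}^2$ in the extended real axis, hence is a M\"obius transformation. Since $M(\Rsphere)$ is a group, any product $\phi\rho$ with $\phi\in LF(\Rsphere)$ is in $M(\Rsphere)$.

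For the reverse inclusion, I would exploit the fact that $M(\Rsphere)$ is generated by reflections in spheres of $\hat{E}^2$, i.e.\ in Euclidean lines and Euclidean circles in $\C$. The core computation is to verify that every such reflection lies in $LF(\Rsphere)\rho$. For the reflection $\sigma$ in a Euclidean circle $S(a,r)$, rewrite the defining formula in the complex plane as
\[
\sigma(z)=a+\frac{r^2}{\overline{z}-\overline{a}}=\frac{a\overline{z}+(r^2-|a|^2)}{\overline{z}-\overline{a}},
\]
which is exactly $\phi(\rho(z))$ for the linear fractional transformation $\phi(w)=(aw+r^2-|a|^2)/(w-\overline{a})$. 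For the reflection in an extended Euclidean line through the origin making angle $\theta$ with the real axis one obtains $z\mapsto e^{2i\theta}\overline{z}$, and a general line reflection is obtained by composing with a translation; in either case the map has the form $z\mapsto \alpha\overline{z}+\beta$ with $\alpha\ne 0$, again a linear fractional transformation evaluated at $\rho(z)$.

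Next I would show that $H:=LF(\Rsphere)\cup LF(\Rsphere)\rho$ is closed under composition, which, combined with the previous step, forces $M(\Rsphere)\subseteq H$. The key observation is that $\rho$ normalises $LF(\Rsphere)$: if $\phi(z)=(az+b)/(cz+d)$, then
\[
\rho\phi\rho^{-1}(z)=\overline{\phi(\overline{z})}=\frac{\overline{a}z+\overline{b}}{\overline{c}z+\overline{d}}\in LF(\Rsphere),
\]
so $\rho LF(\Rsphere)\rho=LF(\Rsphere)$. Together with $\rho^2=\mathrm{id}$ and the fact that $LF(\Rsphere)$ is a group (Lemma~\ref{LFgroup}), this yields closure of $H$ under composition and inversion. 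Since every generating reflection of $M(\Rsphere)$ lies in $H$, we conclude $M(\Rsphere)\subseteq H$, completing the proof.

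The main obstacle is essentially bookkeeping: one must handle both the circle case and the line case of reflection, and verify the normalising identity $\rho LF(\Rsphere)\rho=LF(\Rsphere)$. Neither step is deep, but the argument depends on the correct explicit formulas for reflections in $\C$ and on distinguishing $LF(\Rsphere)\rho$ (orientation-reversing M\"obius maps) from $LF(\Rsphere)$ (orientation-preserving ones); the disjointness of the two cosets is not asserted in the statement, though it does follow from the orientation consideration.
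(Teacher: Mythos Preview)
Your proposal is correct and follows essentially the same route as the paper: prove the easy inclusion via Proposition~\ref{LFisMob} and the fact that $\rho$ is a reflection, then show every reflection in a circle or line lies in $LF(\Rsphere)\rho$ by explicit complex-variable computation, and finally handle compositions. Your closure step, phrased via the normalising identity $\rho\,LF(\Rsphere)\,\rho=LF(\Rsphere)$, is exactly what the paper does when it says ``by expanding out, $\phi\rho\phi'\rho\in LF(\Rsphere)$''; you have simply stated it more cleanly.
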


\begin{proof} Since $\rho$ is a reflection (in the real axis), and
Proposition~\ref{LFisMob} shows that $LF(\Rsphere) \subseteq M(\Rsphere)$, we
have the inclusion \[ LF(\Rsphere)\cup LF(\Rsphere)\rho \subseteq M(\Rsphere)
\] immediately.

For the other inclusion, let $\phi$ be a M\"obius transformation of
$\Rsphere$.  Since $\phi$ is a composition of reflections in spheres of
$\Rsphere$, we first consider the cases where $\phi$ is a single reflection.
Suppose that $\phi$ is the reflection in the circle $S(a,r)$.  Then
\[ \phi(z)  =  a + r^2\frac{z-a}{|z-a|^2} =  a +
\frac{r^2}{\overline{z-a}} =  \frac{a\overline{z} + r^2 - a\overline{a}
}{\overline{z} - \overline{a}}  =  \psi\rho(z), \] where $\psi$ is
the map given by \[ \psi(z) = \frac{az + (r^2 - |a|^2)}{z - \overline{a}}. \]
Since \[ a(-\overline{a}) - (r^2 - |a|^2)(1) = -|a|^2 - r^2 + |a|^2 = -r^2
\not = 0, \] the map $\psi$ is a linear fractional transformation.  Hence, $\phi \in
LF(\Rsphere)\rho$.

Now suppose that $\phi$ is the reflection in the line $P(u,t)$. Let $\phi_0$ be
the reflection in the line through the origin $P(u,0)$.  As $P(u,t)$ is the
image of $P(u,0)$ under translation by $tu$, we have
\begin{equation}\label{LFM1} \phi(z) = \phi_0(z - tu) + tu. \end{equation}
Next, the line $P(u,0)$ is the image of the real axis under rotation through an
angle of $\theta + \frac{\pi}{2}$, where $u = e^{i\theta}$.  Since $\rho$ is
reflection in the real axis, \begin{equation}\label{LFM2} \phi_0(z) =
e^{i(\theta + \frac{\pi}{2})}\rho(e^{-i(\theta + \frac{\pi}{2})}z) =
iuiu\overline{z} = -u^2\overline{z}. \end{equation} Combining~(\ref{LFM1})
and~(\ref{LFM2}), we obtain
\[\phi(z) = -u^2\overline{z} + tu^2\overline{u} + tu = \psi\rho(z),\] where
$\psi$ is the linear fractional transformation \[ \psi(z) = \frac{-u^2z +
tu^2\overline{u} + tu }{0z + 1}. \] Thus $\phi \in LF(\Rsphere)\rho$.

We now consider the effect of composing reflections. Let $\phi$ and $\phi'$ be
in $LF(\Rsphere)$.  By expanding out, it can be shown that the composition
$\phi\rho\phi'\rho$ is in $LF(\Rsphere)$.  The composition $\phi\phi'$ is a
linear fractional transformation, as $LF(\Rsphere)$ is a group.
Also, both of the compositions $\phi\rho\phi'$ and $\phi\phi'\rho$ are in
$LF(\Rsphere)\rho$.  Thus every M\"obius transformation is either in
$LF(\Rsphere)$, or in $LF(\Rsphere)\rho$, depending on whether it contains an
even or odd number of reflections respectively. Hence $M(\Rsphere) \subseteq
LF(\Rsphere)\cup LF(\Rsphere)\rho$.  \end{proof}

The M\"obius transformations belonging to $LF(\Rsphere)$ are usually called
orientation preserving.

\begin{lemma} \label{formofphi} A linear fractional transformation $\phi$ of
\Rsphere\ leaves $U^2$ invariant if and only if there exist real numbers $a$,
$b$, $c$, $d$ with $ad - bc = 1$ such that \[ \phi(z) = \frac{az + b}{cz + d}.
\] \end{lemma}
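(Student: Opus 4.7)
The plan is to prove the two directions separately. For the easy direction, assume $a,b,c,d \in \R$ with $ad-bc=1$, and verify directly that $\phi$ preserves $U^2$. The standard calculation is
\[
\phi(z) = \frac{(az+b)(c\bar z + d)}{|cz+d|^2} = \frac{ac|z|^2 + bd + (ad+bc)\re(z) + i(ad-bc)\im(z)}{|cz+d|^2},
\]
so $\im(\phi(z)) = \im(z)/|cz+d|^2$. Hence $\phi$ maps $U^2$ into $U^2$. Its inverse, which in $PSL(2,\R)$ corresponds to $\startm d & -b \\ -c & a \finishm$, also has real entries and determinant $1$, so $\phi^{-1}$ maps $U^2$ into $U^2$, showing $\phi(U^2)=U^2$.

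For the converse, suppose $\phi \in LF(\Rsphere)$ leaves $U^2$ invariant. First I would observe that $\phi$ is a homeomorphism of $\Rsphere$ preserving $U^2$, so it also preserves the topological boundary, giving $\phi(\R\cup\{\infty\}) = \R\cup\{\infty\}$. Writing $\phi(z) = (Az+B)/(Cz+D)$ with complex coefficients, I would split into two cases according to whether $\phi$ fixes $\infty$. If $\phi(\infty)=\infty$, then $C=0$ and $D\neq 0$, and $\phi(z) = (A/D)z + B/D$; evaluating at $0$ and $1$ shows $A/D, B/D \in \R$, while $\im(\phi(i)) > 0$ forces $A/D > 0$. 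Otherwise $C \ne 0$, and I would rescale by $1/C$ to write $\phi(z) = (A'z + B')/(z + D')$ with $A' = A/C$, $B' = B/C$, $D' = D/C$. Then $\phi(\infty) = A' \in \R$, the unique pole $-D' \in \R$, and $\phi(0) = B'/D' \in \R$, so $A', D' \in \R$ and $B' = (B'/D')D' \in \R$.

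In either case I now have a representation $\phi(z) = (az+b)/(cz+d)$ with $a,b,c,d \in \R$ and $\delta := ad-bc \ne 0$. The imaginary part computation from the first paragraph gives $\im(\phi(i)) = \delta/|ci+d|^2$, and since $\phi(i) \in U^2$ this forces $\delta > 0$. The final step is to rescale: setting $\lambda = 1/\sqrt{\delta}$ and replacing $(a,b,c,d)$ by $(\lambda a, \lambda b, \lambda c, \lambda d)$ leaves the map $\phi$ unchanged in $LF(\Rsphere)$ but makes the determinant equal to $\lambda^2 \delta = 1$, as required.

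The main obstacle is making sure the coefficients can simultaneously be taken real; the case split on $\phi(\infty)$ above handles this cleanly, but one must be careful to rescale the representative in $GL(2,\C)$ before reading off reality of the individual entries, since only ratios of the entries are meaningful. Once reality is established, the positivity of the determinant follows painlessly from $\im(\phi(i))>0$ and the standard identity.
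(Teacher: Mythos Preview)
Your argument is essentially the same strategy as the paper's: compute $\im(\phi(z)) = \im(z)/|cz+d|^2$ for the forward direction, use that $\phi$ must preserve the boundary $\hat{\R}$ to pin down reality of the coefficients, then use $\im(\phi(i))>0$ to force positive determinant and rescale. The paper normalises on $b$ (splitting into $b=0$ versus $b\neq 0$, with a further subcase on $a$), whereas you normalise on $C$ (splitting on whether $\phi$ fixes $\infty$); your case split is a bit more geometric but the content is the same.

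There is one small gap in your $C\neq 0$ case. After rescaling you write $\phi(z)=(A'z+B')/(z+D')$ and deduce $D'\in\R$ from the pole, then conclude $B'\in\R$ via $\phi(0)=B'/D'\in\R$ and $B'=(B'/D')D'$. This fails when $D'=0$: then $\phi(0)=\infty$, which lies in $\hat{\R}$ but tells you nothing about $B'$, and the product $(B'/D')D'$ is meaningless. You need a one-line patch, e.g.\ if $D'=0$ evaluate instead at $z=1$ to get $\phi(1)=A'+B'\in\R$, and since $A'\in\R$ already this gives $B'\in\R$. The paper avoids this particular pitfall by normalising differently, but has to handle analogous edge cases of its own.
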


\begin{proof} Suppose there exist real numbers $a$, $b$, $c$, $d$ with $ad - bc
= 1$ such that $\phi(z) = (az + b)/(cz + d)$. Recall that $\phi$ is a bijection.
Let $z \in U^2$.  Then \[\im(\phi(z)) =  \frac{\im(z)}{|cz +
d|^2} > 0.\]  Thus $\phi$ maps into $U^2$. To show that $\phi$ is onto $U^2$,
let $w$ be any point of $U^2$. Then $\phi(z)=w$ if and only if \[ z =
\frac{b-dw}{cw-a}. \] Here, $\im(z) = \im(w)/|cw - a|^2 >
0$, so $\phi$ is onto $U^2$.  Thus $\phi$ leaves $U^2$ invariant.

Conversely, suppose $\phi$ leaves $U^2$ invariant.  Since $\phi$ is a
homeomorphism, $\phi$ maps the boundary of $U^2$, which is $\hat{E}^1 = \R \cup
\{ \infty \}$, bijectively onto itself. We know that for some complex numbers
$a$, $b$, $c$ and $d$ with $ad-bc \not = 0$,  \[ \phi(z) = \frac{az + b}{cz +
d}. \]  We consider two cases, the first being if $b = 0$.  Since $ad-bc \not =
0$, it follows that $a \not = 0$. Then, we may divide $a$, $b$, $c$ and $d$
by $a$ without changing the function $\phi$, so we may assume that $a = 1$. We
have $\phi(1) = 1/(c+d)$ and $\phi(-1) = -1/(-c + d)$ in
$\hat{E}^1$, so $c+d$ and $-c+d$ are real. Thus $c$ and $d$ are real.

The second case is when $b \not = 0$.  By dividing each of $a$, $b$, $c$ and
$d$ by $b$, we may assume that $b = 1$.  Then $\phi(0) = 1/d \in \hat{E}^1$, so
$d \in \R$. If $a = 0$ then $c \not = 0$, and $\phi(1)= 1/(c+d) \in
\hat{E}^1$, so $c + d \in \R$, implying $c\in\R$. If $a \not = 0$, we have that
$\phi(-1/a) = 0$, hence $a \in \R$. Then, \[ \phi(1) = \frac{a +
1}{c+d} \in \hat{E}^1,  \] implying $c \in \R$.

Since $\phi$ maps the upper half-plane onto itself, $\im(\phi(i)) > 0$.  We
find that  $\im(\phi(i)) = (ad-bc)/(c^2 + d^2)$.  Therefore, the determinant
$ad - bc$ must be positive. Let $A = a/\sqrt{ad-bc}$, $B~=~b/\sqrt{ad-bc}$, $C
= c/\sqrt{ad-bc}$ and $D = d/\sqrt{ad-bc}$.  Then $A$, $B$, $C$ and $D$ are
real, $AD - BC = 1$, and $\phi(z) = (Az + B)/(Cz + D)$. \end{proof}

\begin{theorem} Let $-\rho$ be the map $(-\rho)(z) = -\overline{z}$.  Then \[
I(U^2) = \Xi(SL(2,\R)) \cup \Xi(SL(2,\R))(-\rho). \] \end{theorem}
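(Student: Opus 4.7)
The plan is to identify $I(U^2)$ via the isomorphism $I(U^2) \cong M(U^2)$ from Corollary~\ref{Mob_isom_U^2}, and then to split $M(U^2)$ into its intersections with $LF(\Rsphere)$ (the orientation-preserving part) and $LF(\Rsphere)\rho$ (the orientation-reversing part), using Proposition~\ref{MobequalsLF}. This reduces everything to the already-proved characterisation of $U^2$-invariant linear fractional transformations in Lemma~\ref{formofphi}.

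For the forward inclusion, let $\phi \in I(U^2)$, so $\phi \in M(U^2)$ is a M\"obius transformation of $\Rsphere$ leaving $U^2$ invariant. By Proposition~\ref{MobequalsLF}, either $\phi \in LF(\Rsphere)$ or $\phi = \psi\rho$ for some $\psi \in LF(\Rsphere)$. In the first case, Lemma~\ref{formofphi} gives $\phi \in \Xi(SL(2,\R))$ immediately. In the second case, I would consider $\phi \cdot (-\rho)$. Since $-\rho$ is reflection in the imaginary axis, it leaves $U^2$ invariant, and it is an involution, so $\phi \cdot (-\rho)$ again leaves $U^2$ invariant. Moreover, writing $-\rho = \mu\rho$ where $\mu(z) = -z$ is an LFT, a short computation shows $\rho\mu\rho = \mu$, and hence $\phi \cdot (-\rho) = \psi\rho\mu\rho = \psi\mu \in LF(\Rsphere)$. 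Therefore $\phi \cdot (-\rho)$ is a linear fractional transformation leaving $U^2$ invariant, so Lemma~\ref{formofphi} places it in $\Xi(SL(2,\R))$, whence $\phi \in \Xi(SL(2,\R))(-\rho)$ as required.

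For the reverse inclusion, Lemma~\ref{formofphi} together with Proposition~\ref{LFisMob} shows that every element of $\Xi(SL(2,\R))$ is a M\"obius transformation leaving $U^2$ invariant, hence an isometry of $U^2$ by Theorem~\ref{Mob_isom_thm}. Since $-\rho$ is also a M\"obius transformation (a composition of reflections in the real and imaginary axes) leaving $U^2$ invariant — as $\im(-\overline{z}) = \im(z)$ — it too lies in $I(U^2)$. As $I(U^2)$ is a group, $\Xi(SL(2,\R))(-\rho) \subseteq I(U^2)$.

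The main subtlety is the orientation-reversing case: one must pair the orientation-reversing part of $M(\Rsphere)$ not with $\rho$ (which maps $U^2$ to the lower half-plane) but with $-\rho$. The key manipulation is the identity $\rho\mu\rho = \mu$, which lets one absorb the factor $\mu = -\text{id}$ on the correct side and reduce the orientation-reversing case to the already-settled orientation-preserving one.
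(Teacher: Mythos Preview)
Your proof is correct and follows essentially the same route as the paper: both directions use Theorem~\ref{Mob_isom_thm} (or its corollary) to pass between $I(U^2)$ and $M(U^2)$, invoke Proposition~\ref{MobequalsLF} to split into the $LF(\Rsphere)$ and $LF(\Rsphere)\rho$ cases, and then apply Lemma~\ref{formofphi} to land in $\Xi(SL(2,\R))$. The only cosmetic difference is in the orientation-reversing case: where you compute $\phi(-\rho)=\psi\rho\mu\rho=\psi\mu$ via the identity $\rho\mu\rho=\mu$, the paper achieves the same reduction by rewriting the coset $LF(\Rsphere)\rho=LF(\Rsphere)\tau\rho=LF(\Rsphere)(-\rho)$ with $\tau(z)=-z$, and then observes that the $LF$ factor must preserve $U^2$ since both $\phi$ and $-\rho$ do.
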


\begin{proof} Let $\phi$ be in $\Xi(SL(2,\R))$. By Proposition~\ref{LFisMob},
$\phi$ is a M\"obius transformation of $\Rsphere$.  By
Proposition~\ref{formofphi}, $\phi$ leaves $U^2$ invariant, so $\phi$ is a
M\"obius transformation of $U^2$. Thus, by Theorem~\ref{Mob_isom_thm}, $\phi$
restricts to an isometry of $U^2$.  Since $-\rho$ is a reflection, which
moreover leaves $U^2$ invariant, $\phi(-\rho)$ is also a M\"obius transformation
of $U^2$. Hence, the composition $\phi(-\rho)$ is an isometry of $U^2$ as well.

For the other inclusion, let $\phi$ be an isometry of $U^2$. By
Theorem~\ref{Mob_isom_thm}, $\phi$ extends to a unique M\"obius transformation
of $U^2$.  So by Proposition~\ref{MobequalsLF}, either $\phi \in LF(\Rsphere)$, or
$\phi \in LF(\Rsphere)\rho$.

If $\phi \in LF(\Rsphere)$, then by Lemma~\ref{formofphi}, since $\phi$ leaves
$U^2$ invariant, $\phi$ is the image of some matrix in \sltwor\ under the map
$\Xi$. If $\phi \in LF(\Rsphere)\rho$, let $\tau$ be the linear fractional
transformation $\tau(z) = (-z + 0)/(0z + 1) = -z$. Then $(-\rho)(z) =
\tau\rho(z)$, and so $LF(\Rsphere)\rho = LF(\Rsphere)\tau\rho =
LF(\Rsphere)(-\rho)$.  Now, the map $-\rho$ leaves $U^2$ invariant, and $\phi$
leaves $U^2$ invariant, so $\phi$ has the form \[ \phi(z) =
\frac{a(-\overline{z}) + b}{c(-\overline{z}) + d}, \] where $a$, $b$, $c$ and
$d$ are real and $ad-bc = 1$.  Hence $\phi \in \Xi(\sltwor)(-\rho)$.
\end{proof}

\subsection{Topological groups and discrete subgroups}

A topological group is a group $G$ which is also a topological space, such
that the multiplication $(g,h) \mapsto gh$ and inversion $g \mapsto g^{-1}$ are
continuous functions in $G$. Many of the groups discussed in this chapter are
topological groups.  Groups of matrices such as $GL(n,\mathbb{C})$ and
$SL(n,\R)$ are topological groups, and their topology is the metric topology
induced by the distance function \[ d(A,B) = |A - B|, \] where the right-hand
side is the matrix norm defined by \[ |A| = \left( \sum_{i,j = 1}^n
|a_{ij}|^2\right)^{1/2}. \] Then,  quotient groups such as $P\sltwor$ are
topological groups as well, equipped with the quotient topology.  The group of
isometries of $B^n$ or $U^n$, and the group of linear fractional
transformations of $\Rsphere$, are also topological groups. They are
topologised with the subspace topology inherited from the spaces $C(B^n)$,
$C(U^n)$ or $C(\Rsphere)$, respectively, of continuous maps with the
compact-open topology.

An isomorphism of topological groups $G$ and $H$ is a map $\phi:G \rightarrow
H$ which is both a group isomorphism and a homeomorphism.  The isomorphism of
$LF(\Rsphere)$ and $PSL(2,\mathbb{C})$ established in Proposition~\ref{LFisPSL}
is an isomorphism of topological groups.

Let $\Gamma$ be a subgroup of a topological group $G$.  Then  $\Gamma$ is said
to be a
discrete subgroup of $G$ if for all $\gamma \in \Gamma$ there exists an open
neighbourhood $\Omega$ of $\gamma$ in $G$ such that $\Omega \cap \Gamma = \{ \gamma \}$.
Recall that the groups \sltwor\ and $P\sltwor$ act on $U^2$ by isometries.  We
now show that $SL(n,\mathbb{Z})$ is a discrete subgroup of $SL(n,\R)$, and that
$PSL(n,\mathbb{Z})$ is a discrete subgroup of $PSL(n,\R)$.

\begin{lemma} \label{discretelemma} Let $G$ be a topological group which is
also a metric space, and let $\Gamma$ be a subgroup of $G$. Suppose every
convergent subsequence in $\Gamma$ is eventually constant.  Then $\Gamma$ is
discrete. \end{lemma}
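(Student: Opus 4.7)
The plan is to prove the contrapositive: assume $\Gamma$ is not discrete, and construct a convergent sequence in $\Gamma$ which is not eventually constant, contradicting the hypothesis.

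Suppose $\Gamma$ is not discrete. Then by definition there exists some $\gamma \in \Gamma$ such that every open neighbourhood of $\gamma$ in $G$ meets $\Gamma$ in a point other than $\gamma$. Since $G$ is a metric space with metric $d$, the open balls $B(\gamma, 1/n)$ for $n \in \N$ form a neighbourhood base at $\gamma$. For each $n$, I would choose a point $\gamma_n \in \Gamma \cap B(\gamma,1/n)$ with $\gamma_n \neq \gamma$; such a point exists by the failure of discreteness at $\gamma$.

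The sequence $(\gamma_n)$ lies in $\Gamma$ and satisfies $d(\gamma_n,\gamma) < 1/n$, so $\gamma_n \to \gamma$ in $G$. Taking the subsequence equal to the whole sequence (every sequence is a subsequence of itself), this is a convergent subsequence in $\Gamma$. However, no $\gamma_n$ equals $\gamma$, so this sequence is not eventually constant. This contradicts the hypothesis that every convergent subsequence in $\Gamma$ is eventually constant, and therefore $\Gamma$ must be discrete.

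I do not foresee any obstacle: the argument is essentially just unpacking definitions and using the fact that a metric topology is first countable. The only subtlety is that the hypothesis about \emph{subsequences} is slightly stronger than needed, but since a sequence is a subsequence of itself, the stronger hypothesis immediately yields the contradiction from a single convergent non-eventually-constant sequence.
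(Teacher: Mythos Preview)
Your proof is correct and follows essentially the same argument as the paper's: assume $\Gamma$ is not discrete, pick $\gamma_n \in \Gamma \cap B(\gamma,1/n)$ with $\gamma_n \neq \gamma$, and observe that $\gamma_n \to \gamma$ yields a convergent sequence in $\Gamma$ that is not eventually constant. Your remark that a sequence is a subsequence of itself is a helpful clarification of a point the paper leaves implicit.
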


\begin{proof} Suppose that every convergent subsequence in $\Gamma$ is
eventually constant, but that $\Gamma$ is not discrete.  Then there is a point
$\gamma \in \Gamma$ such that for all $n \geq 1$, the ball
$B(\gamma,1/n)$ contains a point $\gamma_n$ of $\Gamma$ which is
different from $\gamma$.  Then $\gamma_n \rightarrow \gamma$, but the sequence
$\{\gamma_n\}_{n=1}^\infty$ is not eventually constant, which is a contradiction.  We
conclude that $\Gamma$ must be discrete. \end{proof}

\begin{propn} A subgroup $\Gamma$ of $SL(n,\R)$ is discrete if for all $r > 0$
the set $\{ A \in \Gamma : |A| < r\}$ is finite. \end{propn}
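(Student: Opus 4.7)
The plan is to apply Lemma~\ref{discretelemma} directly: it suffices to show that every convergent sequence (and therefore every convergent subsequence) in $\Gamma$ is eventually constant. So I would take an arbitrary convergent sequence $\{A_k\}_{k=1}^\infty$ in $\Gamma$, say with $A_k \to A$ in $SL(n,\R)$ under the matrix norm, and show that the $A_k$ are eventually equal to $A$.

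The first observation is that a convergent sequence in the metric space $SL(n,\R)$ is automatically bounded. So there exists some $r > 0$ with $|A_k| < r$ for every $k$. By the hypothesis of the proposition, the set
\[
F = \{ B \in \Gamma : |B| < r \}
\]
is finite, and the entire sequence lies in $F$. Hence $\{A_k\}$ takes only finitely many distinct values.

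Now a convergent sequence in a Hausdorff space which takes only finitely many values must eventually be constant. Indeed, the limit $A$ is forced to lie among those finitely many values, and if infinitely many of the $A_k$ differed from $A$, then one of the finitely many nonlimit values would be repeated infinitely often, producing a subsequence that does not converge to $A$, contradicting $A_k \to A$. Therefore $A_k = A$ for all sufficiently large $k$, which is the hypothesis of Lemma~\ref{discretelemma}, and we conclude that $\Gamma$ is discrete.

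There is no real obstacle here; the argument is purely a packaging of the given lemma with the observation that convergent sequences are bounded. The only mildly delicate point is the last step, where one must separate the limit from the other finitely many values using the Hausdorff property of the underlying metric topology on $SL(n,\R)$.
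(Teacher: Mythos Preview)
Your proposal is correct and follows essentially the same approach as the paper: both arguments take a convergent sequence in $\Gamma$, observe that it is bounded (you cite the general fact directly; the paper routes through continuity of the norm to bound the tail), use the finiteness hypothesis to trap the sequence in a finite set, conclude it is eventually constant, and then invoke Lemma~\ref{discretelemma}. Your write-up is slightly cleaner in that you bound the whole sequence at once and spell out the ``finitely many values plus convergence implies eventually constant'' step explicitly.
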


\begin{proof} Suppose that $\{ A \in \Gamma : |A| \leq r \}$ is finite for each
$r > 0$.  Let $B_j \rightarrow B$ in $\Gamma$.  As the norm function is
continuous, $|B_j| \rightarrow |B|$.  Thus, there exists an integer $j_0$ such
that for all $j \geq j_0$,   $| |B_j| - |B| | \leq 1$.  If $| |B_j| - |B| |
\leq 1$ then $|B_j| \leq 1 + |B|$.  Fix $r = 1 + |B|$, then the set $\{ A \in
\Gamma : |A| \leq 1 + |B| \}$ is finite.  Hence, the sequence $\{ B_j \}$ is
eventually constant.  Since $SL(n,\R)$ is a metric space, we may apply
Lemma~\ref{discretelemma} to conclude that $\Gamma$ is discrete. \end{proof}

\begin{corollary} The group $SL(n,\mathbb{Z})$ is a discrete subgroup of
$SL(n,\R)$. \end{corollary}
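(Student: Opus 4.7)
The plan is to apply the preceding proposition directly: it suffices to show that for every $r > 0$, the set $\{A \in SL(n,\mathbb{Z}) : |A| < r\}$ is finite. This reduces a topological question about $SL(n,\mathbb{Z})$ inside $SL(n,\mathbb{R})$ to an elementary counting argument.

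First I would fix $r > 0$ and take an arbitrary $A = (a_{ij}) \in SL(n,\mathbb{Z})$ with $|A| < r$. Since
\[
|A|^2 = \sum_{i,j=1}^n |a_{ij}|^2,
\]
each individual entry satisfies $|a_{ij}|^2 \leq |A|^2 < r^2$, so $|a_{ij}| < r$. Because the entries $a_{ij}$ are integers, the number of possible integer values for each entry is at most the number of integers in the open interval $(-r, r)$, which is finite (bounded by $2\lceil r \rceil - 1$, or $2r + 1$ loosely). Hence the total number of candidate matrices is bounded by $(2r+1)^{n^2}$, a finite number. A fortiori the subset of these candidates that actually lie in $SL(n,\mathbb{Z})$ is finite.

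Having established finiteness of the norm-balls, I would invoke the previous proposition to conclude that $SL(n,\mathbb{Z})$ is discrete in $SL(n,\mathbb{R})$. There is no real obstacle here; the entire content of the argument is the standard observation that a bounded set of integer points is finite, combined with the fact that the matrix norm controls each entry. The only minor care is to confirm that $SL(n,\mathbb{Z})$ is indeed a subgroup of $SL(n,\mathbb{R})$ (which is immediate, since integer matrices of determinant one form a group under multiplication, and $\mathbb{Z} \subseteq \mathbb{R}$), so that the proposition applies.
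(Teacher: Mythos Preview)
Your proof is correct and is exactly the argument the paper has in mind: the corollary is stated without proof precisely because it follows immediately from the preceding proposition via the observation that a norm bound on an integer matrix bounds each entry, leaving only finitely many possibilities.
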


\begin{corollary} The group $PSL(n,\mathbb{Z})$ is a discrete subgroup of
$PSL(n,\R)$. \end{corollary}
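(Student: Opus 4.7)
The plan is to reduce the claim to the preceding corollary about $SL(n,\Z)$ by exploiting that $PSL(n,\R)$ and $PSL(n,\Z)$ are obtained from $SL(n,\R)$ and $SL(n,\Z)$ by the same finite quotient construction. I would split into cases according to the parity of $n$. For odd $n$, the definition given in the excerpt says $PSL(n,\R) = SL(n,\R)$ and $PSL(n,\Z) = SL(n,\Z)$, so the statement is literally the preceding corollary and nothing needs to be done. So the work is entirely in the even case, where $PSL(n,\R) = SL(n,\R)/\{\pm I\}$ and $PSL(n,\Z) = SL(n,\Z)/\{\pm I\}$.

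For even $n$, write $\pi: SL(n,\R) \to PSL(n,\R)$ for the quotient map. The topology on $PSL(n,\R)$ is by definition the quotient topology, and since $\pi$ identifies $A$ with $-A$, the preimage $\pi^{-1}([\gamma])$ of any class consists of exactly the two points $\gamma$ and $-\gamma$. I would let $[\gamma] \in PSL(n,\Z)$ be arbitrary and aim to produce an open neighbourhood of $[\gamma]$ in $PSL(n,\R)$ whose intersection with $PSL(n,\Z)$ is $\{[\gamma]\}$.

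By the previous corollary, there exist open sets $U \ni \gamma$ and $U' \ni -\gamma$ in $SL(n,\R)$ with $U \cap SL(n,\Z) = \{\gamma\}$ and $U' \cap SL(n,\Z) = \{-\gamma\}$; since $\gamma \neq -\gamma$ (as $n \geq 1$ and neither point equals the identity class here by assumption, or else shrink around each point separately), we may further arrange $U$ and $U'$ to be disjoint. Set $V = U \cap (-U')$. Then $V$ is open, contains $\gamma$, and has the key saturation property $\pi^{-1}(\pi(V)) = V \cup (-V)$, which is an open subset of $SL(n,\R)$. By the definition of the quotient topology, $\pi(V)$ is open in $PSL(n,\R)$. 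If $[\delta] \in \pi(V) \cap PSL(n,\Z)$, then either $\delta$ or $-\delta$ lies in $V$; replacing $\delta$ by $-\delta$ if necessary (which does not change $[\delta]$, nor membership of $SL(n,\Z)$ since $-I \in SL(n,\Z)$), we may assume $\delta \in V \subseteq U$. Hence $\delta \in U \cap SL(n,\Z) = \{\gamma\}$, so $[\delta] = [\gamma]$.

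The only subtle step, and the one I would expect to be the minor obstacle, is verifying that the image $\pi(V)$ is actually open: this rests on recognising that $V \cup (-V)$ is the saturation of $V$ under the $\{\pm I\}$-action, which needs the small observation that $V = U \cap (-U')$ is designed precisely so that $-V = (-U) \cap U'$, making the union $\pi$-saturated. Once that is in hand, the argument collapses to a direct transfer of discreteness through the quotient.
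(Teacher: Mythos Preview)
The paper gives no proof at all for this corollary; it is simply stated immediately after the corollary that $SL(n,\mathbb{Z})$ is discrete in $SL(n,\mathbb{R})$, with the passage to the quotient left implicit. Your argument correctly supplies those details, splitting on parity and, in the even case, building a $\{\pm I\}$-saturated open neighbourhood so that openness survives the quotient map.

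Two small remarks. First, the parenthetical about $\gamma \neq -\gamma$ is confused: for any $\gamma \in SL(n,\mathbb{R})$ one has $\gamma \neq -\gamma$ automatically, since $\gamma = -\gamma$ would force $\gamma = 0$. Second, the disjointness of $U$ and $U'$ is never used; all you need is $V \subseteq U$, so that $V \cap SL(n,\mathbb{Z}) \subseteq \{\gamma\}$, and the analogous containment for $-V$. The core of the argument---that $V \cup (-V)$ is the $\pi$-saturation of $V$ and hence $\pi(V)$ is open---is exactly right.
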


We now discuss group actions in greater detail.  Let $G$ be a group acting on a
set $X$ and let $x$ be an element of $X$.  The subset of $X$ given by $Gx = \{
gx : g \in G \}$ is called the $G$-orbit of $x$, or the orbit of $x$
under $G$.   The $G$-orbits partition $X$.

\begin{lemma}\label{discretecompact} If $\Gamma$ is a discrete group of
isometries of a metric space $X$, and $K$ is compact in
$X$, then $K$ contains only finitely many points of each orbit
$\Gamma x$.\end{lemma}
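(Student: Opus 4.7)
The plan is to argue by contradiction, so suppose $K\cap\Gamma x$ is infinite. Then there exists a sequence of distinct elements $\gamma_1,\gamma_2,\ldots$ in $\Gamma$ with $\gamma_n x \in K$ for every $n$. Because $K$ is compact in a metric space, I may pass to a subsequence (still written $\{\gamma_n\}$) for which $\gamma_n x$ converges to some $y \in K$; in particular, $d(\gamma_n x, \gamma_m x) \to 0$ as $n,m\to\infty$.

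Next I would construct candidate ``limit'' elements inside $\Gamma$. Set $\delta_n = \gamma_1^{-1}\gamma_n$ for $n \geq 2$; since the $\gamma_n$ are pairwise distinct, so are the $\delta_n$. Applying the continuous isometry $\gamma_1^{-1}$ yields $\delta_n x \to \gamma_1^{-1} y$. Moreover, for any $z \in X$, the isometry relation $d(\delta_n z,\delta_n x) = d(z,x)$ shows that $\{\delta_n z\}$ is bounded. In the paper's setting, where $X$ is a symmetric space and hence a proper metric space, a standard Arzel\`a--Ascoli argument---using that isometries are equicontinuous and that pointwise-bounded sets of isometries have subsequences converging pointwise, hence uniformly on compact sets---extracts a subsequence of $\{\delta_n\}$ which converges in the compact-open topology of $I(X)$ to some isometry $\delta$.

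It remains to derive a contradiction from the discreteness of $\Gamma$. Since $I(X)$ is a Hausdorff topological group, the discrete subgroup $\Gamma$ is in fact closed in $I(X)$: any limit point $g$ of $\Gamma$ would, via continuity of the group operations, produce $\gamma\gamma'^{-1} \to e$ along indices tending to infinity, forcing eventually $\gamma = \gamma'$ by the neighbourhood of $e$ guaranteed by discreteness. Hence $\delta \in \Gamma$. Discreteness then furnishes an open neighbourhood $U$ of $\delta$ in $I(X)$ with $U\cap\Gamma = \{\delta\}$; the convergent subsequence eventually lies in $U$, so all but finitely many of its terms equal $\delta$, contradicting the pairwise distinctness of the $\delta_n$.

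The principal obstacle is the extraction of a subsequence of $\{\delta_n\}$ converging in $I(X)$ from just the boundedness of each orbit $\{\delta_n z\}$. This step genuinely needs $X$ to be proper (closed bounded sets compact); without such a hypothesis one can imagine sequences of isometries that crowd the reference point $x$ more and more closely without converging uniformly on any compact set, and the bridge from ``$\delta_n x$ is Cauchy'' to ``$\delta_n$ is Cauchy in $I(X)$'' would break down. In the symmetric-space context of this thesis, however, properness is automatic and the argument goes through cleanly.
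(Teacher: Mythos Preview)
Your argument is correct and follows the same overall contradiction strategy as the paper: assume infinitely many orbit points in $K$, extract a convergent subsequence $\gamma_n x$, and then contradict discreteness. The difference is in how the crucial middle step is handled. The paper simply asserts that, with the compact-open topology on $\Gamma$, one has $\gamma_n x \to \gamma x$ if and only if $\gamma_n \to \gamma$ in $\Gamma$, and immediately concludes that $\{\gamma_n\}$ is eventually constant. You, by contrast, actually justify this implication: you pass to $\delta_n = \gamma_1^{-1}\gamma_n$, invoke properness of $X$ and an Arzel\`a--Ascoli argument to get a subsequence converging in $I(X)$, and then use that discrete subgroups of a Hausdorff topological group are closed to place the limit back in $\Gamma$. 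Your version is more rigorous; the paper's one-line claim is precisely the statement whose proof requires the machinery you spell out, and you are right that properness of $X$ (automatic for symmetric spaces) is what makes it go through.
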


\begin{proof} Let $K$ be compact in $X$.  Suppose $K$  contains infinitely many
points of an orbit $\Gamma x$.  Then $K$ contains a convergent subsequence $\{
\gamma_n x \}$ of $\Gamma x$, and there are infinitely many distinct points
$\gamma_n x$. Since $\Gamma$ is topologised with the compact open topology,
$\gamma_n x \rightarrow \gamma x$ in $K$ if and only if $\gamma_n \rightarrow
\gamma$ in $\Gamma$.  As $\Gamma$ is discrete, the sequence $\{\gamma_n\}$ is
eventually constant, so the sequence $\{ \gamma_n x \}$ is eventually constant,
that is, it contains only finitely many distinct elements.  This is a
contradiction. \end{proof}

The stabiliser subgroup $G_x$ of $x$ in $G$ is the subgroup $\{ g \in G : gx =
x \}$.  As a useful application of the ideas of orbits and stabiliser
subgroups, we have the following decomposition of \sltwoz.

\begin{lemma} \label{sl2decomp} Let $\gamma \in \sltwoz$.  Then $\gamma$ may be
expressed in the form $k_1ak_2$, where $k_1$ and $k_2$ are in \sotwor,
and $a$ is in \sltwor\ with the form $\startm s & 0 \\ 0 & s^{-1} \finishm$
for some $s \geq 1$. \end{lemma}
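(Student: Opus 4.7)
The plan is to establish this as an instance of the Cartan (KAK) decomposition for $SL(2,\mathbb{R})$, noting that the hypothesis $\gamma \in SL(2,\mathbb{Z})$ is not actually used; the argument works for any $\gamma \in SL(2,\mathbb{R})$. The key object is the symmetric matrix $\gamma \gamma^T$, which is positive definite (since $\gamma$ is invertible) and has determinant $1$.

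First I would invoke the spectral theorem: since $\gamma\gamma^T$ is real symmetric, there is an orthogonal matrix $k_1 \in O(2,\mathbb{R})$ with
\[
\gamma \gamma^T = k_1 \,D\, k_1^{-1},
\]
where $D$ is a diagonal matrix whose diagonal entries are the (positive) eigenvalues of $\gamma\gamma^T$. Because $\det(\gamma\gamma^T)=1$, these eigenvalues are reciprocals, so we may order the columns of $k_1$ so that $D = \operatorname{diag}(s^2, s^{-2})$ with $s \geq 1$. Set $a = \operatorname{diag}(s, s^{-1})$, which lies in $SL(2,\mathbb{R})$ and has the required form, and define $k_2 = a^{-1} k_1^{-1} \gamma$. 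A direct computation gives
\[
k_2 k_2^T = a^{-1} k_1^{-1} \gamma \gamma^T k_1 a^{-1} = a^{-1} k_1^{-1} (k_1 D k_1^{-1}) k_1 a^{-1} = a^{-1} D a^{-1} = I,
\]
so $k_2 \in O(2,\mathbb{R})$, and by construction $\gamma = k_1 a k_2$.

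The main obstacle is that the spectral theorem only produces $k_1 \in O(2,\mathbb{R})$, and similarly the argument only shows $k_2 \in O(2,\mathbb{R})$; we need both to lie in $SO(2,\mathbb{R})$. If $\det(k_1) = -1$, I would replace $k_1$ by $\tilde{k}_1 = k_1 \cdot \operatorname{diag}(-1,1)$. Since $\operatorname{diag}(-1,1)$ commutes with $D$, the identity $\tilde{k}_1 D \tilde{k}_1^{-1} = \gamma \gamma^T$ still holds, and $\det(\tilde{k}_1) = 1$, so $\tilde{k}_1 \in SO(2,\mathbb{R})$. Redefining $k_2 = a^{-1} \tilde{k}_1^{-1} \gamma$, we retain $k_2 \in O(2,\mathbb{R})$, and the determinant identity
\[
\det(k_2) = \det(a)^{-1} \det(\tilde{k}_1)^{-1} \det(\gamma) = 1
\]
forces $k_2 \in SO(2,\mathbb{R})$. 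This completes the decomposition, with $s \geq 1$ guaranteed by our initial ordering of the eigenvalues of $\gamma\gamma^T$.
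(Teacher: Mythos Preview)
Your argument is correct, but it takes a different route from the paper's proof of this lemma. The paper argues geometrically: it first shows that $SO(2,\mathbb{R})$ is the stabiliser of $i$ under the action of $SL(2,\mathbb{R})$ on $U^2$, then shows that the $SO(2,\mathbb{R})$-orbit of $e^r i$ is a specific Euclidean circle $C_r$, and finally observes that $\gamma i$ lies on some $C_r$, so a rotation $k$ takes $\gamma i$ to $e^r i$, a diagonal matrix $\alpha$ takes $e^r i$ to $i$, and hence $\alpha k\gamma$ stabilises $i$ and must lie in $SO(2,\mathbb{R})$. Your approach is purely linear-algebraic, diagonalising $\gamma\gamma^T$ via the spectral theorem. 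What your method buys is independence from the upper half-plane model and immediate generalisation to higher rank; indeed, the paper itself uses exactly your spectral argument later on when proving the $SL(n,\mathbb{R})$ analogue (Lemma~\ref{slndecomp}). The paper's geometric proof, on the other hand, fits naturally into the surrounding discussion of isometries of $U^2$ and makes the role of $SO(2,\mathbb{R})$ as a point stabiliser explicit.
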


\begin{proof} We first show that \sotwor\ is the stabiliser subgroup of $z = i$
in \sltwor.  Let $k \in \sotwor$.  Then $k$ has the form \[ k_\theta = \startm
\cos \theta & -\sin \theta \\ \sin \theta & \cos \theta \finishm \] for some
$\theta \in [0,2\pi)$.  It can be calculated that  $k_\theta$  fixes the point
$i$.  Now suppose that $k$ is the matrix $\startm a & b \\ c & d \finishm$ in
$\sltwor$, and that $ki = i$. Then $ai + b  =  di - c$, hence $a = d$ and $b =
-c$.  Since $ad - bc = 1$, we have $a^2 + c^2 = 1$, and so the matrix \[ k =
\startm a & b \\ c & d \finishm = \startm a & -c \\ c & a \finishm \] is an
element of \sotwor.  Therefore, if $k \in \sltwor$, $ki = i$ if and only if $k$
is in $\sotwor$.

We next show that for each $r \geq 0$, the orbit of the point $e^ri$ under
the group \sotwor\ is the Euclidean circle with Cartesian equation \[ x^2 + (y
- \cosh r)^2 = \sinh^2 r. \] We will denote this circle by $C_r$. When $r = 0$,
the point $e^ri$ is just $i$.  Since \sotwor\ is the stabiliser subgroup of
$i$, the orbit is then the single point $i$. For $r > 0$, a tedious calculation
shows that for any $k_\theta \in \sotwor$, the point $k_\theta e^ri$ lies in
the circle $C_r$. We now argue that the orbit is onto the circle $C_r$.   The
action of $\sotwor$  on the point $e^{-r}i$ may be regarded as a continuous
function of $\theta$.  Thus, the image of the interval $[0,\pi/2]$ is a
connected subset of $C_r$.  When $\theta = 0$, we get $k_\theta e^ri =
e^ri$.  When $\theta = \pi/2$, we get $k_\theta e^ri = e^{-r}i$.  For $0 <
\theta < \pi/2$,  the real part of $k_\theta e^{r}i$ is negative.  Hence, the
image of $[0,\pi/2]$ is those points of the circle $C_r$ with non-positive real
part.  Similarly, the image of the interval $[\pi/2,\pi]$ is those points of
$C_r$ with non-negative real part.  Therefore, the orbit of the point $e^ri$
under \sotwor\ is the whole circle $C_r$.

We now apply these results to complete the proof. Let $p$ be the point $\gamma
i$. Then, since $p$ lies in the orbit $C_r$ for some $r \geq 0$, there exists a
$k \in \sotwor$ such that $kp = e^ri$. Let $\alpha$ be the matrix $\startm
e^{-r/2} & 0 \\ 0 & e^{r/2} \finishm$.  Then $\alpha e^{r}i = i$. Let $k' =
\alpha k\gamma$.  Then $k'i = i$, so $k' \in \sotwor$.  Making $\gamma$ the subject and then relabelling,
we obtain $\gamma = k^{-1}\alpha^{-1}k' = k_1 a k_2$. \end{proof}

\subsection{Tesselation by fundamental domains}\label{tesselation}

In this subsection, we provide general definitions of concepts needed to describe
tesselations of metric spaces which are induced by the action of discrete
groups of isometries.  Most of the proofs are for just the
upper half-plane.  However, with a few exceptions,
the proofs for the general case are not much different.  See Chapter 6
of~\cite{rat1:fhm} for a full treatment of the general case.

Let $(X,d)$ be a metric space and let $\Gamma$ be a non-trivial group of
isometries of $X$. A subset $R$ of $X$ is said to be a fundamental region for
the group $\Gamma$ if \begin{enumerate} \item the set $R$ is open in $X$, \item
the members of $\{ \gamma R : \gamma \in \Gamma \}$ are mutually disjoint, and
\item $X = \cup \{ \gamma \overline{R} : \gamma \in \Gamma \}$. \end{enumerate}
A fundamental domain is a connected fundamental region. Now, if $\mathcal{S}$
is a collection of subsets of $X$, then $\mathcal{S}$ is described as locally
finite if for each point $x \in X$, there is an open neighbourhood of $x$ which
meets only finitely many members of $\mathcal{S}$.   We say that a
fundamental domain $D$ is a locally finite fundamental domain if the collection
$\{ \gamma \overline{D} : \gamma \in \Gamma \}$ is a locally finite collection of sets.

A subset $F$ of $X$ is a fundamental set for the group $\Gamma$ if $F$ contains
exactly one point from each $\Gamma$-orbit in $X$.  Note that a fundamental
domain is not a fundamental set.  As the orbits of $\Gamma$ on the space $X$
partition $X$, if $F$ is a fundamental set, then we have $X = \cup \{  \gamma
F: \gamma \in \Gamma\}$.

We consider the metric space $U^2$. One feature of the upper half-plane is that
any isometry of $U^2$ which fixes every point of a non-empty open subset of
$U^2$ is the identity map. This fact is used to prove the following lemma.

\begin{lemma}\label{fundsets} An open subset $R$ of the metric space $U^2$ is a
fundamental region for a group $\Gamma$ of isometries of $U^2$ if there is a
fundamental set $F$ for $\Gamma$ such that $R \subseteq F \subseteq \overline{R}$.
\end{lemma}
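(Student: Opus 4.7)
The plan is to check the three defining conditions of a fundamental region for $R$ directly, using the given inclusions $R \subseteq F \subseteq \overline{R}$ together with the fact, just quoted in the text, that an isometry of $U^2$ which fixes every point of a nonempty open subset of $U^2$ is the identity.

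Condition (1) is immediate, since $R$ is open by hypothesis. For condition (3), given any $x \in U^2$, the fundamental set property of $F$ says that $\Gamma x$ meets $F$ in exactly one point $y$, so $y = \gamma^{-1}x$ for some $\gamma \in \Gamma$. Then $y \in F \subseteq \overline{R}$ gives $x = \gamma y \in \gamma\overline{R}$, so $U^2 = \bigcup_{\gamma \in \Gamma} \gamma\overline{R}$. This step uses only the second inclusion $F \subseteq \overline{R}$ and makes no use of isometries or openness.

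For condition (2), suppose for contradiction that there exist distinct $\gamma_1, \gamma_2 \in \Gamma$ with $\gamma_1 R \cap \gamma_2 R \neq \emptyset$, and set $\gamma = \gamma_2^{-1}\gamma_1 \neq 1$. Then $V = R \cap \gamma^{-1}R$ is the image of the nonempty open set $\gamma_1 R \cap \gamma_2 R$ under $\gamma_1^{-1}$, hence is a nonempty open subset of $R$. For every $x \in V$, both $x$ and $\gamma x$ lie in $R$, and therefore in $F$. But $x$ and $\gamma x$ belong to the same $\Gamma$-orbit, and $F$ contains exactly one representative of each orbit, so $\gamma x = x$. Thus $\gamma$ fixes every point of the nonempty open set $V$, and by the cited rigidity property of isometries of $U^2$, $\gamma$ must be the identity — contradicting $\gamma_1 \neq \gamma_2$. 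This step is where both the first inclusion $R \subseteq F$ and the isometry hypothesis on $\Gamma$ are used, and it is the only nontrivial point of the proof.

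The main obstacle is simply keeping the fundamental set/fundamental region distinction straight and remembering to invoke the rigidity fact at the right moment; otherwise one might get stuck trying to deduce disjointness from the orbit-uniqueness in $F$ alone, which only gives equality of representatives and not triviality of $\gamma$. Once that ingredient is brought in, the proof is essentially a bookkeeping exercise.
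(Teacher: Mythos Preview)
Your proof is correct and follows essentially the same approach as the paper: condition (3) via $F \subseteq \overline{R}$, and condition (2) by showing that $\gamma = \gamma_2^{-1}\gamma_1$ fixes every point of the nonempty open set $R \cap \gamma^{-1}R$ and then invoking the rigidity property of isometries of $U^2$. Your write-up is slightly more explicit than the paper's about quantifying over all points of $V$ before applying rigidity, but the argument is the same.
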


\begin{proof} Suppose that there exists a fundamental set $F$ for the group
$\Gamma$ such that $R \subseteq F \subseteq \overline{R}$.  We first show that the
elements of the set $\{ \gamma R : \gamma \in \Gamma \}$ are mutually
disjoint.  Suppose that $\gamma$ and $\delta$ are elements of $\Gamma$ such
that $\gamma R \cap \delta R$ is non-empty.  Then, there are points $z$ and $w$
in $R$ such that $\gamma z = \delta w$.  Hence, $\delta^{-1}\gamma z = w$.  As
$R \subseteq F$, the points $z$ and $w$ are in $F$.  Since $F$ contains only one point from
the orbit $\Gamma z$,  we have $\delta^{-1}\gamma z = z$.  Thus the isometry
$\delta^{-1}\gamma$ fixes each point of the open set $R \cap \gamma^{-1}\delta
R$.  Therefore, $\delta^{-1}\gamma$ is the identity, and so $\delta = \gamma$.

Next, since $F \subseteq\overline{R}$, we have \[ X = \bigcup_{\gamma \in \Gamma}
\gamma F = \bigcup_{\gamma \in \Gamma} \gamma \overline{R}. \] Therefore $R$ is a
fundamental region for $\Gamma$.  \end{proof}

We now describe a general method for constructing a fundamental domain. Suppose
there is a point $p_0$ in $X$ which has a trivial stabiliser subgroup
$\Gamma_{p_0}$.  For each $\gamma \not = 1$ in $\Gamma$, we define the open
half-space $H_\gamma(p_0)$ by \[ H_\gamma(p_0) = \{ x \in X : d(x,p_0) <
d(x,\gamma p_0) \}. \] Then, the Dirichlet domain $D(p_0)$ with centre $p_0$ is
defined to be the intersection of these half-spaces: \[ D(p_0) = \bigcap \{
H_\gamma(p_0) : \gamma \not = 1 \mbox{ in } \Gamma \}. \]

If $\Gamma = P\sltwoz$, we may regard $\Gamma$ as a subgroup of the group of
isometries of $U^2$.   We will, abusing notation, write $\gamma$ for both a
matrix $\gamma \in\sltwoz$, and the coset of this matrix in $P\sltwoz$.  There
are uncountably many points of $U^2$ which have trivial stabilisers in
$\Gamma$.  This is because, if \[ \startm a & b \\ c & d \finishm z = z, \]
then by the quadratic formula $z = ((a-d)\pm \sqrt{(d-a)^2 + 4bc})/2c$, and
$\mathbb{Z}$ is countable.  Some points which do have trivial stabilisers are
all the points $ti$, where $t >  1$.   We now show that in the upper
half-plane, a Dirichlet domain with centre at $p_0$ is a locally finite
fundamental domain.  To show that $D(p_0)$ is open, we will show that its
complement is closed in $U^2$.  In general, a union of closed sets need not be
closed. However, if a collection $\mathcal{S}$ of closed subsets is locally
finite, then the union of the members of $\mathcal{S}$ is closed.

\begin{propn} Let $\Gamma = P\sltwoz$ and let $p_0$ be a point of
$U^2$ such that $\Gamma_{p_0}$ is trivial. Then the Dirichlet domain
$D(p_0)$ is a locally finite fundamental domain for the action of $\Gamma$ on
$U^2$.
\end{propn}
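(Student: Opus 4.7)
The plan is to verify in turn that $D(p_0)$ is (i) open, (ii) a fundamental region, (iii) connected, and (iv) that the family $\{\gamma\overline{D(p_0)} : \gamma \in \Gamma\}$ is locally finite. Throughout, the key preliminary is that the orbit $\Gamma p_0$ is closed and discrete in $U^2$: since $\Gamma_{p_0}$ is trivial, $\gamma \mapsto \gamma p_0$ is a bijection from $\Gamma$ onto the orbit, and Lemma~\ref{discretecompact} ensures any compact set meets $\Gamma p_0$ in only finitely many points. In particular, for each $x \in U^2$ and $r > 0$, only finitely many $\gamma \in \Gamma$ satisfy $d(x, \gamma p_0) \leq r$, because closed hyperbolic balls in $U^2$ are compact.

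For (i), I would show $U^2 \setminus D(p_0)$ is closed. Each set $U^2 \setminus H_\gamma(p_0) = \{y : d(y,p_0) \geq d(y, \gamma p_0)\}$ is closed by continuity of $d$. A triangle inequality shows that whenever $d(p_0, \gamma p_0) > 2r + 2 d(x, p_0)$, every $y \in \overline{B}(x, r)$ is strictly closer to $p_0$ than to $\gamma p_0$, so $\overline{B}(x, r) \subseteq H_\gamma(p_0)$. Combined with the preliminary, only finitely many $\gamma$ yield a complement meeting $\overline{B}(x, r)$, so the family $\{U^2 \setminus H_\gamma(p_0)\}_{\gamma \neq 1}$ is locally finite; its union is therefore closed, and $D(p_0)$ is open.

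For (ii), I apply Lemma~\ref{fundsets} by constructing a fundamental set $F$ with $D(p_0) \subseteq F \subseteq \overline{D(p_0)}$. For each orbit $\Gamma z$, choose a representative $z^*$ that minimises $d(\cdot, p_0)$ along $\Gamma z$; this minimum is attained because, fixing any $\gamma_0$ with $d(\gamma_0 z, p_0) = r_0$, only finitely many orbit points lie in $\overline{B}(p_0, r_0)$. Using isometric invariance of $d$ and the minimality of $z^*$, we obtain $d(z^*, p_0) \leq d(z^*, \delta p_0)$ for every $\delta \in \Gamma$, so $z^*$ lies in each closed half-space $\overline{H_\delta(p_0)}$, and indeed in $D(p_0)$ when the minimiser is unique. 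For (iii), each $H_\gamma(p_0)$ is an open hyperbolic half-plane bounded by the perpendicular bisector geodesic of $[p_0, \gamma p_0]$, and is therefore hyperbolically convex; an intersection of convex sets in $U^2$ is convex and hence connected, so $D(p_0)$ is a fundamental domain.

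For (iv), I unravel the definitions to see that $\gamma\overline{D(p_0)} \subseteq \{y : d(y, \gamma p_0) \leq d(y, \delta p_0) \text{ for every } \delta \in \Gamma\}$. If $y \in \overline{B}(x, r) \cap \gamma\overline{D(p_0)}$, then two triangle inequalities yield $d(x, \gamma p_0) \leq 2r + d(x, p_0)$, so by the preliminary only finitely many $\gamma$ can contribute, establishing local finiteness at $x$. The main obstacle lies in step (ii): I must argue that the minimiser $z^*$ actually belongs to the closure $\overline{D(p_0)}$ rather than merely the a priori larger intersection $\bigcap_{\delta \neq 1}\overline{H_\delta(p_0)}$. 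This amounts to perturbing $z^*$ slightly into the strict interior of every half-space tied at $z^*$, which reduces to the observation that only finitely many bisectors pass through a small neighbourhood of $z^*$ --- exactly the local finiteness developed in step~(i).
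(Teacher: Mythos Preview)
Your proposal is correct and follows the same overall strategy as the paper: verify openness via local finiteness of the complements $U^2\setminus H_\gamma(p_0)$, apply Lemma~\ref{fundsets} by building a fundamental set $F$ of nearest orbit representatives with $D(p_0)\subseteq F\subseteq\overline{D(p_0)}$, and establish local finiteness of the translates by the same triangle-inequality-plus-discreteness trick.

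The one place where your route diverges is in handling the inclusion $F\subseteq\overline{D(p_0)}$ and connectedness. You treat these as separate issues---convexity of hyperbolic half-planes for connectedness, and a local-finiteness-then-perturb argument to pass from $\bigcap\overline{H_\gamma(p_0)}$ to $\overline{D(p_0)}$. The paper instead does both at once with a single geodesic argument: for $z\in F$ and any $w$ in the open segment $(p_0,z)$, one checks directly from $d(z,p_0)\leq d(z,\gamma p_0)$ and the triangle inequality that $d(w,p_0)<d(w,\gamma p_0)$ strictly (equality would force $p_0=\gamma p_0$). Thus $[p_0,z)\subseteq D(p_0)$, giving $z\in\overline{D(p_0)}$ and, applied to $z\in D(p_0)$, that $D(p_0)$ is star-shaped about $p_0$. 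This is a little cleaner than your perturbation, since it names the specific direction (towards $p_0$) that works and avoids invoking local finiteness a second time; but your version is perfectly sound once the perturbation is made explicit.
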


\begin{proof} For each $\gamma \not = 1$ in $P\sltwoz$, let $K_\gamma = U^2
\setminus H_\gamma(p_0)$. Then $K_\gamma$ is closed.  We show that $\mathcal{K}
= \{ K_\gamma : \gamma \not = 1 \mbox{ in } \Gamma\}$ is a locally finite
collection of sets.  Let $z$ be any point of $U^2$, and let $r > 0$ be such
that $B(p_0,r)$ contains $z$.  We will show that $B(p_0,r)$ meets only finitely
many of the sets in $\mathcal{K}$. If $B(p_0,r)$ meets none of the sets in
$\mathcal{K}$ then we are done, so suppose $B(p_0,r)$ intersects the set
$K_\gamma$ in a point $w$.  Then we have \[ d_U(p_0,\gamma p_0)  \leq
d_U(p_0,w) + d_U(w,\gamma p_0) \leq d_U(p_0,w) + d_U(w,p_0)  < 2r. \] Hence,
the ball $B(p_0,2r)$ contains $\gamma p_0$. Now, the closure of this ball is
compact, and $P\sltwoz$ is discrete, so by Lemma~\ref{discretecompact},
$\overline{B(p_0,2r)}$ contains only finitely many points of the orbit $\Gamma
p_0$.  Thus $B(p_0,r) \subseteq \overline{B(p_0,2r)}$ meets only finitely many of the
sets $K_\gamma$.  Therefore $\mathcal{K}$ is a locally finite family of closed
sets in $U^2$.  This means that the union of the sets in $\mathcal{K}$ is
closed.  So $D(p_0)$, which is the complement of this union, is open.

We now construct a fundamental set $F$ and show that $D(p_0) \subseteq F
\subseteq \overline{D(p_0)}$.  From each orbit $\Gamma z$, we may select a
point which is the minimum distance from the point $p_0$ (if there is more than
one closest point, choose any).  Let $F$ be the set of these points. Then $F$
is a fundamental set for $\Gamma$.

Let $z$ be in $D(p_0)$ and $\gamma \not = 1$ in $\Gamma$.  Then, since $\gamma$ is an
isometry, \[ d_U(z,p_0) < d_U(z,\gamma p_0) = d(\gamma^{-1}z,p_0), \] so $z$ is the unique
closest point of the orbit $\Gamma z$ to $p_0$.  Hence $D(p_0) \subseteq F$.

To show $F \subseteq \overline{D(p_0)}$, let $z$ be in $F$.  If $z = p_0$ then
$z \in \overline{D(p_0)}$ immediately, so we assume that $z \not = p_0$.  Let
$\gamma \not = 1$ be in $\Gamma$.  If $d_U(z,p_0) > d_U(z,\gamma p_0)$,
then $d_U(z,p_0) > d(\gamma^{-1}z,p_0)$, which contradicts our construction of
$F$.  Therefore, $d_U(z,p_0) \leq d_U(z,\gamma p_0)$.  Let $[p_0,z]$ be the
geodesic segment in $U^2$ joining the points $p_0$ and $z$, and let $w$ be a
point in the interior of this segment.  Then \[ d_U(z,w) + d_U(w,p_0)=
d_U(z,p_0), \] so  \[ d_U(w,p_0)  = d_U(z,p_0) - d_U(z,w)  \leq d_U(z,\gamma
p_0) - d_U(z,w)  \leq d_U(w,\gamma p_0), \] with equality only if $d_U(z,p_0) =
d_U(z,\gamma p_0)$ and $d_U(z,\gamma p_0) = d_U(z,w) + d_U(w,\gamma p_0)$.
Suppose we have equality.  Then $[z,w]\cup[w,\gamma p_0]$ is the geodesic
segment $[z,\gamma p_0]$.  The segments $[z,p_0]$ and $[z,\gamma p_0]$ have the
same length, and both extend $[z,w]$, so $p_0 = \gamma p_0$.  This contradicts
the stabiliser subgroup $\Gamma_{p_0}$ being trivial.  Thus we have the strict
inequality \[ d_U(w,p_0) < d_U(w,\gamma p_0) \] for all $\gamma \not = 1$ in
$\Gamma$.  Hence, $w \in D(p_0)$.  This implies that the half-open segment
$[p_0,z)$ is contained in $D(p_0)$, and so $z \in \overline{D(p_0)}$.
Therefore,  $F \subseteq \overline{D(p_0)}$.  By Lemma~\ref{fundsets}, $D(p_0)$
is a fundamental region.

We have shown that, for any $w \in D(p_0)$, the geodesic segment $[p_0,w]$ is
in $D(p_0)$.  This means $D(p_0)$ is connected.  Therefore $D(p_0)$ is a
fundamental domain.

Finally we show that $D(p_0)$ is a locally finite fundamental domain.  Write
$D$ for $D(p_0)$.  Let $r > 0$, and suppose the ball $B(p_0,r)$ meets
$\gamma\overline{D}$ for some $\gamma \in \Gamma$.  Then there is a $z$ in $D$
such that $d_U(p_0, \gamma z) < r$.  Moreover, \begin{align*} d_U(p_0,\gamma
p_0) & \leq d_U(p_0, \gamma z) + d(\gamma z, \gamma p_0) \\ & < r + d_U(z,p_0)
\\ & \leq r + d_U(z,\gamma^{-1} p_0) \\ & = r + d_U(\gamma z, p_0) \\& <
2r.\end{align*}  Since $\Gamma$ is discrete, $d_U(p,\gamma p_0) < 2r$ for only
finitely many elements $\gamma$.  Hence, the ball $B(p_0,r)$ meets only
finitely many sets $\gamma \overline{D}$. \end{proof}

Let $T$ be the generalised hyperbolic triangle with vertices at
$\pm \frac{1}{2} + \frac{\sqrt{3}}{2}i$ and $\infty$, and let
$T^\circ$ be the interior of the triangle $T$.  This triangle is pictured
overleaf.

\begin{center}
\begin{pspicture}(0,-0.5)(16,10)
\psline(0,0)(16,0)
\psline(8,0)(8,10)
\psline(6,3.5)(6,10)
\psline(10,3.5)(10,10)
\psline[linestyle=dotted](2,3.5)(2,10)
\psline[linestyle=dotted](14,3.5)(14,10)
\uput[d](0,0){-2}
\uput[d](4,0){-1}
\uput[d](6,0){$-\frac{1}{2}$}
\uput[d](10,0){$\frac{1}{2}$}
\uput[d](8,0){0}
\uput[d](12,0){1}
\uput[d](16,0){2}
\uput[r](8,8){$\displaystyle{ti}$}
\psdot*(8,8)
\uput[r](8,1.95){$\displaystyle{\frac{i}{t}}$}
\psdot*(8,2)
\uput[r](4,8){$\displaystyle{ti-1}$}
\psdot*(4,8)
\uput[r](12,8){$\displaystyle{ti+1}$}
\psdot*(12,8)
\psarc(8,0){4}{60}{120}
\psarc[linestyle=dotted](0,0){4}{0}{90}
\psarc[linestyle=dotted](4,0){4}{0}{180}
\psarc[linestyle=dotted](8,0){4}{0}{180}
\psarc[linestyle=dotted](12,0){4}{0}{180}
\psarc[linestyle=dotted](16,0){4}{90}{180}

\end{pspicture}
\end{center}

\medskip

\begin{propn} Let $p_0 = ti$ for some $t > 1$.  The Dirichlet domain $D(p_0)$
 is equal to $T^\circ$.
\end{propn}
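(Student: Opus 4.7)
The plan is to prove the two inclusions $D(p_0)\subseteq T^\circ$ and $T^\circ\subseteq D(p_0)$ in turn.

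For $D(p_0)\subseteq T^\circ$, I would compute the half-planes $H_\gamma(p_0)$ for three distinguished elements $\gamma$ of $P\sltwoz$, namely $u=\startm 1 & 1 \\ 0 & 1\finishm$, its inverse, and $v=\startm 0 & -1 \\ 1 & 0\finishm$.  Writing $z=x+iy$ and using the explicit formula for $d_U$, the equation $d_U(z,ti)=d_U(z,ti+1)$ reduces at once to $x^2=(x-1)^2$, so the perpendicular bisector is the vertical line $\re(z)=1/2$ and $H_u(p_0)=\{z:\re(z)<1/2\}$; by symmetry, $H_{u^{-1}}(p_0)=\{z:\re(z)>-1/2\}$.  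Since $vp_0=i/t$, the equation $d_U(z,ti)=d_U(z,i/t)$ simplifies to $(t^2-1)(x^2+y^2)=t^2-1$, and the hypothesis $t>1$ gives the unit circle as the bisector, so $H_v(p_0)=\{z:|z|>1\}$.  Their intersection is exactly $T^\circ$, whence $D(p_0)\subseteq T^\circ$.

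For $T^\circ\subseteq D(p_0)$, I would invoke the following classical lemma: \emph{if $z\in T^\circ$ and $\gamma z\in\overline{T}$ for some $\gamma\in P\sltwoz$, then $\gamma=1$.}  Granted the lemma, suppose for contradiction that some $z\in T^\circ$ is not in $D(p_0)$; then there is $\gamma\neq 1$ with $d_U(z,\gamma p_0)\leq d_U(z,p_0)$.  Let $\delta_0\in\Gamma$ achieve the minimum of $\delta\mapsto d_U(z,\delta p_0)$; a short computation shows that $\delta_0^{-1}z$ has $p_0$ as a closest orbit point, so $\delta_0^{-1}z\in\overline{D(p_0)}\subseteq\overline{T}$.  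If this minimum is strict, then $\delta_0\neq 1$, and the lemma applied to $z$ with $\delta_0^{-1}$ in place of $\gamma$ forces $\delta_0^{-1}=1$, a contradiction.  If the minimum equals $d_U(z,p_0)$ and is also attained by $\gamma\neq 1$, then $\gamma^{-1}z$ is likewise in $\overline{D(p_0)}\subseteq\overline{T}$, and the same lemma yields $\gamma=1$.  Either way we reach a contradiction, so $T^\circ\subseteq D(p_0)$.

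The principal obstacle is the lemma, which I would prove using the classical height identity $\im(\gamma z)=\im(z)/|cz+d|^2$ for $\gamma=\startm a & b \\ c & d\finishm\in\sltwoz$.  After possibly replacing $(z,\gamma)$ by $(\gamma z,\gamma^{-1})$, one may assume $\im(\gamma z)\geq\im(z)$, so $|cz+d|^2\leq 1$.  If $c\neq 0$, then $c^2\im(z)^2>3c^2/4$ together with $|cz+d|^2\leq 1$ forces $c=\pm 1$; the further inequality $(c\re(z)+d)^2<1-\im(z)^2<1/4$, combined with $|\re(z)|<1/2$ and $d\in\mathbb{Z}$, forces $d=0$, yielding $|z|\leq 1$ and contradicting $z\in T^\circ$.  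If $c=0$, then $\gamma$ acts as the translation $z\mapsto z+b$, and the constraint $\gamma z\in\overline{T}$ together with $\re(z)\in(-1/2,1/2)$ and $b\in\mathbb{Z}$ forces $b=0$, so $\gamma=1$.  The mild boundary adjustment needed when the substitution $(z,\gamma)\mapsto(\gamma z,\gamma^{-1})$ puts the interior point on the closure side is handled by the same inequalities, using $|z|\geq 1$ and $|\re(z)|\leq 1/2$ on $\overline{T}$.
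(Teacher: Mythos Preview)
Your proof is correct.  The inclusion $D(p_0)\subseteq T^\circ$ is handled exactly as in the paper, by computing the three half-spaces $H_u(p_0)$, $H_{u^{-1}}(p_0)$ and $H_v(p_0)$.

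For the reverse inclusion your route differs from the paper's.  The paper exploits the fact, already established in the preceding proposition, that $D(p_0)$ is a fundamental domain: if $D(p_0)\subsetneqq T^\circ$, a short topological argument produces $z\in T^\circ$ and $\gamma\neq 1$ with $z\in\gamma D(p_0)$, so that $w:=\gamma^{-1}z$ also lies in $T^\circ$.  The height identity and the \emph{strict} inequalities $|z|>1$, $|\re(z)|<1/2$ then give $|cz+d|^2>1$, hence $\im(w)<\im(z)$; swapping the roles of $z$ and $w$ gives $\im(z)<\im(w)$, a contradiction.  Your approach instead isolates the classical lemma (essentially the argument in Serre's \emph{Course in Arithmetic}) that no non-trivial $\gamma$ carries a point of $T^\circ$ into $\overline{T}$, and then feeds in the characterisation of $\overline{D(p_0)}$ as the set of points having $p_0$ as a nearest orbit point.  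Both arguments rest on the same computation with $\im(\gamma z)=\im(z)/|cz+d|^2$; the paper's symmetric set-up (both points in $T^\circ$) avoids your boundary case-analysis at the cost of the extra topological step passing from $\gamma\overline{D}$ to $\gamma D$, while your version yields the stronger stand-alone lemma that $T^\circ$ meets each $\Gamma$-orbit at most once.
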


\begin{proof}\cite{bea1:gdg} Let $u = \matrixu$ and $v = \matrixv$.  Then $u$
and $v$ are in $P\sltwoz$, so \[D(p_0) \subseteq \left(H_u(p_0) \cap H_v(p_0)
\cap H_{u^{-1}}(p_0)\right).\]  We have $up_0 = ti+1$, $vp_0 = i/t$ and
$u^{-1}p_0 = ti - 1$.  Thus the geodesic $\re(z) = 1/2$ is the boundary of
$H_u(p_0)$, and $H_u(p_0) = \{ z \in U^2 : \re(z) < 1/2\}$. The boundary of
$H_{u^{-1}}(p_0)$ is the geodesic $\re(z) = -1/2$, and $H_{u^{-1}}(p_0) = \{ z
\in U^2 : \re(z) > -1/2\}$. For $H_v(p_0)$, all points in the intersection of
the circle $|z| = 1$ with $U^2$ are the same hyperbolic distance from $p_0$ as
from $vp_0$.  Thus, this circle, which is a geodesic since it is orthogonal to
the real axis, is the boundary of $H_v(p_0)$, and $H_v(p_0)$ is the set $\{ z
\in U^2 : |z| > 1 \}$.  The three boundary geodesics form the sides of a
generalised hyperbolic triangle with one vertex at infinity.  The finite
vertices are at $\pm \frac{1}{2} + \frac{\sqrt{3}}{2}i$. Hence, $H_u(p_0) \cap
H_v(p_0) \cap H_{u^{-1}}(p_0)$ is the interior of the triangle $T$. Thus
$D(p_0)\subseteq T^\circ$.

Suppose $D(p_0) \subsetneqq T^\circ$, and write $D$ for $D(p_0)$. Then there is a
point $z \in T^\circ$ such that $z \not \in D$. Since $D$ is a fundamental
domain, $U^2 = \{ \gamma\overline{D}: \gamma\in \Gamma \}$, hence there is a
$\gamma \in \Gamma$, $\gamma \not = 1$, such that $z \in \gamma\overline{D}$.
Now, the action of $\Gamma$ on $U^2$ is by homeomorphisms, and $D$ and
$T^\circ$ are both open sets, with $D \subsetneqq T^\circ$.  Thus $T^\circ$
must properly contain the boundary of $D$. So we may, in fact, find some
$\gamma \in \Gamma$ such that  $z \in \gamma D$. Let $w = \gamma^{-1}z \in D$.
Then, since $D \subseteq T^\circ$, $w$ is in $T^\circ$.  We show that this
implies a contradiction.

Let $\delta$ be $\gamma^{-1}$, and suppose that $\delta = \startm a & b \\ c &
d \finishm$.  Then, since $z$ is in $T^\circ$, $|z| > 1$ and $\re(z) < 1/2$, so
\begin{align*} |cz + d|^2 & = c^2|z|^2 + 2cd\re(z) + d^2 \\ & > c^2  - |cd| +
d^2 \\ & = (|c| - |d|)^2 + |cd|. \end{align*} This lower bound is a
non-negative integer.  It is zero if and only if $c = d = 0$; since  $ad - bc =
1$ this is impossible.  Therefore, $|cz + d|^2 > 1$. Thus, \[ \im(\delta z) =
\frac{\im(z)}{|cz +d|^2} < \im(z), \] that is, $\im(w) < \im(z)$. Now replace $z$
by $w$ and $\delta$ by $\gamma$ in the above argument.  We find that
$\im(\gamma w) < \im(w)$, hence $\im(z) < \im(w)$.  This is a contradiction.
Therefore, $D = T^\circ$.\end{proof}

We conclude this discussion of fundamental domains by showing how they are
related to tesselations, and using this relationship to find generators for the
groups $P\sltwoz$ and \sltwoz.  A convex polyhedron in a metric space $X$ is a
non-empty, closed, convex subset of $X$ with finitely many sides and a
non-empty interior. A tesselation of $X$ is a collection $\mathcal{P}$ of
convex polyhedra in $X$ such that \begin{enumerate} \item the interiors of the
polyhedra in $\mathcal{P}$ are mutually disjoint, and \item the union of the
polyhedra in $\mathcal{P}$ is equal to $X$. \end{enumerate} By comparing this
definition with that of a fundamental domain, it can be seen that, since
$T^\circ$ is a locally finite fundamental domain, $\mathcal{T} = \{\gamma T :
\gamma \in PSL(2,\mathbb{Z})\}$ is a locally finite tesselation of $U^2$.
Moreover, it is an exact tesselation, meaning that each side of a triangle
$\gamma T$ is a side of exactly two triangles $\gamma T$ and $\delta T$ in
$\mathcal{T}$.  An exact tesselation $\mathcal{P}$ is said to be connected if
for each $P, Q \in \mathcal{P}$ there is a finite sequence $P_0,P_1
\ldots, P_m$ in $\mathcal{P}$ such that $P=P_0$, $Q = P_m$, and for each $1\leq
i \leq m$, the polyhedra $P_{i-1}$ and $P_i$ have a common side.  The
tesselation $\mathcal{T}$ is pictured in Chapter 1 on page~\pageref{tess}.

\begin{lemma}\label{connected} The tesselation $\mathcal{T}$ of
$U^2$ is connected.
\end{lemma}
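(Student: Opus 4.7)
The plan is to prove a slightly stronger statement: every tile $\gamma T$ in $\mathcal{T}$ can be joined to $T$ itself by a finite sequence of tiles in which consecutive members share a side. Since the relation ``two tiles can be joined by such a sequence'' is an equivalence relation on $\mathcal{T}$, this implies the required connectedness.

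First, I would pick interior points $p \in T^\circ$ and $q \in (\gamma T)^\circ$, say $p = p_0$ and $q = \gamma p_0$, and join them by a continuous path $\alpha: [0,1] \to U^2$---for concreteness, the geodesic segment $[p,q]$. Because the tesselation $\mathcal{T}$ is locally finite (as established in the preceding proposition) and $\alpha([0,1])$ is compact, a standard open-cover argument shows that $\alpha$ meets only finitely many tiles of $\mathcal{T}$.

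Second, I would arrange that $\alpha$ misses the set $V$ of vertices of $\mathcal{T}$. The set $V$ is countable, since $P\sltwoz$ is countable and each tile $\gamma T$ has only two finite vertices. If the geodesic segment $[p,q]$ happens to pass through a vertex $v \in V$, I would locally modify $\alpha$ in a small neighbourhood of $v$ to route around $v$ through the interiors of tiles incident to $v$; by local finiteness only finitely many such modifications are required, and the resulting continuous path still meets only finitely many tiles and now avoids every vertex.

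Third, I would label the tiles visited by $\alpha$ in the order they are encountered as $P_0, P_1, \ldots, P_m$, with $P_0 = T$ and $P_m = \gamma T$. Each transition between consecutive tiles $P_{i-1}$ and $P_i$ occurs at a point of $\partial P_{i-1}$ which is not a vertex, and therefore lies in the relative interior of a unique side $s$ of $P_{i-1}$. Because $\mathcal{T}$ is an exact tesselation, $s$ is a side of exactly two tiles of $\mathcal{T}$, namely $P_{i-1}$ and one other, which must be $P_i$. Hence $P_{i-1}$ and $P_i$ share a side, as required.

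The main obstacle is the second step: making rigorous the claim that $\alpha$ can be perturbed to avoid vertices without destroying the finiteness of the list of tiles it crosses. The essential point is that at any vertex $v$, only finitely many tiles contain $v$ and any two tiles meeting at $v$ can themselves be chained through side-adjacent tiles around $v$; so a local detour around $v$ either produces a short valid chain of adjacent tiles, or can be taken inside the union of tiles already incident to $v$, and can be spliced into the final sequence $P_0, P_1, \ldots, P_m$ without difficulty.
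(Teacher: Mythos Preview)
Your proposal is correct and would work, but it takes a genuinely different route from the paper's proof. You argue by tracing a path between two tiles, perturbing it to avoid vertices, and reading off a chain of side-adjacent tiles from the finitely many crossings. The paper instead uses a clopen argument: it fixes a tile $\delta T$, lets $V$ be the union of all tiles reachable from $\delta T$ by finite side-adjacent chains, and shows that $V$ is non-empty, closed (as a locally finite union of closed tiles), and open; connectedness of $U^2$ then forces $V = U^2$. Openness at a boundary point $z$ is handled by choosing $\varepsilon$ so small that $\overline{B(z,\varepsilon)}$ meets only tiles containing $z$, and observing that the induced tesselation of the circle $S(z,\varepsilon)$ is exact and connected, so all tiles through $z$ are already in $V$.

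The trade-off: your approach is more constructive and geometrically explicit, but it carries the technical burdens you identify---perturbing off vertices and making precise the ``order in which tiles are visited'' (the preimages $\alpha^{-1}(P_i)$ need not be intervals). The paper's clopen argument sidesteps both issues entirely, and in fact handles vertices and side points uniformly via the restriction to a small circle---which is exactly the local observation you invoke in your final paragraph. So the key local ingredient is the same; the paper just packages it more efficiently.
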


\begin{proof} Let $\delta T$ be a triangle in $\mathcal{T}$, and let $V$ be the
union of all the triangles $\gamma T$ where there exists a finite sequence
$\gamma_0T,\gamma_1T, \ldots, \gamma_mT$ such that $\delta T = \gamma_0T$,
$\gamma T = \gamma_mT$, and the triangles $\gamma_{i-1}T$ and $\gamma_iT$ share
a common side for $1 \leq i \leq m$.  Since $V$ contains $\delta T$, $V$ is
non-empty.  The tesselation $\mathcal{T}$ is a locally finite collection of closed
subsets of $U^2$. Hence, $V$ is closed.

We now show that the set $V$ is also open. Let $z$ be in $V$.  If $z$ is in the
interior of some triangle $\gamma T$, then since an interior is open, there is
an $\varepsilon > 0$ such that the ball $B(z,\varepsilon)$ is contained in the
interior, and so this ball is contained in $V$. The other possibility is that
$z$ lies on the side of some triangle in $V$. Choose $\varepsilon$ so that
$\overline{B(z,\varepsilon)}$ meets only the triangles in $\mathcal{T}$ which
have $z$ on one of their sides.  Let $S(z,\varepsilon)$ be the hyperbolic
circle centred at $z$ of radius $\varepsilon$.  Then $\mathcal{T}$ restricts to
an exact tesselation of $S(z,\varepsilon)$.  This restriction is a connected
tesselation, so any triangle in $\mathcal{T}$ which has $z$ on a side is in
$V$.  Thus $V$ contains $B(z,\varepsilon)$. We have shown that $V$ is both open
and closed, and is non-empty. Since $U^2$ is connected, this means $V = U^2$.
\end{proof}

\begin{propn}\label{gensides}
The group $\Gamma = P\sltwoz$ is generated by the set
\[
\Sigma = \{ \gamma \in \Gamma : T \cap \gamma T \mbox{ is a side of } T \}.
\]
\end{propn}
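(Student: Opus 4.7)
The plan is to leverage the connectedness of the tessellation $\mathcal{T}$ established in Lemma~\ref{connected}. Given any $\gamma \in \Gamma$, I would apply that lemma to the two tiles $T, \gamma T \in \mathcal{T}$ to obtain a finite sequence $\gamma_0 T, \gamma_1 T, \ldots, \gamma_m T$ in $\mathcal{T}$ with $\gamma_0 T = T$ and $\gamma_m T = \gamma T$, where $\gamma_{i-1}T$ and $\gamma_i T$ share a side for every $1 \leq i \leq m$.

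A preliminary step is to observe that each label $\gamma_i$ is uniquely determined by its triangle: if $\delta T = \eta T$ for some $\delta, \eta \in \Gamma$, then $\delta T^\circ$ and $\eta T^\circ$ are non-empty and equal, so by property (2) of a fundamental region the two translates of the fundamental domain $T^\circ$ coincide, forcing $\delta = \eta$. In particular, I may arrange that $\gamma_0 = 1$ and $\gamma_m = \gamma$ in the sequence above.

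Next, I telescope the product
\[
\gamma \;=\; \gamma_0 \cdot (\gamma_0^{-1}\gamma_1) \cdot (\gamma_1^{-1}\gamma_2) \cdots (\gamma_{m-1}^{-1}\gamma_m),
\]
so that it suffices to show each factor $\sigma_i := \gamma_{i-1}^{-1}\gamma_i$ lies in $\Sigma$. By construction, $\gamma_{i-1}T \cap \gamma_i T$ is a common side $S_i$ of these two triangles. Applying the isometry $\gamma_{i-1}^{-1}$, which sends $\gamma_{i-1}T$ to $T$ and $\gamma_iT$ to $\sigma_i T$, we obtain
\[
T \cap \sigma_i T \;=\; \gamma_{i-1}^{-1}\bigl(\gamma_{i-1}T \cap \gamma_i T\bigr) \;=\; \gamma_{i-1}^{-1}(S_i),
\]
which is the image of a side of $\gamma_{i-1}T$ under an isometry carrying $\gamma_{i-1}T$ to $T$, hence a side of $T$. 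Therefore $\sigma_i \in \Sigma$, and $\gamma$ is a product of elements of $\Sigma$, as required.

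The content of the argument sits entirely on top of Lemma~\ref{connected}; the only subtle point is the uniqueness of the labels $\gamma_i$, which lets one anchor the chain at the identity on one end and at $\gamma$ on the other. After that, the telescoping identity and the equivariance of ``being a side'' under isometries make the rest mechanical.
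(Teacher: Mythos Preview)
Your proof is correct and follows essentially the same route as the paper: invoke Lemma~\ref{connected} to get a chain of adjacent tiles from $T$ to $\gamma T$, telescope $\gamma$ as a product of the increments $\gamma_{i-1}^{-1}\gamma_i$, and observe that applying $\gamma_{i-1}^{-1}$ shows each increment lies in $\Sigma$. Your explicit justification that the labels $\gamma_i$ are uniquely determined by their tiles (so that one may take $\gamma_0=1$, $\gamma_m=\gamma$) is a point the paper passes over silently; the only minor caveat is that one should discard any repeated consecutive tiles so that each $\sigma_i\neq 1$, as the paper does.
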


\begin{proof} Let $\gamma$ be any element of $\Gamma$.  Then, since
$\mathcal{T}$ is connected, there is a finite sequence of elements
$\gamma_0,\gamma_1, \ldots, \gamma_m$ of $\Gamma$ such that $T = \gamma_0T$,
$\gamma_mT=\gamma T$, and $\gamma_{i-1}T$ and $\gamma_iT$ share a common side
for $1 \leq i \leq m$.  Note that $\gamma_0 = I$ and $\gamma_m = g$.  Since
$\gamma_{i-1}^{-1}\gamma_{i-1}T = T$, the triangles $T$ and
$\gamma_{i-1}^{-1}\gamma_iT$ share a common side for each $i$.  We may assume
that for each $i$, $\gamma_{i-1} \not = \gamma_i$.  Then
$\gamma_{i-1}^{-1}\gamma_i \in \Sigma$.  The product
\[\gamma_0(\gamma_0^{-1}\gamma_1)(\gamma_1^{-1}\gamma_2)\cdots(\gamma_{m-1}^{-1}\gamma_m)
= \gamma\] is thus a product of the identity $\gamma_0$ and elements of
$\Sigma$, hence $\Sigma$ generates $\Gamma$. \end{proof}

\begin{corollary} The group $\sltwoz$ is generated by the matrices $u =
\matrixu$ and $v = \matrixv$. \end{corollary}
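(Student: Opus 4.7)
The plan is to apply Proposition~\ref{gensides} to identify explicit generators for $P\sltwoz$, and then lift this generating set to $\sltwoz$ using the fact that $v^2 = -I$.

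First I would identify the side-pairing elements of~$\Sigma$. The triangle $T$ has three sides: the two vertical geodesic segments on $\re(z) = \pm\tfrac{1}{2}$ (from the Dirichlet domain computation that preceded this corollary) and the arc of the unit circle $|z| = 1$ joining $-\tfrac{1}{2} + \tfrac{\sqrt{3}}{2}i$ to $\tfrac{1}{2} + \tfrac{\sqrt{3}}{2}i$. I would then show that, viewed as elements of $P\sltwoz$:
\begin{itemize}
\item $u$ acts as $z \mapsto z+1$, so $uT$ meets $T$ along the right vertical side;
\item $u^{-1}$ acts as $z \mapsto z-1$, so $u^{-1}T$ meets $T$ along the left vertical side;
\item $v$ acts as $z \mapsto -1/z$, which preserves $|z|=1$ and swaps its two halves, so $vT$ meets $T$ along the bottom arc.
\end{itemize}
Because the tesselation $\mathcal{T}$ is \emph{exact}, each side of $T$ is shared with exactly one neighbouring triangle, so the cosets of $u$, $u^{-1}$, $v$ exhaust~$\Sigma$ in $P\sltwoz$. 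By Proposition~\ref{gensides}, these three elements generate $P\sltwoz$; since $u^{-1}$ is automatically in any group containing $u$, the two cosets of $u$ and $v$ suffice.

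It remains to lift this to $\sltwoz$. Given any $\gamma \in \sltwoz$, its image $\bar\gamma$ in $P\sltwoz$ can by the previous step be written as a word $w(u,v)$ in $u$, $v$ and their inverses. Thus in $\sltwoz$ we have $\gamma = \pm w(u,v)$. A direct computation shows $v^2 = \startm -1 & 0 \\ 0 & -1 \finishm = -I$, so $-I$ already lies in $\langle u, v \rangle \subseteq \sltwoz$. Therefore both $w(u,v)$ and $-w(u,v) = v^2 w(u,v)$ belong to $\langle u, v \rangle$, and hence $\gamma \in \langle u, v \rangle$.

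The only step requiring real care is the identification of $\Sigma$: one must verify that $u$, $u^{-1}$, $v$ really do pair the three sides of $T$ and not pair them in some unexpected way (for instance one must check $vT$ is actually on the opposite side of the unit arc from $T$). These are short explicit computations using the action~(\ref{gltwocaction}) together with the description of $T$ above, so this is more tedious than difficult. Everything else is bookkeeping around passing between $\sltwoz$ and $P\sltwoz$.
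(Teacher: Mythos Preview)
Your proposal is correct and is precisely the argument the paper has in mind: the corollary is stated immediately after Proposition~\ref{gensides} with no separate proof, so the intended reasoning is exactly to identify $\Sigma$ as the cosets of $u$, $u^{-1}$, $v$ and then lift to $\sltwoz$ via $v^2=-I$. Your write-up simply makes explicit what the paper leaves implicit.
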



\chapter{Symmetric Spaces and Distance Functions}\label{sym}


\section{Introduction}

The upper half-plane $U^2$ is a metric space with distance function $d_U$.  The
group $P\sltwor$ acts on $U^2$ by isometries, and a tesselation of $U^2$ is
induced by the action of the discrete subgroup $P\sltwoz$. It is natural to
ask if, for $\gamma \in P\sltwoz$ and $z \in U^2$, there is any relationship
between the hyperbolic distance $d_U(z,\gamma z)$, and the number of tiles of
this tesselation which are crossed by the geodesic segment $[z, \gamma z]$.
Now, the tiles in this tesselation of $U^2$ are the images under elements of
$P\sltwoz$ of the triangle $T$ with vertices at $\pm \frac{1}{2} +
\frac{\sqrt{3}}{2}i$ and $\infty$.  Moreover, the group $P\sltwoz$ may be
generated by those elements $u$ and $v$ which map $T$ to a triangle adjacent to
$T$. Thus, it seems there is some relationship between the number of tiles
crossed by the geodesic segment $[z, \gamma z]$, and the expression for
$\gamma$ in terms of $u$ and $v$.

\begin{center}
\begin{pspicture}(0,-0.5)(16,10)
\psline[linewidth=1.5pt]{<->}(-0.5,0)(16.5,0)
\psline[linewidth=1.5pt]{->}(8,0)(8,10)
\psline(2,0)(2,10)
\psline(6,0)(6,10)
\psline(10,0)(10,10)
\psline(14,0)(14,10)
\uput[d](0,0){-2}
\uput[d](4,0){-1}
\uput[d](8,0){0}
\uput[d](12,0){1}
\uput[d](16,0){2}
\psarc(0,0){4}{0}{90}
\psarc(4,0){4}{0}{180}
\psarc(8,0){4}{0}{180}
\psarc(12,0){4}{0}{180}
\psarc(16,0){4}{90}{180}
\psarc(2,0){2}{0}{180}
\psarc(6,0){2}{0}{180}
\psarc(10,0){2}{0}{180}
\psarc(14,0){2}{0}{180}
\psarc(1,0){1}{0}{180}
\psarc(3,0){1}{0}{180}
\psarc(5,0){1}{0}{180}
\psarc(7,0){1}{0}{180}
\psarc(9,0){1}{0}{180}
\psarc(11,0){1}{0}{180}
\psarc(13,0){1}{0}{180}
\psarc(15,0){1}{0}{180}
\psarc(1.33,0){1.33}{0}{180}
\psarc(5.33,0){1.33}{0}{180}
\psarc(9.33,0){1.33}{0}{180}
\psarc(13.33,0){1.33}{0}{180}
\psarc(2.67,0){1.33}{0}{180}
\psarc(6.67,0){1.33}{0}{180}
\psarc(10.67,0){1.33}{0}{180}
\psarc(14.67,0){1.33}{0}{180}
\psarc(0.8,0){0.8}{0}{180}
\psarc(4.8,0){0.8}{0}{180}
\psarc(8.8,0){0.8}{0}{180}
\psarc(12.8,0){0.8}{0}{180}
\psarc(3.2,0){0.8}{0}{180}
\psarc(7.2,0){0.8}{0}{180}
\psarc(11.2,0){0.8}{0}{180}
\psarc(15.2,0){0.8}{0}{180}
\psarc(0.67,0){0.67}{0}{180}
\psarc(4.67,0){0.67}{0}{180}
\psarc(8.67,0){0.67}{0}{180}
\psarc(12.67,0){0.67}{0}{180}
\psarc(3.33,0){0.67}{0}{180}
\psarc(7.33,0){0.67}{0}{180}
\psarc(11.33,0){0.67}{0}{180}
\psarc(15.33,0){0.67}{0}{180}
\psarc(0.57,0){0.57}{0}{180}
\psarc(4.57,0){0.57}{0}{180}
\psarc(8.57,0){0.57}{0}{180}
\psarc(12.57,0){0.57}{0}{180}
\psarc(3.43,0){0.57}{0}{180}
\psarc(7.43,0){0.57}{0}{180}
\psarc(11.43,0){0.57}{0}{180}
\psarc(15.43,0){0.57}{0}{180}
\psarc(0.5,0){0.5}{0}{180}
\psarc(1.5,0){0.5}{0}{180}
\psarc(2.5,0){0.5}{0}{180}
\psarc(3.5,0){0.5}{0}{180}
\psarc(4.5,0){0.5}{0}{180}
\psarc(5.5,0){0.5}{0}{180}
\psarc(6.5,0){0.5}{0}{180}
\psarc(7.5,0){0.5}{0}{180}
\psarc(8.5,0){0.5}{0}{180}
\psarc(9.5,0){0.5}{0}{180}
\psarc(10.5,0){0.5}{0}{180}
\psarc(11.5,0){0.5}{0}{180}
\psarc(12.5,0){0.5}{0}{180}
\psarc(13.5,0){0.5}{0}{180}
\psarc(14.5,0){0.5}{0}{180}
\psarc(15.5,0){0.5}{0}{180}
\psarc[linestyle=dashed](4.5,0){9.18}{3.8}{119.3}
\uput[l](0,8){$z$}
\psdot*(0,8)
\psdot*(13.66,0.6)
\uput[r](13.66,0.56){$\gamma z$}
\end{pspicture}
\end{center}

\medskip

In this chapter, we present the theory needed to more precisely frame questions
about, and analyse, these kinds of relationships between distances,
tesselations and generators.    The upper half-plane  is an example of a very
general concept called a symmetric space. Where $(X,d)$, a symmetric space, is
acted on by a discrete group of isometries $\Gamma$, we define two distance
functions on $\Gamma$, and introduce a notion of equivalence of distance
functions. The geometric distance function on $\Gamma$ is induced by the
distance function $d$ on $X$. The word distance function on $\Gamma$, an
algebraic concept, is induced by the generators of $\Gamma$. The remainder of
this thesis investigates the circumstances in which the geometric and word
distance functions are equivalent.  We prove that, in spaces where the tiles of
the tesselation are compact, these two distance functions are indeed
equivalent. Next, so as to generalise the action of $P\sltwor$ on $U^2$, we
describe a space $\Pn$ on which $P\slnr$, $n \geq 2$, acts by isometries.
Chapter 4 comprises a proof that the geometric and word distances on $P\slnz$
are not equivalent for $n = 2$, but they are equivalent for all $n \geq 3$.

\section{Symmetric Spaces}

Let $X$ be a differentiable manifold which is also a metric space, and let $d$
be a Riemannian distance function on $X$ (see~\cite{hel1:dg} for a discussion
of Riemannian metrics and Riemannian distance functions).  Suppose there exists
a group $G$ which acts transitively and by isometries on $X$. Also suppose
there exists a special point, say 0, in $X$, and an element $k \in G$, such
that $k \circ 0 = 0$ and $Dk(0) = -I$.  Then, $(X,d)$ is said to be a symmetric
space. (The element $k$ is actually a reflection in the point~$0$.)

The upper half-plane is a symmetric space.  Considered as an open subset of
$\mathbb{R}^2$, it is a two-dimensional differentiable manifold, and it is a
metric space with distance function $d_U$. Let $G$ be the group $\sltwor$.  We
know that $G$ acts by isometries on $X$. For transitivity, it suffices to show
that for all $z \in U^2$, there exists a $g \in G$ so that $gi = z$.  Let $z =
x+ iy$ be in $U^2$, and let $g \in G$ be the matrix \[ g= \startm \sqrt{y} &
\frac{x}{\sqrt{y}} \\ 0 & \frac{1}{\sqrt{y}} \finishm. \] Then $gi = z$.  If
$k$ is the element $\matrixv$ in $G$, then $k i = i$ and $Dk(i) = -I$.

Other examples of symmetric spaces are hyperbolic $n$-space $H^n$, Euclidean
$n$-space $E^n$, and spherical $n$-space $S^n$.  There are also symmetric
spaces of matrices, one of which is described in Section~\ref{pnaction} below.
A space which is not a symmetric space is an ellipsoid.  There is no transitive
group of isometries for this space.  For example, reflections which are not in
the axes of the ellipsoid do not preserve the space.

It turns out that symmetric spaces have the features needed to construct
tesselations like that of the upper half-plane, as described in
Subsection~\ref{tesselation}.  More precisely, let $X$ be a symmetric space
which is acted on by a group of isometries $G$. The group $G$ has a normal
subgroup, say $N$, consisting of all elements of $G$ which fix every point of
the space $X$.  (If $X$ is the upper half-plane and $G = \sltwor$, this normal
subgroup is $\{ \pm I \}$.)  Then, the quotient group $G/N$ has a discrete
subgroup $\Gamma$, such that for some point $p_0$ in $X$, the stabiliser
subgroup $\Gamma_{p_0}$ is trivial. The Dirichlet domain $D$ with centre $p_0$
is a locally finite fundamental domain for the action of $\Gamma$ on $X$. The
closure of $D$ is a polyhedron which, acted on by $\Gamma$, yields a
tesselation of $X$.  This tesselation is locally finite, and any compact set in
$X$ meets only  finitely many tiles of the tesselation. Finally, the group
$\Gamma$ may be generated by those finitely many elements of $\Gamma$ which map
$\overline{D}$ to a tile which shares a side with $\overline{D}$. For the
details of the considerable amount of theory needed to establish these results,
see Chapter 4 of~\cite{hel1:dg} and Chapter 6 of~\cite{rat1:fhm}.

\section{Distance functions on discrete groups}

Let $\Gamma$ be a discrete group of isometries of a symmetric space $X$ and let
$d$ and $d'$ be distance functions on  $\Gamma$. Then $d$ and $d'$ are said to
be Lipschitz equivalent if there exist constants $c$, $C > 0$ such that, for
all $\gamma_1$, $\gamma_2 \in \Gamma$, \[ c\,d(\gamma_1,\gamma_2) \leq
d'(\gamma_1,\gamma_2) \leq C d(\gamma_1,\gamma_2). \] Lipschitz equivalence is
an equivalence relation.  We write $d \approx d'$ if distance functions $d$ and
$d'$ are in the same equivalence class.

Lipschitz equivalence allows distance functions on $\Gamma$ to be compared.  We
now define the two distance functions that we wish to compare.

\subsection{The geometric distance function}

Let $p_0$ be a point in $(X,d)$ such that the stabiliser subgroup
$\Gamma_{p_0}$ is trivial.  A geometric distance function $d_R$ on $\Gamma$ is
defined by \[ d_R(\gamma_1,\gamma_2) = d(\gamma_1 p_0, \gamma_2 p_0). \] The
letter $R$ is used since $d$ is a Riemannian distance function.  From the
properties of the distance function $d$ on $X$, it follows immediately that
$d_R$ is non-negative and symmetric, that $d_R$ satisfies the triangle
inequality, and that $\gamma_1= \gamma_2$ implies $d_R(\gamma_1,\gamma_2)=0$.
Now suppose $d_R(\gamma_1,\gamma_2)=0$. Then $d(\gamma_1 p_0,\gamma_2 p_0) =
0$, and so $\gamma_1 p_0 = \gamma_2 p_0$.  Therefore, $p_0 =
\gamma_1^{-1}\gamma_2 p_0$, meaning that $\gamma_1^{-1}\gamma_2$ is in the
stabiliser subgroup of $p_0$.  But this subgroup is trivial, hence $\gamma_1 =
\gamma_2$. Thus $d_R$ really is a distance function.

The point $p_0$ used to define the geometric distance function is not, in
general, the only point of $X$ whose stabiliser subgroup in $\Gamma$ is
trivial.  For instance, in the upper half-plane, all points $ti$, where $t >
1$, have trivial stabiliser subgroups in $P\sltwoz$.  The geometric distance
functions which are then defined using, say, the points $2i$ and $3i$ will not
be the same. For our purposes, this difference does not matter. We are
interested in Lipschitz equivalence classes of distance functions, and the
following result shows that all geometric distance functions on $\Gamma$ acting
on the space $(X,d)$ are Lipschitz equivalent.

\begin{lemma} Let $\Gamma$ be a discrete group of isometries of the symmetric
space $(X,d)$. The geometric distance function on $\Gamma$ depends on the
choice of point in $X$ with trivial stabiliser subgroup, but only up to Lipschitz
equivalence. \end{lemma}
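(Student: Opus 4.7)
The plan is to show that for any two points $p_0, p_1 \in X$ with trivial $\Gamma$-stabilisers, the corresponding geometric distance functions $d_R$ and $d_R'$ satisfy $c\,d_R \leq d_R' \leq C\,d_R$ for some positive constants $c,C$. By the symmetry of the statement under swapping $p_0 \leftrightarrow p_1$, it is enough to produce one such constant $C$, and then re-run the same argument with the roles of the points interchanged to produce $c$.

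The first step is a pure triangle-inequality calculation. Write $\gamma = \gamma_1^{-1}\gamma_2$ and set $\delta = d(p_0,p_1)$. Since every element of $\Gamma$ is an isometry,
\[
d_R'(\gamma_1,\gamma_2) = d(p_1,\gamma p_1) \leq d(p_1,p_0) + d(p_0,\gamma p_0) + d(\gamma p_0,\gamma p_1) = 2\delta + d_R(\gamma_1,\gamma_2).
\]
This already controls $d_R'$ additively by $d_R$, but to upgrade the additive shift $2\delta$ into a multiplicative constant I need a uniform positive lower bound on $d_R(\gamma_1,\gamma_2)$ whenever $\gamma_1 \neq \gamma_2$.

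The second step supplies that lower bound, which I expect to be the only real obstacle. Since $\Gamma_{p_0}$ is trivial, $d(p_0,\gamma p_0) = 0$ forces $\gamma = 1$, so the bound is at least strictly positive for each individual $\gamma \neq 1$; the issue is uniformity. I would apply Lemma~\ref{discretecompact}: a symmetric space is a complete Riemannian manifold, so by Hopf--Rinow the closed unit ball $\overline{B(p_0,1)}$ is compact, hence meets the orbit $\Gamma p_0$ in only finitely many points. The minimum distance from $p_0$ to any of those other orbit points is some $c_0 > 0$, and any $\gamma p_0$ outside $\overline{B(p_0,1)}$ has $d(p_0,\gamma p_0) \geq 1$. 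Setting $c_1 = \min\{c_0,1\}$, we get $d_R(\gamma_1,\gamma_2) \geq c_1$ for all $\gamma_1 \neq \gamma_2$.

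Combining the two steps: for $\gamma_1 \neq \gamma_2$,
\[
d_R'(\gamma_1,\gamma_2) \leq 2\delta + d_R(\gamma_1,\gamma_2) \leq \left(1 + \tfrac{2\delta}{c_1}\right) d_R(\gamma_1,\gamma_2),
\]
with the inequality holding trivially when $\gamma_1 = \gamma_2$. Setting $C = 1 + 2\delta/c_1$ gives one half of Lipschitz equivalence; repeating the argument with the roles of $p_0$ and $p_1$ interchanged (using a corresponding constant $c_1'$ obtained from the orbit $\Gamma p_1$) yields the matching lower Lipschitz bound, and the lemma follows.
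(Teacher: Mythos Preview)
Your proof is correct and follows essentially the same approach as the paper: both use the triangle inequality to get the additive bound $d_R' \leq 2d(p_0,p_1) + d_R$, then convert to a multiplicative bound by invoking a uniform positive lower bound on $d_R(\gamma_1,\gamma_2)$ for $\gamma_1 \neq \gamma_2$ coming from discreteness. If anything you are more careful than the paper, which simply asserts the existence of that lower bound, whereas you justify it via Lemma~\ref{discretecompact} and compactness of closed balls.
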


\begin{proof} Let $d_R^p$ and $d_R^q$ be two geometric distance functions
defined with respect to the points $p$ and $q$ in $(X,d)$. Let $\gamma_1$ and
$\gamma_2$ be elements of $\Gamma$. By the triangle inequality, and the fact
that $\Gamma$ acts by isometries, \begin{align*} d_R^p(\gamma_1,\gamma_2) & =
d(\gamma_1 p,\gamma_2 p) \\ & \leq d(\gamma_1 p,\gamma_1 q) + d(\gamma_1
q,\gamma_2 q) + d(\gamma_2 q, \gamma_2 p ) \\ & =  2 d(p,q) +
d_R^q(\gamma_1,\gamma_2). \end{align*} As the group $\Gamma$ is discrete, there
is an $r_q > 0$ such that $d_R^q(\gamma_1 , \gamma_2 ) \geq r_q$ for all distinct
$\gamma_1$ and $\gamma_2$. Choose the constant $C_1$ so that $2d(p,q) \leq (C_1
- 1)r_q $. Then, for all $\gamma_1 \not = \gamma_2$, \begin{align*} 2d(p,q) +
d_R^q(\gamma_1,\gamma_2) & \leq  (C_1 - 1)r_q + d_R^q(\gamma_1,\gamma_2) \\ &
\leq  (C_1 - 1)d_R^q(\gamma_1,\gamma_2) + d_R^q(\gamma_1,\gamma_2) \\ & =  C_1
d_R^q (\gamma_1,\gamma_2). \end{align*} Thus $d_R^p (\gamma_1,\gamma_2) \leq
C_1 d_R^q(\gamma_1,\gamma_2)$ for all $\gamma_1$, $\gamma_2 \in \Gamma$.
Similarly, there exists a constant $C_2 > 0$ such that \[ d_R^q
(\gamma_1,\gamma_2) \leq C_2 d_R^p(\gamma_1,\gamma_2) \] for all $\gamma_1$ and
$\gamma_2$ in $\Gamma$.  Combining these results, \[ \frac{1}{C_1}
d_R^p(\gamma_1, \gamma_2) \leq d_R^q(\gamma_1,\gamma_2) \leq C_2
d_R^p(\gamma_1,\gamma_2), \] which completes the proof. \end{proof}

This lemma shows that changing the point $p_0$ in the definition of a
geometric distance function $d_R$ does not affect Lipschitz equivalence.   Now, the
definition of $d_R$ also depends on the distance function $d$ on $X$.   It is
reasonable to wonder whether the geometric distance  is well-defined, at
least up to Lipschitz equivalence, if the distance function on the space $X$ is
changed.  It turns out that, because $X$ is a symmetric space, any two
Riemannian distance functions $d$ and $d'$ on $X$ are scalar multiples of each
other. Hence, the respective induced geometric distance functions $d_R$ and
$d_R'$ are indeed Lipschitz equivalent.

We will from now on abuse notation and refer to $d_R$ as the geometric distance
function on $\Gamma$.

If $x$ is in $X$ and $\gamma_1$ in $\Gamma$, the distance in $X$ along the
geodesic joining  $x$ and $\gamma_1 x$ is $d(x,\gamma_1 x)$.  Since $\Gamma$
acts transitively on $X$, there is a $\gamma_2 \in \Gamma$ such that $\gamma_2
x = p_0$.  Then, \[d(x, \gamma_1 x) = d(\gamma_2 x, \gamma_2 \gamma_1 x) =
d(p_0, \gamma_2 \gamma_1 \gamma_2^{-1} p_0) =
d_R(1,\gamma_2\gamma_1\gamma_2^{-1}).\] So the geometric distance provides a
way of relating a distance in the space $X$ between points $x$ and $\gamma x$
to a distance in the group $\Gamma$.  In particular, the distance along the
geodesic joining the points $p_0$ and $\gamma p_0$ is Lipschitz equivalent to
 $d_R(1,\gamma )$.

\subsection{The word distance function}

To define the second distance function on $\Gamma$, fix a finite set of
generators for $\Gamma$, denote this set of generators by $\Sigma$, and let
$\Sigma^{-1}$ be the set $\{ \sigma^{-1} : \sigma \in \Sigma\}$. A word
distance function $d_\Sigma$ on $\Gamma$ is induced by $\Sigma$ as follows.
For $\gamma_1$ and $\gamma_2$ in $\Gamma$, $d_\Sigma(\gamma_1,\gamma_2) = n$,
where $n$ is the smallest integer such that $\gamma_1^{-1}\gamma_2$ may be
written as a word of length $n$ in terms of elements of $\Sigma \cup
\Sigma^{-1}$.   Let $l_\Sigma(\gamma)$ denote the length of a shortest word for
$\gamma$ in terms of elements of $\Sigma \cup \Sigma^{-1}$.  Then
$d_\Sigma(\gamma_1, \gamma_2) = l_\Sigma(\gamma_1^{-1}\gamma_2)$.  The word
distance function corresponds to distance in the Cayley graph for $\Gamma$
with respect to~$\Sigma$.

We now show that $d_\Sigma$ does satisfy the definition of a distance function.
Since the identity is the only element of $\Gamma$ which may be written as a
word of length zero, $d_\Sigma(\gamma_1,\gamma_2) = 0$ if and only if $\gamma_1
= \gamma_2$.  All words for elements other than the identity  have positive
length, so $d_\Sigma(\gamma_1,\gamma_2) \geq 0$ for all $\gamma_1, \gamma_2 \in
\Gamma$.  Also, for any $\gamma_1, \gamma_2 \in \Gamma$, by taking the inverse
of a shortest word for $\gamma_1^{-1}\gamma_2$, we see that
$d_\Sigma(\gamma_1,\gamma_2) = d_\Sigma(\gamma_2,\gamma_1)$. Finally, for any
$\gamma_1$, $\gamma_2$ and $\gamma_3$ in $\Gamma$, \begin{align*}
d_\Sigma(\gamma_1,\gamma_3) & =  l_\Sigma(\gamma_1^{-1}\gamma_3) \\ & =
l_\Sigma(\gamma_1^{-1}\gamma_2\gamma_2^{-1}\gamma_3) \\ & \leq
l_\Sigma(\gamma_1^{-1}\gamma_2) + l_\Sigma(\gamma_2^{-1}\gamma_3) \\ & =
d_\Sigma(\gamma_1,\gamma_2) + d_\Sigma(\gamma_2,\gamma_3), \end{align*} proving the
triangle inequality.

\begin{lemma} \label{word_equiv} The word distance function on $\Gamma$
depends on the choice of generators, but only up to Lipschitz
equivalence.
\end{lemma}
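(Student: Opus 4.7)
The plan is to show that any two word distance functions $d_{\Sigma_1}$ and $d_{\Sigma_2}$ on $\Gamma$ arising from finite generating sets $\Sigma_1$ and $\Sigma_2$ are Lipschitz equivalent. My approach is to write each element of one generating set as a word in the other, and then translate shortest words back and forth at a bounded cost per letter.

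First I would observe that every element of the finite set $\Sigma_1 \cup \Sigma_1^{-1}$ lies in $\Gamma$ and so, since $\Sigma_2$ generates $\Gamma$, can be expressed as a word in $\Sigma_2 \cup \Sigma_2^{-1}$. I then define
\[ C_1 = \max\{ l_{\Sigma_2}(\sigma) : \sigma \in \Sigma_1 \cup \Sigma_1^{-1} \}, \]
which is a finite positive integer because $\Sigma_1$ is finite (and because $\Gamma$ is non-trivial, so at least one $\sigma$ contributes a positive length). Define $C_2$ analogously, with the roles of $\Sigma_1$ and $\Sigma_2$ reversed.

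Now let $\gamma \in \Gamma$ be arbitrary and write $\gamma = s_1 s_2 \cdots s_n$ as a shortest word in $\Sigma_1 \cup \Sigma_1^{-1}$, so that $n = l_{\Sigma_1}(\gamma)$. Replacing each $s_i$ by a word of length at most $C_1$ in $\Sigma_2 \cup \Sigma_2^{-1}$ produces a word for $\gamma$ in $\Sigma_2 \cup \Sigma_2^{-1}$ of total length at most $C_1 n$; hence $l_{\Sigma_2}(\gamma) \leq C_1 l_{\Sigma_1}(\gamma)$. The symmetric argument gives $l_{\Sigma_1}(\gamma) \leq C_2 l_{\Sigma_2}(\gamma)$. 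Applying these two inequalities to $\gamma_1^{-1}\gamma_2$ and recalling that $d_\Sigma(\gamma_1,\gamma_2) = l_\Sigma(\gamma_1^{-1}\gamma_2)$ yields
\[ \frac{1}{C_2}\, d_{\Sigma_1}(\gamma_1,\gamma_2) \leq d_{\Sigma_2}(\gamma_1,\gamma_2) \leq C_1\, d_{\Sigma_1}(\gamma_1,\gamma_2) \]
for all $\gamma_1, \gamma_2 \in \Gamma$, which is precisely Lipschitz equivalence of $d_{\Sigma_1}$ and $d_{\Sigma_2}$.

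There is no serious obstacle in this argument; the only point requiring care is the finiteness and positivity of the constants $C_1$ and $C_2$, which both follow directly from the assumption that $\Sigma_1$ and $\Sigma_2$ are finite generating sets of a non-trivial group. The trivial case $\Gamma = \{ 1 \}$, where both distance functions are identically zero, could be dispatched separately but does not meaningfully arise for the groups of interest.
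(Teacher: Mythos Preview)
Your proof is correct and follows essentially the same approach as the paper: define constants as the maximum word length, with respect to one generating set, of the generators (and their inverses) from the other set, then substitute letter by letter in a shortest word. The paper's version is slightly terser (it handles inverses via the observation $d_\Sigma(1,\gamma)=d_\Sigma(1,\gamma^{-1})$ rather than including $\Sigma^{-1}$ explicitly in the maximum), but the argument is the same.
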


\begin{proof} Let $\Sigma = \{ \sigma_1, \sigma_2, \ldots, \sigma_n \}$ and
$\Sigma' = \{ \sigma_1', \sigma_2', \ldots, \sigma_m' \}$ be two finite sets of
generators for $\Gamma$, with $d_\Sigma$ and $d_\Sigma'$ the respective induced
word distance functions.  For $\gamma_1, \gamma_2 \in \Gamma$, we have
$d_\Sigma(\gamma_1,\gamma_2) = d_\Sigma(1,\gamma_1^{-1}\gamma_2)$, and
$d_\Sigma'(\gamma_1,\gamma_2) = d_\Sigma'(1,\gamma_1^{-1}\gamma_2)$. Thus, to
prove that $d_\Sigma \approx d_\Sigma'$, it suffices to prove that for all
$\gamma \in \Gamma$, there exists a constant $C>0$ such that\[
d_\Sigma(1,\gamma) \leq C\,d_\Sigma'(1,\gamma),
\hspace{5mm}\mbox{and}\hspace{5mm} d_\Sigma'(1,\gamma) \leq
C\,d_\Sigma(1,\gamma). \]

As $\Sigma$ and $\Sigma'$ are sets of generators, if $\sigma_i \in
\Sigma$ then both $\sigma_i$ and $\sigma_i^{-1}$ may be expressed
in terms of elements of $\Sigma' \cup (\Sigma')^{-1}$, and similarly
if $\sigma_j' \in \Sigma'$ then $\sigma_j'$ and $(\sigma_j')^{-1}$
may be expressed in terms of elements of $\Sigma\cup \Sigma^{-1}$.

For all $\gamma \in \Gamma$, by taking the inverse of a shortest word for
$\gamma$, we find that $d_\Sigma(1,\gamma) = d_\Sigma(1,\gamma^{-1})$, and
$d'_\Sigma(1,\gamma) = d'_\Sigma(1,\gamma^{-1})$. So let \begin{align*}
c_1&=\max_{1 \leq i \leq n} \{d_\Sigma'(1,\sigma_i) \}, \\ c_2&=\max_{1 \leq
j \leq m} \{d_\Sigma(1,\sigma_j')  \},\\ C&=\max\{c_1,c_2\}. \end{align*}

\noindent Then for any $\gamma \in \Gamma$, \[ d_\Sigma'(1,\gamma) \leq c_1
d_\Sigma(1,\gamma) \leq Cd_\Sigma(1,\gamma),\] and \[ d_\Sigma(1,\gamma) \leq c_2
d_\Sigma'(1,\gamma) \leq C d_\Sigma'(1,\gamma).\]  Therefore
$d_\Sigma \approx d_\Sigma'$. \end{proof}

As with the geometric distance function, this result allows us to abuse
notation.  If we are interested only in Lipschitz equivalence of distance
functions, we will now write $d_W$ for any word distance function, and refer to
$d_W$ as the word distance function on $\Gamma$.  We will continue to denote by
$d_\Sigma$ the word distance function induced by a particular set of generators
$\Sigma$.

The action of $\Gamma$ on $X$ induces a tesselation of $X$.  For $\gamma \in
\Gamma$, the geodesic segment $[p_0, \gamma p_0]$ crosses a sequence of tiles
in this tesselation. Let $T$ be the tile which has the point $p_0$ in its
interior. Then $[p_0,\gamma p_0]$ intersects a finite sequence of tiles $\{
\gamma_i T \}_{i=0}^n$, say, where $\gamma_0 = 1$ and $\gamma_n = \gamma$.
There is a slight complication if the geodesic segment $[p_0 ,\gamma p_0]$
passes through the vertex of some tile in the tesselation. Since the
tesselation is locally finite, if this happens we can always choose a finite
set of successively adjacent tiles meeting at the vertex. Then, we may assume
that for  $1 \leq i \leq n$, the tile $\gamma_{i-1}T$ is adjacent to the tile
$\gamma_i T$. Thus, the tile $T$ is adjacent to the tile
$\gamma_{i-1}^{-1}\gamma_i T$. As in the upper half-plane, the set of elements
of $\Gamma$ which map $T$ to a tile adjacent to $T$ generate $\Gamma$.  This
means that \[ \gamma =
\left(\gamma_0^{-1}\gamma_1\right)\left(\gamma_1^{-1}\gamma_2\right) \cdots
\left(\gamma_{n-1}^{-1}\gamma_n\right) \] is a word for $\gamma$ with respect
to a finite set of generators for $\Gamma$.  Its length is the same as the
number of tiles of the tesselation which are crossed by the geodesic segment
$[p_0,\gamma p_0]$.  There is no shorter word for $\gamma$ with respect to this
set of generators (this is proved for a case when $\Gamma = P\sltwoz$ in
Section~\ref{proofd2}, and the proof given there generalises).  Therefore,
$d_W(1,\gamma)$ has a geometric significance as well as its algebraic meaning:
it is Lipschitz equivalent to the number of tiles in the tesselation which are
crossed by the geodesic segment $[p_0,\gamma p_0]$.

\section{Fundamental domains of compact closure}

A straightforward example of a symmetric space is the Euclidean plane, acted on
by the group of Euclidean isometries. Let $s$ and $t$ be the orthogonal
translations $s(\vecx) = \vecx + (0,1)$ and $t(\vecx)= \vecx + (1,0)$.  Let
$\Gamma$ be the discrete subgroup of isometries generated by $s$ and $t$, and
let $p_0$ be the point $(1/2,1/2)$. Then $p_0$ has trivial stabiliser subgroup in $\Gamma$,
and the Dirichlet domain with centre $p_0$ is the open square
$(0,1)\times(0,1)$.  It seems intuitively reasonable that in the tesselation
induced, which is a grid of unit squares, the Euclidean distance between
$\vecx$ and $\gamma \vecx$ is roughly the same as the number of tiles crossed by
the line segment $[\vecx,\gamma\vecx]$.

\begin{center}
\begin{pspicture}(-8,-2.2)(8,3.8)
\psline[linewidth=2pt]{<->}(-8.5,0)(8.5,0)
\psline[linewidth=2pt]{<->}(0,-2.2)(0,4.2)
\psline(-8.5,2)(8.5,2)
\psline(-8.5,4)(8.5,4)
\psline(-8.5,-2)(8.5,-2)
\psline(-8,-2.2)(-8,4.2)
\psline(-6,-2.2)(-6,4.2)
\psline(-4,-2.2)(-4,4.2)
\psline(-2,-2.2)(-2,4.2)
\psline(8,-2.2)(8,4.2)
\psline(6,-2.2)(6,4.2)
\psline(4,-2.2)(4,4.2)
\psline(2,-2.2)(2,4.2)
\uput[d](0.2,0.1){\mbox{\small{0}}}
\uput[d](2.2,0.1){\mbox{\small{1}}}
\uput[d](4.2,0.1){\mbox{\small{2}}}
\uput[d](6.2,0.1){\mbox{\small{3}}}
\uput[d](8.2,0.1){\mbox{\small{4}}}
\uput[d](-1.6,0.1){\mbox{\small{-1}}}
\uput[d](-3.6,0.1){\mbox{\small{-2}}}
\uput[d](-5.6,0.1){\mbox{\small{-3}}}
\uput[d](-7.6,0.1){\mbox{\small{-4}}}
\uput[d](0.2,2.1){\mbox{\small{1}}}
\uput[d](0.2,4.1){\mbox{\small{2}}}
\uput[d](0.4,-1.9){\mbox{\small{-1}}}
\psline[linestyle=dashed](-7.3,-1.6)(6.7,2.4)
\psdot*(-7.3,-1.6)
\psdot*(6.7,2.4)
\uput[u](-7.3,-1.6){\mbox{\small{$\vecx$}}}
\uput[u](6.7,2.4){\mbox{\small{$\gamma\vecx$}}}
\end{pspicture}
\end{center}

\medskip

\noindent The following theorem shows that this intuition is correct.  In
spaces, like this one, where the induced tesselation has compact tiles, the
geometric and word distance functions are Lipschitz equivalent.

\begin{theorem}~\cite{lmr2:wrm} \label{compactfd} Let $D$ be a fundamental
domain for the action of $\Gamma$ on $(X,d)$.  Suppose $D$ has compact
closure.  Then the geometric distance function $d_R$ on $\Gamma$ is Lipschitz
equivalent to the word distance function $d_W$ on $\Gamma$. \end{theorem}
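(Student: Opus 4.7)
The plan is to prove the two Lipschitz inequalities separately. Because $\Gamma$ acts on $X$ by isometries, both $d_R$ and $d_W$ depend only on $\gamma_1^{-1}\gamma_2$:
\begin{equation*}
d_R(\gamma_1,\gamma_2) = d(p_0, \gamma_1^{-1}\gamma_2\, p_0), \qquad d_W(\gamma_1,\gamma_2) = l_\Sigma(\gamma_1^{-1}\gamma_2),
\end{equation*}
so it suffices to compare $d(p_0,\gamma p_0)$ with $l_\Sigma(\gamma)$ as $\gamma$ ranges over $\Gamma$. By Lemma~\ref{word_equiv} (and the already-established analogue for $d_R$), I may freely change the generating set $\Sigma$ and the base point $p_0$, since only Lipschitz equivalence is at stake.

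For the inequality $d_R \leq C\,d_W$, fix any finite generating set $\Sigma$ and let $K = \max\{d(p_0,\sigma p_0) : \sigma \in \Sigma \cup \Sigma^{-1}\}$, which is finite because $\Sigma$ is. Writing $\gamma = \sigma_1\sigma_2\cdots\sigma_n$ as a shortest word (so $n = l_\Sigma(\gamma)$), the triangle inequality together with the isometric action of $\Gamma$ yields
\begin{equation*}
d(p_0,\gamma p_0) \leq \sum_{i=1}^{n} d(\sigma_1\cdots\sigma_{i-1}\, p_0,\, \sigma_1\cdots\sigma_i\, p_0) = \sum_{i=1}^{n} d(p_0, \sigma_i p_0) \leq nK,
\end{equation*}
which gives $d_R(1,\gamma) \leq K\, d_\Sigma(1,\gamma)$ for every $\gamma \in \Gamma$.

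For the reverse direction $d_W \leq c\, d_R$, choose $p_0 \in D$ with trivial stabiliser in $\Gamma$ (such a point exists because the fixed-point sets of non-identity isometries are proper submanifolds whose countable union cannot cover the non-empty open set $D$), and set $R = \operatorname{diam}(\overline{D})$, which is finite since $\overline{D}$ is compact. Define
\begin{equation*}
\Sigma_0 = \{\eta \in \Gamma \setminus \{1\} : d(p_0, \eta p_0) \leq 2R + 1\};
\end{equation*}
this is finite because closed balls in the symmetric space $X$ are compact and $\Gamma$ is discrete, so $\overline{B}(p_0, 2R+1)$ meets the orbit $\Gamma p_0$ in only finitely many points. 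Given $\gamma \in \Gamma$ with $L = d(p_0,\gamma p_0)$, parametrise the geodesic $\alpha : [0,L] \to X$ from $p_0$ to $\gamma p_0$ by arc length, fix $N = \lceil L \rceil$ and a partition $0 = t_0 < t_1 < \cdots < t_N = L$ of step size at most $1$, and set $x_i = \alpha(t_i)$; choose $\gamma_i \in \Gamma$ with $x_i \in \gamma_i \overline{D}$, subject to $\gamma_0 = 1$ and $\gamma_N = \gamma$ (possible because $p_0 \in D$). Since both $\gamma_{i-1} p_0$ and $x_{i-1}$ lie in $\gamma_{i-1}\overline{D}$, and similarly $\gamma_i p_0, x_i \in \gamma_i\overline{D}$, the triangle inequality yields
\begin{equation*}
d(p_0, \gamma_{i-1}^{-1}\gamma_i\, p_0) = d(\gamma_{i-1} p_0, \gamma_i p_0) \leq R + 1 + R = 2R + 1,
\end{equation*}
so each factor $\gamma_{i-1}^{-1}\gamma_i$ lies in $\Sigma_0 \cup \{1\}$. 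The telescoping identity $\gamma = \prod_{i=1}^{N} \gamma_{i-1}^{-1}\gamma_i$ simultaneously proves that $\Sigma_0$ generates $\Gamma$ and gives $l_{\Sigma_0}(\gamma) \leq N \leq L+1 = d_R(1,\gamma)+1$. Discreteness of $\Gamma$ provides $c_0 := \inf_{\gamma \neq 1} d_R(1,\gamma) > 0$, which absorbs the additive $+1$ into a multiplicative constant for $\gamma \neq 1$, and Lemma~\ref{word_equiv} transfers the resulting bound to any $d_W$.

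The main obstacle I anticipate is the consistent choice of the $\gamma_i$ when a sample point $x_i$ lies on the shared boundary of several tiles of the tesselation $\{\gamma \overline{D}\}$; any fixed selection rule will do, but one must confirm that the endpoint assignments $\gamma_0 = 1$ and $\gamma_N = \gamma$ remain compatible, which is why I begin with $p_0 \in D$ rather than merely with $p_0$ of trivial stabiliser. The auxiliary background facts — that closed balls in the symmetric space $X$ are compact, and that a non-empty open subset of $X$ always contains a point of trivial $\Gamma$-stabiliser — will need to be cited rather than reproved.
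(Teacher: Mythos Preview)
Your proof is correct and follows essentially the same route as the paper's: both halves match, with the reverse inequality obtained by subdividing the geodesic $[p_0,\gamma p_0]$ into bounded-length pieces, choosing a tile $\gamma_i\overline{D}$ containing each sample point, and telescoping $\gamma=\prod \gamma_{i-1}^{-1}\gamma_i$ with each factor lying in a fixed finite set by compactness of $\overline{D}$. The only cosmetic differences are that the paper uses step size roughly $K_0/2$ (half the minimal orbit separation) rather than $1$, thereby obtaining a multiplicative bound directly instead of an additive $+1$ to absorb, and defines its finite set $\Theta$ as those $\theta$ with $\theta\overline{D}$ meeting a fixed neighbourhood of $\overline{D}$ rather than via the ball condition on $d(p_0,\eta p_0)$.
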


\begin{proof} Let $\Sigma$ be a finite set of generators for $\Gamma$ such that
$\Sigma = \Sigma^{-1}$, and let $d_\Sigma$ be the word distance function
induced by the set $\Sigma$. Let $\gamma$ be in $\Gamma$.  Choose $C_0$ to be
the constant \[C_0 = \max\{ d_R(1,\sigma):\sigma \in \Sigma \}.\] We prove
that  $d_R(1,\gamma) \leq C_0 d_\Sigma(1,\gamma)$ by induction on $m =
d_\Sigma(1,\gamma)$. For the inductive step, we may write $\gamma =
\sigma_1\sigma_2\cdots\sigma_m$. Then, \begin{align*} d_R(1,\gamma) & =
d(p_0,\sigma_1\sigma_2\cdots\sigma_m p_0) \\ & =
d(\sigma_1^{-1}p_0,\sigma_2\cdots\sigma_m p_0) \\ &\leq d(\sigma_1^{-1}p_0,p_0)
+ d(p_0,\sigma_2\cdots\sigma_m p_0) \\ & = d_R(\sigma_1^{-1},1) +
d_R(1,\sigma_2\cdots\sigma_m ) \\ & \leq C_0d_\Sigma(\sigma_1^{-1},1) +
C_0d_\Sigma(1,\sigma_2\cdots\sigma_m) \\ & \leq  C_0 + C_0(m-1) \\ & =  C_0 m
\\ & =  C_0 d_\Sigma(1,\gamma). \end{align*}
Thus there exists a constant $C_1 > 0$ such that, for all $\gamma \in \Gamma$,
$d_R(1,\gamma) \leq C_1 d_W(1,\gamma)$.

To show that $d_W(1,\gamma) \leq C_2 d_R(1,\gamma)$, consider the geodesic
segment $[p_0, \gamma p_0]$. Since $\Gamma$ is discrete, the constant $K_0$
defined by \[ K_0 = \inf\{ d(p_0,\theta p_0) : \theta \in \Gamma \} \] is
strictly positive.  By the division algorithm, there exists a positive  integer
$q$ and real number $r$, where $0 \leq r < K_0 / 2$, such that \[ d(p_0,\gamma
p_0) = \frac{K_0}{2}\,q + r. \] Thus, there exists a finite sequence of points
$p_0,p_1, p_2, \ldots, p_n = \gamma p_0$ lying on the geodesic segment $[p_0,
\gamma p_0]$ such that, for $1 \leq i \leq n$, \[ \frac{K_0}{2} \leq
d(p_{i-1},p_i) < K_0. \] Note that  \[ d_R(1,\gamma) = d(p_0,\gamma p_0) =
\sum_{i=1}^n d(p_{i-1},p_i) \geq n \frac{K_0}{2}. \]

Now, the tiles of the tesselation induced by $\Gamma$ are the images of
$\overline{D}$ under elements of $\Gamma$.  So each point $p_i$ belongs to (at
least one) image of $\overline{D}$. Thus, for $0 \leq i \leq n$, we have $p_i
\in \gamma_i \overline{D}$, for some $\gamma_i \in \Gamma$.  In particular,
$\gamma_0 = 1$ and $\gamma_n = \gamma$. Then, for $1 \leq i \leq n$,  define
$r_i$ to be the element $r_i = \gamma_{i-1}^{-1}\gamma_i \in \Gamma$,  so
that \[ r_1 r_2 \cdots r_n = \gamma_0 \gamma_1^{-1}\gamma_1 \gamma_2 \cdots
\gamma_{n-1}^{-1}\gamma_n= \gamma_0^{-1} \gamma_n = 1 \gamma = \gamma. \] Let $x_i$ be the point $\gamma_i^{-1}p_i \in \overline{D}$. Since
$d(p_{i-1},p_i) < K_0$,\[ d(\gamma_{i-1}^{-1}p_{i-1},\gamma_{i-1}^{-1}p_i) =
d(x_{i-1}, \gamma_{i-1}^{-1}\gamma_i x_i) = d(x_{i-1},r_i x_i ) < K_0, \] so
the point $r_i x_i$ is in the ball $B(x_{i-1},K_0)$.

Consider the set \[ B = \bigcup_{x \in \overline{D}} B(x,K_0). \] Since
$\overline{D}$ is compact, there is an $R_0 > 0$ such that \begin{equation}
\label{fdbound}\sup \{ d(x,y): x,y \in \overline{D}\} \leq R_0. \end{equation}
That is, the distance between points in $\overline{D}$ is bounded above by the
constant $R_0$.  Let $\Theta$ be the following set of elements in $\Gamma$: \[
\Theta = \{ \theta \in \Gamma : \theta\overline{D} \cap B \not = \emptyset \}.
\]  Then, using~(\ref{fdbound}), and the fact that $p_0$ is in $\overline{D}$,
\[B \subseteq \bigcup_{\theta \in \Theta} \theta \overline{D} \subseteq
\overline{B(p_0,K_0 + R_0)}.\] Since the closed ball $\overline{B(p_0,K_0 +
R_0)}$ is compact, it meets only finitely many images of $\overline{D}$.
Thus $\Theta$ is a finite set.  Therefore, if $x$ and $y$ are points
in $\overline{D}$, and the point $\theta y$ is in the ball $B(x,K_0)$, the
element $\theta$ belongs to the finite set $\Theta$.

Now, for $1 \leq i \leq n$, the points $x_{i-1}$ and $x_i$ are in
$\overline{D}$, and the point $r_i x_i$ is in the ball $B(x_{i-1},K_0)$. This
means the element $r_i$ belongs to the set $\Theta$. Moreover, since $r_1 r_2
\cdots r_n$ is a word for $\gamma$, and $\Theta$ is finite and independent of
the element $\gamma$, the set $\Theta$ is a finite set of generators for the
group $\Gamma$. Let $d_\Theta$ be the word distance function induced by the set
$\Theta$.  Then, $d_\Theta(1,\gamma) \leq n$. Thus \[ d_R(1,\gamma) \geq  \frac{K_0}{2} n
\geq \frac{K_0}{2} d_\Theta(1,\gamma). \] Therefore, there is a constant $C_2 > 0$
such that $d_W(1,\gamma) \leq C_2 d_R(1,\gamma)$. \end{proof}

Note that we did not need the fact that $\overline{D}$ is compact to
prove the inequality $d_R(1,\gamma) \leq C_1 d_W(1,\gamma)$.

\section{A group action of $P\slnr$}\label{pnaction}

As described in Subsection~\ref{linfrac}, the group $P\sltwor$ acts on the
upper half-plane by isometries. We now describe how the group $P\slnr$, where
$n \geq 2$, acts by isometries on a symmetric space $\Pn$ of $n \times n$
matrices.  First, we define $\Pn$, and show that it is a metric space.  Then,
we define an action of the group $\slnr$ on $\Pn$, and show that this action is
by isometries and is transitive.  Following this, we conclude that $\Pn$ is a
symmetric space.  Next, we show that the upper half-plane is isometric to
$P^{(2)}$, once the distance function on $U^2$ is multiplied by a constant.
This shows that  the action of $\slnr$ on $\Pn$ is a logical generalisation of
the action of $\sltwor$ on $U^2$. Finally, we define the action of the quotient
group $P\slnr$ on $\Pn$.  The proofs of many statements in this section depend
on elementary results of matrix theory.  See, for example,~\cite{lar1:pla}.

A linear map $T:\R^n \rightarrow \R^n$ is said to be positive definite if
$T\vecx \cdot \vecx \geq 0$ for all $\vecx$ in $\R^n$, and $T\vecx \cdot
\vecx = 0$ if and only if $\vecx = \veczero$.  If $T$ is positive definite and
$T\vecx = \veczero$, then $T\vecx \cdot \vecx = 0$, so $\vecx = \veczero$.
Thus positive definite matrices are invertible.

Let $\Pn$ be the set of symmetric, positive definite $n \times n$ real matrices
of determinant 1.  In other words, $\Pn$ is the subset of symmetric positive
definite matrices in \slnr. Note that the identity matrix $I$ is in $\Pn$.  The
space $\Pn$ is a differentiable manifold of dimension $\frac{n}{2}(n + 1)$.

We now show that $\Pn$ is a metric space. From now on, we denote the Euclidean
norm of a vector $\vecx \in \R^n$ by $\| \vecx \|$.  The operator norm of a
matrix $A$ is then, by definition, \[ \| A \| = \sup_{\| \vecx \| = 1} \| A\vecx \|. \] Note that
this matrix norm is different from the norm used to equip the group $\slnr$
with a topology.  We define the distance function  $d_P$ on $\Pn$ by  \[ d_P(S_1,S_2) =
\log\| S_1^{-1}S_2 \| + \log\|S_2^{-1}S_1\|. \] Clearly, $d_P(S_1,S_2) =
d_P(S_2,S_1)$.  For any two $n \times n$ matrices $A$ and $B$, we have $\| AB
\| \leq \| A \| \| B \|$ (this is a standard result for bounded operators on a
Hilbert space; see~\cite{con1:cfa}). Since $I = S_1^{-1}S_2S_2^{-1}S_1$, it
follows that \[ 1 = \| I \| \leq \| S_1^{-1}S_2\|\| S_2^{-1}S_1 \|. \] Thus \[
d_P(S_1,S_2) = \log \| S_1^{-1}S_2\|\| S_2^{-1}S_1 \| \geq 0. \] If $S_1=S_2$
then $d_P(S_1,S_2) = \log\|I\|^2 = \log 1 = 0$.  If $d_P(S_1,S_2) = 0$, then
\[\|S_1^{-1}S_2\| \| S_2^{-1}S_1 \| = 1.\] Write $M = S_1^{-1}S_2$, so that $\|
M \| \| M^{-1} \| = 1$. Since $M$ is in \slnr, there are matrices $K_1$ and
$K_2$ in $\sonr$, and a diagonal matrix $A \in \slnr$ with positive diagonal
entries, such that $M = K_1 A K_2$.  (This decomposition is established for the
case $n = 2$ in Lemma~\ref{sl2decomp}.  The general case is proved below in
Lemma~\ref{slndecomp}, so as not to  interrupt the proof that $\Pn$ is a metric
space.)  Then, since $\| K \vecx \| = \| \vecx \|$ for all $K \in \sonr$ and
all $\vecx \in \R^n$, \[ \| M \| = \| K_1 A K_2 \| = \|A\|,
\hspace{5mm}\mbox{and}\hspace{5mm}  \| M^{-1}\| = \| K_2^{-1} A^{-1} K_1^{-1}
\| = \| A^{-1} \|. \] Also, $\det(A) = 1$.  So we have a diagonal matrix $A$
with determinant 1 and $\| A \| \| A^{-1} \| = 1$.  We now show that $A$ must
be the identity.

\begin{lemma}\label{diagnorm} Let $D$ be a diagonal matrix.  Suppose $D$ has
positive diagonal entries $\{
\lambda_1,\lambda_2,\ldots,\lambda_n\}$.  Then the operator norm of $D$ is
\[\| D \| = \max_{1\leq i\leq n}\{ \lambda_i \}.
\]\end{lemma}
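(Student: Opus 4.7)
The plan is to prove the two inequalities $\|D\| \leq \max_i \lambda_i$ and $\|D\| \geq \max_i \lambda_i$ separately, using the explicit action of the diagonal matrix $D$ on vectors of $\R^n$. Let $\lambda_{\max} = \max_{1 \leq i \leq n} \lambda_i$, and fix an index $k$ such that $\lambda_k = \lambda_{\max}$.

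For the upper bound, I would take an arbitrary unit vector $\vecx = (x_1, x_2, \ldots, x_n) \in \R^n$ with $\sum_i x_i^2 = 1$. Since $D$ is diagonal, $D\vecx = (\lambda_1 x_1, \lambda_2 x_2, \ldots, \lambda_n x_n)$, and so
\[
\| D\vecx \|^2 = \sum_{i=1}^n \lambda_i^2 x_i^2 \leq \lambda_{\max}^2 \sum_{i=1}^n x_i^2 = \lambda_{\max}^2.
\]
Taking square roots and then the supremum over all unit vectors $\vecx$ yields $\| D \| \leq \lambda_{\max}$.

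For the lower bound, I would evaluate $D$ on the standard basis vector $\vece_k$, which is a unit vector. A direct calculation gives $D\vece_k = \lambda_k \vece_k$, so $\| D \vece_k \| = \lambda_k = \lambda_{\max}$. Thus $\| D \| \geq \lambda_{\max}$. Combining the two inequalities completes the proof.

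There is no real obstacle here; the argument is a routine computation using the fact that a diagonal matrix acts coordinatewise and that the standard basis vectors are unit vectors. The key observation is simply that the supremum defining the operator norm is achieved at the basis vector corresponding to the largest diagonal entry.
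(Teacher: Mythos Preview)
Your proof is correct and follows essentially the same approach as the paper: both bound $\|D\vecx\|^2 = \sum_i \lambda_i^2 x_i^2$ above by $\lambda_{\max}^2$ for unit $\vecx$, and then observe that this bound is attained at the standard basis vector corresponding to the largest diagonal entry.
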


\begin{proof} We have \[ \| D \|^2  =  \sup_{\|\vecx\| = 1}\| D \vecx \|^2  =
\sup\{ \lambda_1^2 x_1^2 + \lambda_2^2 x_2^2 +\cdots + \lambda_n^2 x_n^2 :
x_1^2 +  x_2^2 + \cdots + x_n^2=1 \}.\] If $\lambda_I$ is the maximum of the
set $\{ \lambda_1,\lambda_2,\ldots,\lambda_n\}$, then for $\vecx$ such that
$\|\vecx\| = 1$,\[ \lambda_1^2 x_1^2 + \lambda_2^2x_2^2 + \cdots + \lambda_n^2
x_n^2  \leq  \lambda_I^2 (x_1^2 +x_2^2+ \cdots + x_n^2 )  =  \lambda_I^2. \] So
$\| D \|^2 \leq \lambda_I^2$.  If $\vecx = \vece_I$, then \[ \lambda_1^2 x_1^2
+ \lambda_2^2x_2^2 + \cdots + \lambda_n^2 x_n^2 = \lambda_I^2, \] so equality is
attained.  \end{proof}

Let the diagonal entries of $A$ be $\{
\lambda_1,\lambda_2,\ldots,\lambda_n\}$.  We have $\det(A) = 1$, so the product
$\prod_i \lambda_i = 1$.  By Lemma~\ref{diagnorm}, the equation $\| A \| \|
A^{-1} \| = 1$ means that $(\max \lambda_i)(\max \lambda_i^{-1}) = 1$. Hence,
\begin{equation}\label{diag} (\max_i \lambda_i)(\min_i \lambda_i)^{-1} = 1.
\end{equation} Suppose first that $\max \lambda_i > 1$.  Then $\min \lambda_i < 1$, as
otherwise $\det(A) = \prod_i \lambda_i > 1$.  It follows that \[ (\max_i
\lambda_i)(\min_i \lambda_i)^{-1} > 1, \] which contradicts~(\ref{diag}).  Therefore $\max
\lambda_i \leq 1$.  But if $\max \lambda_i < 1$, $\det(A) < 1$.  Hence, $\max
\lambda_i = 1$.  For the minimum diagonal entry, we now have $\min \lambda_i
\leq 1$. If $\min \lambda_i < 1$ then $\det(A) < 1$, so $\min \lambda_i = 1$.
Therefore, $\lambda_i = 1$ for all $i$, and $A$ is the identity matrix.

Since $A$ is the identity, $M = K_1 K_2 \in \sonr$. So $S_1^{-1}S_2$ is in
$\sonr$. Hence, $S_2 = S_1 U$ for some $U$ in \sonr.  As the matrices $S_1$ and
$S_2$ are symmetric, we have $S_i = (S_i S_i^T)^{1/2}$ for $i = 1,2$.  Then, \[
S_2  =  \left(S_2 S_2^T\right)^{1/2}  =  \left(S_1 U U^T S_1^T\right)^{1/2} =
\left(S_1 S_1^T\right)^{1/2}  =  S_1. \] This proves that $d_P(S_1,S_2) = 0$ if and only
if $S_1 = S_2$.

For the triangle inequality, \begin{align*} d_P(S_1,S_3) & =
\log\|S_1^{-1}S_3 \| \| S_3^{-1}S_1\| \\ & =  \log\|S_1^{-1}S_2S_2^{-1}S_3 \|
\| S_3^{-1}S_2S_2^{-1}S_1\| \\ & \leq  \log \|S_1^{-1}S_2\|\|S_2^{-1}S_3 \| \|
S_3^{-1}S_2\|\|S_2^{-1}S_1\| \\ & =  \log \|S_1^{-1}S_2\|\|S_2^{-1}S_1 \| +
\log \| S_2^{-1}S_3\|\|S_3^{-1}S_2\| \\& =  d_P(S_1,S_2) + d_P(S_2,S_3).
\end{align*} We conclude that $\Pn$ with the distance function $d_P$ is a
metric space.

We now define an action of the group $\slnr$ on \Pn. For $M \in \slnr$ and $S \in \Pn$, the
action of $M$ on $S$ is \[ M \circ S = MSM^T. \] Since $S$ is symmetric, $MSM^T
= (MSM^T)^T$. Since $S$ is positive definite,  \[MSM^T\vecx\cdot\vecx =
S(M^T\vecx)\cdot(M^T\vecx) \geq 0,\] with equality if and only if $M^T\vecx =
\veczero$. But $M^T \in \slnr$, so $M^T\vecx = \veczero$ if and only if $\vecx
= \veczero$.  Hence $M \circ S \in \Pn$. Since $ISI^T = S$, and
\[(M_1M_2)S(M_1M_2)^T = M_1(M_2 S M_2^T)M_1^T,\] this is indeed a group
action.

The stabiliser subgroup of $I$ in $\slnr$ is \[  \{ M \in \slnr : M\circ I = I\}
= \{ M \in \slnr : MM^T = I \}, \] which is the special orthogonal group
$SO(n,\R)$.  This fact is used to prove the following decomposition of \slnr.

\begin{lemma} \label{slndecomp}  Let $M \in \slnr$.  Then there exist matrices
$K_1$ and $K_2$ in $\sonr$, and a diagonal matrix $A$ in $\slnr$ with positive
diagonal entries, such that $M = K_1 A K_2$.
\end{lemma}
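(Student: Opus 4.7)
The plan is to build the decomposition from the spectral theorem applied to the symmetric positive definite matrix $MM^T$, and then to repair the signs of the determinants of the orthogonal factors at the end.

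First I would observe that $MM^T$ is symmetric and positive definite: it is symmetric because $(MM^T)^T = MM^T$, and it is positive definite because $MM^T \vecx \cdot \vecx = \|M^T\vecx\|^2 > 0$ for $\vecx \neq \veczero$, since $M^T$ is invertible (as $M \in \slnr$). By the spectral theorem for real symmetric matrices, there exists an orthogonal matrix $K_1$ and a diagonal matrix $D$ such that $MM^T = K_1 D K_1^T$, and the diagonal entries of $D$ are the eigenvalues of $MM^T$, which are strictly positive. Let $A$ be the diagonal matrix whose diagonal entries are the positive square roots of those of $D$, so that $A^2 = D$ and $A$ has positive diagonal entries.

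Next I would set $K_2 = A^{-1} K_1^T M$ and verify directly that $K_2$ is orthogonal:
\[
K_2 K_2^T = A^{-1}K_1^T M M^T K_1 A^{-1} = A^{-1} K_1^T (K_1 A^2 K_1^T) K_1 A^{-1} = A^{-1} A^2 A^{-1} = I.
\]
Rearranging the definition of $K_2$ gives $M = K_1 A K_2$, which is the desired factorisation apart from the condition that $K_1, K_2$ lie in $\sonr$ and that $A$ has determinant $1$.

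Taking determinants of $M = K_1 A K_2$ gives $1 = \det(K_1)\det(A)\det(K_2)$. Since $\det(A) > 0$ and each $\det(K_i) \in \{\pm 1\}$, the determinants $\det(K_1)$ and $\det(K_2)$ must share a sign. If both equal $+1$, then $K_1, K_2 \in \sonr$ and $\det(A) = 1$, so we are done. If both equal $-1$, let $E = \operatorname{diag}(-1,1,\ldots,1)$; then $E^2 = I$ and $EAE = A$ because $A$ is diagonal, so
\[
M = K_1 A K_2 = (K_1 E)(E A E)(E K_2) = (K_1 E) A (E K_2),
\]
and $\det(K_1 E) = \det(E K_2) = 1$, so after replacing $K_1$ by $K_1 E$ and $K_2$ by $E K_2$ we have $K_1, K_2 \in \sonr$, whence $\det(A) = 1$ as well. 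The only subtlety worth flagging is this sign repair at the end; the main content is really the spectral theorem, which produces the diagonal factor $A$ together with the left orthogonal factor $K_1$ in a single step.
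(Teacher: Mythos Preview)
Your proof is correct and follows essentially the same route as the paper: both apply the spectral theorem to the symmetric positive definite matrix $MM^T$ to extract the diagonal factor, then recover the second orthogonal factor as $A^{-1}K_1^T M$. The only cosmetic difference is that the paper arranges $K_1 \in \sonr$ at the outset (by flipping a column sign) and then invokes the fact that the stabiliser of $I$ in $\slnr$ is $\sonr$ to force $K_2 \in \sonr$ automatically, whereas you leave both orthogonal factors in $O(n)$ and repair their signs simultaneously at the end via conjugation by $E$; both fixes are equally valid.
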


\begin{proof} First, $M \circ I = MIM^T$ is a symmetric positive definite
matrix.  Since  it is symmetric, there exists an orthogonal basis of
eigenvectors of $MIM^T$.   If $K$ is the orthogonal matrix which has these
eigenvectors as columns, then the matrix $KMIM^TK^{-1} = KMIM^TK^T$ is diagonal, and the
diagonal entries of $KMIM^TK^T$ are the eigenvalues of $MIM^T$.  By multiplying
the columns of the matrix $K$ by a suitable constant, we may assume that $\det(K) = 1$, so
that $K \in \sonr$.  As the matrix $MIM^T$ is positive definite, its eigenvalues are
real and positive. So there exists a diagonal matrix $D$ such that
$KMIM^TK^T = D^2$.  Note that $\det(D) = \sqrt{\det(D^2)}
= 1$, so $D \in \slnr$.
Then \[   D^{-1} K MIM^T K^T D^{-1} =
(D^{-1}KM) I (D^{-1}KM)^T = (D^{-1}KM) \circ I = I. \] As the group $\sonr$
is the stabiliser subgroup of $I$ in $\Pn$, we have $D^{-1}KM = K'$ for some
$K' \in \sonr$.  Making $M$ the subject and then relabelling, we obtain $M =
K^{-1}DK' = K_1 A K_2$. \end{proof}

We now further investigate the action of $\slnr$ on $\Pn$. For $M \in \slnr$
and $S_1,S_2 \in \Pn$, \begin{align*} d_P(M \circ S_1,M \circ S_2) & =  \log\|
S_1^{-1}M^{-1}MS_2 \| \| S_2^{-1}M^{-1}MS_1 \| \\ & =  \log \| S_1^{-1}S_2 \|
\| S_2^{-1}S_1 \| \\ & =  d_P(S_1,S_2). \end{align*} Thus  $\slnr$ acts by
isometries on $\Pn$.

The action of $\slnr$ on $\Pn$ is also transitive.   Let $S$ be in $\Pn$. As in
the proof of Lemma~\ref{slndecomp}, since $S$ is symmetric and positive
definite, there exists $K \in \sonr$, and a diagonal matrix $D \in \slnr$ with
positive diagonal entries, such that $KSK^T = D^2$. Then \[ S  =  K^TDDK  =
(K^TD)I(K^TD)^T. \] Setting $M = K^TD$, we have $M \in \slnr$ and $M\circ I =
S$.

The only thing now required to conclude that $\Pn$ is a symmetric space is the
point $0$ in $\Pn$ and element $k \in \slnr$ such that $k$ fixes $0$ and $Dk(0)
= -I$.  For any $n$, we take the point $0$ to be the identity matrix.  Then,
for $n = 2$, take $k = \matrixv$.  For $n \geq 3$, we may take as $k$ a matrix which
has all zeros except for the diagonal running from bottom left to top right.
On this diagonal, each entry is either $1$ or $-1$, arranged so that $\det(k) =
1$.

The following lemma motivates our investigation of the space $\Pn$, as it shows
that the upper half-plane and the space $\Ptwo$ are isometric after adjusting
the distance function on $U^2$ (or $\Ptwo$) by a constant.  Lipschitz
equivalence is not affected by multiplying a distance function by a constant.

\begin{lemma} \label{U2isomP2} There is a bijection $\varphi: U^2 \rightarrow
\Ptwo$ so that for some constant $C> 0$, \[ d_U(z,w) = C
d_P(\varphi(z),\varphi(w)) \] for all $z, w \in U^2$. That is, $\varphi$ is a
similarity.   \end{lemma}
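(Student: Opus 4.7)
The plan is to exploit that both $U^2$ and $\Ptwo$ are $\sltwor$-homogeneous spaces with isotropy $\sotwor$, then transfer the distance comparison to a one-parameter family by $\sltwor$-invariance. The stabiliser of $i \in U^2$ in $\sltwor$ is $\sotwor$ (from the proof of Lemma~\ref{sl2decomp}), and the stabiliser of $I \in \Ptwo$ under the action $M \circ S = MSM^T$ is $\{M : MM^T = I\} = \sotwor$, as noted in this section. Define $\varphi: U^2 \to \Ptwo$ by $\varphi(g \cdot i) = g \circ I = gg^T$. Well-definedness and injectivity both follow from the isotropy identification: if $g_1 \cdot i = g_2 \cdot i$ then $g_1^{-1}g_2 \in \sotwor$, so $g_2 g_2^T = g_1(g_1^{-1}g_2)(g_1^{-1}g_2)^T g_1^T = g_1 g_1^T$. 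Surjectivity is the Cholesky-type fact that each $S \in \Ptwo$ equals $gg^T$ for some $g \in \sltwor$, already implicit in the proof, earlier in this section, that $\sltwor$ acts transitively on $\Ptwo$. Explicitly, with $g_z = \startm \sqrt{y} & x/\sqrt{y} \\ 0 & 1/\sqrt{y} \finishm$ sending $i$ to $z = x+iy$, one computes $\varphi(z) = \frac{1}{y}\startm x^2 + y^2 & x \\ x & 1 \finishm$.

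Next I would verify the $\sltwor$-equivariance $\varphi(hz) = h \circ \varphi(z)$: since $hz = (hg_z) \cdot i$, we have $\varphi(hz) = (hg_z)(hg_z)^T = h\,\varphi(z)\,h^T$. Because $\sltwor$ acts by isometries on $U^2$ (via Theorem~\ref{Mob_isom_thm} together with Lemma~\ref{formofphi}) and on $\Ptwo$ (shown earlier in this section), both sides of the sought equation $d_U(z,w) = C\, d_P(\varphi(z), \varphi(w))$ are $\sltwor$-invariant. From the argument in the proof of Lemma~\ref{sl2decomp}, which works verbatim over $\sltwor$ since it uses only the $\sotwor$-orbit structure on $U^2$, any pair $(z,w) \in U^2 \times U^2$ is $\sltwor$-equivalent to a pair $(i, e^t i)$ with $t \geq 0$. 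So it is enough to verify the proportionality on this one-parameter family.

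For the one-parameter computation, the corollary giving $d_U(ia, ib) = \log(b/a)$ yields $d_U(i, e^t i) = t$. On the other side, $\varphi(i) = I$ and $\varphi(e^t i) = \startm e^t & 0 \\ 0 & e^{-t} \finishm$, so Lemma~\ref{diagnorm} makes both $\|\varphi(e^t i)\|$ and $\|\varphi(e^t i)^{-1}\|$ equal $e^t$; hence $d_P(I, \varphi(e^t i)) = 2t$. Combining, $d_U(i, e^t i) = \tfrac{1}{2} d_P(\varphi(i), \varphi(e^t i))$, so $C = \tfrac{1}{2}$.

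The main obstacle is administrative rather than substantive: Lemma~\ref{sl2decomp} is stated only for $\sltwoz$, but the reduction step needs the $KAK$ decomposition over $\sltwor$. Rereading that proof shows it uses only the real-analytic facts that $\sotwor$ fixes $i$ and acts transitively on each Euclidean circle $C_r$ around $i$, plus the transitivity of $\sltwor$ on $U^2$, so the argument carries over unchanged. (Alternatively, the same decomposition is a special case of the polar decomposition argument underlying Lemma~\ref{slndecomp}.)
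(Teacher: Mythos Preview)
Your proof is correct and follows essentially the same route as the paper: define $\varphi(gi) = g\circ I$, check it is a well-defined bijection via the common stabiliser $\sotwor$, use $\sltwor$-invariance of both distances to reduce to a diagonal one-parameter family, and compute there. Your careful flag that Lemma~\ref{sl2decomp} is stated only for $\sltwoz$ but is needed over $\sltwor$ is well taken---the paper simply applies it to $g\in\sltwor$ without comment---and your resolution is right; incidentally, your value $C=\tfrac12$ agrees with the paper's own computation ($d_U=2\log s$, $d_P=4\log s$), though the paper records the constant as $C=2$.
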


\begin{proof} Let $G = \sltwor$ and $K = SO(2,\R)$.  Recall that $K$ is the
stabiliser subgroup of $i$ when $G$ is acting on $U^2$ (see the proof of
Lemma~\ref{sl2decomp}), and of $I$ when $G$ is acting on $\Ptwo$.

Define a map $\varphi: U^2 \rightarrow \Ptwo$ by $\varphi(gi) = g \circ I$.
The domain of $\varphi$ is $U^2$ because $U^2 = Gi$, and $\varphi$ is onto $\Ptwo$
since  $\Ptwo = G \circ I$.  To show $\varphi$ is injective, suppose $\varphi(g_1
i) = \varphi(g_2 i)$. Then $g_1 \circ I = g_2 \circ I$, and so $I = g_2^{-1}g_1
\circ I$, hence $g_2^{-1}g_1 \in K$.  Thus $g_2^{-1}g_1 i = i$, and so $g_1 i =
g_2 i$.  Therefore $\varphi$ is a bijection.

Now, for all $g_1, g_2 \in G$, \[ d_U(g_1 i , g_2 i)  =  d_U( i , g_1^{-1}g_2
i), \hspace{5mm}\mbox{and}\hspace{5mm} d_P(g_1 \circ I, g_2 \circ I)  =
d_P(I ,g_1^{-1}g_2 \circ I). \] So, to show that $d_U(g_1i,g_2i) = C
d_P(\varphi(g_1i),\varphi(g_2i))$, it suffices to prove that for all $g \in G$,
\[ d_U(i,gi) = C d_P(I,g \circ I). \] By Lemma~\ref{sl2decomp}, we may write
$g$ as $k_1 a k_2$, where $k_1, k_2 \in K$ and $a = \startm s & 0 \\ 0 & s^{-1}
\finishm$ for some $s \geq 1$.  Since $K$ stabilises $i$ in $U^2$ and $I$ in
$\Ptwo$, $d_U(i,gi) = d_U(i,ai)$ and $d_P(I,g\circ I) = d_P(I,a\circ I)$. In
$U^2$, $ai = s^2i$, and $d_U(s^2i,i) = 2\log s$.  In $\Ptwo$, \[ a \circ I =
\startm s & 0 \\ 0 & s^{-1} \finishm \circ I = \startm s & 0 \\ 0 & s^{-1}
\finishm^2 = \startm s^2 & 0 \\ 0 & s^{-2} \finishm. \] Then, by
Lemma~\ref{diagnorm}, $\| a \circ I \| = \| (a \circ I)^{-1} \| = s^2$, so
$d_P(a\circ I , I) = 2 \log s^2 = 4 \log s$. Thus, with $C = 2$, $d_U(g_1 i,
g_2 i) = C d_P(g_1 \circ I, g_2 \circ I)$. \end{proof}

We now, finally, define the action of the group $P\slnr$ on $\Pn$. Let $M$ be
in $\slnr$ and $S$ in \Pn.   Then $MSM^T = (-M)S(-M)^T$, so $M \circ S = (-M)
\circ S$. Write $[M]$ for the coset of $M$ in the group $P\slnr$. Then $[M]
\circ S = MSM^T$ is a well-defined group action of $P\slnr$ on the space
$\Pn$.  So, the group $\pslnr$ acts transitively and by isometries on the
metric space $\Pn$.


\chapter{Equivalence of Distance Functions on $PSL(n,\mathbb{Z})$}


\section{Introduction}

We have, in Chapter 3, described the action of the group $P\slnr$ on the
symmetric space $\Pn$, and shown that this action is a generalisation of the
action of the group $P\sltwor$ on the upper half-plane.  In Chapter 4, we
consider the Lipschitz equivalence of distance functions on the discrete
subgroup $P\slnz$ of  $P\slnr$.  The content of this chapter is an elaboration of the
paper~\cite{lmr1:cseg} by Lubotzky, Mozes and Raghunathan.

A fundamental domain for the action of $P\slnz$ on $\Pn$ does not have compact
closure~\cite{hum1:ag}.   So Theorem~\ref{compactfd} does not apply to
$P\slnz$.  Our aim in this chapter is to prove the following theorem.

\begin{theorem}\label{bigone} Let $d$ be an integer greater than or equal to
$2$.  Let $\Gamma$ be the group $P\sldz$ acting on the space $\Pd$.
When $d = 2$, the geometric distance function $d_R$ on $\Gamma$ is not
Lipschitz equivalent to the word distance function $d_W$ on $\Gamma$.  For all
$d \geq 3$, the distance function $d_R$ on $\Gamma$ is Lipschitz equivalent to the distance
function $d_W$ on $\Gamma$. \end{theorem}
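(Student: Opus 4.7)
The plan is to prove the two parts separately, as they require genuinely different methods.

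For the case $d=2$, the strategy is to exhibit a family of elements where the ratio $d_W/d_R$ is unbounded. I would take $u = \matrixu \in P\sltwoz$ and study its powers $u^n = \startm 1 & n \\ 0 & 1 \finishm$. On the geometric side, a direct computation gives $u^n \circ I = \startm 1+n^2 & n \\ n & 1 \finishm$; since this matrix is symmetric and positive definite with largest eigenvalue $\asymp n^2$, Lemma~\ref{diagnorm} (applied after diagonalisation) yields $\|u^n \circ I\| \asymp n^2$, so $d_R(1,u^n) = O(\log n)$. On the word-length side, I would prove $l_\Sigma(u^n) \geq c n$ for any fixed finite generating set $\Sigma$: this can be done using the classical decomposition $P\sltwoz \cong \Z/2 * \Z/3$, or more concretely by observing that acting on the tesselation from Subsection~\ref{tesselation}, the geodesic from $p_0$ to $u^n p_0$ crosses linearly many tiles (since $u$ is a parabolic translation along a horocycle). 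Comparing the two growth rates shows no pair of Lipschitz constants can exist.

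For $d \geq 3$, one inclusion is essentially free. The proof of Theorem~\ref{compactfd} establishes $d_R(1,\gamma) \leq C_1 d_W(1,\gamma)$ using only the triangle inequality and a finite generating set, and the remark at the end of that proof notes no compactness was needed for this direction. So the real task is the reverse inequality $d_W(1,\gamma) \leq C_2 d_R(1,\gamma)$.

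The reverse inequality should be proved in two steps. First, I would show that elementary matrices have logarithmic word length: for each off-diagonal pair $(i,j)$, pick a diagonal matrix $a \in \sldz$ whose $i$th and $j$th diagonal entries give an eigenvalue $\lambda >1$ for the conjugation action on $E_{ij}(1)$, so that $a^k E_{ij}(1) a^{-k} = E_{ij}(\lambda^k)$. This identity expresses $E_{ij}(\lambda^k)$ as a word of length $2k+1$ in the fixed generators $\{a, E_{ij}(1)\}$, hence for any $t \in \Z$ a base-$\lambda$ expansion gives $l_W(E_{ij}(t)) = O(\log |t|)$; this is the content of a result like Corollary~\ref{word_log}. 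Crucially, such a $\lambda$ exists in $\sldz$ only when $d \geq 3$, because one needs a third coordinate available to arrange the determinant to be one while keeping the $(i,i)/(j,j)$ ratio an integer~$>1$.

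Second, I would establish bounded generation with controlled entries: every $\gamma \in \sldz$ can be written as a product $\gamma = E_{i_1 j_1}(t_1) \cdots E_{i_N j_N}(t_N)$ with $N$ depending only on $d$ and with $|t_l| \leq C \|\gamma\|^{\alpha}$ for absolute constants $C, \alpha$ — this is the role of Theorem~\ref{normthm}, built from results like Proposition~\ref{Eij_is_U1} and Corollary~\ref{product2}. Combining the two steps gives
\[
l_W(\gamma) \leq \sum_{l=1}^N l_W(E_{i_l j_l}(t_l)) \leq N \cdot O(\log \|\gamma\|) = O(\log \|\gamma\|),
\]
and since $d_R(1,\gamma) \asymp \log \|\gamma\|$ by the definition of $d_P$, the desired bound $d_W(1,\gamma) \leq C_2 d_R(1,\gamma)$ follows. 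The main obstacle is the bounded-generation step: producing a universal $N$ and giving effective control on the sizes of the $t_l$ is delicate, since the naive row-reduction algorithm does not obviously keep entries polynomially bounded in $\|\gamma\|$. This is precisely where the higher-rank hypothesis $d \geq 3$ enters in an essential way, as the $d=2$ counterexample above confirms.
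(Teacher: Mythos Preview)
Your overall architecture matches the paper's: the $d=2$ counterexample via $u^n$ is essentially identical to Section~\ref{proofd2}, and for $d\geq 3$ the paper also reduces to proving $d_W(1,\gamma)\leq K\log\|\gamma\|$ (Theorem~\ref{normthm}) via a two-step factorisation.

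However, your Step~1 contains a genuine error. You propose a \emph{diagonal} matrix $a\in\sldz$ with $a^k E_{ij}(1)a^{-k}=E_{ij}(\lambda^k)$ for some integer $\lambda>1$. But an invertible integer matrix with integer inverse has diagonal entries in $\{\pm 1\}$, so every diagonal element of $\sldz$ has $a_i/a_j\in\{\pm 1\}$ and your eigenvalue $\lambda$ is never $>1$. The matrix $\mathrm{diag}(2,1,1/2)$ you seem to have in mind lies in $\slthreer$, not $\slthreez$. This is exactly why Proposition~\ref{Eij_is_U1} is the hardest part of the paper: there is no integer diagonal element that exponentially contracts a single root group.

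The paper's fix is to use a \emph{non-diagonal} hyperbolic element
\[
A=\startm 2 & 1 & 0 \\ 1 & 1 & 0 \\ 0 & 0 & 1\finishm\in\slthreez,
\]
which acts by conjugation on the two-dimensional abelian subgroup $V=\{E_{13}(x)E_{23}(y)\}\cong\R^2$ with \emph{irrational} eigenvalues $\lambda^{\pm 1}=(3\pm\sqrt 5)/2$. Because $\lambda$ is irrational, $A^k$ does not send $E_{13}(1)$ to $E_{13}(\lambda^k)$; instead one must work in the full lattice $L=V\cap\slthreez\cong\Z^2$. The paper shows (Lemmas~\ref{syndetic}--\ref{lengthbounds}) that base-$\lambda$ expansions along the two eigenspaces produce a set $S\subset L$ that is syndetic in $L$ and whose elements have word length $O(\log\|\vecv\|)$ in $\{A,Y_0\}$; a bounded correction then handles every lattice point. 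This is substantially more delicate than the one-line identity you wrote.

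A smaller discrepancy in Step~2: the paper does not use bounded generation by elementaries. Corollary~\ref{product1} factors $\gamma$ into $d^2$ elements of the various $SL^{s,t}(2,\Z)$ with polynomially bounded entries, and then Proposition~\ref{product2} together with Lemma~\ref{dU_lemma} and the U1 property give Corollary~\ref{word_log}, bounding $d_W(1,\delta)$ for each $SL(2,\Z)$-factor by $K_6\log\|\delta\|$. Your version would need Carter--Keller with effective entry bounds, which is true but not what the paper does.
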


\noindent (The switch to the letter $d$ is so that the letter $n$ is available
to be used in proofs.)

Before proving Theorem~\ref{bigone}, we will discuss briefly
how the distance functions  $d_R$ and $d_W$ are actually defined for the group
$P\sldz$.  Then, we prove Theorem~\ref{bigone} for the case where $\Gamma =
P\sltwoz$.  To prove Theorem~\ref{bigone} for the case $\Gamma = P\sldz$, where
$d \geq 3$, it suffices to prove that there exist constants $C_1, C_2 > 0$ such
that, for all $\gamma \in \Gamma$,  \[ d_R(1,\gamma) \leq C_1 d_W(1,\gamma),
\mand d_W(1,\gamma) \leq C_2 d_R(1,\gamma). \] The proof is then broken into
three main steps. \begin{enumerate} \item The inequality $d_R(1,\gamma) \leq C_1
d_W(1,\gamma)$ can be established in exactly the same way as in
Theorem~\ref{compactfd}. \item The element $\gamma \in P\sldz$ is the coset of
some matrix $g \in \sldz$. There exists a constant
$K > 0$ such that $d_W(1,\gamma) \leq K \log \| g\|$, where $\| g\|$ is the
operator norm of the matrix $g$. This step is Theorem~\ref{normthm}. \item We conclude, using the definition of the
distance function $d_P$ on $\Pn$, that $d_W(1,\gamma) \leq
C_2d_R(1,\gamma)$. \end{enumerate}  The proof of Theorem~\ref{normthm} forms the bulk
of this chapter.

We will often need to compare the growth of functions, and so state here a
useful definition.  Let $f$ and $g$ be functions on a discrete set $X$. We say
that the function $f$ is $O(g)$, or $f= O(g)$, if there exists a  constant $C >
0$, and a finite subset $X_0 \subseteq X$,  such that $f(x) \leq C g(x)$ for
all $x \in X \setminus X_0$.  The following lemma is also useful.

\begin{lemma} \label{basicineq} Let $f$ and $g$ be functions $\R \rightarrow
\R$.  Then $f(x) \leq g(x)$ for all $x \geq x_0$ if $f(x_0) \leq
g(x_0)$, and  $f'(x) \leq g'(x)$ for all $x \geq x_0$. \end{lemma}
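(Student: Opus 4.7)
The plan is to reduce the inequality to a statement about a single auxiliary function and then apply the Mean Value Theorem. Set $h(x) = g(x) - f(x)$. The hypotheses become $h(x_0) \geq 0$ and $h'(x) \geq 0$ for all $x \geq x_0$, and the desired conclusion becomes $h(x) \geq 0$ for all $x \geq x_0$.

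Next I would fix an arbitrary $x \geq x_0$ and apply the Mean Value Theorem to $h$ on the interval $[x_0, x]$ (the existence of $h'$ on $[x_0, x]$ is built into the hypothesis that $f'$ and $g'$ exist there, which in turn implies $h$ is continuous on $[x_0, x]$ and differentiable on $(x_0, x)$). This yields some $c \in (x_0, x)$ with
\[
h(x) - h(x_0) = h'(c)(x - x_0).
\]
Since $h'(c) = g'(c) - f'(c) \geq 0$ and $x - x_0 \geq 0$, the right-hand side is non-negative, so $h(x) \geq h(x_0) \geq 0$, giving $f(x) \leq g(x)$. The case $x = x_0$ is immediate from the first hypothesis.

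This is a completely standard calculus argument, so there is no real obstacle. The only subtlety worth noting is that the statement implicitly assumes $f$ and $g$ are differentiable on $[x_0, \infty)$ (so that $f'$ and $g'$ make sense there); under that standing assumption the Mean Value Theorem applies verbatim. No additional machinery beyond first-year calculus is needed.
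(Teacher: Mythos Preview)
Your proof is correct; this is the standard Mean Value Theorem argument for such comparison inequalities. The paper itself states this lemma without proof, treating it as an elementary calculus fact, so there is no approach to compare against.
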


\section{Distance functions on $P\sldz$}

Here we discuss briefly how the geometric and word distance functions on the
discrete group $P\sldz$ are defined.

The definition of the geometric distance function is with respect to some
matrix $S_0 \in \Pd$ which has a trivial stabiliser subgroup in $P\sldz$.
Setting $S_0$ to be the identity $I$ does not suffice, because if $K$ is any
orthogonal matrix then $K \circ I = I$. However, there
does exist a suitable $S_0$ arbitrarily close to $I$ in $\Pd$.  For example,
when $d = 2$, for small $\varepsilon$ the matrix \[ \startm  1+ \varepsilon^2 &
\varepsilon \\ \varepsilon & 1 \finishm \in \Ptwo \] has a trivial stabiliser
subgroup in $P\sltwoz$.

For the word distance function, it will often be convenient to consider words
in the group \sldz\ rather than in $P\sldz$. For $g \in \sldz$, write $[g]$ for
the coset of $g$ in $P\sldz$.  Then if $g_1 g_2 \cdots g_n$ is a word for $g$
in terms of elements of $\sldz$, a word for the coset $[g]$ is $
[g_1][g_2]\cdots[g_n]$.  Going in the other direction, if $[g] =
[g_1][g_2]\cdots[g_n]$ is a word in $P\sldz$ then, in \sldz, either $g =
g_1g_2\cdots g_n$, or $g = (-I)g_1g_2\cdots g_n$.  Multiplying by the matrix
$-I$ adds 1 to the word length, which will not affect Lipschitz equivalence.
Thus, it is reasonable to move back and forth between words in $\sldz$ and
words in $P\sldz$ when constructing words and proving results about the word
distance function $d_W$.  We will usually write $\gamma$ both for the coset
$\gamma \in P\sldz$, and for an element $g \in \sldz$ such that $[g] = \gamma$;
we may describe $\gamma$ as a matrix rather than  a coset.

We now describe a particular finite set $\Sigma_d$ of generators of $P\sldz$.
This set contains the (cosets of the) following matrices.  First, it contains
the upper-triangular matrices which have all diagonal entries equal to 1, and
one 1 above the diagonal.  The set $\Sigma_d$ also contains all permutation
matrices which have determinant 1. Finally, $\Sigma_d$ contains all other
permutation matrices, but with $1$ changed to $-1$ wherever this will ensure a
determinant of $+1$. So, generators for $P\sltwoz$ are \[ \Sigma_2 =
\left\{\startm 1 & 1 \\ 0 & 1 \finishm, \matrixv \right\}, \] and generators
for $PSL(3,\mathbb{Z})$ are \begin{align*} \Sigma_3 &= \left\{ \startm 1 & 1 &
0 \\ 0 & 1 & 0 \\ 0 & 0 & 1 \finishm, \startm 1 & 0 & 1 \\ 0 & 1 & 0 \\ 0 & 0 &
1 \finishm, \startm 1 & 0 & 0 \\ 0 & 1 & 1 \\ 0 & 0 & 1 \finishm \right\} \\ &
\mbox{} \bigcup \left\{ \startm 0 & 1 & 0 \\ 0 & 0 & 1 \\ 1 & 0 & 0 \finishm,
\startm 0 & 0 & 1 \\ 1 & 0 & 0 \\ 0 & 1 & 0 \finishm \right\}  \\ &  \mbox{}
\bigcup \left\{ \startm \pm 1 & 0 & 0 \\ 0 & 0 & \pm 1 \\ 0 & \pm 1 & 0
\finishm, \startm 0 & \pm 1 & 0 \\ \pm 1 & 0 & 0 \\ 0 & 0 & \pm 1 \finishm,
\startm 0 & 0 &\pm 1  \\ 0 & \pm 1 & 0 \\ \pm 1 & 0 & 0 \finishm \right\},
\end{align*} where the signs in the last set of matrices are arranged so that
the determinant in each case is 1.  The proof that the sets $\Sigma_d$ do
generate the groups $P\sldz$ is by induction on $d$. For the inductive step, it
is possible by multiplying by elements of $\Sigma_d \cup \Sigma_d^{-1}$ to
transform a matrix $\gamma$ in $P\sldz$ into a matrix $\theta$ which has first
column $(1,0,\ldots,0)^T$ and first row $(1,0,\ldots,0)$. Then, by the
inductive assumption, the $(d-1)\times(d-1)$ matrix in the lower right-hand
corner of $\theta$ may be written in terms of elements of $\Sigma_{d-1} \cup
\Sigma_{d-1}^{-1}$.  The set of generators $\Sigma_{d-1}$ may then be
considered a subset of $\Sigma_d$, by embedding $PSL(d-1,\mathbb{Z})$ in the
lower right-hand corner of $P\sldz$.

\section{Proof of Theorem~\ref{bigone} where $\Gamma = P\sltwoz$}\label{proofd2}

The group $P\sltwoz$ may be generated by  $u =\matrixu$ and $v~=~\matrixv$.
Let $\Sigma = \{ u , v \}$ and let $d_\Sigma$ be the word distance function
induced by the set of generators $\Sigma$.   We show that there is no $C_1 > 0$
such that $d_W(1,u^n) \leq C_1 d_R(1,u^n)$ for all $n \geq 2$.

Let $n \geq 2$.  Then $d_\Sigma(1,u^n) \leq n$.  To show that
$d_\Sigma(1,u^n) \geq n$, we consider the tesselation of the upper half-plane
induced by the action of  $\Gamma = P\sltwoz$.  Recall that the point
$p_0 = 2i$ lies in the tile $T$, where $T$ is the hyperbolic triangle with
vertices at $\pm \frac{1}{2} + \frac{\sqrt{3}}{2}i$ and $\infty$.  Let $w_1w_2\cdots w_l$ be any
word for $u^n$ in terms of the elements of $\Sigma \cup \Sigma^{-1}$.  Let
$W_0$ be the identity in $\Gamma$.  For $1 \leq j \leq l$, let $W_j$ be the
partial word $w_1w_2\cdots w_j$.  Then the sequence of partial words, $\{ W_j
\}_{j=0}^l$, corresponds to a sequence of tiles, $\{W_jT\}_{j=0}^l$, such that
$W_0T = T$ and $W_lT = u^nT$.  In this sequence of tiles, for each $j$ such
that  $1 \leq j \leq l$, the tiles $W_{j-1}T$ and $W_jT$ share a boundary.  For
$k \in \Z$, let $L_k$ be the geodesic of $U^2$ which is given by $\re(z)=k+1/2$.  Then
$L_k$ is a boundary between pairs of tiles.  See the picture of the tesselation
on page~\pageref{tess}.  Since $u^np_0 = n + 2i$, the $n$
boundaries $L_0,L_1,\ldots,L_{n-1}$ all lie between the points $p_0$ and
$u^np_0$.  Thus, the sequence $\{W_jT\}_{j=0}^l$ must contain at
least $n$ successive pairs of tiles which have a geodesic $L_k$ as their common
boundary.  Therefore, $l \geq n$.  So $d_\Sigma(1,u^n) \geq n$;
hence $d_\Sigma(1,u^n) = n$.  This implies that the word distance $d_W(1,u^n)$
grows linearly in $n$.

For the geometric distance function, we use Lemma~\ref{U2isomP2}, which states
that the upper half-plane is isometric to $\Ptwo$ once the distance function on
one of these spaces is multiplied by a scalar.  Multiplying a distance function
by a scalar does not affect Lipschitz equivalence, so there is some constant $C
> 0$ such that  \[ d_R(1,u^n) = d_P(S_0, u^n \circ S_0) =  C d_U (p_0, u^n p_0) = C
\cosh^{-1}\left( 1 + \frac{n^2}{8}\right). \] The function $\cosh^{-1}( 1 +
n^2/8)$ is $O(\log n)$, so $d_R(1,u^n)$ is $O(\log n)$.  But there is no
constant $C_1>0$ such that $ n \leq C_1 \log n $ for all $n \geq 2$.  Hence,
there is no  $C_1 > 0$ such that $d_W(1,u^n) \leq C_1 d_R(1,u^n)$ for all $n
\geq 2$.  So, in this case, the geometric distance function $d_R$ is not
Lipschitz equivalent to the word distance function $d_W$.

\section{Proof of Theorem~\ref{bigone} where $\Gamma = P\sldz$ and $d \geq 3$}

The first step in this proof is to show that there exists a constant $C_1 > 0$
such that, for all $\gamma \in \Gamma$,  \begin{equation}\label{stepone}
d_R(1,\gamma) \leq C_1 d_W(1,\gamma). \end{equation} As claimed in the
introduction to this chapter, the proof  in Theorem~\ref{compactfd} suffices.
This is because the proof of inequality~(\ref{stepone}) in
Theorem~\ref{compactfd} did not depend on the compactness of tiles in the
tesselation induced by $\Gamma$.  Indeed,
inequality~(\ref{stepone}) holds in the case $\Gamma = P\sltwoz$.

The second step in the proof of Theorem~\ref{bigone} for the case $d \geq 3$ is
the following theorem.  We denote by $\| g \|$ the operator norm of a matrix $g
\in \sldz$, that is, $\| g \| = \sup_{\| \vecx \| = 1}\| g\vecx \|$.  Now, if
$g \in \sldz$, then $\| g \| = \| {-g} \|$.  So, if
$\gamma \in P\sldz$ is the coset $\gamma = [g]$, then it is well-defined to
declare the norm of $\gamma$ to be $\| \gamma \| = \| g \|$.

\begin{theorem} \label{normthm} Let $d$ be an integer greater than or equal to
3.  Then there exists a constant $K > 0$ such that, for all $\gamma  \in P\sldz$,
$d_W(1,\gamma) \leq K \log \| \gamma \|$.  \end{theorem}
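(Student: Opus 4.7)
My approach is to combine two ingredients already available in this chapter: a bounded-length product decomposition of $\gamma \in \sldz$ into elementary matrices with polynomially-controlled entries, and a logarithmic word-length bound for each such elementary matrix. The triangle inequality for $d_W$ then yields the claim, since only boundedly many factors arise and each contributes $O(\log \|\gamma\|)$ to the word length.

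More precisely, lift $\gamma \in P\sldz$ to some $g \in \sldz$ with $\|g\| = \|\gamma\|$. Corollary~\ref{product2} provides constants $N = N(d)$, $M = M(d)$ and $C = C(d)$ together with a factorisation
\[
g = E_{i_1 j_1}(t_1) \cdots E_{i_N j_N}(t_N), \qquad E_{ij}(t) := I + t\,e_{ij},
\]
in which $|t_k| \leq C \|g\|^M$ for each $k$, where $e_{ij}$ denotes the matrix unit with a single $1$ in position $(i,j)$. Corollary~\ref{word_log} supplies $K_0 > 0$ with $d_W\bigl(1, [E_{ij}(t)]\bigr) \leq K_0 \log(|t| + 2)$ for all $i \neq j$ and $t \in \Z$. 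Both inputs genuinely require $d \geq 3$: the product decomposition is a Carter--Keller-style bounded-generation statement which, as Section~\ref{proofd2} already shows, fails in $d = 2$; and Corollary~\ref{word_log} is built around the commutator identity
\[
[E_{ik}(a),\, E_{kj}(b)] = E_{ij}(ab), \qquad i, j, k \text{ pairwise distinct,}
\]
whose application requires a third index $k \notin \{i, j\}$.

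Given these inputs, the proof is a short computation. By the triangle inequality for $d_W$ on $P\sldz$,
\[
d_W(1, \gamma) \leq \sum_{k=1}^{N} d_W\bigl(1, [E_{i_k j_k}(t_k)]\bigr) \leq K_0 \sum_{k=1}^{N} \log(|t_k| + 2) \leq N K_0 \log\bigl(C \|\gamma\|^M + 2\bigr).
\]
Since $\|\gamma\| \geq 1$ for every $\gamma \in \sldz$ (as $\det\gamma = 1$ forces the largest singular value to be at least $1$), the right-hand side is bounded by $N K_0\bigl(\log(2C) + M \log \|\gamma\|\bigr)$. The finitely many $\gamma$ with $\|\gamma\|$ close to $1$ can be absorbed by enlarging the constant, yielding $d_W(1, \gamma) \leq K \log \|\gamma\|$ for a suitable $K = K(d) > 0$. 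The passage from $\sldz$ to the quotient $P\sldz$ costs at most an additive constant, as discussed earlier in the chapter.

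The principal obstacle is the input Corollary~\ref{product2}: quantitative bounded generation of $\sldz$ by elementary matrices with polynomial control on the entries $t_k$ in terms of $\|g\|$. The qualitative bounded-generation (without entry control) is the Carter--Keller theorem, but tracking the size of intermediate entries through the reduction algorithm requires care; the key point is that for $d \geq 3$ a first column can be reduced to $\mathbf{e}_1$ in a bounded number of elementary operations with polynomial entry growth, rather than via a Euclidean algorithm forced on $d = 2$. By comparison, Corollary~\ref{word_log} is a much shorter commutator argument, and the combining step above is routine.
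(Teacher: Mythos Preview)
Your overall architecture is correct and matches the paper: factor $\gamma$ into a bounded number of pieces whose entries are polynomially controlled by those of $\gamma$, bound the word length of each piece by a logarithm of its norm, and sum. The concluding calculation you wrote is essentially the paper's Subsection~\ref{finishnormproof}.

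However, you have mis-described both inputs. The paper's bounded factorisation is Corollary~\ref{product1}, not \ref{product2}, and it does \emph{not} produce elementary matrices $E_{ij}(t)$. It produces $d^2$ factors $\delta_1,\ldots,\delta_{d^2}$, each lying in some embedded copy $SL^{s,t}(2,\Z)$; these are arbitrary $2\times 2$ integer blocks, obtained by ordinary Gaussian elimination together with the Euclidean algorithm for entry control. No Carter--Keller theorem is invoked or proved anywhere in the chapter. Correspondingly, Corollary~\ref{word_log} really is stated for an arbitrary $\delta\in SL^{s,t}(2,\Z)$, not merely for powers of $E_{ij}(1)$, and its proof is where the substantial work lies: Proposition~\ref{Eij_is_U1} (the U1 property for $E_{ij}(1)$, proved via an eigenvalue argument in $SL(3,\Z)$, not by the commutator identity you cite) together with Proposition~\ref{product2} (a decomposition of $\delta\in SL(2,\Z)$ into an \emph{unbounded} number of factors $v$ and $u(n_j)$, with $\sum_j\log(|n_j|+1)$ controlled by $d_U(p_0,\delta p_0)$ via the tesselation of the upper half-plane).

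Your version --- bounded elementary factorisation with polynomial entry control, then only the U1 bound for elementary matrices --- would indeed prove the theorem, and would let you skip Proposition~\ref{product2} entirely. But that first input is a quantitative Carter--Keller statement that the chapter does not supply; it is a genuinely different and harder factorisation than the elementary Corollary~\ref{product1}. The paper trades a cheap factorisation into $SL(2,\Z)$-blocks for a more expensive logarithmic bound on each block; you have proposed the opposite trade.
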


Before we prove Theorem~\ref{normthm}, we show how it implies that there exists
a constant $C_2 > 0$ such that, for all $\gamma \in \Gamma$,
\begin{equation}\label{stepthree} d_W(1,\gamma) \leq C_2 d_R(1,\gamma).
\end{equation} Inequality~(\ref{stepthree}) is the final step in the proof of
Theorem~\ref{bigone}. Applying Theorem~\ref{normthm} to first $\gamma$ then
$\gamma^{-1}$, we obtain \[ d_W(1,\gamma) \leq K \log \| \gamma \|, \mand
d_W(1,\gamma^{-1}) \leq K \log \| \gamma^{-1} \|. \] Now, $d_W(1,\gamma) =
d_W(1,\gamma^{-1})$, which gives us \begin{equation}\label{wordlesslog}
d_W(1,\gamma) \leq \frac{K}{2} \log \| \gamma \| \|\gamma^{-1} \|.
\end{equation} Next, we consider distances in the space $\Pd$. The distance
between the identity $I$ and the matrix $\gamma \circ I$ is \[ d_P(I ,\gamma
\circ I) =
d_P(I, \gamma \gamma^T) = \log \| \gamma \gamma^T \| \| (\gamma \gamma^T )^{-1}
\| = \log \| \gamma \gamma^T \| \| (\gamma^{-1})^T \gamma^{-1} \|. \] It is a
standard result that if $A$ is a bounded operator on a Hilbert space then
$\| A \|^2 = \| A^* \|^2 = \| A A^* \|$, where $A^*$ is the adjoint of $A$;
see~\cite{con1:cfa}. Applying this to the matrix $\gamma$, which has adjoint
$\gamma^T$, we obtain \[ d_P(I, \gamma \circ I ) = \log \| \gamma \|^2 \|
\gamma^{-1} \|^2 = 2 \log \| \gamma \| \| \gamma^{-1} \|. \] Combining this
with~(\ref{wordlesslog}) yields \begin{equation} \label{wordlessdP}
d_W(1,\gamma) \leq \frac{K}{4} d_P(I,\gamma \circ I). \end{equation} The geometric
distance function on $\Gamma$ is $d_R(1,\gamma) = d_P(S_0,\gamma \circ S_0)$.  Since
$\Gamma$ is discrete, there is a constant $r_0 > 0$ such that for all $\gamma
\in \Gamma$, with $\gamma \not = 1$,  we have $d_R(1,\gamma) \geq r_0$.  Choose
the constant $C$ so that $2 d_P(I, S_0) \leq (C - 1)r_0 $.  Then, by the
triangle inequality, for $\gamma \not = 1$, \begin{align*} d_P(I, \gamma \circ I) &
\leq  d_P(I, S_0) + d_P(S_0,\gamma \circ S_0) + d_P(\gamma \circ S_0, \gamma
\circ I) \\ & =
2d_P(I, S_0) + d_R(1,\gamma) \\ & \leq  (C - 1)r_0 + d_R(1,\gamma) \\ & \leq
(C -1)d_R(1,\gamma) + d_R(1,\gamma) \\ & =  C d_R(1,\gamma). \end{align*}
Hence, using~(\ref{wordlessdP}), we conclude that there exists a constant $C_2
> 0$ such that, for all $\gamma \in \Gamma$, \[ d_W(1,\gamma) \leq C_2
d_R(1,\gamma). \]

\subsection{Outline of the proof of Theorem~\ref{normthm}}

Since Theorem~\ref{normthm} seeks to bound the word length of $\gamma \in
\Gamma$, we begin by, in Subsection~\ref{factorsldz}, constructing a special
word  \[ \gamma = \delta_1 \delta_2 \cdots \delta_{d^2}. \] Each of the terms
$\delta_i$ in this factorisation belongs to some copy of $P\sltwoz$ in
$P\sldz$.  We want to relate the word for $\gamma$ to the norm of $\gamma$.
Since the norm of $\gamma$ depends on the entries of $\gamma$, we show that, in
fact, the elements $\delta_i$ may be chosen so that the entries of each
$\delta_i$ are bounded by a fixed polynomial in the entries of $\gamma$.

Next, in Subsection~\ref{factorsltwoz}, we find a bound on the word length of
any $\delta$ belonging to some copy of $P\sltwoz$ in $P\sldz$.  Let $d_W$ be
the word distance function on $P\sldz$.  We show that there exists a constant
$K_1 > 0$ such that, for all such $\delta$,  \begin{equation}\label{step22ineq}
d_W(1,\delta) \leq K_1 \log \| \delta \|. \end{equation} To establish this
inequality, we first consider the elementary matrices which have one
above-diagonal entry $1$, all diagonal entries $1$, and all other entries $0$.
We show that if $\delta$ is such a matrix, then, roughly, $d_W(1,\delta^n) =
O(\log |n|)$.  It is here that $d \geq 3$ is necessary, for if $d = 2$ and
$\delta = \matrixu$, then $d_W(1,\delta^n) \not = O(\log |n|)$.  Next, we
restrict our attention to $P\sltwoz$. We construct a special word for any
$\delta \in P\sltwoz$ in terms of only the matrices $\matrixv$ and $\startm 1 &
n \\ 0 & 1 \finishm$, where $n \in \Z$.  Using the tesselation of the upper
half-plane, we show that a particular function defined on the terms in this
special word is bounded by the hyperbolic distance from $p_0$ to $\delta p_0$.
Then, we show that this hyperbolic distance is bounded by a logarithm of $\|
\delta \|$.  Combining these results allows us to prove
inequality~(\ref{step22ineq}).

In Subsection~\ref{opnorm}, we prove an inequality relating the operator norm
of a matrix $\gamma$ to the entry in $\gamma$ which has maximum absolute
value.  Subsection~\ref{finishnormproof} combines the above results to complete
the proof of Theorem~\ref{normthm}.

\subsection{Factorisation of $\gamma \in P\sldz$}\label{factorsldz}

Since a word in $\sldz$ yields a word in $P\sldz$, the statements of
Lemma~\ref{e1_transpose} and Corollary~\ref{product1} here are in terms of the
group $\sldz$.

First, some definitions.   A vector $\mathbf{x} = (x_1,x_2,\ldots,x_d)^T$ in
$\Z^d$ is said to be unimodular if the greatest common divisor of its
components, $\gcd(x_1,x_2, \ldots, x_d)$, is 1. Some elements of the set of
integers $\{x_1,x_2,\ldots,x_d\}$ may be zero.  If, for some $1 \leq j \leq d$,
we have $x_j = 0$,  then define \[\gcd(x_1,x_2, \ldots, x_d) = \gcd(x_1,x_2,
\ldots, x_{j-1},x_{j+1}, \ldots,x_d).\]  If $x_1 = x_2 = \cdots = x_d = 0$,
define $\gcd(x_1,x_2,\ldots,x_d) = 0$.  Next, for $1 \leq i \not = j \leq d$,
we will denote by $E_{ij}(t)$ the elementary matrix having $t$ at the $(i,j)$
entry, all diagonal entries $1$, and all other entries $0$. Another definition
needed is that of the group $SL^{s,t}(2,\mathbb{Z})$, where $1 \leq s \not = t
\leq d$.  This is the copy of \sltwoz\ in \sldz\ which fixes the basis vectors
$\mathbf{e}_j$ for $j$ not equal to $s$ or $t$.  For example, if $d = 3$ and
$\startm a_{11} & a_{12} \\ a_{21} & a_{22} \finishm \in \sltwoz$, then \[
SL^{1,2}(2,\Z) = \left\{ \startm a_{11} & a_{12} &  0 \\ a_{21} & a_{22} & 0 \\
0 & 0 & 1 \finishm \right\}, \mand SL^{1,3}(2,\Z) = \left\{ \startm a_{11} & 0
& a_{12}  \\ 0  & 1  & 0 \\ a_{21} & 0 & a_{22} \finishm \right\}. \]

\begin{lemma} \label{e1_transpose} Let $\mathbf{x} = (x_1,x_2,\ldots,x_d)^T \in
\mathbb{Z}^d$ be a unimodular vector.  Then there is an element $\gamma_1 \in
\{ E_{1i}(1)\ |\ i \not = 1 \} \cup \{I\}$, and elements $\gamma_i \in
SL^{1,i}(2,\mathbb{Z})$ for $2 \leq i \leq d$, such that \[\gamma_d
 \cdots \gamma_2 \gamma_1 \mathbf{x} = (1,0,\ldots,0)^T.\]
Moreover, there is a fixed polynomial $p$ in the components of $\vecx$, such
that, for $1 \leq i \leq d$, and $1 \leq k,l \leq d$, the
$(k,l)$ entry of $\gamma_i$ satisfies\[ |(\gamma_i)_{kl}| \leq
|p(x_1,x_2,\ldots,x_d)|.\] \end{lemma}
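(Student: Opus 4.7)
The plan is to perform Gaussian-style elimination on $\vecx$, using matrices from the prescribed subgroups to execute the extended Euclidean algorithm one coordinate at a time.

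First I construct $\gamma_1$: if $x_1 \neq 0$, take $\gamma_1 = I$; otherwise unimodularity forces some $j \geq 2$ with $x_j \neq 0$, and I set $\gamma_1 = E_{1j}(1)$. In either case $\vecx^{(1)} := \gamma_1\vecx$ has a nonzero first entry, its remaining entries are $x_2, \ldots, x_d$, and the gcd of all its entries is still $1$. Then, inductively for $i = 2, \ldots, d$, let $a_i$ denote the first entry of $\vecx^{(i-1)}$, set $g_i = \gcd(a_i, x_i) \geq 1$, and choose (via the extended Euclidean algorithm) integers $\alpha_i, \beta_i$ with $\alpha_i a_i + \beta_i x_i = g_i$. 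Take $\gamma_i \in SL^{1,i}(2,\Z)$ to be the element whose $(1,i)$-block is
\[
\begin{pmatrix} \alpha_i & \beta_i \\ -x_i/g_i & a_i/g_i \end{pmatrix}.
\]
This block has determinant $1$ and sends the $(1,i)$-subvector $(a_i, x_i)^T$ to $(g_i, 0)^T$, while $\gamma_i$ fixes $\vece_k$ for $k \notin \{1, i\}$. Thus entries $2, \ldots, i-1$ of $\vecx^{(i)} := \gamma_i \vecx^{(i-1)}$ remain zero (they were zeroed at earlier stages), the $i$-th entry becomes zero, and entries $i+1, \ldots, d$ are unchanged. A routine induction then yields that the first entry of $\vecx^{(d)}$ equals $\gcd(x_1, \ldots, x_d) = 1$, so $\gamma_d \cdots \gamma_1 \vecx = (1, 0, \ldots, 0)^T$, as required.

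For the entry bound, every entry of $\gamma_1$ lies in $\{0, 1\}$; and for $i \geq 2$ the entries of $\gamma_i$ are $\alpha_i$, $\beta_i$, $\pm x_i/g_i$, $\pm a_i/g_i$, together with $0$s and $1$s from the embedding into $SL^{1,i}(2,\Z)$. Since $a_i$ divides the first entry of $\vecx^{(1)}$, whose absolute value is at most $2\max_k |x_k|$, we get $|a_i/g_i| \leq |a_i| \leq 2\max_k|x_k|$ and $|x_i/g_i| \leq |x_i|$. The reduced Bezout coefficients produced by the extended Euclidean algorithm satisfy $|\alpha_i| \leq |x_i|/g_i + 1$ and $|\beta_i| \leq |a_i|/g_i + 1$, so these are also bounded by $2\max_k |x_k| + 1$. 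Hence every entry of every $\gamma_i$ is bounded in absolute value by some fixed polynomial in $x_1, \ldots, x_d$; for concreteness any polynomial dominating $2(|x_1| + \cdots + |x_d|) + 1$ suffices, e.g.\ $3(x_1^2 + \cdots + x_d^2) + 3$.

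The only delicate point is controlling the size of the Bezout coefficients $\alpha_i, \beta_i$: in principle these could be astronomically large, but the reduced choice from the extended Euclidean algorithm keeps them of the same order as $a_i$ and $x_i$, and hence polynomial in the original data. The remainder is straightforward bookkeeping.
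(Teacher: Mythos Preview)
Your proof is correct and follows essentially the same approach as the paper: fix the first entry to be nonzero via $\gamma_1$, then run the extended Euclidean algorithm coordinate by coordinate using matrices in $SL^{1,i}(2,\Z)$, controlling the size of the B\'ezout coefficients at each step. Your observation that the running first entry $a_i$ always divides the first entry of $\vecx^{(1)}$ gives you a slightly sharper polynomial bound than the paper's $5(x_1^2+\cdots+x_d^2)^2$, but the argument is otherwise the same.
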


\begin{proof} Since $\mathbf{x}$ is unimodular, $\mathbf{x} \not = \mathbf{0}$.
If the first component $x_1$ is non-zero then just put $\gamma_1 = I$. If $x_1
= 0$, then $x_j \not = 0$ for some $2 \leq j \leq d$; put $\gamma_1 =
E_{1j}(1)$. Then the first component of $\gamma_1 \mathbf{x}$ is non-zero.

For $i = 2,3,\ldots, d$ in turn, we construct $\gamma_i$.  Let $\mathbf{y}^i$
be the vector \[\mathbf{y}^i = (y_1^i,y_2^i,\ldots,y_d^i)^T =
\gamma_{i-1}\cdots\gamma_2\gamma_1 \mathbf{x}.\] By the construction which follows, we
have $y_1^i \not = 0$, and $y_j^i = 0$ for $1 < j < i$. Let $a_i$ be the
greatest common divisor of $y_1^i$ and $y_i^i$.  Then, by the Euclidean
algorithm, there exist integers $u$ and $v$ such that $uy_1^i + vy_i^i = a_i$.

We claim that $u$ and $v$ may be chosen so that $|u|$ and $|v|$ are both less
than or equal to the polynomial $(y_1^i)^2 + (y_i^i)^2$. If $y_i^i = 0$, then
with $u = 1$ and $v = 0$ the claim follows immediately.  If $|y_1^i| = |y_i^i|
\not = 0$ then, again, put $u = 1$ and $v = 0$. Otherwise, we may suppose
without loss of generality that $|y_1^i| > |y_i^i|$.  Now, since $uy_1^i +
vy_i^i = a_i$, it follows that \[(u + my_i^i)y_1^i + (v - my_1^i)y_i^i = a_i\]
for all $m \in \mathbb{Z}$ as well.  We may now choose $m$ so that $|u +
my_i^i| \leq |y_i^i|/2$.  Then, relabel $u + my_i^i$ as $u$ and $v - my_1^i$ as
$v$, so that $|u| \leq |y_i^i|/2 \leq |y_i^i|$.   Because $|y_1^i| \not =
|y_i^i|$, the greatest common divisor $a_i$ is not equal to $y_1^i$ or to
$-y_1^i$, thus $|a_i| \leq |y_1^i|/2$.  So, for $|v|$, we have \[  |v|  =
\left| \frac{a_i}{y_i^i} - \frac{u}{y_i^i}y_1^i \right| \leq
\frac{|a_i|}{|y_i^i|} + \frac{|u|}{|y_i^i|}|y_1^i|  \leq |a_i| +
\frac{1}{2}|y_1^i| \leq \frac{1}{2}|y_1^i| + \frac{1}{2}|y_1^i| = |y_1^i|. \]
And thus, since $|u| \leq |y_i^i|$ and $|v| \leq |y_1^i|$, it follows that
$|u|$, $|v| \leq (y_1^i)^2 + (y_i^i)^2$.

Since $a_i$ is a factor of $y_1^i$ and $y_i^i$, there exist integers $b_1$ and
$b_i$ such that $y_1^i = a_ib_1$ and $y_i^i = a_ib_i$.  Put $z=b_1$ and
$w=-b_i$, then  \[ a_i = uy_1^i + vy_i^i = a_i(ub_1 + vb_i) = a_i(uz - vw). \]
Therefore the matrix $\startm u & v \\ w & z
\finishm$ is in $SL(2,\mathbb{Z})$.  Then, \[ \startm u & v \\ w & z \finishm\startm y_1^i \\ y_i^i
\finishm = \startm uy_1^i + vy_i^i \\ -b_iy_1^i + b_1y_i^i
\finishm. \] Note that $-b_iy_1^i + b_1y_i^i = -b_ia_ib_1 + b_1a_ib_i =
0$.  It follows that \[ \startm u & v \\ w & z
\finishm \startm y_1^i \\ y_i^i \finishm = \startm a_i
\\ 0 \finishm. \] The integers $|z|$ and $|w|$ are bounded by the same
polynomial as $|u|$ and $|v|$.  We have \[ |z| = \frac{|y_1^i|}{|a_i|} \leq
|y_1^i| \leq (y_1^i)^2 + (y_i^i)^2, \] and, similarly, $|w| \leq (y_1^i)^2 +
(y_i^i)^2$.

Let $\gamma_i$ be the matrix $\startm u & v \\ w & z \finishm \in
SL^{1,i}(d,\mathbb{Z})$.   Then \[\gamma_i \vecy^i =
\gamma_i\gamma_{i-1}\cdots\gamma_2\gamma_1\mathbf{x} = (a_i,
0,\ldots,0,y_{i+1}^i,\ldots,y_d^i)^T.\] By our construction, $y_i^i = x_i$, so
\begin{align*} a_i & = \gcd(y_1^i,y_i^i) \\ & = \gcd
(\gcd(y_1^{i-1},y_{i-1}^{i-1}),x_i) \\ & = \gcd(y_1^{i-1},x_{i-1},x_i) \\ & =
\cdots \\ & = \gcd(x_1',x_2,\ldots,x_i), \end{align*} where $x_1'$ is the first
component of $\gamma_1\mathbf{x}$. In particular, \[ a_d  =
\gcd(x_1',x_2,\ldots,x_d)  = \gcd(x_1,x_2,\ldots,x_d)  =  1.\]  Thus we obtain
$\gamma_1,\gamma_2,\ldots,\gamma_d$ such that $\gamma_d  \cdots
\gamma_2 \gamma_1 \mathbf{x} = (1,0,\ldots,0)^T$.

We have shown that the entries of $\gamma_i$, for $2 \leq i \leq d$, are
bounded by the polynomial $(y_1^i)^2 + (y_i^i)^2 = (\gcd(y_1^{i-1},
y_{i-1}^{i-1}))^2 + x_i^2$.  Now,  \begin{align*}
(\gcd(y_1^{i-1},y_{i-1}^{i-1}))^2 + x_i^2 & = (\gcd(x_1',x_2,\ldots,x_{i-1}))^2
+ x_i^2 \\ & \leq ((x_1')^2 + x_2^2 + \cdots + x_{i-1}^2 )^2 + x_i^2 \\ & \leq
(2(x_1^2 + x_2^2 + \cdots + x_d^2))^2 + x_i^2 \\ & \leq 5(x_1^2 + x_2^2 +
\cdots + x_d^2)^2. \end{align*} Let $p$ be the polynomial \[
p(x_1,x_2,\ldots,x_d) = 5(x_1^2 + x_2^2 + \cdots + x_d^2)^2. \] Then $p$ is a
fixed polynomial in the components of $\mathbf{x}$, and the entries of each
$\gamma_i$, for $2 \leq i \leq d$, are bounded by $p$. The matrix $\gamma_1$ is
either the identity, or $E_{1j}(1)$ for some $j$, so in either case the entries
of $\gamma_1$ are bounded by $1$. The entries of $\gamma_1$ are thus bounded by
the polynomial $p$ as well. \end{proof}

\begin{corollary} \label{product1} Let $\gamma \in \sldz$.  Then $\gamma$ may
be written as a product \[\gamma = \delta_1 \delta_2 \cdots \delta_{d^2},\]
where each $\delta_i$ belongs to some $SL^{s,t}(2, \mathbb{Z}) \subseteq
\sldz$.  Moreover, there is a fixed polynomial $P$ in the entries of $\gamma$,
such that, for $1 \leq i \leq d^2$, and for $1 \leq k,l\leq d$, the $(k,l)$
entry of $\delta_i$ satisfies \[ |(\delta_i)_{kl}| \leq
|P(\gamma_{11},\gamma_{12},\ldots,\gamma_{dd})|.\]
\end{corollary}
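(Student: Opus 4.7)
The plan is to reduce $\gamma$ to the identity by a sequence of $SL^{s,t}(2,\Z)$-multiplications, peeling off one row and column at a time. At the top level, I apply Lemma~\ref{e1_transpose} to the first column of $\gamma$ (which is unimodular because cofactor expansion of $\det \gamma = 1$ along that column forces the $\gcd$ of its entries to divide $1$). This yields $\gamma_1, \ldots, \gamma_d$ with each $\gamma_i$ in $SL^{1,i}(2,\Z)$ or equal to $I$, such that $\gamma_d \cdots \gamma_1 \gamma$ has first column $\vece_1$. Writing its first row as $(1, a_2, \ldots, a_d)$, I then clear the off-diagonal entries of that row by right-multiplying by the matrices $E_{1j}(-a_j) \in SL^{1,j}(2,\Z)$ for $j = 2, \ldots, d$; because column $1$ is $\vece_1$, each such operation changes only the $(1,j)$ entry. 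The result is block diagonal of the form $\begin{pmatrix} 1 & 0 \\ 0 & \tilde{\gamma} \end{pmatrix}$ with $\tilde{\gamma} \in SL(d-1,\Z)$ embedded in the lower-right $(d-1) \times (d-1)$ block.

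I then recurse on $\tilde{\gamma}$, so the matrices used at subsequent stages lie in $SL^{s,t}(2,\Z)$ with $s, t \geq 2$ and hence do not disturb the first row and column. After $d - 1$ rounds the full matrix is the identity, and rearranging expresses $\gamma$ as the product of the inverses of the factors used (each inverse still lying in the same $SL^{s,t}(2,\Z)$). Counting: the $k$-th round (acting on a $(d-k+1) \times (d-k+1)$ block) contributes $(d-k+1)$ factors to clear the column and $(d-k)$ to clear the row, for a total of $\sum_{k=1}^{d-1}\bigl(2(d-k)+1\bigr) = d^2 - 1$ factors; padding with one identity (which trivially lies in any $SL^{s,t}(2,\Z)$) gives exactly $d^2$ factors $\delta_1, \ldots, \delta_{d^2}$ of the required form.

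The substantive obstacle is the entry bound. If the current intermediate matrix $M$ has entries bounded in absolute value by some $Q$, then Lemma~\ref{e1_transpose} supplies factors with entries bounded by $5d^2 Q^4$, while each $E_{1j}(-a_j)$ has entries bounded by $Q$. Each left- or right-multiplication by such a factor yields a new matrix with entries bounded by a polynomial of bounded degree in $Q$, since matrix multiplication in $d$ dimensions costs only a factor of $d$ and multiplies entry bounds. Because the number of reduction steps is the fixed constant $d^2$, iterating this polynomial increase at most $d^2$ times yields a fixed polynomial $P$, depending only on $d$, in the entries of the original $\gamma$ that dominates every entry of every intermediate matrix, and in particular every entry of every $\delta_i$. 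The only technical care required is to verify at each step that the bound is being tracked back to the entries of $\gamma$ itself rather than to entries of intermediate matrices, which is automatic since compositions of polynomials of bounded degree remain polynomials of bounded degree.
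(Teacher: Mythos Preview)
Your proposal is correct and follows essentially the same approach as the paper: apply Lemma~\ref{e1_transpose} to clear the first column, use elementary matrices $E_{1j}(-a_j)$ to clear the first row, then recurse on the lower-right $(d-1)\times(d-1)$ block, tracking polynomial bounds throughout. The only cosmetic differences are that the paper phrases the recursion as a formal induction (with base case $d=2$ giving $\delta_1=\gamma$ and three identities), obtaining exactly $d^2$ factors without padding, and the paper is slightly more explicit that the $a_j$ in the row-clearing step are entries of the matrix \emph{after} column-clearing (hence already polynomially inflated), whereas your phrase ``each $E_{1j}(-a_j)$ has entries bounded by $Q$'' blurs this---though your subsequent remark about compositions of polynomials covers it.
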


\begin{proof} We prove this corollary by induction on $d$.  The idea is to
multiply $\gamma$ by appropriate matrices on the left to obtain a matrix
$\gamma'$ with first column $(1,0,\ldots,0)^T$.  Then, we multiply $\gamma'$ by
appropriate matrices on the right to obtain a matrix $\gamma''$ with first
column $(1,0,\ldots,0)^T$ and first row $(1,0,\ldots,0)$.  The inductive
assumption is then applied to $\gamma''$.

If $d=2$ then we may let $\delta_1 = \gamma$, and for $2 \leq i \leq d^2$, let
$\delta_i$ be the identity matrix.   For $d \geq 3$, let $\mathbf{x}  =
(x_1,x_2,\ldots,x_d)^T\in \mathbb{Z}^d$ be the first column of the matrix
$\gamma$.  Since $\mathbf{x}$ is a column of a matrix in $SL(d,\mathbb{Z})$,
$\mathbf{x}$ is unimodular.  This is because the determinant of $\gamma$ may be
calculated by expanding along its first column, so we may obtain the equation \[
m_1x_1 + m_2x_2 + \cdots + m_dx_d = 1, \] where the $m_i$, for $1 \leq i \leq d$,
are integers.  Thus, by Lemma~\ref{e1_transpose}, there exist matrices
$\gamma_1,\gamma_2,\ldots,\gamma_d$ belonging to various copies of
$SL(2,\mathbb{Z})$ in $SL(d,\mathbb{Z})$ such that
\[\gamma_d\cdots\gamma_2\gamma_1\mathbf{x} = (1,0,\ldots,0)^T.\] Hence,
\[\gamma' = \gamma_d\cdots\gamma_2\gamma_1\gamma\] is an element of
$SL(d,\mathbb{Z})$ which has as its first column $(1,0,\ldots,0)^T$.  Also by
Lemma~\ref{e1_transpose}, each of the matrices $\gamma_i$ has entries bounded
by the fixed polynomial $p$ in the components $x_j$, where $1 \leq j \leq d$.  Since
$x_j = \gamma_{j1}$, which is an entry in the first column of the matrix
$\gamma$, we may now consider $p$ to be a fixed polynomial in the entries of
$\gamma$.  Therefore, each matrix $\gamma_i$ has entries bounded by a fixed
polynomial $p$ in the entries of $\gamma$.

Let the first row of $\gamma'$ be $(1,y_2,y_3,\ldots,y_d)$.  Multiply $\gamma'$
on the right by $\gamma'_2\gamma'_3\cdots\gamma'_d$, where for $2 \leq j \leq
d$, the matrix $\gamma'_j$ is $\startm 1 & -y_j \\ 0 & 1 \finishm$ in
$SL^{1,j}(2,\mathbb{Z})$.  This results in a matrix \[\gamma''=
\gamma_d\cdots\gamma_2\gamma_1\gamma\gamma'_2\gamma'_3\cdots\gamma'_d\] with
first column $(1,0,\ldots,0)^T$ and first row $(1,0,\ldots,0)$.  We may thus
consider $\gamma''$ to belong to $SL(d-1,\mathbb{Z})$ embedded in the lower
right-hand corner of \sldz.

The entries in the matrices of the form $\gamma'_j$, for $2 \leq j \leq d$, are
bounded by the polynomial $y_j^2 + 1$.  Now, if $\gamma_1$ is the identity, and
$\gamma_j = \startm u & v \\ w & z \finishm \in SL^{1,j}(d,\mathbb{Z})$, then
$y_j = u\gamma_{1j} + v\gamma_{jj}$. If $\gamma_1 = E_{1k}(1)$ for some $k \not
= 1$, and $\gamma_j = \startm u & v \\ w & z \finishm \in
SL^{1,j}(d,\mathbb{Z})$, then $y_j = u(\gamma_{1j} + \gamma_{kj}) +
v\gamma_{jj}$.  In either case, the entries in the matrices of the form
$\gamma'_j$, for $2 \leq j \leq d$, are bounded by \begin{align*} y_j^2 + 1 &
\leq (|u|(|\gamma_{1j}| + |\gamma_{kj}|) + |v||\gamma_{jj}|)^2 + 1 \\ & \leq
(p(\gamma) (\gamma_{1j}^2 + \gamma_{kj}^2 + \gamma_{jj}^2))^2 + 1 \\ & \leq
(p(\gamma)q(\gamma))^2 + 1, \end{align*} where $p(\gamma)$ is the fixed polynomial
$p$ in the entries of $\gamma$, and $q(\gamma)$ is the polynomial $q$ in the
entries of $\gamma$ given by \[ q(\gamma_{11},\gamma_{12},\ldots,\gamma_{dd}) =
2(\gamma_{12}^2  + \gamma_{13}^2 + \cdots + \gamma_{1d}^2 + \gamma_{22}^2 +
\gamma_{33}^2 + \cdots + \gamma_{dd}^2). \] Thus the entries of the matrices of
the form $\gamma'_j$, for $2 \leq j \leq d$, are bounded by a fixed polynomial
$(p(\gamma)q(\gamma))^2 + 1$ in the entries of $\gamma$.

Applying the inductive hypothesis to the matrix $\gamma''$, we may write
$\gamma''$ as a product $\gamma'' = \delta_1\delta_2\cdots\delta_{(d-1)^2}$,
where for $1 \leq k \leq (d-1)^2$, each $\delta_k$ belongs to some
$SL^{s,t}(2,\mathbb{Z})$, and each $\delta_k$ has entries bounded by a fixed
polynomial in the entries of the $(d-1)\times(d-1)$ submatrix in the lower
right-hand corner of $\gamma''$.  So we may consider each $\delta_k$ to have
entries bounded by a fixed polynomial in the entries of $\gamma''$.  Now,
$\gamma''$ is the product of the matrix $\gamma$, and matrices of the form
$\gamma_i$ and $\gamma_j'$.  So, each entry of $\gamma''$ is some polynomial in
the entries of $\gamma$, the $\gamma_i$ and the $\gamma_j'$. But the matrices
$\gamma$, $\gamma_i$ and $\gamma_j'$ all have entries bounded by fixed
polynomials in the entries of $\gamma$.  This means the entries of $\gamma''$
are bounded by a (higher degree) fixed polynomial in the entries of $\gamma$.
So we can find a fixed polynomial $P$ in the entries of $\gamma$ which is
sufficiently large to bound the entries of the $\gamma_i$, the $\gamma_j'$ and
the $\delta_k$.

Rearranging to make $\gamma$ the subject, we now have \[\gamma =
\gamma^{-1}_1\gamma^{-1}_2\cdots\gamma^{-1}_d
\delta_1\delta_2\cdots\delta_{(d-1)^2}
(\gamma'_d)^{-1}\cdots(\gamma'_3)^{-1}(\gamma'_2)^{-1}.\] The total number of
matrices on the right-hand side of this expression is $d^2$.  The inverses
$\gamma_i^{-1}$ and $(\gamma_j')^{-1}$ have entries bounded by the same fixed
polynomial as for $\gamma_i$ and $\gamma_j'$.  This is because the inverse of
$\gamma_i = \startm u & v \\ w & z \finishm$ is $\startm z & -v \\ -w & u
\finishm$, and the inverse of $\gamma'_j = \startm 1 & -y_i \\ 0 & 1 \finishm$
is $\startm 1 & y_i \\ 0 & 1 \finishm$.  In both cases, the bounds on the
entries are unchanged. We conclude that $\gamma$ may be expressed as a product
of the required form. \end{proof}

\subsection{Factorisation of $\delta \in P\sldz$}\label{factorsltwoz}

As in Subsection~\ref{factorsldz}, the results established here are in terms of
the group $\sldz$, rather than $P\sldz$.  Propositions~\ref{Eij_is_U1}
and~\ref{product2}, and Lemma~\ref{dU_lemma}, are all used to prove
Corollary~\ref{word_log}. Corollary~\ref{word_log} says that there exists a
constant $K_6 > 0$ such that, for all elements $\delta \in
SL^{s,t}(2,\Z)\subseteq\sldz$, we have $d_W(1,\delta) \leq K_6 \log \|
\delta\|$.  The proofs of Propositions~\ref{Eij_is_U1} and~\ref{product2} are
long, each involving several lemmas.

We say that $\gamma \in \sldz$ is a U1-element of $\sldz$, or just $\gamma$ is
U1, if there exists a constant $C_\gamma > 0$ such that, for all
integers $n$, \[ d_W(1,\gamma^n) \leq C_\gamma\log (|n| + 1). \]

\begin{propn} \label{Eij_is_U1} The matrix $\gamma = E_{ij}(1)$ (where $1 \leq
i \not = j \leq d$) is  U1-element of $SL(d,\mathbb{Z})$ for $d \geq 3$.
\end{propn}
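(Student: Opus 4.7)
The plan is to exploit the Steinberg commutator identity $[E_{ik}(a), E_{kj}(b)] = E_{ij}(ab)$, valid for any pairwise distinct indices $i,j,k \in \{1,\dots,d\}$. Since this requires a third index $k$ outside $\{i,j\}$, it is only available when $d \geq 3$, which is precisely where the hypothesis enters. First I would verify this identity by a direct matrix computation: using the rank-one matrices $e_{pq}$ satisfying $e_{pq}e_{rs} = \delta_{qr}\, e_{ps}$, the expansion of $(I + a e_{ik})(I + b e_{kj})(I - a e_{ik})(I - b e_{kj})$ collapses to $I + ab\, e_{ij}$ for distinct $i,j,k$.

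Second, I would observe that because $\Sigma_d$ contains the signed permutation matrices of determinant $1$, conjugation by elements of bounded word length intertwines all the off-diagonal positions. Consequently, writing $f_{ij}(n) := d_W(1, E_{ij}(n))$, we have $f_{ij}(n) = f_{i'j'}(n) + O(1)$ uniformly in $n$, so it suffices to control a single function $f(n)$ up to additive constants. Combined with the commutator identity this yields, for any factorisation $n = ab$, the recursion
\[
f(n) \;\leq\; 2 f(a) + 2 f(b) + O(1).
\]

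Third, I would construct the logarithmic bound by combining the commutator with conjugation by iterates of a Fibonacci-type element of bounded generator-length. Specifically, let $M = E_{st}(1) E_{ts}(1) \in SL^{s,t}(2,\Z)$ embedded in $SL(d,\Z)$ via some pair $(s,t)$ disjoint from $\{i,j\}$; then $M$ has word length $2$ in $\Sigma_d$, while $M^k$ has entries of order $\phi^{2k}$. Conjugation by $M^k$, computed via the identity $M^k e_{pq} M^{-k} = (M^k e_p)(e_q^\top M^{-k})$, produces products of elementary matrices whose entries grow exponentially in $k$ (linear in word length). Using these conjugations together with the commutator identity to extract a single $E_{ij}(\cdot)$ factor, one obtains elementary matrices $E_{ij}(m)$ with $m \asymp \phi^{2k}$ expressible in length $O(k) = O(\log m)$. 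A general integer $n$ is then handled by combining a bounded-depth recursion built from such "base'' elements together with the additive identity $E_{ij}(n_1)E_{ij}(n_2) = E_{ij}(n_1+n_2)$, using $O(\log |n|)$ such factors.

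The main obstacle is sharpening the naive commutator recursion $f(n) \leq 4 f(\lceil \sqrt{|n|}\rceil) + O(1)$, which only yields $f(n) = O((\log |n|)^2)$, to the required $f(n) = O(\log |n|)$. This demands a construction in which each recursive step amplifies the exponent by a bounded multiplicative factor while adding only $O(1)$ to the word length, rather than the factor-of-four blow-up produced by a symmetric balanced commutator. The saving is realised by using the third index $k$ not merely once but as a fixed "amplification axis'': one conjugates a single elementary matrix $E_{ij}(1)$ by bounded-length words whose effect on the $(i,j)$-entry grows geometrically, as above. Verifying that the residual "junk'' factors produced by such conjugations (namely the $E_{ik}$- and $E_{kj}$-components) can themselves be cancelled within the same logarithmic budget is the delicate technical step, and is exactly what the $d\geq 3$ hypothesis makes possible.
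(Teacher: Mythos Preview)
Your instinct --- amplify via conjugation by a hyperbolic element so that word length grows only logarithmically in the entry --- is the right one, and it is exactly the mechanism the paper uses. But the realisation you sketch has concrete problems.

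With $(s,t)$ disjoint from $\{i,j\}$, your element $M$ \emph{commutes} with $E_{ij}(1)$, so conjugation by $M^k$ has no effect on the $(i,j)$-entry at all; to get any amplification the hyperbolic block must share an index with the target, and then the conjugate is no longer a single $E_{ij}(\cdot)$ --- the ``junk'' factors you mention are unavoidable, not incidental. Requiring four distinct indices $i,j,s,t$ also forces $d\geq 4$, leaving the crucial case $d=3$ unaddressed (and since one embeds $SL(3,\Z)$ into $SL(d,\Z)$, not conversely, $d=3$ must be done directly). Finally, even granting exponentially large base elements $E_{ij}(F_m)$ of word length $O(m)$, assembling a general $n$ as a product of $O(\log n)$ such factors costs $\sum_\ell m_\ell = O((\log n)^2)$; getting $O(\log n)$ requires a \emph{nested} (Horner) evaluation in which the conjugations are shared rather than repeated, and that is only possible if one carries the second ``junk'' coordinate along throughout rather than cancelling it at each step.

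The paper does exactly this. It works in $SL(3,\Z)$ with
$A=\left(\begin{smallmatrix}2&1&0\\1&1&0\\0&0&1\end{smallmatrix}\right)$
acting by conjugation on the abelian group $V=\{E_{13}(x)E_{23}(y)\}\cong\R^2$ (so the hyperbolic block sits on indices $1,2$, overlapping the target $E_{13}$). It shows that the base-$\lambda$ sums $\pm\sum_{i\leq m}a_iA^i\mathbf{y}_0$ with digits $a_i\in\{0,1,2\}$ form a syndetic subset of the lattice $\Z^2$, and uses the Horner form $A(a_1\mathbf{y}_0+A(a_2\mathbf{y}_0+\cdots))$ to express such a sum as a word in $A$ and $Y_0$ of length $O(m)$ while its norm is $\geq c\lambda^m$. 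The $E_{23}$ component is part of the two-dimensional picture from the start; the bounded discrepancy between the target vector $(n,0)$ and the nearest base-$\lambda$ point is absorbed at the very end by a bounded word in $E_{13}(1),E_{23}(1)$. This settles $d=3$, and then all $d\geq 3$ follow by showing every $E_{ij}(1)$ is conjugate to $E_{13}(1)$ and by embedding $SL(3,\Z)$ in the top-left corner of $SL(d,\Z)$.
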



\begin{proof} First, we show why it suffices to prove this result for just the
case where $\gamma$ is $E_{13}(1) \in SL(3,\mathbb{Z})$.  Then, we identify a
subgroup $V$ of $\slthreer$ with the vector space $\R^2$, and identify a
discrete subgroup of $V$ with the lattice $\Z^2$.  The matrix $E_{13}(1)$ is in
this discrete subgroup.  We construct a set $S \subseteq \Z^2$ such that every
point of $\Z^2$ is at a bounded distance from some point of $S$.  For a vector
$\vecv \in S$, we find a bound on the norm of $\vecv$, and a bound on the  word
length of the element of $\slthreer$ which corresponds to $\vecv$.   This
allows us to bound the word length of those elements of $\slthreer$ which are
identified with the lattice $\Z^2$.  To make the proof of this proposition more
digestible, most of these steps are presented as lemmas.

To show why it suffices to prove this proposition for just the case where
$\gamma$ is $E_{13}(1) \in SL(3,\mathbb{Z})$, let $\gamma$ and $\delta$ be
elements of $SL(d, \mathbb{Z})$, and suppose that $\gamma$ is U1.   Then, there
exist constants $C_\gamma$ and $C_\delta$ such that, for all non-zero integers
$n$, \begin{align*} d_W(1,(\delta\gamma\delta^{-1})^n) & =
d_W(1,\delta\gamma^n\delta^{-1}) \\ & \leq  d_W(1,\gamma^n) + 2d_W(1,\delta)
\\  & \leq  C_\gamma \log (|n| + 1) + 2d_W(1, \delta) \\ & \leq  2(C_\gamma +
2d_W(1,\delta))\log (|n| + 1) \\ & \leq  C_\delta \log (|n| + 1) . \end{align*}
If $n = 0$ then both sides are equal to $0$. This shows that if $\gamma$ is
U1,  so are all conjugates of $\gamma$.

We now show that all elements $E_{ij}(1)$, where $1 \leq i \not = j \leq d$, are
conjugate to $E_{13}(1)$ in $SL(d, \mathbb{Z})$.  We proceed by induction on
$d$.  If $d = 3$, then each matrix $E_{ij}(1)$, for $1 \leq i \not = j \leq 3$,
may be written as $\delta E_{13}(1) \delta^{-1}$ for some $\delta \in SL(3,
\mathbb{Z})$:

$ \begin{array}{ll} \mbox{for } E_{12}(1), \, \delta = \startm 1 & 0 & 0 \\ 0 &
0 & 1 \\ 0 & -1 & 0 \finishm; & \mbox{for } E_{21}(1),\, \delta = \startm  0 &
0 & 1 \\ 1 & 0 & 0 \\ 0 & 1 & 0 \finishm; \\ \rule{0cm}{1cm} \mbox{for }
E_{23}(1), \,  \delta = \startm 0 & -1 & 0 \\ 1 & 0 & 0 \\ 0 & 0 & 1 \finishm ;
& \mbox{for }  E_{31}(1),\, \delta = \startm 0 & 0 & 1 \\ 0 & -1 & 0 \\ 1 & 0 &
0 \finishm; \\ \rule{0cm}{1cm} \mbox{for }  E_{32}(1),\, \delta = \startm 0 & 1
& 0 \\ 0 & 0 & 1 \\ 1 & 0 & 0 \finishm. & \end{array} $ \vspace{3mm}

\noindent The inductive assumption is that for some $d \geq 3$, all of the elements
$E_{ij}(1)$, where $1 \leq i \not = j \leq d$, are conjugate to $E_{13}(1)$ in
$SL(d,\mathbb{Z})$.  Consider $E_{ij}(1) \in SL(d + 1,\mathbb{Z})$.  If $i$ and
$j$ are both less than or equal to $d$, then $E_{ij}(1)$ may be regarded as an
element of \sldz\ embedded in the upper left-hand corner of
$SL(d+1,\mathbb{Z})$, so by assumption $E_{ij}(1)$ is conjugate to
$E_{13}(1)$.  Otherwise, one of $i$ or $j$ must be equal to $d + 1$; we show
that $E_{ij}(1)$ is conjugate to some matrix $E_{st}(1)$ with $1 \leq s \not =
t \leq d$, hence is conjugate to $E_{13}(1)$. For $i=d+1$ and
$1 \leq j \leq d-1$, and for $j=d+1$ and $1 \leq i \leq d-1$, the matrix by which
we conjugate is \[ \startm I_{d-1} & 0 & 0 \\ 0 & 0 & 1 \\ 0 & 1 & 0 \finishm,
\] where $I_{d-1}$ is the $(d-1)\times(d-1)$ identity matrix.  For
$(i,j)=(d+1,d)$ and $(i,j)=(d,d+1)$, we conjugate by \[ \startm I_{d-2} & 0 & 0
& 0 \\ 0 & 0 & 0 & 1 \\ 0 & 0 & 1 & 0 \\ 0 & 1 & 0 & 0 \finishm. \]

It remains to show that if the matrix $E_{13}(1)$ is U1 in $SL(3,\mathbb{Z})$,
then it is U1 in \sldz. For this, let $\Sigma_3$ be a finite set of generators
for $SL(3,\mathbb{Z})$. By embedding $SL(3, \mathbb{Z})$ in the top left-hand
corner of \sldz, we may extend $\Sigma_3$ to $\Sigma$, a finite set of
generators for \sldz.  Let $\gamma$ be a U1 element of $SL(3,\mathbb{Z})$,
embedded in \sldz. The sets of generators $\Sigma_3$ and $\Sigma$ induce word
distance functions which we will denote by $d_\Sigma^3$ and $d_\Sigma$,
respectively. Then, there exists a constant $C_\gamma > 0$ such that, for all
integers $n$, \[ d_\Sigma(1,\gamma^n)  \leq  d_\Sigma^3(1,\gamma^n)
\leq  C_\gamma \log (|n|+1). \] Therefore $d_W(1,\gamma^n) \leq C_\gamma\log
(|n|+1)$, and $\gamma$ is U1 in $SL(d,\mathbb{Z})$.


We have now shown that it suffices to prove that the matrix $E_{13}(1)$ is U1
in $SL(3,\mathbb{Z})$. In the next part of the proof, we consider a special
element and some subgroups of $\slthreez$. Let \[ A = \startm 2 & 1 & 0 \\ 1 &
1 & 0 \\ 0 & 0 & 1 \finishm, \hspace{8mm}  V = \left\{ v(x,y) = \startm 1 & 0 &
x \\ 0 & 1 & y \\ 0 & 0 & 1 \finishm :  x, y \in \R \right\}, \] \[ L = V \cap
SL(3,\mathbb{Z}) = \left\{  \startm 1 & 0 & k \\ 0 & 1 & l \\ 0 & 0 & 1
\finishm : k, l \in \Z \right\}. \] Multiplication in $V$ is given by
$v(x,y)v(x',y') = v(x+x',y+y')$, so $V$ is an Abelian group.  The group $V$ is
isomorphic to the vector space $\mathbb{R}^2$, where the latter is considered
as an Abelian group under addition of vectors.  We may, then, identify
$L\subseteq V$ with the integer lattice $\mathbb{Z}^2 \subseteq \mathbb{R}^2$.
The action of $A$ on $V$ by conjugation takes $v(x,y)$ to $v(2x + y, x+y)$,
which corresponds under this isomorphism to the vector in $\mathbb{R}^2$ given
by \[ \startm 2x + y \\ x + y\finishm  = \startm 2 & 1 \\ 1 & 1 \finishm\startm
x \\ y \finishm. \] Hence, the action of $A$ on $V$ by conjugation corresponds
to the linear action of $A' = \startm 2 & 1 \\ 1 & 1 \finishm$ on
$\mathbb{R}^2$.  Moreover, this action preserves $L$, and so preserves the
integer lattice $\mathbb{Z}^2$.

We will now consider $V$ as the vector space $\mathbb{R}^2$ and write the
multiplication in it as addition.  We also identify $L$ with $\Z^2$. For
$\mathbf{v} \in V$ we will write $A\mathbf{v}$ meaning the conjugation of
$\mathbf{v}$ by $A$, that is, multiplication by the matrix $A'$, which we will
write as $A$. The eigenvectors of the $2 \times 2$ matrix $A$ are then $\lambda
= (3 + \sqrt{5})/2$ and $\lambda^{-1} = (3 - \sqrt{5})/2$.  Note that $2 <
\lambda < 3$ and $\lambda^{-1} < 1$. We will denote by $\mathbf{v}_1$ and
$\mathbf{v}_2$ fixed eigenvectors corresponding to the eigenvalues $\lambda$
and $\lambda^{-1}$ respectively.  For convenience, we may choose $\vecv_1$ and
$\vecv_2$ so that $\| \vecv_1 \| = \| \vecv_2 \| = 1$.  We denote by $W_1$ and
$W_2$ the corresponding  eigenspaces of $V$.    Let $\mathbf{y}_0$ be the fixed
vector $\startm 1 \\ 1 \finishm \in V$. We define the following subsets of $L$:
\begin{align*} S_1 & =  \left\{  \pm \sum_{i=1}^m a_iA^i\mathbf{y}_0
:  a_i \in \{ 0, 1, 2 \}, \ a_m \not = 0 \right\} \\ S_2 & = \left\{
 \pm \sum_{j=1}^n b_jA^{-j}\mathbf{y}_0 : b_j \in \{ 0, 1, 2 \} ,\
b_n \not = 0 \right\}. \end{align*} The relationship between the sets $S_1$ and
$S_2$ and the eigenspaces $W_1$ and $W_2$ is described in the following lemma.


\begin{lemma} \label{syndetic} For $k = 1,2$, each point of the eigenspace
$W_k$ is within a bounded distance of some point of the set $S_k$. \end{lemma}

\begin{proof} Let  \[ \mathbf{v} = \pm \sum_{i=1}^m a_iA^i\mathbf{y}_0 \] be a
point of $S_1$.  Since $\mathbf{v}_1$ and $\mathbf{v}_2$ span $\mathbb{R}^2$,
we have $\mathbf{y}_0 = \alpha \mathbf{v}_1 + \beta \mathbf{v}_2$ for fixed
real numbers $\alpha$ and $\beta$. Then \[ A^i \mathbf{y}_0 = \alpha \lambda^i
\mathbf{v}_1 + \beta \lambda^{-i}  \mathbf{v}_2, \] and so, \[\mathbf{v} = \pm
\left(  \alpha \sum_{i=1}^m a_i \lambda^i  \mathbf{v}_1 +  \beta \sum_{i=1}^m
a_i \lambda^{-i}  \mathbf{v}_2 \right). \]  The sum $\sum_{i=1}^m a_i
\lambda^{-i}$  is bounded.  This  is because $a_i < \lambda$ and $\lambda^{-1}
< 1$, so \[   \sum_{i=1}^m a_i \lambda^{-i}    <   \sum_{i=0}^\infty
\lambda^{-i} =  \frac{1}{1 - \lambda^{-1}}.\]

Let $\mathbf{u}$ belong to $W_1$.   Then $\mathbf{u} = \alpha'\mathbf{v}_1$ for
some $\alpha' \in \R$. Suppose first that $\alpha' \geq 0$, and consider an
element $\mathbf{v}$ in $S_1$ with the form \[ \mathbf{v} = \alpha \sum_{i=1}^m
a_i \lambda^i \mathbf{v}_1 + \beta \sum_{i=1}^m a_i \lambda^{-i} \mathbf{v}_2.
\] Note here that $\alpha > 0$.  The distance from $\mathbf{v}$ to $\mathbf{u}$
is \begin{align*} \|\mathbf{v} - \mathbf{u}\| & =  \left\| \left( \alpha
\sum_{i=1}^m a_i \lambda^i  - \alpha' \right)\mathbf{v}_1 + \left( \beta
\sum_{i=1}^m a_i \lambda^{-i} \right)\mathbf{v}_2 \right\| \\ & \leq
\alpha\left| \sum_{i=1}^m a_i \lambda^i - \frac{\alpha'}{\alpha} \right| +
|\beta|\sum_{i=1}^m a_i \lambda^{-i} .\end{align*} Since $\alpha$ and $\beta$
are constant, and  $\sum_{i=1}^m a_i \lambda^{-i}$ is bounded, it now suffices
to prove that for all $a \geq 0$ there exists an integer $m \geq 1$, and a set
of integers $\{ a_i \}_{i=1}^m$, with $a_i \in \{0,1,2\}$ and $a_m \not = 0$,
such that \begin{equation} \label{syndet_eqn} \left| \sum_{i=1}^m a_i \lambda^i
- a \right| \leq \lambda.\end{equation}

If $0 \leq a < 1$ then $m = 1$ and $a_1 = 1$ will do. For $a \geq 1$, the proof
is by induction on $k$, where \[ \lambda^k \leq a < \lambda^{k+1}. \] When $k
=0$, we have $1 \leq a < \lambda$, so take $m = 1$ and  $a_1=1$.  Now assume
inductively that, for all $l$ such that $0 \leq l < k$ and all $a$ such that
$\lambda^l \leq a < \lambda^{l+1}$, there exist integers $\{ a_i \}_{i=1}^l $,
with $a_i \in \{ 0,1,2\}$ and $a_l \not = 0$, such that~(\ref{syndet_eqn})
holds. Suppose that $\lambda^k \leq a < \lambda^{k+1}$. Then, since $2 <
\lambda < 3$, either  $\lambda^k \leq a < 2\lambda^k$, or  $2\lambda^k \leq a <
3\lambda^k$.  Put $a_k = 1$ in the first case  and $a_k = 2$ in the second
case.  Then \[ a - a_k\lambda^k < \lambda^k, \] so we have \[ \lambda^l \leq a
- a_k\lambda^k < \lambda^{l+1} \] for some $l < k$.  By the inductive
assumption, there exists a set of integers $\{ a_i \}_{i=1}^l$, with $a_i \in
\{0,1,2\}$ and $a_l \not = 0$,  such that \[ \left| \sum_{i=1}^l a_i\lambda^i -
(a - a_k\lambda^k)\right| \leq \lambda. \] So \[ \left| \sum_{i=1}^l
a_i\lambda^i + a_k\lambda^k  - a \right| \leq \lambda, \] and if we put $a_i =
0$ for $l < i < k$, we obtain a set $\{ a_i \}_{i=1}^k$, with $a_i \in \{0,1,2\}$
and $a_k \not = 0$, satisfying~(\ref{syndet_eqn}).  Thus $\| \mathbf{v} -
\mathbf{u} \|$ is bounded.

When $\alpha' < 0$ the proof is almost the same; consider \[ \mathbf{v} =
-\alpha\sum_{i=1}^m a_i \lambda^i  \mathbf{v}_1 - \beta\sum_{i=1}^m a_i
\lambda^{-1} \mathbf{v}_2. \]  To show that every point of $W_2$ is within a
bounded distance of some point of $S_2$, the proof is similar. \end{proof}


Let $S$ be the set $S_1 + S_2$.  Note that $S$ is a subset of the integer
lattice $L$.

\begin{corollary}\label{SinL}  Each point of $L$ is within a bounded distance of
some point in $S$.\end{corollary}

\begin{proof} The eigenvectors $\vecv_1$ and $\vecv_2$ are linearly
independent, so $\R^2 = W_1 + W_2$.  Each point of $L$, then, can be
written as $\mathbf{w}_1 + \mathbf{w}_2$ for some $\mathbf{w}_1 \in W_1$ and
$\mathbf{w}_2 \in W_2$.  By Lemma~\ref{syndetic}, $\vecw_1$ is within a bounded
distance of some point of $S_1$, and $\vecw_2$ is within a bounded distance of
some point of $S_2$.  Thus $\mathbf{w}_1 + \mathbf{w}_2$ is within a bounded
distance of some point of $S_1 + S_2$.    \end{proof}


We now establish a lower bound on the norm of vectors in $S$. Let
$\mathbf{v}$ be in $S$.  Then $\mathbf{v}$ may be expressed in the form
\begin{equation} \label{formv} \mathbf{v} = \pm_1 \sum_{i=1}^m a_i A^i
\mathbf{y}_0 \pm_2 \sum_{j=1}^n b_j A^{-j} \mathbf{y}_0, \end{equation} where
$\pm_1$ and $\pm_2$ are independent of each other. We first prove a general
result about linearly independent vectors.


\begin{lemma} \label{combbound}  Let $\mathbf{w}_1$ and $\mathbf{w}_2$ be
linearly independent vectors in the vector space $\mathbb{R}^m$, where $m \geq 2$.
Then there exists an $M > 0$ such that, for all $\mu_1$, $\mu_2 \in
\mathbb{R}$, \[ \| \mu_1 \mathbf{w}_1 + \mu_2 \mathbf{w}_2 \| \geq M \max \{ \|
\mu_1 \mathbf{w}_1 \|, \| \mu_2 \mathbf{w}_2 \| \}. \] \end{lemma}

\begin{proof} The two-dimensional vector space spanned by $\mathbf{w}_1$ and
$\mathbf{w}_2$ may be identified with the complex plane $\mathbb{C}$. Indeed we
may, without loss of generality, identify $\mathbf{w}_1$ with the point
$1e^{i0} = 1$, and $\mathbf{w}_2$ with the point $re^{i\theta}$ where $r > 0$
and $0 < \theta < \pi$.  Let $M = \sin \theta$. Then, if $\mu_1 = 0$, there is
nothing more to prove. Otherwise, we may assume without loss of generality that
$\mu_1 > 0$, and so we are trying to prove that \begin{equation}
\label{start_ineq} |\mu_1 + \mu_2re^{i\theta}| \geq M \max\{\mu_1,|\mu_2|r\}
\end{equation} for all $\mu_1 > 0$ and all $\mu_2 \in \mathbb{R}$.  Divide
through this inequality by $\mu_1$ and put $R = \mu_2r/\mu_1$.  To
establish~(\ref{start_ineq}), it now suffices to prove that for all $R \in
\mathbb{R}$, \[  | 1 + Re^{i\theta}| \geq M \max \{1,|R|\}. \] Now, $ |1 +
Re^{i\theta}|^2  = 1 + 2R\cos \theta + R^2$. We have \begin{align*} (\cos
\theta + R)^2 & \geq 0 \\ \Rightarrow \cos^2\theta + 2R\cos \theta + R^2 & \geq
0 \\ \Rightarrow 1 + 2R\cos \theta + R^2 & \geq \sin^2 \theta.\end{align*}
Also,\begin{align*} (1 + R \cos \theta )^2 & \geq 0 \\ \Rightarrow 1  + 2R\cos
\theta + R^2\cos^2 \theta & \geq 0 \\ \Rightarrow 1 + 2R\cos \theta + R^2 &
\geq R^2 \sin^2 \theta.\end{align*} It follows that \[ 1 + 2R \cos \theta + R^2
\geq M^2 \max \{ 1, R^2\}. \] Take square roots of both sides to complete the
proof.\end{proof}

Note that the value of $M$ in Lemma~\ref{combbound} depends only on the angle
between the vectors $\mathbf{w}_1$ and $\mathbf{w}_2$. When we use
Lemma~\ref{combbound} in the next proof, the value of $M$ will thus depend only
on the angle between the fixed eigenvectors  $\mathbf{v}_1$ and
$\mathbf{v}_2$.  So $M$, as far as it is used here, is a constant.


\begin{lemma} \label{lengthbounds} There exists a constant $c_1 > 0$ such that,
for any $\mathbf{v}$ in $S$, if $k$ is the maximum of $m$ and $n$ where
$\vecv$ has  the form~(\ref{formv}), then $c_1 \lambda^k \leq \| \mathbf{v}
\|$. \end{lemma}

\begin{proof} As in Lemma~\ref{syndetic},  $\mathbf{y}_0 = \alpha \mathbf{v}_1
+ \beta \mathbf{v}_2$ for fixed $\alpha$ and $\beta$.  Then, we get
\begin{align*} \mathbf{v} & =  \pm_1  \sum_{i=1}^m a_i \left(
\alpha\lambda^i\mathbf{v}_1 + \beta\lambda^{-i}\mathbf{v}_2 \right) \pm_2
\sum_{j=1}^n b_j \left( \alpha\lambda^{-j} \mathbf{v}_1 + \beta\lambda^j
\mathbf{v}_2\right) \\ & =  \left(\pm_1 \sum_{i=1}^m a_i \lambda^i \pm_2
\sum_{j=1}^n b_j \lambda^{-j}\right)\alpha \mathbf{v}_1 + \left(\pm_1
\sum_{i=1}^m a_i \lambda^{-i} \pm_2 \sum_{j=1}^n b_j \lambda^j\right)\beta
\mathbf{v}_2. \end{align*} Let \[\mu_1 = \left(\pm_1 \sum_{i=1}^m a_i \lambda^i
\pm_2 \sum_{j=1}^n b_j \lambda^{-j}\right)\alpha,\ \mbox{  and  } \ \mu_2 =
\left(\pm_1 \sum_{i=1}^m a_i \lambda^{-i} \pm_2 \sum_{j=1}^n b_j
\lambda^j\right)\beta.\] By Lemma~\ref{combbound}, there exists a constant $M >
0$ such that \[ \| \mathbf{v} \|  \geq  M \max \{ \| \mu_1\mathbf{v}_1\|,
\|\mu_2 \mathbf{v}_2 \| \} = M \max \{ |\mu_1|, |\mu_2|\}.  \] Suppose first
that $m \geq n$, and consider  \[ \frac{|\mu_1|}{\alpha} = \left| \pm_1
\sum_{i=1}^m a_i \lambda^i \pm_2 \sum_{j=1}^n b_j \lambda^{-j}\right|. \]
Since $a_m \not = 0$, $\sum_{i=1}^m a_i \lambda^i \geq \lambda^m$.  For
the other sum, we have \[ 0 < \sum_{j=1}^n b_j \lambda^{-j} <
\sum_{j=0}^\infty \lambda^{-j} = \frac{1}{1 - \lambda^{-1}} =
\frac{\lambda}{\lambda - 1} < 2. \] It follows that \begin{align*} \left| \pm_1
\sum_{i=1}^m a_i \lambda^i \pm_2 \sum_{j=1}^n b_j \lambda^{-j}\right| & \geq
\left| \left| \sum_{i=1}^m a_i \lambda^i \right| - \left| \sum_{j=1}^n b_j
\lambda^{-j}\right| \right| \\ & = \sum_{i=1}^m a_i \lambda^i -  \sum_{j=1}^n
b_j \lambda^{-j}\\ & \geq \lambda^m - 2 \\ & \geq \frac{1}{5}\lambda^m.
\end{align*} Thus, $\| \vecv \| \geq M |\mu_1| \geq c_1 \lambda^m$ where
$c_1 > 0$ is constant.  In the
case $n > m$, the proof is similar, with a lower bound being established for
$|\mu_2|$. \end{proof}


Corollary~\ref{SinL} and Lemma~\ref{lengthbounds} will be used in the conclusion of the
proof of Proposition~\ref{Eij_is_U1}.  Before this, we return to the
interpretation of $V$ as a subgroup of $SL(3,\mathbb{Z})$.  If $\mathbf{v} \in
S$ is considered as an element of $SL(3,\mathbb{Z})$, then its word length is
bounded as follows.


\begin{lemma} \label{wordbound} There exists a constant $c_2 > 0$ such that,
for any $\mathbf{v}$ in $S$, if $k$ is the maximum of $m$ and $n$ where $\vecv$
has the form~(\ref{formv}), then $\mathbf{v}$ may be written as a word in the
elements $A$ and $Y_0$ of $SL(3,\mathbb{Z})$ of length at most $c_2 k$.
\end{lemma}

\begin{proof} The expression \[\mathbf{v} = \pm_1 \sum_{i=1}^m a_i A^i
\mathbf{y}_0 \pm_2 \sum_{j=1}^n b_j A^{-j} \mathbf{y}_0\] may be rewritten as
\begin{align*} \mathbf{v}  =   & \pm_1 A(a_1\mathbf{y}_0 + A(a_2\mathbf{y}_0 +
\cdots + A(a_{m-1}\mathbf{y}_0 + A a_m \mathbf{y}_0)\cdots)) \\ & \mbox{} \pm_2
A^{-1}(b_1\mathbf{y}_0 + A^{-1}(b_2 \mathbf{y}_0 + \cdots +
A^{-1}(b_{n-1}\mathbf{y}_0 + A^{-1} b_n \mathbf{y}_0)\cdots)). \end{align*}
Using the correspondence between $\mathbb{R}^2$ and $SL(3,\mathbb{Z})$, we may
translate this expression for $\mathbf{v}$ into a word written multiplicatively
in the elements \[ A =  \startm 2 & 1 & 0 \\ 1 & 1 & 0 \\ 0 & 0 & 1 \finishm
\hspace{5mm}\mbox{and} \hspace{5mm}
Y_0 = \startm 1 & 0 & 1 \\ 0 & 1 & 1 \\ 0 & 0 & 1 \finishm \] of
$SL(3,\mathbb{Z})$.   Terms of the form $a_i\mathbf{y}_0$ have length at most
2, since $0 \leq a_i \leq 2$.  There are $m$ terms of this form. Each
multiplication by $A$ in $\mathbb{R}^2$ corresponds to conjugation by $A$ in
$SL(3,\mathbb{Z})$, so we add 2 to the length for each of the $m$
multiplications by $A$. Similar calculations for the second half of the
expression show that $\mathbf{v}$ may be written as a word of length at most $
2m + 2m + 2n + 2n  =  4(m+n) \leq  4(2k) = C_2 k.$ \end{proof}


We are now in a position to conclude the proof of Proposition~\ref{Eij_is_U1}.
Let $\Sigma$ be a finite set of generators of $SL(3, \mathbb{Z})$ which
includes the elements $A$, $Y_0$, $E_{13}(1)$ and $E_{23}(1)$, and let
$d_\Sigma$ be the word distance function induced by the set $\Sigma$. Let
$\mathbf{v}$ be in the set $S$. Using logarithm laws to rearrange the inequality $c_1
\lambda^k \leq \| \mathbf{v} \| $ from Lemma~\ref{lengthbounds}, we obtain \[
k  \leq  \frac{\log \| \mathbf{v} \| - \log c_1}{\log \lambda}.\] By
Lemma~\ref{wordbound} and this inequality, the word distance function $d_\Sigma$ satisfies \[
d_\Sigma(1,\mathbf{v})  \leq  c_2k  \leq  \frac{c_2}{\log \lambda}(\log \|
\mathbf{v} \| - \log c_1) \leq B \log \| \vecv \|, \] for some constant $B>0$.

Next, by Corollary~\ref{SinL}, there exists some constant $b_1 > 0$ such that,
for all $\vecw \in L$, we can find a vector $\mathbf{v} \in S$ so that
$\|\mathbf{w} - \mathbf{v}\| \leq b_1$.  Let $\mathbf{w}$ be any element of
$L$, and write $\vecv'  = \vecw - \vecv$, so that $\|\mathbf{v}'\| \leq b_1$.
We now prove that \[d_\Sigma(1,\mathbf{w}) \leq B' \log (\| \mathbf{w}\|+1), \]
for some constant $B' > 0$.    Since $\mathbf{w}$ and $\mathbf{v}$ are elements
of the lattice $L$, so is $\mathbf{v}'$.  We may then translate the vector
$\mathbf{w} = \vecv + \vecv'\in\R^2$ into a word, written multiplicatively, in
terms of elements of $\slthreez$.  We use the elements $A$ and $Y_0$ for
$\mathbf{v}$, and $E_{13}(1)$ and $E_{23}(1)$ for $\mathbf{v}'$. Since the
value of $\|\mathbf{v}'\|$ is bounded, this multiplicative expression for
$\mathbf{v}'$ is bounded in length, by say $b_2 > 0$. Thus \[
d_\Sigma(1,\mathbf{w})  \leq  d_\Sigma(1,\mathbf{v}) + d_\Sigma(1,\mathbf{v}')
\leq  B \log \| \mathbf{v} \| + b_2. \] By the triangle inequality, $\|
\mathbf{v} \|  =  \| \mathbf{w} - \mathbf{v}' \| \leq  \| \mathbf{w} \| + \|
\mathbf{v}' \|$,  so \begin{align*} B \log \| \mathbf{v} \|  + b_2  &\leq   B
\log (\| \mathbf{w} \| + \| \mathbf{v}' \| ) + b_2  \\ & \leq  B\log (\|
\mathbf{w} \| + b_1) + b_2 \\ & \leq B'\log(\|\vecw\|+1),\end{align*} for some constant $B' >
0$.

Let $n$ be any integer. Then the matrix $(E_{13}(1))^n$ is in the lattice $L$.
It corresponds to the vector $\vecw = \startm n \\ 0 \finishm$, which has norm
equal to $|n|$.  So, for some $C>0$, and all integers $n$,  \[ d_W(1,
(E_{13}(1))^n) \leq C \log (|n|+1).  \] Thus, $E_{13}(1)$ is  U1. \end{proof}

\begin{propn} \label{product2} There exist constants $c_3, c_4 > 0$ so that any
$\delta \in SL(2,\mathbb{Z})$ may be written as a word of the form $\delta =
r_1 r_2 \cdots r_m$, where \begin{enumerate}\item each $r_j$, for $1 \leq j \leq
m$, is either \matrixv, or $\startm 1 & n_j \\ 0 & 1 \finishm$ for some non-zero
integer $n_j$, \item we have \[ c_3
d_U(p_0, \delta p_0) \leq \sum_{j=1}^m f(r_j) \leq c_4 d_U(p_0, \delta p_0), \]
where \[ f(r) = \left\{ \begin{array}{ll} 1 & \mbox{if} \ r  = \matrixv \\[4mm]
\log (|n| + 1) & \mbox{if} \ r = \startm 1 & n \\ 0 & 1 \finishm, \end{array}
\right. \] the point $p_0$ is  $2i$ in $U^2$ the upper half-plane, and
$d_U(\cdot,\cdot)$ is the hyperbolic distance function on $U^2$.
\end{enumerate} \end{propn}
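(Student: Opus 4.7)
The plan is to construct the word $r_1 r_2 \cdots r_m$ by the classical Euclidean/continued fraction algorithm for $SL(2,\Z)$, and then verify the two inequalities separately. Write $\delta = \startm a & b \\ c & d \finishm$. If $c = 0$ the matrix is $\pm u(b/d)$ and we are done; otherwise choose the integer $n$ minimising $|a - nc|$, so that $u(-n)\delta$ has $(1,1)$-entry of absolute value less than $|c|$; then $v\,u(-n)\delta$ has bottom-left entry strictly smaller in absolute value than $c$. Iterating terminates, and inverting the sequence of reductions yields $\delta = u(n_0)\,v\,u(n_1)\,v\,u(n_2)\cdots v\,u(n_k)$ with $n_i \in \Z$ and (after absorbing any interior zeros using $v^2 = -I$) with $n_i \neq 0$ for $1 \leq i \leq k-1$. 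Deleting $u(0)$ when it appears at an end produces the required word.

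For the inequality $c_3\, d_U(p_0, \delta p_0) \leq \sum_j f(r_j)$, the triangle inequality and the fact that $SL(2,\Z)$ acts on $U^2$ by isometries give
\[
d_U(p_0, \delta p_0) \;\leq\; \sum_{j=1}^m d_U(r_1 \cdots r_{j-1} p_0,\, r_1 \cdots r_j p_0) \;=\; \sum_{j=1}^m d_U(p_0, r_j p_0).
\]
A direct computation using the explicit formula for $d_U$ yields $d_U(2i, v(2i)) = d_U(2i, i/2) = \log 4$ and $d_U(2i, 2i+n) = \cosh^{-1}(1 + n^2/8)$, the latter bounded by a universal constant multiple of $\log(|n|+1)$ uniformly in $n \neq 0$. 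Hence $d_U(p_0, \delta p_0) \leq C \sum_j f(r_j)$, giving the inequality with $c_3 = 1/C$.

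For the reverse inequality $\sum_j f(r_j) \leq c_4\, d_U(p_0, \delta p_0)$, I would use the standard comparison $d_U(p_0, \delta p_0) \asymp \log \|\delta\|$ (a consequence of Lemma~\ref{U2isomP2} and the definition of $d_P$, since $\delta \in SL(2,\R)$ has singular values $\sigma$ and $\sigma^{-1}$, so $d_P(I,\delta\circ I) = 4\log\|\delta\|$). It then suffices to prove $\sum_j f(r_j) \leq C \log \|\delta\|$. The key quantitative fact about continued fractions is the product-type lower bound
\[
\|\delta\| \;\geq\; c \prod_{i=0}^{k} \max(|n_i|,\, 1),
\]
which follows from the positivity and increase of continued fraction convergents. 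Taking logarithms yields $\log\|\delta\| \geq \sum_{i=0}^k \log(|n_i| + 1) - O(k)$, and since each interior $n_i$ is non-zero the count $k$ of $v$-letters is itself absorbed into the leading term up to an overall constant.

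The main obstacle is the multiplicative estimate $\|\delta\| \geq c \prod \max(|n_i|,1)$. When all $|n_i| \geq 2$ this is classical, but when many $n_i$ equal $\pm 1$ one must control the potential cancellation arising from the finite-order relation $(u(\pm 1) v)^3 = \mp I$. I expect the cleanest route is to group consecutive $\pm 1$'s into blocks of bounded length (passing to a canonical form of the continued fraction) and to verify directly, via a norm-estimate on the $2\times 2$ product, that each such block still contributes a uniform amount both to $\sum f$ and to $\log\|\delta\|$, so the proportionality is preserved.
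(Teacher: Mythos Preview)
Your construction of the word and your plan for the upper bound both diverge from the paper. The paper builds the word geometrically: it follows the geodesic $[p_0,\delta p_0]$ through the $PSL(2,\Z)$-tessellation, recording $u(\pm1)$ or $v$ at each tile-boundary crossing, then collapses consecutive $u(\pm1)$'s into a single $u(n_j)$. Because every partial product $\theta_j$ corresponds to a tile actually met by the geodesic, the hard inequality is obtained by projecting each $p_j=\theta_j p_0$ to its nearest point $q_j$ on $[p_0,\delta p_0]$, checking by explicit hyperbolic computation that the $q_j$ are monotone along the geodesic, and then bounding $d_U(p_{j-1},p_j)\le K_2'\,d_U(q_{j-1},q_j)$ via the constraints that the centre and radius of the geodesic must satisfy in order to enter and leave the relevant tiles. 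Summing, together with the elementary comparison $d_U(p_0,r_jp_0)\asymp f(r_j)$, finishes. No norm estimate or continued-fraction inequality is used, and your Euclidean-algorithm word is not a priori the one produced by the geodesic, so the paper's projection argument does not transfer to it.

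Your route via $\log\|\delta\|$ and a product bound on partial quotients is reasonable, but the proposal stops exactly where the content lies, and the obstacle you flag is not the real one. With your nearest-integer choice of $n$ one has $|c_i|\le|c_{i-1}|/2$, hence $|a_i/c_i|=|c_{i-1}/c_i|\ge2$, so every \emph{interior} partial quotient automatically satisfies $|n_i|\ge2$; the worry about long runs of $\pm1$'s and the relation $(u(\pm1)v)^3=\mp I$ simply does not arise for this algorithm. What does remain is to actually prove the estimate: the halving of $|c_i|$ gives $k\le\log_2\|\delta\|+O(1)$, which controls the $v$-count; the telescoping bound $|n_i|+1\le 2|c_{i-2}|/|c_{i-1}|$ for $2\le i\le k$ gives $\sum_{i=2}^{k}\log(|n_i|+1)\le(k-1)\log2+\log|c_0|$; and the two outer quotients $n_1$, $n_{k+1}$ need separate short bounds in terms of $\|\delta\|$. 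None of this appears in your write-up. The approach can be completed along these lines, but as it stands the upper bound is only a heuristic, and the paragraph on grouping $\pm1$-blocks is a red herring.
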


\begin{proof} Let $T$ be the hyperbolic triangle with vertices at $\pm
\frac{1}{2} + \frac{\sqrt{3}}{2}i$ and $\infty$.  Let $u(n)$ denote the matrix
$\startm 1 & n \\ 0 & 1 \finishm$, where $n \in \Z$, and let $v$ be the matrix
\matrixv.

Given $\delta \in \sltwoz$, we consider the geodesic segment $[p_0, \delta
p_0]$ connecting the points $p_0$ and $\delta p_0$ in $U^2$. This geodesic
segment passes through a sequence of tiles $\left\{ \theta_k T
\right\}_{k=0}^n$, such that $\theta_0 = 1$ and $\theta_n = \delta$, and for $1
\leq k \leq n$, tile $\theta_{k-1} T$ is adjacent to tile $\theta_k T$. Let
$s_k = \theta^{-1}_{k-1}\theta_k$. Then the product of the elements $s_k$ is
\[s_1s_2 \cdots s_n = \theta^{-1}_0\theta_1\theta^{-1}_1\theta_2 \cdots
\theta^{-1}_{n-1}\theta_n = 1.\theta_n = \delta.\] Since successive tiles in
the sequence $\left\{ \theta_k T \right\}_{k=0}^n$ are adjacent, we have, for
$1 \leq k \leq n$, \[ s_k = \theta^{-1}_{k-1}\theta_k \in \left\{ u(1), u(-1),
v \right\}. \] Notice also that since $v^2$ is the identity in $P\sldz$,  there
are no two consecutive matrices $s_{k-1}$ and $s_k$ which are both equal to
$v$.

We now, by reducing the product of the $s_k$, construct a new product
$r_1r_2\cdots r_m$ which is also equal to $\delta$, a corresponding subsequence
of tiles $\{ \theta_j T\}_{j=0}^m$, and a set of points $\{ p_j \}_{j=0}^m$ in
$U^2$.  The basic idea is to multiply together all adjacent matrices of the
form $u(1)$ or $u(-1)$. Let $s_i$ be the first matrix in the product
$s_1s_2\cdots s_n$ which has the form $s_i = u(1)$ or $s_i = u(-1)$.  For all
$k$ such that $1 \leq k < i$, let $r_k = s_k$, and let the point $p_k$ be $p_k
= \theta_k  p_0$. There is an integer $l > 0$ such that $s_i = u(n_i), s_{i+1}
= u(n_{i+1}), \ldots, s_{i+l} = u(n_{i+l})$, but $s_{i + l + 1}= v$.  Let $r_i$
be the matrix $r_i = u(n_i')$, where $n_i' = n_i + n_{i+1} + \cdots +
n_{i+l}$.  Then, \[ r_i  = s_i s_{i+1} \cdots s_{i+l}  = \theta_{i-1}^{-1}
\theta_i \theta_i^{-1} \theta_{i+1} \cdots \theta_{i + l - 1}^{-1} \theta_{i+l}
= \theta_{i-1}^{-1}\theta_{i+l}. \] Let $p_i$ be the point $\theta_{i+l}  p_0$.
Relabel $\theta_{i+l}$ as $\theta_i$, so that $r_i = \theta_{i-1}^{-1}
\theta_i$ and $p_i = \theta_i  p_0$. We now have a product \[ r_1 \cdots r_i
s_{i+l+1} \cdots s_n = \delta, \] a sequence of tiles $\{ \theta_j
T\}_{j=0}^i$, and a set of points $\{ p_j \}_{j=0}^i$, such that, for $1 \leq j
\leq i$, $r_j = \theta_{j-1}^{-1} \theta_j$ and $p_j = \theta_j p_0$.
Iterate the above reduction process on the product $s_{i+l+1} \cdots s_n$ until
a product \[ r_1 r_2 \cdots r_m = \delta, \] a sequence $\{ \theta_j
T\}_{j=0}^m$ and a set $\{ p_j \}_{j=0}^m$ is obtained, where, for $1 \leq j
\leq m$, $r_j = \theta_{j-1}^{-1}\theta_j$ and  $p_j = \theta_j p_0$.
In this product, no two consecutive matrices $r_{j-1}$ and $r_j$ have the
respective forms $ r_{j-1} = u(n_{j-1})$ and $r_j =
u( n_j)$, where $n_{j-1}$ and $n_j$ are non-zero integers.

We now consider the path $\alpha$ in $U^2$ which is the following union of geodesic
segments: \[ \alpha = [p_0, p_1] \cup [p_1, p_2] \cup \cdots \cup
[p_{m-1},p_m]. \] The length of this path is
\begin{equation}\label{plength}\length(\alpha) = \sum_{j=1}^m d_U(p_0, r_j
p_0),\end{equation} since
\[ \sum_{j=1}^m d_U(p_{j-1},p_j)  = \sum_{j=1}^m d_U(\theta_{j-1}
p_0,\theta_j p_0)  = \sum_{j=1}^m d_U(p_0, \theta_{j-1}^{-1}\theta_j   p_0)
 = \sum_{j=1}^m d_U(p_0, r_j  p_0). \]   The following lemma
shows that, roughly speaking, the length of $\alpha$ is not too different from
the length of the geodesic joining the points $p_0$ and $\delta p_0$.

\begin{lemma} \label{K1lemma}There exist constants $K_1, K_2 > 0$ such that \[ K_1
d_U(p_0,\delta p_0) \leq \length(\alpha) \leq K_2
d_U(p_0,\delta p_0). \] \end{lemma}

\begin{proof} Since the geodesic segment $[p_0, \delta p_0]$ is the shortest
path from $p_0$ to $\delta p_0$, we have immediately that \[ d_U(p_0, \delta
p_0) \leq \length(\alpha). \]

To prove the other required inequality, for $0 \leq j \leq m$ let $q_j$ be the
point on the geodesic segment $[p_0, \delta p_0]$ which is closest to the point
$p_j$. Note that  $q_0 = p_0$ and $q_m = p_m$, and also that the point $q_j$
need not be in the same tile of the tesselation as $p_j$.   We claim that, for
$1 \leq j \leq m$,    \begin{equation}\label{qorder} d_U(p_0,q_{j-1}) <
d_U(p_0,q_j). \end{equation} This claim implies that the geodesic segments
$[q_{j-1},q_j]$ are disjoint except for their endpoints, and so
\begin{equation}\label{qlength} d_U(p_0, \delta p_0) =  \sum_{j=1}^m
d_U(q_{j-1},q_j).\end{equation} In proving this claim, we will establish that
the sequence $\{q_j\}_{j=0}^m$ is well-defined.

\begin{center}
\begin{pspicture}(0,0)(16,10)
\psline[linewidth=1.5pt]{->}(0,0)(16.5,0)
\psline[linewidth=1.5pt]{->}(0,0)(0,10)
\psline(2,0)(2,10)
\psline(6,0)(6,10)
\psline(10,0)(10,10)
\psline(14,0)(14,10)
\uput[d](0,0){0}
\uput[d](4,0){1}
\uput[d](8,0){2}
\uput[d](12,0){3}
\uput[d](16,0){4}
\psarc(0,0){4}{0}{90}
\psarc(4,0){4}{0}{180}
\psarc(8,0){4}{0}{180}
\psarc(12,0){4}{0}{180}
\psarc(16,0){4}{90}{180}
\psarc(2,0){2}{0}{180}
\psarc(6,0){2}{0}{180}
\psarc(10,0){2}{0}{180}
\psarc(14,0){2}{0}{180}
\psarc(1,0){1}{0}{180}
\psarc(3,0){1}{0}{180}
\psarc(5,0){1}{0}{180}
\psarc(7,0){1}{0}{180}
\psarc(9,0){1}{0}{180}
\psarc(11,0){1}{0}{180}
\psarc(13,0){1}{0}{180}
\psarc(15,0){1}{0}{180}
\psarc(1.33,0){1.33}{0}{180}
\psarc(5.33,0){1.33}{0}{180}
\psarc(9.33,0){1.33}{0}{180}
\psarc(13.33,0){1.33}{0}{180}
\psarc(2.67,0){1.33}{0}{180}
\psarc(6.67,0){1.33}{0}{180}
\psarc(10.67,0){1.33}{0}{180}
\psarc(14.67,0){1.33}{0}{180}
\psarc(0.8,0){0.8}{0}{180}
\psarc(4.8,0){0.8}{0}{180}
\psarc(8.8,0){0.8}{0}{180}
\psarc(12.8,0){0.8}{0}{180}
\psarc(3.2,0){0.8}{0}{180}
\psarc(7.2,0){0.8}{0}{180}
\psarc(11.2,0){0.8}{0}{180}
\psarc(15.2,0){0.8}{0}{180}
\psarc(0.67,0){0.67}{0}{180}
\psarc(4.67,0){0.67}{0}{180}
\psarc(8.67,0){0.67}{0}{180}
\psarc(12.67,0){0.67}{0}{180}
\psarc(3.33,0){0.67}{0}{180}
\psarc(7.33,0){0.67}{0}{180}
\psarc(11.33,0){0.67}{0}{180}
\psarc(15.33,0){0.67}{0}{180}
\psarc(0.57,0){0.57}{0}{180}
\psarc(4.57,0){0.57}{0}{180}
\psarc(8.57,0){0.57}{0}{180}
\psarc(12.57,0){0.57}{0}{180}
\psarc(3.43,0){0.57}{0}{180}
\psarc(7.43,0){0.57}{0}{180}
\psarc(11.43,0){0.57}{0}{180}
\psarc(15.43,0){0.57}{0}{180}
\psarc(0.5,0){0.5}{0}{180}
\psarc(1.5,0){0.5}{0}{180}
\psarc(2.5,0){0.5}{0}{180}
\psarc(3.5,0){0.5}{0}{180}
\psarc(4.5,0){0.5}{0}{180}
\psarc(5.5,0){0.5}{0}{180}
\psarc(6.5,0){0.5}{0}{180}
\psarc(7.5,0){0.5}{0}{180}
\psarc(8.5,0){0.5}{0}{180}
\psarc(9.5,0){0.5}{0}{180}
\psarc(10.5,0){0.5}{0}{180}
\psarc(11.5,0){0.5}{0}{180}
\psarc(12.5,0){0.5}{0}{180}
\psarc(13.5,0){0.5}{0}{180}
\psarc(14.5,0){0.5}{0}{180}
\psarc(15.5,0){0.5}{0}{180}
\psarc[linestyle=dashed](3.95,0){8.93}{11.5}{116.5}
\uput[l](0,8){$p_0$}
\psdot*(0,8)
\uput[r](12,8){$p_1$}
\psdot*(12,8)
\uput[l](12,2){$p_2$}
\psdot*(12,2)
\uput[r](12.7,1.8){$p_3$}
\psdot*(12.7,1.8)
\uput[r](10.15,6.42){$q_1$}
\psdot*(10.15,6.42)
\uput[r](12.58,2.3){$q_2$}
\psdot*(12.58,2.3)
\psarc[linestyle=dotted](6,0){10}{53}{127}
\psline[linestyle=dotted](12,8)(12,2)
\psarc[linestyle=dotted](11.8,0){2}{62}{85.5}
\end{pspicture}
\end{center}

\medskip

Let $L$ be the geodesic which contains the geodesic segment $[p_0,\delta
p_0]$.  As $\theta_{j-1}^{-1}$ is an isometry,~(\ref{qorder}) holds
if and only if $d_U(\theta_{j-1}^{-1}p_0,\theta_{j-1}^{-1}q_{j-1}) <
d_U(\theta_{j-1}^{-1}p_0,\theta_{j-1}^{-1}q_j)$.  Since $q_{j-1}$ is the
closest point on $L$ to $p_{j-1}$, the point $\theta_{j-1}^{-1}q_{j-1}$ is the
closest point on the geodesic $\theta_{j-1}^{-1}L$ to $\theta_{j-1}^{-1}p_{j-1}
= p_0$.    Similarly, $\theta_{j-1}^{-1}q_j$ is the closest point on the
geodesic $\theta_{j-1}^{-1}L$ to the point $\theta_{j-1}^{-1}p_j$.  Now, the
geodesic $L$ intersects the tiles $\theta_j T$ and $\theta_{j-1} T$, and the
tiles $\theta_j T$ and $\theta_{j-1} T$ are adjacent.  So, the geodesic
$\theta_{j-1}^{-1}L$ intersects the tiles $\theta_{j-1}^{-1}\theta_j T$ and
$T$, and these tiles are also adjacent.   By symmetry in the imaginary axis, we
need only consider the cases $\theta_{j-1}^{-1}\theta_j T = u(1)T$ and
$\theta_{j-1}^{-1}\theta_j T = vT$.  Thus the point $\theta_{j-1}^{-1}p_j$ is either,
respectively, $1 + 2i$, or $i/2$. To simplify notation, we now write $L$ for
$\theta_{j-1}^{-1}L$, $p$ for $p_0$, $p'$ for $\theta_{j-1}^{-1}p_j$, $q$ for
$\theta_{j-1}^{-1}p_{j-1}$ and $q'$ for $\theta_{j-1}^{-1}q_j$.

The first case is when $p' = 1+2i$.  Here, the geodesic $L$ is a semicircle
orthogonal to the real axis, which intersects the tiles $T$ and $u(1)T$.  We may
parametrise $z \in L$ by \[ z = a + re^{i\phi}, \] where $a \in \R$, $r > 0$
and $\phi \in (0,\pi)$.  Since $L$ meets $T$ and $u(1)T$,
\begin{equation}\label{qorderboundonr} r \geq \left|a - \left(\frac{1}{2} +
\frac{\sqrt{3}}{2}i\right)\right| \hspace{5mm}\Rightarrow\hspace{5mm} r^2 \geq
a^2 - a + 1. \end{equation} The distance from the point $n+2i$, where $n \in
\Z$, to a point $z = a + re^{i\phi} \in L$ is  \[ d_U(n+2i,z) =
\cosh^{-1}\left( 1 + \frac{(n-a - r\cos\phi)^2 + (2 -
r\sin\phi)^2}{4r\sin\phi}\right). \] For fixed $n$, $a$ and $r$, this function
has a unique minimum when \begin{equation}\label{minangle} \cos\phi =
\frac{2(n-a)r}{(n-a)^2 + r^2 + 4}. \end{equation} Thus, the points $q$ and $q'$
are the unique closest points on $L$ to $p$ and $p'$ respectively. So, in
this first case, the choice of the $q_j$ is well-defined.  If $q = a +
re^{i\phi}$ and $q' = a + re^{i\phi'}$, to prove~(\ref{qorder}) it now suffices
to prove that $\phi'<\phi$. By~(\ref{minangle}), \[ \cos \phi =
\frac{-2ar}{a^2 + r^2 + 4} \hspace{5mm}\mbox{and}\hspace{5mm}\cos \phi' =
\frac{2(1-a)r}{(1-a)^2 + r^2 + 4}. \] If $0 \leq a \leq 1$, then by considering
the signs of $\cos\phi'$ and
$\cos\phi$, it follows that $\phi' < \phi$.  If $a > 1$ or $a < 0$
then $\cos\phi$ and $\cos\phi'$ have the same sign.  Since cosine is
decreasing on $(0,\pi)$, it suffices to prove that  $ \cos\phi < \cos\phi'
$.  The inequality
\[
\frac{-2ar}{a^2 + r^2 + 4} < \frac{2(1-a)r}{(1-a)^2 + r^2 + 4}
\]
holds if and only if $a^2 < a + r^2 + 4$.  By~(\ref{qorderboundonr}), we have
\[
a + r^2 + 4  > a + r^2 - 1 \geq a^2,
\]
and so $\cos\phi < \cos\phi'$.  This completes the proof of the case $p' = 1
+ 2i$.

The second case is when $p' = i/2$.  Here, $L$ intersects the tiles $T$ and
$vT$, so is either a vertical line, or a semicircle orthogonal to the real
axis. If $L$ is a vertical line, to prove~(\ref{qorder}) it suffices to prove
that $\im(q') < \im(q)$.  We may parametrise $z \in L$ by $z = a + iy$ for some
$a \in [-1/2,1/2]$.  Then the unique closest point on $L$ to $p = 2i$ is $q = a
+ i\sqrt{a^2 + 4}$, and the unique closest point on $L$ to $p' = i/2$ is $q' =
a + i(\sqrt{4a^2 + 1}/2)$.  Therefore the $q_j$ are well-defined here.  It is
straightforward to prove that $\sqrt{4a^2 + 1}/2 < \sqrt{a^2 + 4}$.  If $L$ is
a semicircle, we again parametrise $z \in L$ by $z = a + re^{i\phi}$.  Without
loss of generality, $a > 0$ (if $a = 0$ then $L$ cannot intersect both $T$ and
$vT$).  Again, the closest points to $p$ and $p'$ are unique; we find that the unique minimum
distances from $p$ and $p'$ are achieved when, respectively, \[ \cos \phi = \frac{-2ar}{a^2 +
r^2 + 4},\hspace{5mm}\mbox{and}\hspace{5mm}\cos \phi' = \frac{-8ar}{4a^2 + 4r^2
+ 1}.  \] So $q = a +
re^{i\phi}$ and $q' = a + re^{i\phi'}$.  To establish~(\ref{qorder}), it suffices to prove that $\phi' >
\phi$. Since $\cos\phi < 0$ and $\cos\phi' < 0$, we wish to show that $\cos\phi
> \cos\phi'$; this is again a straightforward inequality.

We have now proved~(\ref{qorder}), and so~(\ref{qlength}) holds.   The plan for
the rest of the proof of Lemma~\ref{K1lemma} is as follows.  After considering
two special cases, we show that when $r_j = u(n_j)$, there exists a constant
$K_2'$ such that $d_U(p_{j-1},p_j) \leq K_2' d_U(q_{j-1},q_j)$. This involves
parametrisation of a geodesic which is a semicircle with centre $a$ and radius
$r$.  The case  $j = 1$ is treated first, then for $j > 1$ we find the distance
between $q_{j-1}$ and $q_j$ in terms of $a$, $r$ and $n_j$, and thus show that
there exists a constant $K$ so that $\cosh^{-1}(1 + n_j^2 / K) \leq
d_U(q_{j-1},q_j)$.  From the explicit formula for $d_U$, it follows that
$d_U(p_{j-1},p_j) \leq K_2' d_U(q_{j-1},q_j)$.  Then, we consider the $r_j$
which are equal to $v$.  We show that \[ d_U(p_{j-1},p_j) + d_U(p_j,p_{j+1})
\leq 4K_2' (d_U(q_{j-1},q_j) + d_U(q_j,q_{j+1})). \] Adding together these
kinds of inequalities completes the proof.

In the special case where $\delta$ is the identity, Lemma~\ref{K1lemma} is
trivial.  The other special case is when $\delta = v$.  Here, we have
$d_U(p_0, p_1) = d_U(q_0, q_1)$, so the path $\alpha$ is identical to the
geodesic segment $[p_0,\delta p_0]$.  Provided $K_2 \geq 1$, there is nothing
more to prove in this case.  So if, later, we arrive at a value of $K_2$ which
is less than 1, we will take $K_2 = 1$ instead.

For all other $\delta \in \sltwoz$, at least one of the $r_j$ must have the
form $u(n_j)$, where $n_j$ is a non-zero integer.  We now show that when $r_j =
u(n_j)$, there exists a constant $K_2'$ such that \begin{equation}\label{case1}
d_U(p_{j-1},p_j) \leq K_2' d_U(q_{j-1},q_j). \end{equation} We have \[
d_U(p_{j-1},p_j)  = d_U(\theta_{j-1} p_0, \theta_j  p_0)  = d_U(p_0,
\theta_{j-1}^{-1}\theta_j p_0) = d_U(p_0,r_j p_0).\] Also, \[ d_U(q_{j-1},q_j)
= d_U(\theta_{j-1}^{-1}  q_{j-1},\theta_{j-1}^{-1}  q_j). \]  Since in both
cases we have applied the isometry $\theta_{j-1}^{-1}$, we may now, without
loss of generality, compare the distances between the points $p_0$ and $r_j
p_0$, and $\theta_{j-1}^{-1} q_{j-1}$ and $\theta_{j-1}^{-1}  q_j$. By symmetry
in the imaginary axis, we may in addition assume that $n_j \geq 1$.

We are considering geodesics $L$ which contain the image of $[p_0, \delta p_0]$
under $\theta_{j-1}^{-1}$, and which, since $p_0 \in T$ and $r_j p_0 \in
u(n_j)T$,  intersect the tiles $T$ and $u(n_j)T$.   We may, as above,
parametrise $z \in L$ by $ z = a + re^{i\phi}$,  where $a \in \R$, $r > 0$
and $0 < \phi < \pi$.

If $j = 1$, then the isometry $\theta_{j-1}^{-1}$ is just the identity.  Thus,
as $p_0 = q_0 = 2i$, the geodesic $L$ passes through the point $2i$ in the
tile $T$. Now, $L$ either intersects the arc of the circle $|z| = 1$ between
the points $z = \frac{1}{2} + \frac{\sqrt{3}}{2}i$ and $z = -\frac{1}{2} +
\frac{\sqrt{3}}{2}i$, or $L$ intersects the ray $R$ given by \[ R = \left\{ z
\in U^2 : \re(z) = -\frac{1}{2}\ \ \mbox{and } \im(z) \geq \frac{\sqrt{3}}{2}
\right\}. \] The case where $L$ intersects the arc may be dealt with as below,
where $\theta_{j-1}^{-1}$ is not the identity.  We now  suppose $L$ intersects
the ray $R$ in the point $w$. Then, the point $a$, which is the centre of the
semicircle $L$, is where the perpendicular bisector of the points $w$ and $2i$
meets the real axis. The value of $a$ thus has a maximum, when $w =
-\frac{1}{2} + \frac{\sqrt{3}}{2}i$. It follows that there are only finitely
many positive integers $n_1$ such that $p_1 = n_1 + 2i$. Let $N$ be the maximum
possible value of $n_1$. By the reductions carried out above to form the
product $r_1 r_2 \cdots r_m$, as the angle $\phi$ increases, the geodesic $L$
must pass straight from a tile \emph{not} of the form $u(m_1)T$, where $m_1 \in
\Z$, into the tile $u(n_1)T$.   Now, for $1 \leq n_1 \leq N$, the smallest
value of $d_U(2i,q_1)$ occurs when $L$ passes through the vertex $\left(n_1 -
\frac{1}{2}\right) + \frac{\sqrt{3}}{2}i$ of the tile $u(n_1)T$.  Since there are only
finitely many cases to consider, we can thus find a $K_2'$ sufficiently large
so that, for each instance $1 \leq n_1 \leq N$, \[ d_U(p_0,p_1) = d_U(2i,n_1 +
2i) \leq K_2' d_U(2i,q_1) = d_U(q_0,q_1). \]

For all $j > 1$, to simplify notation, write $n = n_j$, $p = p_0$, $p' = n +
p_0$, $q = \theta_{j-1}^{-1}  q_{j-1}$ and $q' = \theta_{j-1}^{-1} q_j$.  We
first establish bounds on the values of $a$ and $r$ in terms of $n$.  By the
reductions carried out above to form the product $r_1 r_2 \cdots r_m$, as the
angle $\phi$ increases, the geodesic $L$ must pass straight from a tile
\emph{not} of the form $u(m_1)T$ into $u(n)T$, and later on from $T$ straight
into a tile \emph{not} of the form $u(m_2)T$, where $m_1, m_2 \in \mathbb{Z}$.

\begin{center}
\begin{pspicture}(-2,0)(10,6)
\psline[linewidth=1.5pt]{<->}(-2,0)(10,0) \psline[linewidth=1.5pt]{->}(0,0)(0,6)
\psline(8,0)(8,0.1) \psline(3.7,0)(3.7,0.1)
\psline(-1,1.73)(-1,6)
\psline(1,1.73)(1,6) \psline(7,1.73)(7,6) \psline(9,1.73)(9,6)
\psarc(0,0){2}{60}{120} \psarc(8,0){2}{60}{120}
\psarc[linestyle=dashed](3.7,0){4.5}{0}{180} \uput[d](0,0){0} \uput[d](8,0){$n$}
\uput[d](3.7,0){$a$} \uput[u](5.7,4.1){$L$}
\psdot*(0,4) \uput[l](0,4){$p$}
\psdot*(8,4) \uput[l](8,4){$p'$}
\psdot*(1,1.73)
\psdot*(-1,1.73)\psdot*(9,1.73)\psdot*(7,1.73)
\uput[r](1,1.73){$\frac{1}{2} + \frac{\sqrt{3}}{2}i$}
\uput[l](-1,1.73){$-\frac{1}{2} + \frac{\sqrt{3}}{2}i$}
\uput[r](9,1.73){$n + \frac{1}{2} + \frac{\sqrt{3}}{2}i$}
\uput[l](7,1.73){$n - \frac{1}{2} + \frac{\sqrt{3}}{2}i$}
\end{pspicture}
\end{center}

\medskip

\noindent Thus, the radius of $L$ is bounded by the
inequalities \begin{equation} \label{rbound2} \left| a - \left( n -  1/2 +
i\sqrt{3}/2 \right) \right| \leq r \leq \left| a - \left(n + 1/2 +i\sqrt{3}/2
\right) \right| \end{equation} and \begin{equation} \label{rbound1} \left| a -
\left( 1/2 + i\sqrt{3}/2 \right) \right| \leq r \leq \left| a - \left( - 1/2 +
i\sqrt{3}/2 \right) \right|. \end{equation}
 By squaring, expanding and combining~(\ref{rbound2}) and~(\ref{rbound1}),
we get \begin{equation} \label{abound} \frac{n-1}{2} \leq a \leq \frac{n+1}{2}.
\end{equation} Then, using~(\ref{rbound1}) and~(\ref{abound}), we obtain
\begin{equation} \label{rbound3} \frac{n^2 - 4n + 7}{4} \leq r^2 \leq \frac{n^2
+ 4n + 7}{4}. \end{equation}

Now, we find the hyperbolic distance between $q$ and $q'$ in terms of $a$, $r$
and $n$. Using~(\ref{minangle}), the closest point to  $p=2i$ on the geodesic $L$
is \[ q  = a + \frac{-2ar^2}{a^2 + r^2 + 4} + ir \frac{\sqrt{(a^2 + r^2 + 4 )^2
- 4a^2r^2}}{a^2 + r^2 + 4}, \] and the  closest point to $p' =
n+2i$ on the geodesic $L$ is \[ q'  = a + \frac{2(n-a)r^2}{(n-a)^2 + r^2 + 4}
+ ir \frac{\sqrt{((n-a)^2 + r^2 + 4 )^2 - 4(n-a)^2r^2}}{(n-a)^2 + r^2 + 4}. \]
Then, the distance between $q$ and $q'$ is, for fixed $n$, \[ d_U(q,q') = \cosh^{-1}\left( 1 +
\frac{F(a,r)}{G(a,r)}\right), \] where $F:\mathbb{R}^2 \rightarrow \mathbb{R}$
is the function \begin{align*} F(a,r)  = & \left|q - q'\right|^2 \\  = & \left(
\re(q) - \re(q')\right)^2 + \left( \im(q) - \im(q')\right)^2\\  = & \left(
\frac{2(n-a)r^2}{(n-a)^2 + r^2 + 4} + \frac{2ar^2}{a^2 + r^2 + 4} \right)^2 +
\\ & r^2\left( \frac{\sqrt{(a^2 + r^2 + 4)^2 - 4a^2r^2}}{a^2 + r^2 + 4} -
\frac{\sqrt{((n-a)^2 + r^2 + 4)^2 - 4r^2(n-a)^2 }}{(n-a)^2 + r^2 + 4}
\right)^2,\end{align*} and $G:\R^2 \rightarrow \R$ is \begin{align*} G(a,r) &
=  2\im(q)\im(q') \\ & =  2r^2\frac{\sqrt{((a^2 + r^2 + 4 )^2 - 4a^2r^2 )(
((n-a)^2 + r^2 + 4)^2 - 4r^2(n-a)^2 )}}{(a^2 + r^2 + 4 )((n-a)^2 + r^2 + 4)}.
\end{align*}

From the picture of the geodesic $L$ above,  it is clear that $\im(q)$ may equal
$\im(q')$.  To show that $F(a,r)$ is bounded below, we thus consider the
difference between $\re(q)$ and $\re(q')$.  We then show that $G(a,r)$ is
bounded above.  These facts together will prove that for all $n \geq 1$, and
all $a$ and $r$ satisfying~(\ref{abound}) and~(\ref{rbound3}) respectively,
\begin{equation} \label{fgbound}  \frac{n^2}{K} \leq \frac{F(a,r)}{G(a,r)},
\end{equation} where $K > 0$ is a constant. This implies a lower bound  on
$d_U(q,q')$.

From~(\ref{abound}), we have $a \leq (n+1)/2$, and $n - a \leq (n+1)/2$. Using
these bounds and~(\ref{rbound3}), \begin{align*} \frac{2(n-a)r^2}{(n-a)^2 + r^2
+ 4} + \frac{2ar^2}{a^2 + r^2 + 4} & \geq \frac{2(n-a)r^2 + 2ar^2}{(n+1)^2/4 +
(n^2 + 4n + 7)/4 + 4} \\ & = \frac{2nr^2}{n^2/2 + 3n/2 + 6} \\ & \geq
\frac{n(n^2 - 4n + 7)}{n^2 + 3n + 12}. \end{align*} The right-hand side of the
final line goes to $n$ as $n \rightarrow \infty$, so \[ F(a,r)  \geq  \left(
\frac{2(n-a)r^2}{(n-a)^2 + r^2 + 4} + \frac{2ar^2 }{a^2 + r^2 + 4} \right)^2
\geq \frac{n^2}{k}, \]  for some positive constant $k$. For $G$, by a similar
process using~(\ref{abound}) and~(\ref{rbound3}) repeatedly, we obtain \[
G(a,r) \leq \frac{(n^2 + 4n + 7)/2}{(n^2/2 - 3n/2 + 6)^2}(25n^2/4 + 33n/2 +
137/4). \] The function of $n$ on the right-hand side converges to a constant
as $n \rightarrow \infty$, so $G(a,r)$ is bounded above, by say $K/k$.  The
lower bound on $F(a,r)/G(a,r)$ at~(\ref{fgbound}) follows. Therefore, the
distance between the points  $q$ and $q'$ is bounded below: \[ \cosh^{-1}\left(
1 + \frac{n^2}{K} \right) \leq \cosh^{-1}\left( 1 + \frac{F(a,r)}{G(a,r)}
\right)   = d_U(q,q') . \]

We can find a constant $K_2' > 0$ such that for all $n \geq 1$, \[ \cosh^{-1}\left( 1 +
\frac{n^2}{8} \right) \leq K_2'
\cosh^{-1}\left( 1 + \frac{n^2}{K} \right)  . \]  Therefore,
$d_U(p,p') \leq K_2'd_U(q,q')$. We have established~(\ref{case1}).

We next consider the $r_j$ of the form $r_j = v$, where $j$ is strictly less
than $m$. Rather than comparing the distances
$d_U(q_{j-1},q_j)$ and $d_U(p_{j-1},p_j)$, we show instead that
\begin{equation}\label{risv}  d_U(p_{j-1},p_j) + d_U(p_j, p_{j+1}) \leq 4K_2'
\left( d_U(q_{j-1},q_j) + d_U(q_j, q_{j+1}) \right).\end{equation} When $r_j =
v$, we have \[ d_U(p_{j-1},p_j)  = d_U( p_0, r_j  p_0)  = d_U(2i, i/2). \] By
the construction of the product $r_1 r_2 \cdots r_m$, and the assumption that
$j < m$, if $r_j = v$ then $r_{j+1} = u(n_{j+1})$ for some non-zero integer
$n_{j+1}$. So, by~(\ref{case1}), we have \[ d_U(p_j,p_{j+1})\leq
K_2'd_U(q_j,q_{j+1}). \] Then, the following  proves~(\ref{risv}):
\begin{align*}d_U(p_{j-1},p_j) + d_U(p_j, p_{j+1}) & = d_U(2i,i/2) + d_U(p_j,
p_{j+1}) \\ & = \log 4 + d_U(p_j, p_{j+1}) \\ & \leq 3 \cosh^{-1}(1 + 1/8)  +
d_U(p_j, p_{j+1}) \\ & \leq 3 \cosh^{-1}(1 + n_{j+1}^2/8)  + d_U(p_j, p_{j+1})
\\ & = 3d_U(p_0, p_0 + n_{j+1}) + d_U(p_j, p_{j+1}) \\ & = 3d_U(p_0, r_{j+1}
p_0) + d_U(p_j, p_{j+1}) \\ & = 3d_U(p_j,p_{j+1}) + d_U(p_j, p_{j+1}) \\ & =
4d_U(p_j,p_{j+1}) \\ & \leq 4K_2' d_U(q_j,q_{j+1}) \\ & \leq 4K_2' \left(
d_U(q_{j-1},q_j) + d_U(q_j, q_{j+1}) \right).\end{align*}

The final case is when $j = m$ and $r_m = v$.  Then $r_{m-1}$ must be
$u(n_{m-1})$ for some non-zero integer $n_{m-1}$.
It can be proved, as for~(\ref{risv}), that \[  d_U(p_{m-2},p_{m-1}) +
d_U(p_{m-1},p_m) \leq 4K_2' \left(
d_U(q_{m-2},q_{m-1}) + d_U(q_{m-1}, q_{m}) \right). \]

Let the constant $K_2$ be $K_2 = 2 \times 4K_2'$. (Multiplication by 2 is
needed to cater for the possibility of counting $d_U(q_{m-2},q_{m-1})$ twice.)
Then, \[  \sum_{j=1}^{m} d_U (p_{j-1},p_j) \leq K_2 \sum_{j=1}^m
d_U(q_{j-1},q_j), \] and so by~(\ref{plength}) and~(\ref{qlength}), $\length(\alpha)
\leq K_2 d_U(p_0,\delta p_0)$. \end{proof}

The next lemma shows that the length of the path $\alpha$ is also roughly the
same as the sums of the values of $f$ at each of the terms $r_j$.

\begin{lemma} \label{K3lemma} There exist constants $K_3, K_4 > 0$ such that
\[K_3 \length(\alpha) \leq \sum_{j=1}^m f(r_j) \leq K_4 \length(\alpha).\]
\end{lemma}

\begin{proof} If $r_j = v$, then $d_U(p_0, r_j  p_0) = \log 4$ and $f(r_j)
= 1$.  If $r_j = u(n_j)$ for $n_j \not = 0$, then
$d_U(p_0, r_j  p_0) = \cosh^{-1}\left( 1 + n_j^2/8 \right)$ and $f(r_j) =
\log(|n_j| + 1)$.  Let the set $J$ be \[ J = \left\{ j \in\Z: 1 \leq j \leq m
\ \mbox{and }
r_j = v \right\}, \] and denote its
cardinality by $|J|$. Then \[ \length(\alpha)  = \sum_{j=1}^m d_U(p_0,r_j  p_0)
=  |J|\log 4 + \sum_{j \not \in J} \cosh^{-1}\left( 1 + n_j^2/8\right),\] and \[
\sum_{j=1}^m f(r_j)  = |J| + \sum_{j \not \in J} \log( |n_j| + 1).\]

Now, for all $n_j \not = 0$, \[  \cosh^{-1}(1
+ n_j^2/8) \leq 3\log(|n_j| + 1) \] and, also, \[   \log(|n_j| + 1) \leq
3 \cosh^{-1}(1 + n_j^2/8). \] It follows that \[ \frac{1}{3}\length(\alpha) \leq
\sum_{j=1}^m f(r_j) \leq 3 \length(\alpha).\]  \end{proof}

We may now conclude the proof of Proposition~\ref{product2}.  By
Lemmas~\ref{K1lemma} and~\ref{K3lemma}, there exist constants $K_1$, $K_2$,
$K_3$ and $K_4$ such that \[ K_3 K_1
d_U(p_0, \delta p_0)  \leq K_3 \length(\alpha)  \leq \sum_{j=1}^m f(r_j)  \leq
K_4 \length(\alpha)  \leq K_4K_2 d_U(p_0, \delta  p_0). \] \end{proof}


\begin{lemma} \label{dU_lemma} Let $p_0$ be the point $2i$ in $U^2$.  Then
there is a constant $K_5 > 0$ such that for any $\delta \in \sltwoz$,
\[d_U(p_0,\delta p_0) \leq K_5 \log \|\delta\|.\] \end{lemma}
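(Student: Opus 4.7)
The plan is to compute $\delta p_0$ explicitly for $\delta = \startm a & b \\ c & d \finishm \in \sltwoz$, plug into the explicit formula for $d_U$ derived in Chapter~2, and show that the argument of $\cosh^{-1}$ grows at most polynomially in the entries of $\delta$.

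First, using the action formula \eqref{gltwocaction} and rationalising,
\[
\delta \cdot 2i \;=\; \frac{2ai+b}{2ci+d} \;=\; \frac{bd+4ac}{4c^2+d^2} \;+\; \frac{2}{4c^2+d^2}\,i,
\]
where $ad-bc=1$ has been used in the imaginary part. Then
\[
|2i - \delta(2i)|^2 \;=\; \frac{(bd+4ac)^2 + 4(4c^2+d^2-1)^2}{(4c^2+d^2)^2},
\qquad
2\im(2i)\,\im(\delta(2i)) \;=\; \frac{8}{4c^2+d^2},
\]
and the explicit formula for $d_U$ yields
\[
\cosh d_U(p_0,\delta p_0) \;=\; 1 + \frac{(bd+4ac)^2 + 4(4c^2+d^2-1)^2}{8(4c^2+d^2)}.
\]

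Next I would bound this quantity in terms of the operator norm. Writing $\|\delta\|_F^2 = a^2+b^2+c^2+d^2$ for the Frobenius norm, a couple of applications of the Cauchy--Schwartz inequality give $(bd+4ac)^2 \leq 32\|\delta\|_F^4$ and $(4c^2+d^2-1)^2 \leq C_0\|\delta\|_F^4$, while the denominator $4c^2+d^2$ is bounded below by $1$ (since $c,d$ are integers with $(c,d)\neq(0,0)$ unless $\delta$ is triangular, a case to be treated by inspection). Hence there is a universal constant $C_1$ with
\[
\cosh d_U(p_0,\delta p_0) \;\leq\; C_1 \|\delta\|_F^4.
\]
Since the operator norm and Frobenius norm are equivalent on $2\times 2$ matrices (with $\|\delta\|_F \leq \sqrt{2}\,\|\delta\|$), we obtain
\[
\cosh d_U(p_0,\delta p_0) \;\leq\; C_2 \|\delta\|^4
\]
for a new constant $C_2$.

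Finally, applying the elementary inequality $\cosh^{-1} x \leq \log(2x)$ valid for $x\geq 1$ (which follows from $2x \geq x + \sqrt{x^2-1}$), I would conclude
\[
d_U(p_0,\delta p_0) \;\leq\; \log(2C_2) + 4\log \|\delta\|.
\]
The main technical nuisance, which I expect to be the chief obstacle, is that $\log\|\delta\|$ can vanish (for $\delta$ orthogonal, such as $\matrixv$) while $d_U(p_0,\delta p_0)$ does not. Since the set $\{\delta \in \sltwoz : \|\delta\| \leq 2\}$ is finite by discreteness, it contributes only boundedly many exceptional values; absorbing these into $K_5$ by enlarging the constant (which is legitimate once we restrict to $\|\delta\|$ bounded away from $1$, and then handle the finitely many remaining cases by treating $\log\|\delta\|$ as bounded below on them) yields the desired $K_5$. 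This is the point at which care is required, but it is a routine absorption of finitely many exceptions into the constant rather than a substantive difficulty.
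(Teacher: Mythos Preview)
Your approach is correct and genuinely different from the paper's. The paper uses the $KAK$ decomposition of Lemma~\ref{sl2decomp}: writing $\delta = k_1 a k_2$ with $k_i \in \sotwor$ and $a = \startm s & 0 \\ 0 & s^{-1}\finishm$, $s \geq 1$, one gets $\|\delta\| = s$ and $d_U(i,\delta i) = d_U(i,s^2 i) = 2\log s$, so the triangle inequality via the point $i$ gives $d_U(p_0,\delta p_0) \leq 2\log 2 + 2\log\|\delta\|$ in two lines. Your route---explicit computation of $\delta(2i)$, substitution into the $\cosh^{-1}$ formula, and polynomial bounds on the numerator---reaches the same inequality $d_U(p_0,\delta p_0) \leq A + B\log\|\delta\|$ by brute force. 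The KAK argument is much shorter and exposes the geometric reason (the $\sotwor$-orbits are circles centred on the imaginary axis); your argument is more elementary in that it does not invoke Lemma~\ref{sl2decomp}, at the cost of a page of computation. Incidentally, your parenthetical ``unless $\delta$ is triangular'' is unnecessary: $(c,d)=(0,0)$ contradicts $ad-bc=1$, so $4c^2+d^2 \geq 1$ always.

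You are right to flag the endpoint issue, and in fact the paper glosses over it entirely. But your proposed fix does not work: for the orthogonal integer matrices $\pm I, \pm v$ one has $\|\delta\| = 1$ exactly, so $\log\|\delta\| = 0$, and no enlargement of $K_5$ will make $d_U(2i, i/2) = \log 4$ bounded by $K_5 \cdot 0$. The lemma as literally stated is false at these four matrices; what is true (and sufficient for the downstream application in Corollary~\ref{word_log}) is the affine bound $d_U(p_0,\delta p_0) \leq A + B\log\|\delta\|$, or equivalently the stated bound for $\|\delta\| \geq 2$.
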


\begin{proof} Let $\delta$ be in \sltwoz.  As shown in Lemma~\ref{sl2decomp},
there exist matrices $k_1$ and $k_2$ in \sotwor, and a matrix $a$ in \sltwor\
of the form $\startm s & 0 \\ 0 & s^{-1} \finishm$ where $s \geq 1$, such that
$\delta = k_1 a k_2$. Then, by Lemma~\ref{diagnorm}, $\|\delta\| = \| a\| = s$.
Since the group $\sotwor$ stabilises $i$, we have $d_U(i,\delta i) = d_U(i,
ai) = d_U(i, s^2i) =  2 \log s$.    Thus, \begin{align*} d_U(p_0,\delta p_0) & \leq d_U(2i,i) +
d_U(i, \delta i) +  d_U(\delta  i, \delta  (2i)) \\ & = 2d_U(2i,i)+
d_U(i,\delta  i) \\ & = 2 \log 2 + 2 \log s  \\ & = \log 4 + 2 \log \| \delta \| \\ & \leq K_5 \log \|
\delta \|, \end{align*} for some constant $K_5>0$. \end{proof}


\begin{corollary} \label{word_log} There exists a constant $K_6 > 0$ such that,  for all
$s$ and $t$ where $1 \leq s \not = t \leq d$, and all $\delta \in
SL^{s,t}(2,\mathbb{Z})\subseteq \sldz$,  \[ d_W(1,\delta) \leq K_6 \log \| \delta \|. \]
\end{corollary}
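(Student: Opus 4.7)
The plan is to combine the three main results just established: Proposition~\ref{Eij_is_U1}, Proposition~\ref{product2}, and Lemma~\ref{dU_lemma}. First I would identify $\delta \in SL^{s,t}(2,\Z)$ with its associated $2 \times 2$ matrix $\delta_0 \in \sltwoz$; since $\delta$ acts as the identity on all basis vectors $\vece_k$ with $k \neq s,t$, and since $\det \delta_0 = 1$ forces $\|\delta_0\| \geq 1$, the operator norm of $\delta$ on $\R^d$ coincides with the operator norm of $\delta_0$ on $\R^2$, so $\log\|\delta\| = \log\|\delta_0\|$.

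Next I would apply Proposition~\ref{product2} to $\delta_0$ to produce a factorisation $\delta_0 = r_1 r_2 \cdots r_m$ with each $r_j$ either equal to $v = \matrixv$ or of the form $u(n_j) = \startm 1 & n_j \\ 0 & 1 \finishm$, and such that $\sum_{j=1}^m f(r_j) \leq c_4\, d_U(p_0, \delta_0 p_0)$. Transported into $SL^{s,t}(2,\Z) \subseteq \sldz$, each $r_j$ becomes a specific matrix in $\sldz$; I now need to bound its word length in $\Sigma_d$. When $r_j$ corresponds to $v$, it is (up to sign) a permutation-type matrix of the kind included in $\Sigma_d$, so $d_W(1, r_j)$ is bounded by a constant $C_v$, and trivially $d_W(1,r_j) \leq C_v f(r_j)$ since $f(r_j) = 1$. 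When $r_j$ corresponds to $u(n_j)$, the embedding expresses $r_j$ as $E_{ij}(1)^{n_j}$ for some pair $i \neq j$ determined by $s$ and $t$; because $d \geq 3$, Proposition~\ref{Eij_is_U1} gives a constant $C_{ij}$ with $d_W(1, E_{ij}(1)^{n_j}) \leq C_{ij}\log(|n_j|+1) = C_{ij} f(r_j)$. Taking $M$ to be the maximum of $C_v$ and the finitely many $C_{ij}$ (over the finite set of index pairs available in $\sldz$) gives $d_W(1, r_j) \leq M f(r_j)$ uniformly in $j$, $s$ and $t$.

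Finally, the triangle inequality, Proposition~\ref{product2}, and Lemma~\ref{dU_lemma} combine to give
\begin{align*}
d_W(1,\delta) &\leq \sum_{j=1}^m d_W(1, r_j) \leq M \sum_{j=1}^m f(r_j) \\
&\leq M c_4\, d_U(p_0, \delta_0 p_0) \leq M c_4 K_5 \log \|\delta_0\| = M c_4 K_5 \log \|\delta\|,
\end{align*}
so $K_6 = M c_4 K_5$ works. The only subtle point, and the main obstacle, is justifying that $M$ can be chosen independently of the pair $(s,t)$; this is where the hypothesis $d \geq 3$ enters crucially (so that Proposition~\ref{Eij_is_U1} applies to each $E_{ij}(1)$) and where one uses that there are only finitely many such embeddings in $\sldz$, so a finite maximum of constants suffices.
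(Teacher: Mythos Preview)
Your proposal is correct and follows essentially the same argument as the paper: factor $\delta$ via Proposition~\ref{product2}, bound each $r_j$ using Proposition~\ref{Eij_is_U1} for the $u(n_j)$ terms and a fixed constant for the $v$ terms, take a finite maximum over the index pairs $(s,t)$, and finish with Lemma~\ref{dU_lemma}. Your explicit remark that $\|\delta\| = \|\delta_0\|$ and your emphasis on why $M$ is independent of $(s,t)$ are details the paper leaves implicit, but the structure of the two proofs is the same.
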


\begin{proof} Let $r_1 r_2 \cdots r_m$ be a word for $\delta$ as in
Proposition~\ref{product2}, where each $r_j$ belongs to
$SL^{s,t}(2,\mathbb{Z})$.  Let $\Sigma$ be a fixed finite set of generators of
$\sldz$, and let $d_\Sigma$ be the word distance function on $\sldz$ induced by
$\Sigma$.

First, we consider the terms $r_j$ which are of the form $\startm 1 & n_j \\ 0
& 1 \finishm$ for some non-zero integer $n_j$. Such elements may be written as
$r_j = (E_{st}(1))^{n_j}$. By Proposition~\ref{Eij_is_U1}, there exists a
constant $C_{st}$ such that \[ d_\Sigma(1,
r_j) \leq C_{st} \log (|n_j| + 1) = C_{st}f(r_j), \] where $f$ is the function
defined in the statement of Proposition~\ref{product2}.  Let $C > 0$ be the
maximum of the constants $C_{st}$ over the finitely many values of $s$ and $t$
such that  $1 \leq s \not = t \leq d$.  Then $d_\Sigma(1,r_j) \leq C f(r_j)$.

If the element $r_j$ is of the form $r_j = \matrixv$, then $r_j$  may be
written as a shortest word of fixed length $L_{st}$ in our generators $\Sigma$. For such
$r_j$, we have $f(r_j) = 1$.  Let $L>0$ be the maximum of the constants
$L_{st}$ over the finitely many values of $s$ and $t$ such that  $1 \leq s \not
= t \leq d$. Then $d_\Sigma(1,r_j) \leq L f(r_j)$.

Let $M$ be the maximum of $L$ and $C$. By Proposition~\ref{product2} and
Lemma~\ref{dU_lemma}, the word length of $\delta$ with respect to $\Sigma$  is
bounded as follows: \[ d_\Sigma(1,\delta)  \leq \sum_{j=1}^m d_\Sigma(1,r_j)
\leq M \sum_{j=1}^m f(r_j) \leq Mc_4 d_U(p_0, \delta p_0) \leq Mc_4 K_5 \log \|
\delta \|.\] Thus there exists a constant $K_6>0$ such that
$d_W(1,\delta) \leq K_6 \log \|\delta\|$. \end{proof}

\subsection{Bounds on the operator norm of a matrix}\label{opnorm}

\begin{lemma} \label{norm_ineqs}  There exist constants $c_5$, $c_6 > 0$ such
that, for any $d \times d$ real matrix $A = (a_{ij})$, \[c_5 \max_{i,j}
|a_{ij}| \leq \| A \| \leq c_6 \max_{i,j} |a_{ij}| . \] \end{lemma}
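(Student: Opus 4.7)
The plan is to prove both inequalities directly by unpacking the definition $\|A\| = \sup_{\|\vecx\| = 1} \|A\vecx\|$, where $\|\cdot\|$ on vectors denotes the Euclidean norm. The result is essentially the equivalence of two norms on the finite-dimensional space of $d \times d$ real matrices, so explicit constants can be extracted; no deep machinery is needed. Let $M = \max_{i,j}|a_{ij}|$ throughout.

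For the lower bound, I would choose a clever test vector. Suppose the maximum $M$ is attained at entry $(I,J)$, so $|a_{IJ}| = M$. Take $\vecx = \vece_J$, the $J$-th standard basis vector, which has Euclidean norm $1$. Then $A\vece_J$ is the $J$-th column of $A$, so
\[
\|A\|^2 \geq \|A\vece_J\|^2 = \sum_{i=1}^d a_{iJ}^2 \geq a_{IJ}^2 = M^2.
\]
Taking square roots gives $\|A\| \geq M$, so the constant $c_5 = 1$ suffices.

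For the upper bound, I would estimate $\|A\vecx\|$ uniformly over unit vectors $\vecx$ using the triangle and Cauchy--Schwarz inequalities. Fix $\vecx$ with $\|\vecx\| = 1$. For each row $i$, Cauchy--Schwarz gives
\[
|(A\vecx)_i| = \left|\sum_{j=1}^d a_{ij} x_j \right| \leq \left(\sum_{j=1}^d a_{ij}^2\right)^{1/2}\left(\sum_{j=1}^d x_j^2\right)^{1/2} \leq \sqrt{d}\,M.
\]
Summing the squares of these bounds over the $d$ rows yields $\|A\vecx\|^2 \leq d^2 M^2$, hence $\|A\vecx\| \leq d M$. Taking the supremum over unit vectors gives $\|A\| \leq dM$, so the constant $c_6 = d$ works.

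There is no real obstacle: the lower bound uses one well-chosen unit vector (a standard basis vector aligned with the largest entry), and the upper bound is a routine Cauchy--Schwarz estimate. The constants $c_5 = 1$ and $c_6 = d$ depend only on the dimension $d$ and not on the particular matrix, which is all that is required for the later application in the proof of Theorem~\ref{normthm}.
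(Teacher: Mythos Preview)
Your proof is correct and follows essentially the same approach as the paper: the lower bound is identical (test on the standard basis vector $\vece_J$ to get $c_5=1$), and the upper bound proceeds by bounding each coordinate of $A\vecx$ and summing. The only difference is that you invoke Cauchy--Schwarz to obtain $c_6=d$, whereas the paper uses the cruder estimate $|x_j|\leq 1$ and arrives at $c_6=d^{3/2}$; either constant is fine for the application.
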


\begin{proof}  Suppose the entry of $A$ which has maximum absolute value occurs
in column $k$.  Then \[ \| A \|  = \sup_{\|\vecx\| = 1}\| A \mathbf{x} \|  \geq
\| A \vece_k \|  = \left( \sum_{i=1}^d |a_{ik}|^2 \right)^{1/2}  \geq \left(
\left( \max_i |a_{ik}| \right)^2 \right)^{1/2} = \max_{i,j} |a_{ij}| .\] For
the other inequality, let $\mathbf{x}$ be any vector such that $\| \mathbf{x}
\| = 1$, and write $\mathbf{y} = A \mathbf{x}$.  For $1 \leq j \leq d$, we have
$|x_j| \leq 1$, so \begin{align*} \| A \mathbf{x} \| & = \| \mathbf{y} \| \\ &
=  \left( \sum_{i=1}^d |y_i|^2 \right)^{1/2} \\ & \leq \sqrt{d} \max_i |y_i|
\\ & = \sqrt{d} \max_i \left| \sum_{j=1}^d a_{ij}x_j \right| \\ & \leq \sqrt{d}
d \max_{i,j} |a_{ij}x_j| \\ & \leq d^{3/2} \max_{i,j}|a_{ij}|. \end{align*}
Thus, \[ \| A \|  = \sup_{\| \mathbf{x} \| = 1}\| A \mathbf{x}\|  \leq d^{3/2}
\max_{i,j} |a_{ij}|.\] Note that we may take the constant $c_5$ to be $1$.
\end{proof}

\subsection{Conclusion of the proof of
Theorem~\ref{normthm}}\label{finishnormproof}

Let $\gamma$ be in $\sldz$. By Corollary~\ref{product1}, $\gamma$ may be
written as a word \[ \gamma = \delta_1 \delta_2 \cdots \delta_{d^2}, \] where
each $\delta_i \in SL^{s,t}(\mathbb{Z})$ for some $1 \leq s \not = t \leq d$.
Moreover, the entries of these $\delta_i$ are bounded by a fixed polynomial $P$ in the
entries of $\gamma$; we will write $P(\gamma)$ for
$P(\gamma_{11},\gamma_{12},\ldots,\gamma_{dd})$.  Then for $1 \leq i \leq d^2$,
the $(k,l)$ entry of the matrix $\delta_i$ satisfies \begin{equation}
\label{poly_bound}\left| \left( \delta_i \right)_{kl} \right| \leq |P
(\gamma)|. \end{equation} Let $\delta$ be the matrix in the expression
$\delta_1 \delta_2 \cdots \delta_{d^2}$ which has maximum norm. That is,
$\delta = \delta_i$ for some $1 \leq i \leq d^2$, and $\| \delta \| \geq \|
\delta_i \|$ for all $1 \leq i \leq d^2$. Then, by Corollary~\ref{word_log}, the
word distance from $1$ to $\gamma$ is bounded as follows: \[ d_W(1,\gamma) \leq
\sum_{i=1}^{d^2} d_W(1,\delta_i) \leq \sum_{i=1}^{d^2} K_6 \log \| \delta_i \|
\leq K_7 \log \| \delta \|,  \] where $K_7$ is some positive constant.   Using
Lemma~\ref{norm_ineqs}, and (\ref{poly_bound}), and writing $\delta =
(\delta_{kl})$, we then obtain \[ K_7 \log \| \delta \|  \leq K_7 \log \left(c_6
\max_{k,l} |\delta_{kl}|\right)  \leq K_7 \log \left(c_6 |P(\gamma)|\right). \] Let the degree
of the polynomial $P$ be $\deg(P)$, and let the sum of the the absolute values
of the coefficients of $P$ be $S$. Then for all $\gamma$, \[ |P(\gamma)|  \leq
S\left(\max_{i,j}|\gamma_{ij}|\right)^{\deg(P)}, \] so \begin{align*} K_7 \log(C_6
|P(\gamma)|) & \leq K_7
\log\left(c_6S(\max_{i,j}|\gamma_{ij}|)^{\degree(P)}\right) \\ &
\leq K_7 \log\left(c_6S\max_{i,j}|\gamma_{ij}|\right)^{\degree(P)} \\ & = K_7 \degree(P)
\log \left(c_6S\max_{i,j}|\gamma_{ij}|\right) \\ & \leq K \log
\left(\max_{i,j}|\gamma_{ij}|\right)\end{align*}  for some constant $K > 0$. Finally,
using Lemma~\ref{norm_ineqs} again, \[ K \log \left(\max_{i,j} | \gamma_{ij} |
\right)
\leq K \log\| \gamma \|. \] Thus \[ d_W(1,\gamma) \leq K \log \| \gamma \|, \]
and the proof of Theorem~\ref{normthm} is complete.


\clearpage
\addcontentsline{toc}{chapter}{References}
\bibliographystyle{plain} 
\bibliography{allsources}
\nocite{nor1:gcekr}
\nocite{bosmeh1:imsh}
\nocite{hod1:mrlc}
\nocite{boy1:hm}
\nocite{gray1:int}
\nocite{gray2:gfi}
\nocite{gray3:is}
\nocite{kli1:mtamt}
\nocite{mil1:hg}
\nocite{mil2:eptg}
\nocite{pla1:m}
\nocite{poi3:sh}
\nocite{pro1:ce}
\nocite{rat1:fhm}
\nocite{ros1:hneg}
\nocite{sti2:int}
\nocite{tor1:pgrp}
\nocite{wal1:nemr}

\end{document}